\documentclass[10pt]{article}

\usepackage[utf8]{inputenc}
\usepackage[T1]{fontenc}
\usepackage[english]{babel}

\usepackage[lmargin = 1.5 in, rmargin = 1 in, tmargin = 1 in, bmargin = 1 in]{geometry}

\usepackage{fancyhdr}
\author{Charles Devlin VI}
\usepackage[stretch=20]{microtype}
\usepackage{amsmath}
\usepackage{amsthm}
\usepackage{amssymb} 
\usepackage{mathtools}

\usepackage{color}

\usepackage{stackrel}

\usepackage[hyphens]{url}

\usepackage[colorlinks]{hyperref}
\hypersetup{
  linkcolor=blue
}%

\usepackage{enumitem}

\usepackage{bm}

\allowdisplaybreaks 

\renewcommand{\AA}{\mathbb{A}}
\newcommand{\CC}{\mathbb{C}}
\newcommand{\DD}{\mathbb{D}}

\newcommand{\NN}{\mathbb{N}}

\newcommand{\QQ}{\mathbb{Q}}
\newcommand{\RR}{\mathbb{R}}
\newcommand{\ZZ}{\mathbb{Z}}

\newcommand{\fa}{\mathfrak{a}}
\newcommand{\fh}{\mathfrak{h}}

\newcommand{\cD}{\mathcal{D}}
\newcommand{\cE}{\mathcal{E}}
\newcommand{\cH}{\mathcal{H}}
\newcommand{\cL}{\mathcal{L}}
\newcommand{\cR}{\mathcal{R}}
\newcommand{\cS}{\mathcal{S}}
\newcommand{\cU}{\mathcal{U}}

\DeclareMathOperator{\id}{id}
\DeclareMathOperator{\dist}{dist}
\DeclareMathOperator{\Len}{Len}

\newcommand{\hlint}[2]{\left({#1}, {#2}\right]}
\newcommand{\hrint}[2]{\left[{#1}, {#2}\right)}

\newcommand{\D}{\mathrm{d}}
\newcommand{\p}{\partial}

\newcommand{\w}{\wedge}

\renewcommand{\i}{{\mathrm{i}}}

\newtheorem{thm}{Theorem}[section]
\newtheorem{prop}[thm]{Proposition}

\newtheorem{lemma}[thm]{Lemma}
\newtheorem{defn}[thm]{Definition}

\usepackage{tikz}
\usepackage{graphicx}

\makeatletter
\renewcommand{\fnum@figure}{Fig. \thefigure}
\makeatother
 \usepackage[font=small,labelfont=bf,labelsep=space]{caption}

\newcommand{\ratios}[1][\epsilon]{\frac{r \fa_{{#1}}^{-1}}{r^{\xi Q} \fa_{{#1}/r}^{-1}}}
\newcommand{\invratios}[1][\epsilon]{\frac{r^{\xi Q} \fa_{{#1}/r}^{-1}}{r \fa_{{#1}}^{-1}}}

\NewDocumentCommand \locLFPP { o o } {%
  \IfNoValueTF{#2}
    {%
      \IfNoValueTF{#1}
        {%
          \fa_{\epsilon}^{-1} \hat{D}_h^{\epsilon}%
        }
        {%
          \fa_{#1}^{-1} \hat{D}_h^{#1}%
        }
    }
    {%
      \fa_{#1}^{-1} \hat{D}_{#2}^{#1}%
    }
}

\NewDocumentCommand \LFPP { o o } {%
  \IfNoValueTF{#2}
    {%
      \IfNoValueTF{#1}
        {%
          \fa_{\epsilon}^{-1} D_h^{\epsilon}%
        }
        {%
          \fa_{#1}^{-1} D_h^{#1}%
        }
    }
    {%
      \fa_{#1}^{-1} D_{#2}^{#1}%
    }
}

\newcommand{\DerivativeBound}{\tau}

\newcommand{\hphi}[1][\phi]{
  \mathchoice
    {\hyperlink{hphi}{h^{#1}}} 
    {\hyperlink{hphi}{h^{#1}}} 
    {\hyperlink{hphi}{\scriptstyle h^{#1}}} 
    {\hyperlink{hphi}{\scriptscriptstyle h^{#1}}} 
}
\NewDocumentCommand{\confmaps}{O{\DerivativeBound} O{V} O{U}}{%
  \mathchoice
    {\hyperlink{confmaps}{\Lambda_{#1}(#2, #3)}} 
    {\hyperlink{confmaps}{\Lambda_{#1}(#2, #3)}} 
    {\hyperlink{confmaps}{\scriptstyle \Lambda_{#1}(#2, #3)}} 
    {\hyperlink{confmaps}{\scriptscriptstyle \Lambda_{#1}(#2, #3)}} 
}
\NewDocumentCommand{\diagonal}{O{\rho} O{W}}{%
  \mathchoice
    {\hyperlink{Delta}{\Delta_{#1}(#2)}} 
    {\hyperlink{Delta}{\Delta_{#1}(#2)}} 
    {\hyperlink{Delta}{\scriptstyle \Delta_{#1}(#2)}} 
    {\hyperlink{Delta}{\scriptscriptstyle \Delta_{#1}(#2)}} 
}

\DeclareMathOperator{\Prob}{\bm{\mathrm{P}}}
\DeclareMathOperator{\E}{\bm{\mathrm{E}}}

\newcommand{\frepsilon}{\epsilon \log \epsilon^{-1}}

\title{The coordinate change formula for the Liouville quantum gravity metric holds for all conformal maps simultaneously}
\date{March 4, 2026}

\usepackage[style=alphabetic,maxnames=50,maxalphanames=50,sorting=nyt,safeinputenc]{biblatex}
\DeclareNameAlias{author}{last-first}

\begin{document}
  \maketitle

  \abstract{
    \textit{Liouville quantum gravity (LQG)} is, heuristically, a theory of random Riemannian geometry with Riemannian metric tensor $e^{\gamma h} (\D x^2 + \D y^2)$, where $h$ is a variant of the Gaussian free field and $\gamma > 0$ is a parameter.
    If $U \subset \CC$ is an open set, $\phi \colon U \to \phi(U)$ is a conformal map, and $h^{\phi} = h \circ \phi^{-1} + Q \log|(\phi^{-1})'|$ (where $Q = Q(\gamma)$ is a parameter), then the LQG surface on $U$ defined with field $h$ is equivalent to the LQG surface on $\phi(U)$ with field $h^{\phi}$.
    This equivalence is meant in the sense that the area measures and distance functions on these surfaces are almost surely equivalent.
    It is known for the area measure that, in fact, this equivalence holds almost surely for all conformal maps $\phi$ simultaneously (Sheffield-Wang 2016).
    We prove the corresponding result for the distance function.
    This makes precise the frequently used heuristic definition that a \textit{quantum surface} is a random equivalence class of domains equipped with the LQG area measure and LQG distance function.
  }

  \tableofcontents

  \vspace{12pt} \noindent \textbf{Acknowledgements} We thank Ewain Gwynne for helpful discussions.
    The author was partially supported by NSF grant DMS-2245832.

  \section{Introduction}
    A \textit{Liouville quantum gravity} (LQG) surface is a random $2$-dimensional Riemannian manifold with Riemannian metric tensor $e^{\gamma h} (\D x^2 + \D y^2)$, where $\gamma \in (0, 2)$ is a parameter, $h$ is a Gaussian free field (GFF), and $\D x^2 + \D y^2$ is the Euclidean metric tensor.
    This definition does not make rigorous sense because $h$ is not a function.
    The rigorous definition of LQG does not directly try to define a probability measure on the set of Riemannian metrics, instead defining probability distributions on the key observables of a Riemannian manifold: a volume form (area measure) and a distance function (metric).
    We will briefly discuss the rigorous definition below.
    For more details, see \cite{GwynneLQGSurvey}, \cite{ScottICM}, and \cite{BerestyckiPowell}.

    Let $p_t(z) \coloneqq \frac{1}{2 \pi t} e^{-|z|^2/2t}$ be the heat kernel.
    Given a GFF-type distribution $h$, define an area measure as the almost sure weak limit \cite{RhodesVargasGMC} \cite{BerestyckiGMC}
    \begin{align}
      \mu \
      &\coloneqq \ \lim_{\epsilon \to 0} \epsilon^{\gamma^2/2} e^{\gamma h_{\epsilon}^{*}(z)} \, \D z,
      \label{eq:LQGAreaMeasure}
    \end{align}
    where $\D z$ is Lebesgue measure on $\CC$ and
    \begin{align*}
      h_{\epsilon}^{*}(z) \
      &\coloneqq \ \left(h \ast p_{\epsilon^2/2} \right)(z) \
      = \ \int\limits_{\CC} h(w) p_{\epsilon^2/2}(z - w) \, \D w,
    \end{align*}
    where the integral should be interpreted in the sense of distributional pairing.
    It is worth noting that the heat kernel mollification $h_{\epsilon}^{*}(z)$ in \eqref{eq:LQGAreaMeasure} can be replaced with other continuous regularizations, such as the circle average process.
    We state the definition in terms of the heat kernel mollification as it is the smooth approximation of the GFF used in the definition of the LQG metric below.

    In analogy with Riemannian geometry, the distance between points on an LQG surface should be the infimum over lengths of paths, where lengths are weighted by $e^{\xi h}$.
    Here, $\xi = \xi(\gamma) \coloneqq \gamma/d_{\gamma}$ where $d_{\gamma}$ is the Hausdorff dimension of the LQG metric\footnote{The precise definition of $d_{\gamma}$ is not as the Hausdorff dimension of the LQG metric. It is defined in \cite{FractalDimensionOfLQG}, and is shown to agree with the Hausdorff dimension of the LQG metric in \cite{KPZFormulasForLQG}.}.
    Like with the area measure \eqref{eq:LQGAreaMeasure}, we make this definition precise by mollifying and taking a limit.
    For $\epsilon > 0$, define the $\epsilon$\textit{-Liouville first passage percolation} (LFPP) metric with parameter $\xi$ by
    \begin{align}
      D_h^{\epsilon}\left(z, w \right) \
      &\coloneqq \ \inf_{P \colon z \to w} \int\limits_{0}^{1} e^{\xi h_{\epsilon}^{*}(P(t))} |P'(t)| \, \D t,
      \label{eq:LFPPDefn}
    \end{align}
    where the infimum is over all piecewise continuously differentiable paths $P \colon [0, 1] \to \CC$ from $z$ to $w$.
    Define the normalization constants
    \begin{align*}
      \fa_{\epsilon} \
      &\coloneqq \ \text{median of } \inf\left\{\int_{0}^{1} e^{\xi h_{\epsilon}^{*}(P(t))} |P'(t)| \, \D t \right\},
    \end{align*}
    where the infimum is over all piecewise continuously differentiable paths $P \colon [0, 1] \to [0, 1]^2$ with $P(0) \in \{0\} \times [0,1]$ and $P(1) \in \{1\} \times [0,1]$.
    While the exact value of $\fa_{\epsilon}$ is unknown, it is known that \cite[Proposition 1.1]{TightnessOfSupercriticalLFPP}
    \begin{align*}
      \fa_{\epsilon} \
      &= \ \epsilon^{1 - \xi Q + o_{\epsilon}(1)}, \quad \epsilon \to 0
    \end{align*}
    for a nonexplicit exponent $Q = Q(\xi) > 0$.
    The $\gamma$-LQG metric is then defined by
    \begin{align}
      D_h(z, w) \
      &\coloneqq \ \lim_{\epsilon \to 0} \LFPP[\epsilon][h]\left(z, w \right).
      \label{eq:LQGMetricDefn}
    \end{align}
    This definition makes sense for arbitrary $\xi > 0$, but there is a critical parameter $\xi_{\text{crit}} \approx 0.41$ (see \cite{TightnessOfSupercriticalLFPP} and \cite{ExistenceAndUniqueness}) where a phase transition occurs.
    For $\xi$ in the \textit{subcritical} and \textit{critical regimes}, i.e. $0 < \xi \leq \xi_{\text{crit}}$, there is a corresponding $\gamma \in \hlint{0}{2}$ such that $\xi = \gamma/d_{\gamma}$.
    In the \textit{supercricial regime} where $\xi > \xi_{\text{crit}}$, the corresponding $\gamma$ is complex.

    In the subcritical regime, the limit \eqref{eq:LQGMetricDefn} is known to exist almost surely with respect to the topology of uniform convergence on compact subsets of $\CC \times \CC$ \cite{AlmostSureConvergence}.
    The proof of almost sure convergence builds on the results of \cite{TightnessOfSubcriticalLFPP}, \cite{ExistenceAndUniqueness}, \cite{WeakLQGMetrics}, \cite{LocalMetricsOfTheGFF}, and \cite{ConfluenceOfGeodesicsSubcriticalLQG}, which together prove convergence in probability, and \cite{UpToConstants}, which gives a more quantitative comparison of LFPP and the LQG metric.
    For the critical and supercritical phases, only convergence in probability is known, and a weaker topology on function space introduced in \cite{BeerTopology} must be used instead of local uniform convergence, see \cite{UniquenessOfSupercriticalLQGMetrics}, \cite{TightnessOfSupercriticalLFPP}, and \cite{WeakSupercriticalLQGMetrics}.

    \subsection{Axioms of LQG metrics}
      An important question is whether there could be multiple different metrics which satisfy the heuristic definition of the LQG metric as the ``Riemannian distance function associated to the Riemannian metric tensor $e^{\gamma h} (\D x^2 + \D y^2)$.''
      This question is answered in \cite{ExistenceAndUniqueness} and \cite{UniquenessOfSupercriticalLQGMetrics} by stating a list of axioms which any reasonable notion of an LQG metric should satisfy, then showing that these axioms uniquely characterize the LQG metric up to multiplicative constant.
      Since the limit \eqref{eq:LQGMetricDefn} satisfies these axioms \cite{WeakLQGMetrics} \cite{ExistenceAndUniqueness} \cite{ConformalCovariance}, it makes sense to call \eqref{eq:LQGMetricDefn} \textit{the} LQG metric.

      Let $(X, D)$ be a metric space.
      A \textit{curve} in $X$ is a continuous function $P \colon [a,b] \to X$, and its $D$-length is
      \begin{align}
        \Len(P; D) \
        &\coloneqq \ \sup_{T} \sum_{i=1}^{\# T} D\left(P(t_i), P(t_{i-1}) \right),
        \label{eq:PathLengthDefinition}
      \end{align}
      where the supremum is over all finite partitions $T = \{a = t_0 < t_1 < \cdots < t_{\# T} = b\}$ of $[a, b]$.
      If $Y \subset X$, the \textit{internal metric} of $D$ on $Y$ is defined by
      \begin{align*}
        D(x, y; Y) \
        &\coloneqq \ \inf_{P} \Len(P; D), \quad \forall x, y \in Y,
      \end{align*}
      where the infimum is over all curves in $Y$ from $x$ to $y$.
      Then $D(\cdot, \cdot; Y)$ is a metric on $Y$, except it may take the value $\infty$.

      We say $(X, D)$ is a \textit{length space} if for each $x, y \in X$ and each $\epsilon > 0$, there is a curve from $x$ to $y$ with $D$-length at most $D(x, y) + \epsilon$.

      We call a metric $D$ on an open set $U \subset \CC$ \textit{continuous} if it induces the Euclidean topology, i.e. the identity mapping $(U, |\cdot|) \to (U, D)$ is a homeomorphism.
      We will equip the space of continuous metrics on $U$ with the topology of local uniform convergence of functions $U \times U \to \hrint{0}{\infty}$ with its associated Borel $\sigma$-algebra.
      If $U$ is disconnected, we allow $D(x, y) = \infty$ when $x$ and $y$ are in different connected components of $U$.
      In this case, in order to have a sequence of continuous metrics $D^n$ converge to $D$ locally uniformly, we require $D^n(x, y) = \infty$ for all $n$ sufficiently large if and only if $D(x, y) = \infty$.

      If $U \subset \CC$ is an open set, let $\cD'(U)$ denote the space of distributions on $U$ equipped with the weak topology.
      Fix $\gamma \in (0, 2)$ and let $\xi \coloneqq \gamma/d_{\gamma}$.
      A $\gamma$-\textit{LQG metric} is a collection of measurable functions $D^U \colon \cD'(U) \to \{\text{continuous metrics on } U\}$, one for each open set $U \subset \CC$, such that when $h$ is a GFF plus a continuous function on $U$, the following axioms are true.
      \begin{enumerate}[label=\Roman*]
        \item \label{axiom:LengthSpace} \textbf{Length space.} Almost surely, $(U, D_h^U)$ is a length space.
        \item \label{axiom:Locality} \textbf{Locality.} If $V \subset U$ is a deterministic open subset, then almost surely, $D_h^U(\cdot, \cdot; V) = D_{h|_V}^V(\cdot, \cdot)$.
        \item \label{axiom:WeylScaling} \textbf{Weyl scaling.} For each continuous function $f \colon U \to \RR$, define
          \begin{align*}
            \left(e^{\xi f} \cdot D_h^U \right)(z,w) \
            &\coloneqq \ \inf_{P \colon z \to w} \int\limits_{0}^{\Len(P; D_h^U)} e^{\xi f(P(t))} \, \D t, \ \forall z,w \in \CC,
          \end{align*}
          where the infimum is over all curves $P$ from $z$ to $w$ parameterized by $D_h^U$-length.
          Then almost surely, $e^{\xi f} \cdot D_h^U = D_{h + f}^U$ for all continuous functions $f \colon U \to \RR$.
        \item \label{axiom:CoordinateChange} \textbf{Coordinate change.} For each conformal transformation $\phi \colon U \to \phi(U)$, almost surely,
          \begin{align*}
            D_h^U(z, w) \
            &= \ D_{h \circ \phi^{-1} + Q \log|(\phi^{-1})'|}^{\phi(U)} (\phi(z),\phi(w)) \ \forall z,w \in U.
          \end{align*}
      \end{enumerate}
      See \cite[Section 1.2]{ExistenceAndUniqueness} for a discussion regarding why these axioms are natural.
      Note that the axioms only specify properties of $D_h^U$ when $h$ is a GFF plus continuous function; for other distributions, $D_h^U$ can be defined arbitrarily.

    \subsection{Outline of main results}
      In axiom \ref{axiom:CoordinateChange}, the almost sure event on which the coordinate change formula holds is allowed to depend on the conformal map $\phi$.
      This is not ideal for a few reasons.
      Often, one would like to work with random coordinate changes, but instead must rely on ad hoc arguments such as those of \cite[Section 2.4.1]{MinkowskiContentOfLQGMetric}.
      Moreover, in Riemannian geometry, one usually views isometric surfaces as different parameterizations of the ``same'' surface, so the central objects of study are equivalence classes of isometric surfaces rather than specific equivalence class representatives.
      Taking the perspective that LQG is ``random Riemannian geometry'', LQG surfaces should be viewed as random equivalence classes of surfaces with equivalence given by the coordinate change formula.
      For this heuristic to be valid, all parameterizations of the same LQG surface must be equivalent simultaneously.
      This was shown to be true for the LQG area measure in \cite{SheffieldWangConformalCoordinateChange}, but their proof does not adapt nicely to the LQG metric due to the additional challenge posed by the non-linear minimization over paths.
      The main result of this paper is the proof of the analogous result for the LQG metric.

      \begin{thm}
        \label{thm:StrongCoordinateChange}
        If $\xi < \xi_{\text{crit}}$, there is a version of the LQG metric $(D^U)_{U \subset \CC}$ which satisfies:
        \begin{enumerate}[label=\Roman*]
          \setcounter{enumi}{4}
          \item \label{axiom:StrongCoordinateChange} \textbf{Strong coordinate change} Fix an open set $U \subset \CC$.
            If $h$ is a whole-plane GFF plus a continuous function on $U$, then almost surely, for each conformal transformation $\phi \colon U \to \phi(U)$, 
            \begin{align*}
              D_h^U(z, w) \
              &= \ D_{h \circ \phi^{-1} + Q \log|(\phi^{-1})'|}^{\phi(U)}\left(\phi(z), \phi(w) \right) \ \forall z, w \in U.
            \end{align*}
        \end{enumerate}
      \end{thm}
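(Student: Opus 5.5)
The approach is to \emph{define} the version of the metric through a fixed LFPP limit that is insensitive to the choice of coordinates, and then show that LFPP itself almost satisfies the coordinate change formula uniformly over all conformal maps. For $h$ a whole-plane GFF plus a continuous function on $U$, let $\hat D_h^\epsilon$ be a localized LFPP on $U$: use a mollifier truncated at a small scale, so that $\hat D_h^\epsilon$ restricted to $V \Subset U$ depends only on $h$ near $V$. Set $D_h^U := \lim_{\epsilon\to0}\locLFPP$ along a deterministic sequence $\epsilon_n \to 0$ whenever this limit exists locally uniformly, and set it arbitrarily otherwise. By \cite{AlmostSureConvergence} this version agrees almost surely with the LQG metric for each fixed $U$, so axioms I--IV still hold. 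For the target field $\hphi = h\circ\phi^{-1}+Q\log|(\phi^{-1})'|$ I cannot invoke a per-map almost sure limit, because $\phi$ ranges over an uncountable family. Instead I would prove directly that, on a single almost sure event, the LFPP of $\hphi$ on $\phi(U)$ converges along the same sequence and equals the pullback of $D_h^U$.

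The key comparison is local. Fix compacts $W \Subset U$ and the class $\confmaps$ of conformal maps whose derivatives on $W$ are bounded above and below by $\DerivativeBound$, together with the corresponding regularity. This class is compact in the topology of local uniform convergence, so it is separable. Cover $W$ by Euclidean balls of radius $\delta$. On a ball $B(z_0,\delta)$ the map $\phi$ is close to the affine map $z\mapsto \phi(z_0)+\phi'(z_0)(z-z_0)$, with relative error $O(\delta)$. A dilation by $r=|\phi'(z_0)|$ turns $\epsilon$-LFPP of $h$ into $(\epsilon/r)$-LFPP of the pushed-forward field, up to the factor $\ratios$. That factor tends to $1$ uniformly for $r$ in compact sets, because $\fa_\epsilon=\epsilon^{1-\xi Q+o(1)}$ together with the regular-variation statement behind it. The $Q\log|(\phi^{-1})'|$ term exactly absorbs the factor $r^{\xi Q}$ relative to this normalization. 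The remaining errors come from the mollification not commuting with a near-affine map, and they are bounded by $\sup|h^*_{\epsilon}\circ\phi^{-1}-(h\circ\phi^{-1})^*_{\epsilon r}|$. I expect this sup to be small in a multiplicative sense, with a bound of the form $e^{\xi\cdot o(1)}$ uniform over $\phi\in\confmaps$. The estimate would rest on Gaussian tail bounds for the continuity modulus of $\epsilon$-mollified fields, combined with a union bound over a $\epsilon^{K}$-net of maps in the class and of points. The net has only polynomially many elements, while the tails are superpolynomial. Borel--Cantelli along $\epsilon_n$ then yields a single almost sure event.

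On that event, for every $\phi$ in every class $\confmaps$ (a countable union over $\DerivativeBound$ and over an exhaustion $W\uparrow U$), I get
\begin{align*}
  (1-o(1))\,\locLFPP[\epsilon_n](z,w;\cdot)\ \le\ \fa_{\epsilon_n'}^{-1}\hat D^{\epsilon_n'}_{\hphi}(\phi(z),\phi(w))\ \le\ (1+o(1))\,\locLFPP[\epsilon_n](z,w;\cdot),
\end{align*}
where $\epsilon_n'$ is the locally varying rescaled scale. I would first prove these as internal-metric bounds on small balls and then concatenate along near-geodesic paths. The same summation gives the global statement, because localized LFPP is a length metric, so distances are recovered by summing over small pieces.

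I expect the main obstacle to be the varying scale $\epsilon/|\phi'|$. The pushed-forward field is mollified at a position-dependent scale, not at a single scale $\epsilon'$, so the middle quantity is not literally LFPP of $\hphi$. To resolve this, I would show that the LQG limit is unchanged when the mollification scale is multiplied by a continuous positive function bounded away from $0$ and $\infty$. First I would try to prove this by comparing, on each small ball, the mixed-scale LFPP with constant-scale LFPP, using the same uniform estimate as above together with the up-to-constants comparison of \cite{UpToConstants}. The multiplicative constant would then be pinned to $1$ by the exact scaling of $\fa$ and the length-space structure. With this in hand, $D^{\phi(U)}_{\hphi}$ defined through the LFPP limit equals $D_h^U(\phi^{-1}\cdot,\phi^{-1}\cdot)$ simultaneously for all $\phi$, which is axiom~\ref{axiom:StrongCoordinateChange}.
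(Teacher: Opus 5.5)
\textbf{The first half of your plan matches the paper; the second half is where the gap is.}

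The first half is essentially the paper's Section~\ref{section:SmallScale}. There you compare the mollification of $h^\phi$ with that of $h$ at scale $\epsilon/|\phi'|$. You let $Q\log|(\phi^{-1})'|$ together with regular variation of $\fa$ absorb the factor $|\phi'|^{1-\xi Q}$. The result is a pathwise $(1+\delta)$-comparison that is uniform in $\phi$ (Proposition~\ref{prop:MainRegularityConditionHighProbability}).

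This only works on balls of radius $\epsilon^{1-\zeta}$, not on balls of fixed radius $\delta$ as in your second paragraph. Comparing with the single scale $\epsilon/|\phi'(z_0)|$ on $B_\delta(z_0)$ requires $\sup_z|\hat h^*_{\epsilon t}(z)-\hat h^*_{\epsilon s}(z)|$ to be small for $t/s=1+O(\delta)$. That supremum has typical size of order $\delta\sqrt{\log\epsilon^{-1}}\to\infty$. Lemma~\ref{lemma:MollifiedFieldComparison} gives smallness only for $t/s=1+O(\epsilon^{1-\zeta})$. A minor point: an $\epsilon^K$-net of the maps has cardinality $\exp(O((\log\epsilon^{-1})^2))$, not polynomially many, although the Gaussian tails still win.

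\textbf{The gap is the step you flag as the main obstacle.} You need LFPP with the position-dependent scale $\epsilon/|\phi'(\cdot)|$ to converge to $D_h$ \emph{simultaneously for all} $\phi$. This is not a side lemma; it is the whole difficulty of the theorem, and your sketch does not prove it.

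\begin{itemize}
\item Pathwise comparisons are only available on balls of radius $\epsilon^{1-\zeta}\to0$. You cannot glue them using the almost sure convergence $\fa_{\epsilon t}^{-1}\hat D^{\epsilon t}_h\to D_h$, because that convergence is additive and says nothing at scale $\epsilon^{1-\zeta}$.
\item Multiplicative control at mesoscopic radii $r$ is what is needed. After rescaling, this involves the ratios $\frac{r\fa_{\epsilon t}^{-1}}{r^{\xi Q}\fa_{\epsilon t/r}^{-1}}$ with $r\to0$, which regular variation does not control (it is uniform only for $r$ in compact sets).
\item It also involves events that hold only with probability bounded below. Such events cannot be union-bounded over the $\epsilon^{-2(1-\zeta)}$ balls.
\item The up-to-constants comparison does not rescue this. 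It gives an unspecified constant $C$. There is no exact scaling of $\fa$ (only regular variation), and no length-space argument, that forces $C=1$.
\end{itemize}

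The paper fills exactly this gap with Section~\ref{section:LargeScale}:
\begin{itemize}
\item Independence across concentric annuli (Lemma~\ref{lemma:IndependenceAcrossConcentricAnnuli}) produces many good annuli, uniformly in $\phi$.
\item Stringing paths around these annuli gives an initial Lipschitz constant (Proposition~\ref{prop:InitialLipschitzCondition}).
\item A contradiction argument against $\hat D^\epsilon_h\to D_h$ finds a positive fraction of radii with good scaling ratios (Lemmas~\ref{lemma:APrioriScalingRatiosBound} and~\ref{lemma:GoodScalingRatios}).
\item The iteration of Proposition~\ref{prop:ImprovingTheLipschitzConstant} uses that a fraction $1/(A+1)$ of each geodesic lies in annuli where the constant is $1+\delta$. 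This drives the constant to $1$.
\item Borel--Cantelli along $\epsilon=2^{-n}$ then gives the almost sure statement.
\end{itemize}

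\textbf{Secondary points you skip.} For $h\in\cD'(U)$, $\hat D^\epsilon_h$ only makes sense as an internal metric on $V\Subset U$. So the limit has to be defined near the diagonal and extended as a length metric, with good sets satisfying $A_{\phi(U)}=\{h^\phi:h\in A_U\}$. Also, axioms I--IV for a GFF plus a continuous function need Lemma~\ref{lemma:GeneralFields}, not just \cite{AlmostSureConvergence}.
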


      We only prove Theorem \ref{thm:StrongCoordinateChange} for $\xi < \xi_{\text{crit}}$, but parts of our argument hold for $\xi \geq \xi_{\text{crit}}$.
      Specifically, all results in Section \ref{section:SmallScale} hold for arbitrary $\xi$.
      The results in sections \ref{section:LargeScale} and \ref{section:CoordinateChange} only hold for $\xi < \xi_{\text{crit}}$ since they rely on almost sure convergence of LFPP locally uniformly, which is only known for $\xi < \xi_{\text{crit}}$ \cite{AlmostSureConvergence}.

      Let us briefly describe the main ideas in the proof of Theorem \ref{thm:StrongCoordinateChange}.
      Let $h$ be a whole-plane GFF and fix an open set $U \subset \CC$.
      To ease notation, if $\phi \colon U \to \phi(U)$ is a conformal map, write $\hypertarget{hphi}{h^{\phi}} \coloneqq h \circ \phi^{-1} + Q \log|(\phi^{-1})'|$.
      The goal is roughly to prove that for all conformal transformations $\phi \colon U \to \phi(U)$, the metrics $\LFPP[\epsilon][\hphi](\phi(\cdot), \phi(\cdot))$ all converge simultaneously along the dyadic sequence $\epsilon = \{2^{-j}\}_{j \in \NN}$ to the same limiting metric.
      We can then define $D_h^U$ to equal $\lim_{\epsilon \to 0} \LFPP[\epsilon][h]$ on the event of this simultaneous convergence, and define it arbitrarily otherwise.
      It is not hard to check that the collection of functions $(h \mapsto D^U)_{U \subset \CC}$ satisfies the LQG metric axioms plus the strong coordinate change formula.

      To sketch the proof of simultaneous convergence, fix a conformal transformation $\phi \colon U \to \phi(U)$ and assume $0 \in U$.
      Near $0$, $\phi$ looks like the affine transformation $z \mapsto a z + b$ with $a = \phi'(0)$ and $b = \phi(0)$, so $\LFPP[\epsilon][\hphi](\phi(\cdot), \phi(\cdot))$ should be well-approximated by $\LFPP[\epsilon][h(a^{-1}(\cdot - b)) + Q \log|a|](a \cdot + b, a \cdot + b)$.
      Using the exact scaling relation \eqref{eq:LFPPScaling} for LFPP, 
      \begin{align*}
        \LFPP[\epsilon][h(a^{-1}(\cdot - b)) + Q \log|a|]\left(a \cdot + b, a \cdot + b \right) \
        &= \ \frac{|a| \fa_{\epsilon}^{-1}}{|a|^{\xi Q} \fa_{\epsilon |a|}^{-1}} \LFPP[\epsilon |a|][h]\left(\cdot, \cdot \right).
      \end{align*}
      The factor $\frac{|a| \fa_{\epsilon}^{-1}}{|a|^{\xi Q} \fa_{\epsilon |a|}^{-1}}$ should be close to $1$ by regular variation of $\epsilon \mapsto \fa_{\epsilon}$ \cite[Corollary 1.11]{ExistenceAndUniqueness}.
      It is shown in \cite{AlmostSureConvergence} that $\LFPP[\epsilon][h]$ converges locally uniformly along the continuum index $\epsilon \in (0, 1)$ almost surely to a metric on $\CC$ we will denote by $D_h$.
      Therefore, the metrics $\LFPP[\epsilon |a|][h](\cdot, \cdot)$ converge to $D_h$.
      Note that almost sure convergence along the continuum $(0, 1)$ rather than a subsequence is crucial to prove simultaneous convergence for all $\phi$ since $\epsilon |a|$ depends on $\phi$.
      Since this argument only applies ``near $0$'', i.e. on a ball of vanishing radius as $\epsilon \to 0$, we appeal to local independence properties of the GFF to prove there are many such balls where this small-scale argument holds, which can be used to deduce convergence of $\LFPP[\epsilon][\hphi](\phi(\cdot), \phi(\cdot))$ to $D_h$ globally.

      There are several issues which must be dealt with to rigorize this proof sketch.
      First, the metrics $\LFPP[\epsilon][\hphi]$ do not make sense since $h \circ \phi^{-1}$ is only defined on $\phi(U)$ but the heat kernel is nonzero on all of $\CC$.
      To avoid this, we will work with a variant of LFPP which depends locally on the GFF.
      This localized variant does not satisfy the exact spatial scaling property \eqref{eq:LFPPScaling}, so we will need to compare to ordinary LFPP defined using the heat kernel mollification, then rescale.

      Second, we will need to approximate $\LFPP[\epsilon][\hphi](\phi(\cdot), \phi(\cdot))$ by $\LFPP[\epsilon][h(a^{-1}(\cdot - b)) + Q \log|a|](a \cdot + b, a \cdot + b)$ in a manner which is uniform over all conformal maps $\phi$.
      This will be done using distortion estimates to carefully compare mollifications of $h \circ \phi^{-1}$ and $h(a^{-1} (\cdot - b))$.
      This comparison is carried out in Section \ref{section:SmallScale}.

      Third, since our comparison of $\LFPP[\epsilon][\hphi](\phi(\cdot), \phi(\cdot))$ and $\LFPP[\epsilon |a|][h]$ only holds on small scales, we will use a multiscale argument to upgrade to a global comparison between $\LFPP[\epsilon][\hphi](\phi(\cdot), \phi(\cdot))$ and $D_h$.
      This argument is inspired by the argument in Section 3.2 of \cite{ConformalCovariance} and its adaptation in \cite{AlmostSureConvergence}.
      The idea is to use local independence properties of the GFF plus the small-scale estimate described above to show that $\LFPP[\epsilon][\hphi](\phi(\cdot), \phi(\cdot))$ and $D_h$ are ``almost Lipschitz equivalent''.
      We will then show that the Lipschitz constant can be made arbitrarily close to $1$ as $\epsilon \to 0$, implying that all the coordinate-changed LFPP metrics converge to the same limit.
      This argument is carried out in Section \ref{section:LargeScale}.

  \section{Preliminaries}
    \subsection{Notation}
      For $z \in \CC$ and $r > 0$, let $B_r(z) = \{w \in \CC : |z - w| < r\}$.
      More generally, for $U \subset \CC$, let $B_r(U) \coloneqq \cup_{z \in U} B_r(z)$.
      For $z \in \CC$ and $0 < r_1 < r_2$, let $\AA_{r_1, r_2}(z) \coloneqq B_{r_2}(z) \setminus \overline{B_{r_1}(z)}$.
      If $f \colon (0, \infty) \to \RR$ and $g \colon (0, \infty) \to (0, \infty)$ are functions, we say $f(\epsilon) = O_{\epsilon}(g(\epsilon))$ if $f(\epsilon)/g(\epsilon)$ remains bounded as $\epsilon \to 0$ or as $\epsilon \to \infty$ depending on the context.
      If $U, V \subset \CC$, we write $V \Subset U$ to mean $\overline{V}$ is compact and $\overline{V} \subset U$.
      If $h$ is a distribution on a subdomain $U \subset \CC$ and $\phi \colon U \to \phi(U)$ is a conformal transformation, we define a distribution on $\phi(U)$ by $\hphi \coloneqq h \circ \phi^{-1} + Q \log|(\phi^{-1})'|$.

      If $D$ is a metric on a subdomain $U$ of $\CC$ and $A \subset U$ is a region with the topology of an annulus, we define $D(\text{around } A)$ to be the infimum over all $D$-lengths of paths in $A$ which disconnect the inner and outer boundaries.

      In \cite[Theorem 1.4]{AlmostSureConvergence}, a version of the LQG metric on $\CC$ is constructed which satisfies axiom \ref{axiom:StrongCoordinateChange} for all affine transformations.
      This function will be used throughout this paper, and will be denoted by $\cD'(\CC) \ni h \mapsto D_h$.
      The statement of Theorem 1.4 of \cite{AlmostSureConvergence} is that almost surely, for all $a \in \CC \setminus \{0\}$ and $b \in \CC$,
      \begin{align}
        D_h\left(a \cdot + b, a \cdot + b \right) \
        &= \ D_{h(a \cdot + b) + Q \log|a|}\left(\cdot, \cdot \right).
        \label{eq:ASCoordinateChangeForAffineTransformations}
      \end{align}

    \subsection{Variants of LFPP}
      Let us discuss the variants of LFPP which will be used in the remainder of this article.
      As stated in the introduction, LFPP defined with the heat kernel mollification of a GFF \eqref{eq:LFPPDefn} appropriately renormalized is known to converge uniformly on compacts as $\epsilon \to 0$ to the $\gamma$-LQG metric.
      The main reason the heat kernel mollification is used is that this is the continuous approximation of the GFF for which tightness of LFPP is known \cite{TightnessOfSubcriticalLFPP}.
      However, $h_{\epsilon}^{*}$ is also a convenient choice of regularization because $\LFPP$ satisfies an exact coordinate change formula for affine transformations: if $a \in \CC \setminus \{0\}$ and $b \in \CC$, then
      \begin{align}
        \LFPP[\epsilon][h]\left(a z + b, a w + b \right) \
        &= \ \frac{|a| \fa_{\epsilon}^{-1}}{|a|^{\xi Q} \fa_{\epsilon/|a|}^{-1}} \LFPP[\epsilon/|a|][h(a \cdot + b) + Q \log|a|]\left(z,w \right).
        \label{eq:LFPPScaling}
      \end{align}
      This is used in \cite{AlmostSureConvergence} to define a version of the LQG metric on $\CC$ which satisfies the coordinate change formula for all affine transformations simultaneously.

      One inconvenience of working with $\LFPP$ is that $p_{\epsilon^2/2}(\cdot)$ is non-vanishing on all of $\CC$, so $\LFPP$ doesn't depend locally on the field.
      This is troublesome if we want to take advantage of local independence properties of the GFF (see Lemma \ref{lemma:IndependenceAcrossConcentricAnnuli} below).
      Additionally, we would like to consider variants of LFPP defined for fields of the form $h \circ \phi^{-1}$, where $h$ is a GFF on a subdomain $U \subset \CC$ and $\phi \colon U \to \phi(U)$ is a conformal transformation.
      The mollification of such a field with the heat kernel doesn't make sense since $\phi^{-1}$ is defined only on $\phi(U)$.
      For these reasons, we will consider a truncated version of LFPP.
      Let $\psi = \psi_1 \colon \CC \to [0,1]$ be a smooth radially-symmetric bump function which is identically equal to $1$ on $B_{1/2}(0)$ and vanishes outside $B_{1}(0)$.
      Define
      \begin{align*}
        \psi_{\epsilon}(z) \
        &\coloneqq \ \psi\left(\frac{z}{\frepsilon} \right).
      \end{align*}
      The reason for the scaling factor $R = R(\epsilon) = \frepsilon$ is as follows.
      Our localized variant of $\epsilon$-LFPP on an open set $U$ will be determined by the GFF restricted to $B_{R}(U)$.
      Our arguments will rely on local independence properties of the GFF restricted to disjoint concentric annuli with radii $\approx \epsilon^{1 - \zeta}$ with $\zeta \in (0, 1)$ close to $0$, so we need $R(\epsilon) = o(\epsilon^{1 - \zeta})$ for any $\zeta \in (0, 1)$.
      This suggests $R(\epsilon) = \epsilon$ is appropriate, but choosing $R(\epsilon) = \frepsilon$ allows us to compare ordinary LFPP \eqref{eq:LFPPDefn} with our localized variant (see \ref{loc:UniformComparison} from Lemma \ref{lemma:PropertiesOfLocalizedFieldAndLFPP} below).

      Define
      \begin{align*}
        Z_{\epsilon} \
        &\coloneqq \ \int\limits_{\CC} \psi_{\epsilon}(w) p_{\epsilon^2/2}(w) \, \D A(w), \\
        \hat{h}_{\epsilon}^{*}(z) \
        &= \ Z_{\epsilon}^{-1} h \ast (\psi_{\epsilon} p_{\epsilon^2/2})(z) \
        \coloneqq \ Z_{\epsilon}^{-1} \left\langle h, \psi_{\epsilon}(\cdot - z) p_{\epsilon^2/2}(\cdot - z) \right\rangle.
      \end{align*}
      The \textit{localized} $\epsilon$-\textit{LFPP} metric with parameter $\xi > 0$ is defined by
      \begin{align*}
        \hat{D}_h^{\epsilon}(z,w) \
        &\coloneqq \ \inf_{P \colon z \to w} \int\limits_{0}^{1} e^{\xi \hat{h}_{\epsilon}^{*}(P(t))} |P'(t)| \, \D t,
      \end{align*}
      where the infimum is over piecewise $C^1$ paths $P$ from $z$ to $w$.
      It is shown in the proof of \cite[Theorem 1.1]{AlmostSureConvergence} that $\locLFPP$ converges almost surely uniformly on compacts.
      The advantage of working with $\locLFPP$ is that it depends locally on the field.
      In particular, if $\phi \colon U \to \phi(U)$ is a conformal transformation and $V \Subset U$, then the metric
      \begin{align*}
        &\hat{D}_{h \circ \phi^{-1} + Q \log|(\phi^{-1})'|}^{\epsilon} \left(\phi(z), \phi(w); \phi(V) \right) \\
        &\qquad \qquad \coloneqq \ \inf_{\substack{P \colon z \to w \\ P \subset V}} \int\limits_{0}^{1} e^{\xi Z_{\epsilon}^{-1} (h \circ \phi^{-1} + Q \log|(\phi^{-1})'|) \ast (\psi_{\epsilon} p_{\epsilon^2/2})(\phi(P(t)))} |(\phi \circ P)'(t)| \, \D t,
      \end{align*}
      where the infimum is over piecewise $C^1$ paths $P$ from $z$ to $w$ contained in $V$, is well-defined provided $B_{\frepsilon}(\phi(V)) \subset \phi(U)$.
      Note that if $\DerivativeBound > 1$, then by the Koebe-$\frac{1}{4}$ theorem, there exists $\epsilon_0 > 0$ such that for all $\epsilon \in (0, \epsilon_0)$ and all conformal maps $\phi \colon U \to \phi(U)$ with $\DerivativeBound^{-1} \leq |\phi'(z)| \leq \DerivativeBound$ for all $z \in V$, we have $B_{\frepsilon}(\phi(V)) \subset \phi(U)$, hence $\hat{D}_{h \circ \phi^{-1} + Q \log |(\phi^{-1})'|}^{\epsilon}(\phi(\cdot), \phi(\cdot); \phi(V))$ are all well-defined.


      The following lemma summarizes the basic properties of $\hat{h}_{\epsilon}^{*}$ and $\locLFPP$ which will be used in this paper.
      \begin{lemma}
        \label{lemma:PropertiesOfLocalizedFieldAndLFPP}
        \leavevmode
        \begin{enumerate}
          \item \label{loc:AddRandomVariableToField} If $c \in \RR$ is a random variable, then $\widehat{(h + c)}_{\epsilon}^{*}(z) = \hat{h}_{\epsilon}^{*}(z) + c$ for all $z \in \CC$.
          \item \label{loc:Locality} $\hat{h}_{\epsilon}^{*}(z)$ is almost surely determined by $h|_{B_{\frepsilon}(z)}$.
            Consequently, for any deterministic open set $U \subset \CC$, the internal metric $\hat{D}_h^{\epsilon}(\cdot, \cdot; U)$ is almost surely determined by $h|_{B_{\frepsilon}(U)}$.
          \item \label{loc:Continuity} $\hat{h}_{\epsilon}^{*}(z)$ has a modification which is jointly continuous in $z$ and $\epsilon$.
            We will always assume we are working with such a modification.
          \item \label{loc:UniformComparison} Let $U \subset \CC$ be a connected, bounded, open set.
            Almost surely, 
            \begin{align*}
              \lim_{\epsilon \to 0} \sup_{z \in \overline{U}} \left|h_{\epsilon}^{*}(z) - \hat{h}_{\epsilon}^{*}(z) \right| \
              &= \ 0,
            \end{align*}
            and 
            \begin{align*}
              \lim_{\epsilon \to 0} \frac{\hat{D}_h^{\epsilon}(z,w;V)}{D_h^{\epsilon}(z,w;V)} \
              &= \ 1, \ \text{ uniformly over all } z,w \in V \text{ with } z \neq w \text{ and all connected } V \subset U.
            \end{align*}
          \item \label{loc:TranslationInvariance} If $b \in \CC$ and if $H(z) \coloneqq h(z + b)$, then
            \begin{align*}
              \locLFPP(z + b, z + w) \
              &= \ \locLFPP[\epsilon][H](z,w) \ \forall (z,w) \in \CC^2.
            \end{align*}
          \item \label{loc:AlmostSureConvergence} If $h$ is a whole-plane GFF, then $\locLFPP \to D_h$ locally uniformly almost surely.
        \end{enumerate}
        \begin{proof}
          \ref{loc:AddRandomVariableToField} and \ref{loc:TranslationInvariance} are routine calculations, while \ref{loc:Locality} follows from the fact that $\psi_{\epsilon}$ is supported on $B_{\frepsilon}(0)$.
          For \ref{loc:Continuity} and \ref{loc:UniformComparison}, see \cite[Lemma 2.2]{UpToConstants}.
          Finally, \ref{loc:AlmostSureConvergence} is shown in the proof of Theorem 1.1 in \cite{AlmostSureConvergence}.
        \end{proof}
      \end{lemma}

  \section{Comparison of LFPP and coordinate-changed LFPP at small scales}
    \label{section:SmallScale}
    Given open sets $V \Subset U$ and $\DerivativeBound > 1$, define
    \begin{align*}
      \hypertarget{confmaps}{\Lambda_{\DerivativeBound}(V, U)} \
      &\coloneqq \ \left\{\phi \colon U \to \phi(U) : \phi \text{ is conformal, } |\phi'(z)| \in [\DerivativeBound^{-1}, \DerivativeBound] \ \forall z \in \overline{V} \right\}.
    \end{align*}
    Throughout this section, $h$ will denote a whole-plane GFF and the LQG metric parameter $\xi > 0$ is arbitrary.
    In this section, we will compare the mollification of $\hphi$ with the mollification of $h$ in a manner which is uniform over $\phi \in \confmaps$.
    The basic ingredient is distortion estimates for conformal maps.
    Our main goal is to prove the following.
    \begin{prop}
      \label{prop:MainRegularityConditionHighProbability}
      Fix open sets $W \Subset V \Subset U$, $\DerivativeBound > 1$, and $\zeta, p, \delta \in (0, 1)$.
      There exists $\epsilon_0 = \epsilon_0(W, V, U, \DerivativeBound, \zeta, p, \delta) \in (0,1)$ such that for each $\epsilon \in (0, \epsilon_0)$ and each $z_0 \in W$, it holds with probability at least $p$ that for all $\phi \in \confmaps$ and all piecewise $C^1$ paths $P$ in $B_{2 \epsilon^{1 - \zeta}}(z_0)$,
      \begin{align*}
        \left(1 + \delta \right)^{-1} \Len\left(P; \locLFPP[\epsilon/|\phi'(z_0)|][h] \right) \
        &\leq \ \Len\left(\phi \circ P; \locLFPP[\epsilon][\hphi] \right) \\
        &\leq \ \left(1 + \delta \right) \Len\left(P; \locLFPP[\epsilon/|\phi'(z_0)|][h] \right).
      \end{align*}
    \end{prop}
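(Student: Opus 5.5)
We reduce the comparison of lengths to a pointwise comparison of the mollified fields along the path. Write $a = \phi'(z_0)$ and $\epsilon' = \epsilon/|a|$. For a path $P$ in $B_{2\epsilon^{1-\zeta}}(z_0)$ we have
\begin{align*}
  \Len\left(\phi \circ P; \locLFPP[\epsilon][\hphi]\right) \
  &= \ \fa_{\epsilon}^{-1} \int_0^1 e^{\xi \widehat{(\hphi)}_{\epsilon}^{*}(\phi(P(t)))} |\phi'(P(t))| |P'(t)| \, \D t,
\end{align*}
and similarly for $P$ with $\fa_{\epsilon'}^{-1}$, field $h$ and scale $\epsilon'$. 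By Koebe distortion applied on the ball $B_{\operatorname{dist}(V,\partial U)}(z_0)$, uniformly over $\phi \in \confmaps$ we have $|\phi'(w)/a| = 1 + O(\epsilon^{1-\zeta})$ for $|w - z_0| \leq 2\epsilon^{1-\zeta}$, and $\log|(\phi^{-1})'|$ varies by $O(\epsilon^{1-\zeta})$ on the relevant region. Hence it suffices to show that, with probability at least $p$, uniformly over $\phi$ and $z \in B_{2\epsilon^{1-\zeta}}(z_0)$,
\begin{align*}
  \left| \xi \widehat{(\hphi)}_{\epsilon}^{*}(\phi(z)) - \xi \hat h_{\epsilon'}^{*}(z) - \xi Q \log|a|^{-1} - \log\frac{\fa_{\epsilon}}{|a|\, \fa_{\epsilon'}} \right| \
  &\leq \ \log(1+\delta)/2.
\end{align*}
The deterministic constant is handled by regular variation: $\fa_{\epsilon}/\fa_{\epsilon/|a|} = |a|^{1-\xi Q + o(1)}$ uniformly for $|a| \in [\DerivativeBound^{-1}, \DerivativeBound]$ by \cite[Corollary 1.11]{ExistenceAndUniqueness}. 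Combined with the $-Q\log|a|$ coming from $Q\log|(\phi^{-1})'|$, this gives a cancellation up to $o_\epsilon(1)$.

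The main step is the field comparison. By a change of variables $w = \phi(u)$ in the pairing,
\begin{align*}
  \widehat{(h\circ\phi^{-1})}_{\epsilon}^{*}(\phi(z)) \
  &= \ Z_{\epsilon}^{-1}\langle h, K^{\phi}_{z}\rangle, \qquad K^{\phi}_z(u) \coloneqq (\psi_\epsilon p_{\epsilon^2/2})(\phi(u)-\phi(z))\,|\phi'(u)|^2,
\end{align*}
while $\hat h_{\epsilon'}^{*}(z) = Z_{\epsilon'}^{-1}\langle h, K^{\mathrm{aff}}_z\rangle$ with $K^{\mathrm{aff}}_z(u) = (\psi_\epsilon p_{\epsilon^2/2})(a(u-z))|a|^2$. (Scaling makes $Z_{\epsilon'}$ and this normalization match; recheck the bump-function scaling, since $\psi_{\epsilon'}$ uses $\epsilon'\log\epsilon'^{-1}$ rather than $\epsilon\log\epsilon^{-1}/|a|$. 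This discrepancy only affects the far tail of the Gaussian, where $p$ is $\epsilon^{\Omega(\log\epsilon^{-1})}$ small, and is absorbed in the error.) Both kernels are supported in a ball of radius $O(\frepsilon)$ around $z$. Since $\phi(u)-\phi(z) = a(u-z)(1+O(\epsilon^{1-\zeta}))$ and $|\phi'(u)|^2 = |a|^2(1+O(\epsilon^{1-\zeta}))$, the difference kernel $K^\phi_z - K^{\mathrm{aff}}_z$ has $L^1$ norm $O(\epsilon^{1-\zeta}\cdot \log^2\epsilon^{-1})$ relative to the kernel mass. Its gradient is also small: it is bounded by $\epsilon^{-1}$ times the same relative factor.

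We then bound $\langle h, K^\phi_z - K^{\mathrm{aff}}_z\rangle$ uniformly over $\phi$ and $z$. Subtract the circle average $h_{\epsilon}(z_0)$, which the difference kernel annihilates up to its total mass. The total mass is $O(\epsilon^{1-\zeta}\log^2\epsilon^{-1})$, so this contributes $O(\epsilon^{1-\zeta}\log^{3}\epsilon^{-1})$ with high probability since $h_\epsilon(z_0) = O(\log \epsilon^{-1})$. For the centered remainder, uniformity over the infinite-dimensional family $\phi$ is the crux. I would avoid a chaining argument over $\phi$ by bounding deterministically $|\langle h - h_\epsilon(z_0), f\rangle| \leq \|f\|_{H^{s}}\|h-h_\epsilon(z_0)\|_{H^{-s}(B_{3\epsilon^{1-\zeta}}(z_0))}$ for $s$ slightly above $1$. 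Alternatively one can write $h$ on the ball as a continuous function $h_\eta^*$ at scale $\eta = \epsilon^{1+\zeta'}$ plus a rough part: the rough part pairs with $O(\epsilon^{-1})$-Lipschitz kernels to give an error of size $\eta^{1-o(1)}\epsilon^{-1}\cdot(\text{relative kernel error})$. The $H^{-s}$ norm of the field on a ball of radius $r = 3\epsilon^{1-\zeta}$, rescaled, is of order $r^{s-1}\log$ factors, with Gaussian tails by scale invariance. The $H^s$ norm of the difference kernel is $O(\epsilon^{-s-1}\cdot\epsilon^{2}\cdot\epsilon^{1-\zeta}\log^{C}\epsilon^{-1})$, computed from the derivative bounds, which are uniform in $\phi$ by higher Koebe-type estimates on $\phi''$ and $\phi'''$. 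The product is $O(\epsilon^{1-\zeta}\cdot r^{s-1}\epsilon^{1-s}\log^C)$, which tends to $0$ when $\zeta$ is small and $s$ is close to $1$; if it does not, shrink $\zeta$ in the kernel estimate while keeping the ball size. The $Q\log|(\phi^{-1})'|$ term is a deterministic $C^1$ function with gradient $O(1)$ uniformly, so its mollification at $\phi(z)$ equals $-\log|a| + O(\epsilon^{1-\zeta})$. Choosing $\epsilon_0$ so that all errors are below $\log(1+\delta)/(4\xi)$ with probability $\ge p$ finishes the proof. The hardest step is the Sobolev-norm bound giving uniformity over $\phi$, in particular getting exponents that close.
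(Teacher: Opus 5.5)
Your architecture is the paper's. You reduce to a uniform comparison of $\langle h,\Psi_{\epsilon}^{\phi,z}\rangle$ with $\hat{h}^{*}_{\epsilon/|\phi'(z_0)|}(z)$, handle $\fa$ by uniform regular variation and the $Q\log|(\phi^{-1})'|$ term by Koebe, and get uniformity in $\phi$ from a deterministic Sobolev duality on a ball of radius $r\asymp\epsilon^{1-\zeta}$, with Gaussian tails for the rescaled field. The gap is that this duality step only closes for small $\zeta$, and your exponent count hides this.

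In two dimensions $\langle h,f\rangle=\langle h(r\cdot+z_0)-c,\,r^2f(r\cdot+z_0)\rangle$ and $\|r^2f(r\cdot+z_0)\|_{\dot H^s}=r^{1+s}\|f\|_{\dot H^s}$. The constant $c$ drops because $\Psi_{\epsilon}^{\phi,z}-\Psi_{\epsilon/|\phi'(z_0)|}^{\id,z}$ has total mass exactly $0$. So the field contributes a factor $r^{1+s}$, not $r^{s-1}$. The difference kernel has height $\epsilon^{-1-\zeta}$ and $k$-th derivatives of size $\epsilon^{-1-\zeta-k}$ on a set of area $\epsilon^2$, up to logarithms. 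Hence $\|f\|_{\dot H^s}\asymp\epsilon^{-s-\zeta}$, not $\epsilon^{2-s-\zeta}$. The product is $\epsilon^{1-(2+s)\zeta}$ up to logarithms, which vanishes only for $\zeta<1/(2+s)$.

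With $s=1$ this is exactly Lemma~\ref{lemma:QuantitativeCoordinateChangedFieldComparison} and its restriction $\zeta<1/3$. Taking $s>1$ makes it worse, and even $s\downarrow0$ only reaches $\zeta<1/2$. The proposition, however, is needed for all $\zeta\in(0,1)$, and later it is applied with $\zeta_{+}>1/2$ in place of $\zeta$. Your fallback of shrinking $\zeta$ in the kernel estimate while keeping the ball is not available. The relative error $\epsilon^{1-\zeta}$ is $\bigl||\phi'(z)|/|\phi'(z_0)|-1\bigr|$ for $|z-z_0|\asymp\epsilon^{1-\zeta}$. This is genuinely of that size for generic $\phi$, because the comparison scale $\epsilon/|\phi'(z_0)|$ is frozen over the whole ball.

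The paper supplies the missing step with a two-scale argument (Lemma~\ref{lemma:CoordinateChangedFieldComparison}). Fix $\zeta_{-}<\frac13\wedge\zeta$ and a net of points $z_1$ with spacing $\asymp\epsilon^{1-\zeta_{-}}$. The small-$\zeta$ estimate plus a union bound (the failure probabilities are superpolynomially small) compares $\Psi_{\epsilon}^{\phi,z}$ with $\Psi^{\id,z}_{\epsilon/|\phi'(z_1)|}$ for $|z-z_1|<2\epsilon^{1-\zeta_{-}}$. The remaining difference $\hat{h}^{*}_{\epsilon/|\phi'(z_1)|}(z)-\hat{h}^{*}_{\epsilon/|\phi'(z_0)|}(z)$ depends on $\phi$ only through a scale ratio $1+O(\epsilon^{1-\zeta})$. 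Lemma~\ref{lemma:MollifiedFieldComparison} shows that almost surely $\sup_{|t/s-1|\le C\epsilon^{1-\zeta}}\sup_{z\in K}|\hat{h}^{*}_{\epsilon t}(z)-\hat{h}^{*}_{\epsilon s}(z)|\to0$. It does this by writing $\hat{h}^{*}_{\epsilon}$ in polar coordinates and using $\sup_{z\in K,\,r<e^{-1}}|h_r(z)|/\log r^{-1}<\infty$. This turns uniformity over $\phi$ into uniformity over a one-parameter family of scales.

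Your second sketch (smooth part plus rough part) could also be completed for all $\zeta$, since $L^1$-based norms avoid the $L^2$ concentration penalty. For instance, $|\langle h,f\rangle|\lesssim\|h\|_{C^{-\kappa}(K)}\|f\|_{L^1}^{1-\kappa}\|f\|_{W^{1,1}}^{\kappa}\lesssim\epsilon^{1-\zeta-\kappa}$ up to logarithms. As written, however, the rough-part bound is only asserted, not justified.

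Separately, your displayed target inequality double counts the $Q$ term. Since $\widehat{(h^{\phi})}$ already includes $Q\log|(\phi^{-1})'|$, the extra $-\xi Q\log|a|^{-1}$ should be dropped.
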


    To ease notation throughout this section, if $\phi \colon U \to \phi(U)$ is a conformal transformation and $z, w \in U$, define
    \begin{align*}
      \Psi_{\epsilon}^{\phi, z}(w) \
      &\coloneqq \ \frac{|\phi'(w)|^2}{Z_{\epsilon}} \psi_{\epsilon}(\phi(w) - \phi(z)) p_{\epsilon^2/2}(\phi(w) - \phi(z)) \\
      &= \ \frac{|\phi'(w)|^2}{Z_{\epsilon} \epsilon^2} \psi\left(\frac{\phi(w) - \phi(z)}{\frepsilon} \right) p_{1/2}\left(\frac{\phi(w) - \phi(z)}{\epsilon} \right).
    \end{align*}
    The following lemma summarizes the basic properties of the functions $\Psi_{\epsilon}^{\phi, z}$.
    Here and throughout, $\ast$ denotes convolution.
    \begin{lemma}
      \label{lemma:DistortedBumpProperties}
      Fix open sets $V \Subset U$ and $\DerivativeBound > 1$.
      \begin{enumerate}[label=(\roman*)]
        \item \label{DistortedBump:Support} If $\phi \in \confmaps$ and $z \in V$, then $\Psi_{\epsilon}^{\phi, z}$ is supported on $B_{4 \DerivativeBound \frepsilon}(z)$;
        \item \label{DistortedBump:FieldMollification} $Z_{\epsilon}^{-1} (h \circ \phi^{-1}) \ast (\psi_{\epsilon} p_{\epsilon^2/2})(\phi(z)) = \langle h, \Psi_{\epsilon}^{\phi, z} \rangle$;
        \item \label{DistortedBump:LogMollification} $Z_{\epsilon}^{-1} \log|(\phi^{-1})'| \ast (\psi_{\epsilon} p_{\epsilon^2/2})(\phi(z)) = \langle -\log|\phi'|, \Psi_{\epsilon}^{\phi, z} \rangle$.
      \end{enumerate}
      \begin{proof}
        \textit{Proof of \ref{DistortedBump:Support}:} Since $\psi$ is supported on $\DD$, $\Psi_{\epsilon}^{\phi, z}$ is supported on $\phi^{-1}(B_{\frepsilon}(\phi(z)))$.
        By the Koebe-$\frac{1}{4}$ theorem, the image of $\DD$ under $\omega \mapsto \phi(z + 4 \DerivativeBound \frepsilon \omega)$ contains $B_{\DerivativeBound \frepsilon |\phi'(z)|}(\phi(z))$. 
        Since $|\phi'(z)| \geq \DerivativeBound^{-1}$, it follows that $B_{\DerivativeBound \frepsilon |\phi'(z)|}(\phi(z)) \supset B_{\frepsilon}(\phi(z))$.
        Therefore, the support of $\Psi_{\epsilon}^{\phi, z}$ is contained in
        \begin{align*}
          \phi^{-1}\left(B_{\frepsilon}(\phi(z)) \right) \
          &\subset \phi^{-1}\left(B_{4 \DerivativeBound \frepsilon |\phi'(z)|/4}(\phi(z)) \right) \
          \subset \ B_{4 \DerivativeBound \frepsilon}(z).
        \end{align*}

        \textit{Proof of \ref{DistortedBump:FieldMollification}:} 
        \begin{align*}
          Z_{\epsilon}^{-1} (h \circ \phi^{-1}) \ast (\psi_{\epsilon} p_{\epsilon^2/2})(\phi(z)) \
          &= \ \left\langle h \circ \phi^{-1}, Z_{\epsilon}^{-1} \psi_{\epsilon}(\cdot - \phi(z)) p_{\epsilon^2/2}(\cdot - \phi(z)) \right\rangle \\
          &= \ \left\langle h, Z_{\epsilon}^{-1} |\phi'(\cdot)|^2 \psi_{\epsilon}(\phi(\cdot) - \phi(z)) p_{\epsilon^2/2}(\phi(\cdot) - \phi(z)) \right\rangle \\
          &= \ \left\langle h, \Psi_{\epsilon}^{\phi, z} \right\rangle.
        \end{align*}

        \textit{Proof of \ref{DistortedBump:LogMollification}:}
        \begin{align*}
          Z_{\epsilon}^{-1} \log|(\phi^{-1})'| \ast \psi_{\epsilon}(\phi(z)) \
          &= \ \left\langle \log|(\phi^{-1})'(\cdot)|, Z_{\epsilon}^{-1} \psi_{\epsilon}(\cdot - \phi(z)) p_{\epsilon^2/2}(\cdot - \phi(z)) \right\rangle \\
          &= \ \left\langle \log|(\phi^{-1})'(\phi(\cdot)), Z_{\epsilon}^{-1} |\phi'(\cdot)|^2 \psi_{\epsilon}(\phi(\cdot) - \phi(z)) p_{\epsilon^2/2}(\phi(\cdot) - \phi(z)) \right\rangle \\
          &= \ \left\langle -\log|\phi'|, \Psi_{\epsilon}^{\phi, z} \right\rangle.
        \end{align*}
      \end{proof}
    \end{lemma}

    The next lemma is our main estimate for comparing $\Psi_{\epsilon}^{\phi, z}$ and $\Psi_{\epsilon/|\phi'(z_0)|}^{\id, z}$ when $z$ is close to $z_0$.

    \begin{lemma}
      \label{lemma:GrowthRateOfDistortedBumpFunctions}
      Fix open sets $W \Subset V \Subset U$, $\DerivativeBound > 1$, and $\zeta \in (0, 1)$.
      There exist $C = C(W, V, \DerivativeBound, \zeta) > 0$ and $\epsilon_0 = \epsilon_0(W, V, \DerivativeBound, \zeta) \in (0,1)$ such that for all $\epsilon \in (0, \epsilon_0)$,
      \begin{align}
        \sup_{z_0 \in W} \sup_{z \in B_{2 \epsilon^{1 - \zeta}}(z_0)} \sup_{\phi \in \confmaps} \|\Psi_{\epsilon}^{\phi, z} - \Psi_{\epsilon/|\phi'(z_0)|}^{\id, z} \|_{L^{\infty}(U)} \
        &\leq \ C \epsilon^{-1 - \zeta} \log \epsilon^{-1}, \label{eq:GrowthRateZerothDerivative} \\
        \sup_{z_0 \in W} \sup_{z \in B_{2 \epsilon^{1 - \zeta}}(z_0)} \sup_{\phi \in \confmaps} \|\nabla \Psi_{\epsilon}^{\phi, z} - \nabla \Psi_{\epsilon/|\phi'(z_0)|}^{\id, z} \|_{L^{\infty}(U)} \
        &= \ C \epsilon^{-2 - \zeta} \log \epsilon^{-1}. \label{eq:GrowthRateFirstDerivative}
      \end{align}
      \begin{proof}
        The idea is that if $z$ is close to $z_0$, then on a small ball around $z$, $w \mapsto \phi(w)$ looks like the affine transformation $w \mapsto \phi'(z_0) (w - z) + \phi(z)$, where the error from this estimate is uniform over $\phi \in \confmaps$ by distortion-type inequalities.
        Therefore,
        \begin{align}
          p_{\epsilon^2/2}\left(\phi(w) - \phi(z) \right) \
          &\approx \ p_{\epsilon^2/2}\left(\left(\phi'(z_0)(w - z) + \phi(z) \right) - \phi(z) \right) \nonumber \\
          &= \ p_{1/2}\left(\frac{w - z}{\epsilon/|\phi'(z_0)|} \right) \nonumber \\
          &= \ p_{(\epsilon/|\phi'(z_0)|)/2}\left(z - w \right). \label{eq:DistortionHeuristic}
        \end{align}
        Since the bump function $\psi_{\epsilon}$ is $\equiv 1$ on $B_{\frepsilon/2}(0)$ and $p_{\epsilon^2/2}(z) \leq C e^{-(\log \epsilon^{-1})^2/4}$ when $z \not\in B_{\frepsilon/2}(0)$, we will use \eqref{eq:DistortionHeuristic} to show that $\psi_{\epsilon}(\phi(w) - \phi(z)) p_{\epsilon^2/2}(\phi(w) - \phi(z)) \approx \psi_{\epsilon/|\phi'(z_0)|}(w - z) p_{(\epsilon/|\phi'(z_0)|)/2}(w - z)$ for all $w$ near $z$.

        Since $\int_{\CC} p_{\epsilon^2/2}(w) \, \D A(w) = 1$, we see that 
        \begin{align}
          0 \
          &\leq \ 1 - Z_{\epsilon} \
          = \ \frac{2}{\epsilon^2} \int\limits_{0}^{\infty} r \left(1 - \psi_{\epsilon}(r) \right) e^{-r^2/\epsilon^2} \, \D r \
          \leq \ \frac{2}{\epsilon^2} \int\limits_{\frepsilon/2}^{\infty} r e^{-r^2/\epsilon^2} \
          = \ e^{-(\log \epsilon^{-1})^2/4},
          \label{eq:NormalizingConstantEstimate}
        \end{align}
        where we have used radial symmetry of $\psi$ to rewrite the integral $Z_{\epsilon}$ in polar coordinates.
        Therefore, $\|\Psi_{\epsilon}^{\phi, z} - Z_{\epsilon} \Psi_{\epsilon}^{\phi, z}\|_{L^{\infty}(U)} \leq C \epsilon^{-2} \epsilon^{\frac{1}{4} \log \epsilon^{-1}}$ and $\|\nabla \Psi_{\epsilon}^{\phi ,z} - Z_{\epsilon} \nabla \Psi_{\epsilon}^{\phi, z}\|_{L^{\infty}(U)} \leq C \epsilon^{-3} \epsilon^{\frac{1}{4} \log \epsilon^{-1}}$, where the constant $C$ depends only on $\DerivativeBound$ (not on $\phi$).
        Therefore, it will suffice to prove \eqref{eq:GrowthRateZerothDerivative} and \eqref{eq:GrowthRateFirstDerivative} with $Z_{\epsilon} \Psi_{\epsilon}^{\phi, z}$ and $Z_{\epsilon/|\phi'(z_0)|} \Psi_{\epsilon/|\phi'(z_0)|}^{\id, z}$ in place of $\Psi_{\epsilon}^{\phi, z}$ and $\Psi_{\epsilon/|\phi'(z_0)|}^{\id, z}$.

        Note that
        \begin{align*}
          Z_{\epsilon} \Psi_{\epsilon}^{\phi, z}(w) \
          &= \ \frac{|\phi'(w)|^2}{\epsilon^2} \psi\left(\frac{\phi(w) - \phi(z)}{\frepsilon} \right) p_{1/2}\left(\frac{\phi(w) - \phi(z)}{\epsilon} \right), \\
          \p_x Z_{\epsilon} \Psi_{\epsilon}^{\phi, z}(w) \
          &= \ \frac{2 \phi'(w) \cdot \phi''(w)}{\epsilon^2} \psi\left(\frac{\phi(w) - \phi(z)}{\frepsilon} \right) p_{1/2}\left(\frac{\phi(w) - \phi(z)}{\epsilon} \right) \\
          &\qquad + \frac{|\phi'(w)|^2}{\epsilon^2} p_{1/2}\left(\frac{\phi(w) - \phi(z)}{\epsilon} \right) (\nabla \psi)\left(\frac{\phi(w) - \phi(z)}{\frepsilon} \right) \cdot \frac{\phi'(w)}{\frepsilon} \\
          &\qquad + \frac{|\phi'(w)|^2}{\epsilon^2} \psi\left(\frac{\phi(w) - \phi(z)}{\frepsilon} \right) (\nabla p_{1/2})\left(\frac{\phi(w) - \phi(z)}{\epsilon} \right) \cdot \frac{\phi'(w)}{\epsilon},
        \end{align*}
        and similarly for $\p_y Z_{\epsilon} \Psi_{\epsilon}^{\phi, z}$, where $\cdot$ denotes the standard Euclidean dot product and $\p_x, \p_y$ denote derivatives with respect to the real and imaginary parts of $w = x + \i y$ respectively.
        Recall that $\psi \equiv 1$ on $B_{1/2}(0)$, so 
        \begin{align*}
          \left|Z_{\epsilon} \Psi_{\epsilon}^{\phi, z}(w) - \frac{|\phi'(w)|^2}{\epsilon^2} p_{1/2}\left(\frac{\phi(w) - \phi(z)}{\epsilon} \right)\right|
          &\leq
          \begin{cases}
            0 & \phi(w) \in B_{\frepsilon/2}(\phi(z)) \\
            C \epsilon^{-2} e^{-\frac{1}{4} (\log \epsilon^{-1})^2} & \text{otherwise}
          \end{cases},
        \end{align*}
        where the constant $C$ depends only on $\DerivativeBound$.
        A similar estimate holds for each of the three terms in $\p_x Z_{\epsilon} \Psi_{\epsilon}^{\phi, z}$, the terms in $\p_y Z_{\epsilon} \Psi_{\epsilon}^{\phi, z}$, and with $Z_{\epsilon/|\phi'(z_0)|} \Psi_{\epsilon/|\phi'(z_0)|}^{\id, z}$ in place of $Z_{\epsilon} \Psi_{\epsilon}^{\phi, z}$. 
        Therefore, if
        \begin{align*}
          \tilde{\Psi}_{\epsilon}^{\phi, z}(w) \
          &\coloneqq \ \frac{|\phi'(w)|^2}{\epsilon^2} p_{1/2}\left(\frac{\phi(w) - \phi(z)}{\epsilon} \right), \\
          \tilde{\Psi}_{\epsilon, x}^{\phi, z}(w) \
          &\coloneqq \ \frac{2 \phi'(w) \cdot \phi''(w)}{\epsilon^2} p_{1/2}\left(\frac{\phi(w) - \phi(z)}{\epsilon} \right) + \frac{|\phi'(w)|^2}{\epsilon^2} (\nabla p_{1/2})\left(\frac{\phi(w) - \phi(z)}{\epsilon}\right) \cdot \frac{\phi'(w)}{\epsilon}, \\
          \tilde{\Psi}_{\epsilon, y}^{\phi, z}(w) \
          &\coloneqq \ \frac{2 \phi'(w) \cdot \i \phi''(w)}{\epsilon^2} p_{1/2}\left(\frac{\phi(w) - \phi(z)}{\epsilon} \right) + \frac{|\phi'(w)|^2}{\epsilon^2} \left(\nabla p_{1/2} \right)\left(\frac{\phi(w) - \phi(z)}{\epsilon} \right) \cdot \frac{\i \phi'(w)}{\epsilon},
        \end{align*}
        it will suffice to prove that there is a constant $C > 0$ such that
        \begin{align*}
          \|\tilde{\Psi}_{\epsilon}^{\phi, z} - \tilde{\Psi}_{\epsilon/|\phi'(z_0)|}^{\id, z}\|_{L^{\infty}(U)} \
          &\leq \ C \epsilon^{-1 - \zeta} \log \epsilon^{-1}, \\
          \|\tilde{\Psi}_{\epsilon, x}^{\phi, z} - \tilde{\Psi}_{\epsilon/|\phi'(z_0)|, x}^{\id, z}\|_{L^{\infty}(U)} \
          &\leq \ C \epsilon^{-2 - \zeta} \log \epsilon^{-1}, \\
          \|\tilde{\Psi}_{\epsilon, y}^{\phi, z} - \tilde{\Psi}_{\epsilon/|\phi'(z_0)|, y}^{\id, z}\|_{L^{\infty}(U)} \
          &\leq \ C \epsilon^{-2 - \zeta} \log \epsilon^{-1}.
        \end{align*}
        To obtain these estimates, note that $p_{1/2}$ has bounded derivatives of orders $0$, $1$, and $2$, and $|\phi'|$ is bounded by $\DerivativeBound$ on $V$, so by the mean value theorem, it will suffice to prove that there is a constant $C$ such that for all $\phi \in \confmaps$, all $z_0 \in W$, all $z \in B_{2 \epsilon^{1 - \zeta}}(z_0)$, and all $w \in B_{4 \DerivativeBound \frepsilon}(z)$,
        \begin{align}
          \left|\phi'(w) - \phi'(z_0) \right| \
          &\leq \ C \epsilon^{1 - \zeta}, \label{eq:DeBrangesDerivativeBound} \\
          \left|\frac{\phi(w) - \phi(z)}{\epsilon} - \frac{w - z}{\epsilon/\phi'(z_0)} \right| \
          &\leq \ C \epsilon^{1 - \zeta} \log \epsilon^{-1}, \label{eq:DeBrangesFunctionBound} \\
          |\phi''(w)| \
          &\leq \ C. \label{eq:DeBrangesSecondDerivativeBound}
        \end{align}
        In \eqref{eq:DeBrangesFunctionBound}, note that we have written $\phi'(z_0)$ rather than $|\phi'(z_0)|$ because $p_{(\epsilon/|\phi'(z_0)|)/2}(w - z) = p_{1/2}(\frac{w - z}{\epsilon/\phi'(z_0)})$ by rotational symmetry of the heat kernel.
        Let us now prove \eqref{eq:DeBrangesDerivativeBound}, \eqref{eq:DeBrangesFunctionBound}, and \eqref{eq:DeBrangesSecondDerivativeBound}.

        Fix $r > 0$ such that $B_r(W) \subset V$ and fix $\epsilon_0 \in (0, 1)$ such that $\frac{4 \DerivativeBound \frepsilon + 2 \epsilon^{1 - \zeta}}{r} < \frac{1}{2}$ for all $\epsilon \in (0, \epsilon_0)$.
        For any $z \in W$, Taylor expand the function $\omega \mapsto \frac{1}{r} \frac{\phi(z + r \omega) - \phi(z)}{\phi'(z)} = \omega + \sum_{n=2}^{\infty} a_n \omega^n$ on $\DD$, and apply de Branges's theorem\footnote{We do not need the full strength of de Branges's theorem, but use it for convenience. For example, the result of Littlewood \cite{Littlewood} $|a_n| \leq e n$ for all $n$ is sufficient.} \cite{deBranges} to see that $|a_n| \leq n$ for all $n$.
        Therefore, for all $\omega \in \DD$,
        \begin{align}
          &\left|\frac{\phi'(z_0 + (4 \DerivativeBound \frepsilon + 2 \epsilon^{1 - \zeta}) \omega)}{\phi'(z_0)} - 1 \right| \nonumber \\
          &\qquad \qquad = \ \left|\frac{\phi'(z_0 + r \frac{(4 \DerivativeBound \frepsilon + 2 \epsilon^{1 - \zeta}) \omega}{r})}{\phi'(z_0)} - 1 \right| \nonumber \\
          &\qquad \qquad \leq \ \sum_{n=2}^{\infty} n^2 \left|\frac{4 \DerivativeBound \frepsilon + 2 \epsilon^{1 - \zeta}}{r} \right|^{n-1} \nonumber \\
          &\qquad \qquad = \ \frac{4 \DerivativeBound \frepsilon + 2 \epsilon^{1 - \zeta}}{r} \frac{4 - 3\frac{4 r \frepsilon + 2 \epsilon^{1 - \zeta}}{r} + (\frac{4 r \frepsilon + 2 \epsilon^{1 - \zeta}}{r})^2}{(1 - \frac{4 \DerivativeBound \frepsilon + 2 \epsilon^{1 - \zeta}}{r})^3} \nonumber \\
          &\qquad \qquad \leq \ 34 \frac{4 \DerivativeBound \frepsilon + 2 \epsilon^{1 - \zeta}}{r}, \qquad \qquad \qquad \qquad \left(\frac{4 \DerivativeBound \frepsilon + 2 \epsilon^{1 - \zeta}}{r} < \frac{1}{2} \right). \label{eq:DerivativePreliminaryDistortionEstimate}
        \end{align}
        For any $z \in B_{2 \epsilon^{1 - \zeta}}(z_0)$ and $w \in B_{4 \DerivativeBound \frepsilon}(z)$, we can find $\omega \in \DD$ with $w = z_0 + (4 \DerivativeBound \frepsilon + 2 \epsilon^{1 - \zeta}) \omega$.
        Applying \eqref{eq:DerivativePreliminaryDistortionEstimate} with this choice of $\omega$ and multiplying through by $|\phi'(z_0)| \leq \DerivativeBound$ proves \eqref{eq:DeBrangesDerivativeBound}.

        Similarly, for each $\omega \in \DD$,
        \begin{align}
          &\left|\frac{\phi(z + 4 \DerivativeBound \frepsilon \omega) - \phi(z)}{\phi'(z) \epsilon} - \frac{(z + 4 \DerivativeBound (\frepsilon) \omega) - z}{\epsilon} \right| \nonumber \\
          &= \ \frac{r}{\epsilon} \left|\frac{\phi(z + r \frac{4 \DerivativeBound (\frepsilon)\omega}{r}) - \phi(z)}{\phi'(z) r} - \frac{4 \DerivativeBound (\frepsilon) \omega}{r} \right| \nonumber \\
          &= \ \frac{r}{\epsilon} \left|\sum_{n=2}^{\infty} a_n \left(\frac{4 \DerivativeBound (\frepsilon) \omega}{r} \right)^n \right| \nonumber \\
          &\leq \ \frac{r}{\epsilon} \sum_{n=2}^{\infty} n \left|\frac{4 \DerivativeBound \frepsilon}{r} \right|^n \nonumber \\
          &= \ \frac{r}{\epsilon} \left(\frac{\frac{4 \DerivativeBound \frepsilon}{r}}{(1 - \frac{4 \DerivativeBound \frepsilon}{r})^2} - \frac{4 \DerivativeBound \frepsilon}{r} \right) \nonumber \\
          &= \ \frac{16 \DerivativeBound^2 \epsilon (\log \epsilon^{-1})^2}{r} \frac{2 - \frac{4 \DerivativeBound \frepsilon}{r}}{(1 - \frac{4 \DerivativeBound \frepsilon}{r})^2} \nonumber \\
          &\leq \ \frac{128 \DerivativeBound^2 \epsilon (\log \epsilon^{-1})^2}{r} & \left(\frac{4 \DerivativeBound \frepsilon}{r} < \frac{1}{2} \right) \nonumber \\
          &\leq \ \frac{128 \DerivativeBound^2}{r} \epsilon^{1 - \zeta} \label{eq:LipschitzDeBrangesEstimate}
        \end{align}
        For $w \in B_{4 \DerivativeBound \frepsilon}(z)$, we can find $\omega \in \DD$ such that $w = z + 4 \DerivativeBound \frepsilon \omega = w$, then apply \eqref{eq:LipschitzDeBrangesEstimate} with this choice of $\omega$ to get
        \begin{align}
          \left|\frac{\phi(w) - \phi(z)}{\phi'(z) \epsilon} - \frac{w - z}{\epsilon} \right| \
          &\leq \ \frac{128 \DerivativeBound^2}{r} \epsilon^{1 - \zeta}.
          \label{eq:LipschitzDeBrangesEstimateSubstituted}
        \end{align}
        To deduce \eqref{eq:DeBrangesFunctionBound} from \eqref{eq:LipschitzDeBrangesEstimateSubstituted}, note that 
        \begin{align*}
          \left|\frac{\phi(w) - \phi(z)}{\epsilon} - \frac{w - z}{\epsilon/\phi'(z_0)} \right| \
          &\leq \ \left|\frac{\phi(w) - \phi(z)}{\epsilon} \right| \left|1 - \frac{\phi'(z_0)}{\phi'(z)} \right| + |\phi'(z_0)| \left|\frac{\phi(w) - \phi(z)}{\phi'(z) \epsilon} - \frac{w - z}{\epsilon} \right| \\
          &\leq \ \left(4 \DerivativeBound^2 \log \epsilon^{-1} \right) C \DerivativeBound \epsilon^{1 - \zeta} + \DerivativeBound  \frac{128 \DerivativeBound^2}{r} \epsilon^{1 - \zeta}
        \end{align*}
        where we have applied \eqref{eq:DeBrangesDerivativeBound} with $z$ in place of $w$ to bound $|1 - \frac{\phi'(z_0)}{\phi'(z)}|$ and used the mean value theorem to bound
        \begin{align*}
          \left|\frac{\phi(w) - \phi(z)}{\epsilon} \right| \
          &\leq \ \frac{|\phi'(c)| 4 \DerivativeBound \frepsilon}{\epsilon} \
          \leq \ 4 \DerivativeBound^2 \log \epsilon^{-1},
        \end{align*}
        where $c \in V$ is some point on the line segment from $z$ to $w$.

        Finally, the bound
        \begin{align*}
          &\left|\frac{r \phi''(z + 4 \DerivativeBound \frepsilon \omega)}{\phi'(z)} \right| \\
          &\qquad \qquad \leq \ \sum_{n=2}^{\infty} n^2(n-1) \left|\frac{4 \DerivativeBound \frepsilon}{r} \right|^{n-2} \\
          &\qquad \qquad = \ \left(\frac{4 \DerivativeBound \frepsilon}{r} \right)^2 \frac{2(24 - 46 \frac{4 \DerivativeBound \frepsilon}{r} + 34 (\frac{4 \DerivativeBound \frepsilon}{r})^2 - 9 (\frac{4 \DerivativeBound \frepsilon}{r})^3)}{(1 - \frac{4 \DerivativeBound \frepsilon}{r})^4} \\
          &\qquad \qquad \leq \ 1040 \left(\frac{4 \DerivativeBound \frepsilon}{r} \right)^2, \qquad \qquad \qquad \qquad \left(\frac{4 \DerivativeBound \frepsilon}{r} < \frac{1}{2} \right)
        \end{align*}
        applied with $\omega \in \DD$ chosen so that $w = z + 4 \DerivativeBound \frepsilon \omega$, together with the fact that $|\phi'(z)| \geq \DerivativeBound^{-1}$, is enough to obtain \eqref{eq:DeBrangesSecondDerivativeBound}.
      \end{proof}
    \end{lemma}

    Our next lemma will be used to compare the mollified fields $\hat{h}_{\epsilon/|\phi'(z_0)|}^{*}$ and $\hat{h}_{\epsilon/|\phi'(z_1)|}^{*}$ when $z_0$ and $z_1$ are close.

    \begin{lemma}
      \label{lemma:MollifiedFieldComparison}
      Fix a compact set $K \subset \CC$ and constants $C > 0$ and $\zeta \in (0, 1)$.
      Almost surely,
      \begin{align*}
        \sup_{\substack{t, s \in [\DerivativeBound^{-1}, \DerivativeBound] \\ 1 - C \epsilon^{1 - \zeta} \leq \frac{t}{s} \leq 1 + C \epsilon^{1 - \zeta}}} \sup_{z \in K} \left|\hat{h}_{\epsilon t}^{*}(z) - \hat{h}_{\epsilon s}^{*}(z) \right| \
        &\stackrel{\epsilon \to 0}{\to} \ 0.
      \end{align*}
      \begin{proof}
        We will prove that there is a $K$-dependent random variable $X$ such that for $\epsilon, \delta \in (0, e^{-1})$ with $1 \leq \frac{\delta}{\epsilon} \leq 2$,
        \begin{align}
          \sup_{z \in K} \left|\hat{h}_{\epsilon}^{*}(z) - \hat{h}_{\delta}^{*}(z) \right|
          &\leq X \left(\log \epsilon^{-1} \right)^3 \log \frac{1}{\epsilon \log \epsilon^{-1}} \left[\left|\frac{\delta \log \delta^{-1}}{\epsilon \log \epsilon^{-1}} - 1 \right| \vee \left(\frac{\delta}{\epsilon} - 1 \right) + 2 e^{-(\log \delta^{-1})^2/4} \right].
          \label{eq:MollifiedFieldComparison}
        \end{align}
        Applying \eqref{eq:MollifiedFieldComparison} with $\epsilon t$ in place of $\epsilon$ and $\epsilon s$ in place of $\delta$ will prove the lemma.
        Since
        \begin{align*}
          \left|\hat{h}_{\epsilon}^{*}(z) - \hat{h}_{\delta}^{*}(z) \right| \
          &\leq \ \left(1 - Z_{\epsilon} \right) \left|\hat{h}_{\epsilon}^{*}(z) \right| + \left|Z_{\epsilon} \hat{h}_{\epsilon}^{*}(z) - Z_{\delta} \hat{h}_{\delta}^{*}(z) \right| + \left(1 - Z_{\delta} \right) \left|\hat{h}_{\delta}^{*}(z)\right|,
        \end{align*}
        it will suffice to prove that there is a random variable $X \in (0, \infty)$ such that for all $z \in K$ and all $\epsilon, \delta \in (0, e^{-1})$ with $1 \leq \frac{\delta}{\epsilon} \leq 2$,
        \begin{align}
          \left(1 - Z_{\epsilon} \right) \left|\hat{h}_{\epsilon}^{*}(z) \right| \
          &\leq \ X e^{-(\log \epsilon^{-1})^2/4} \left(\log \epsilon^{-1} \right)^2 \log \frac{1}{\epsilon \log \epsilon^{-1}}, \label{eq:ContinuityOfMollificationZTerms} \\
          \left|Z_{\epsilon} \hat{h}_{\epsilon}^{*}(z) - Z_{\delta} \hat{h}_{\delta}^{*}(z) \right| \
          &\leq \ X \left(\log \delta^{-1} \right)^3 \log \frac{1}{\delta \log \delta^{-1}} \left[\left|\frac{\delta \log \delta^{-1}}{\epsilon \log \epsilon^{-1}} - 1 \right| \vee \left(\frac{\delta}{\epsilon} - 1 \right)\right]. \label{eq:ContinuityOfMollificationMainTerms}
        \end{align}
        Using polar coordinates centered at $z$, we see that
        \begin{align}
          \hat{h}_{\epsilon}^{*}(z) \
          &= \ \int\limits_{0}^{\frepsilon} r h_r(z) \Psi_{\epsilon}^{\id, 0}(r) \, \D r.
          \label{eq:MollificationPolarCoordinates}
        \end{align}
        By \cite[Lemma 2.2]{WeakLQGMetrics}, $X \coloneqq \sup_{z \in K} \sup_{r \in (0, e^{-1})} \frac{|h_r(z)|}{\log r^{-1}} < \infty$ almost surely.
        It follows that
        \begin{align*}
          \left|\hat{h}_{\epsilon}^{*}(z) \right| \
          &\leq \ X \int\limits_{0}^{\frepsilon} r \log r^{-1} \Psi_{\epsilon}^{\id, 0}(r) \, \D r \\
          &= \ \frac{C_1 C_2 X}{Z_{\epsilon} \epsilon^2} \int\limits_{0}^{\frepsilon} r \log r^{-1} \, \D r \\
          &= \ \frac{C_1 C_2 X}{Z_{\epsilon}} \frac{1}{4} \left(\log \epsilon^{-1} \right)^2 \left(2 \log \frac{1}{\frepsilon} + 1 \right),
        \end{align*}
        where $C_1 \coloneqq \max_{w \in \CC} \psi(w)$ and $C_2 \coloneqq \max_{w \in \CC} p_{1/2}(w)$.
        From \eqref{eq:NormalizingConstantEstimate}, we see that when $\epsilon \in (0, e^{-1})$,
        \begin{align*}
          Z_{\epsilon} \
          &\geq \ 1 - e^{-(\log \epsilon^{-1})^2/4} \
          > \ 1 - e^{-1/4}, \\
          1 - Z_{\epsilon} \
          &\leq \ e^{-(\log \epsilon^{-1})^2/4}.
        \end{align*}
        It follows that
        \begin{align*}
          \left(1 - Z_{\epsilon} \right) \left|\hat{h}_{\epsilon}^{*}(z) \right| \
          &\leq \ \frac{C_1 C_2 X}{1 - e^{-1/4}} e^{-(\log \epsilon^{-1})^2/4} \left(\log \epsilon^{-1} \right)^2 \left(2 \log \frac{1}{\epsilon \log \epsilon^{-1}} + 1 \right),
        \end{align*}
        which proves \eqref{eq:ContinuityOfMollificationZTerms} with $\frac{4 C_1 C_2 X}{1 - e^{-1/4}}$ in place of $X$.

        To prove \eqref{eq:ContinuityOfMollificationMainTerms}, assume $1 \leq \frac{\delta}{\epsilon} \leq 2$ and use \eqref{eq:MollificationPolarCoordinates} to see that
        \begin{align}
          \left|Z_{\epsilon} \hat{h}_{\epsilon}^{*}(z) - Z_{\delta} \hat{h}_{\delta}^{*}(z) \right| \
          &\leq \ X \int\limits_{0}^{\delta \log \delta^{-1}} r \log r^{-1} \left|Z_{\epsilon} \Psi_{\epsilon}^{\id, 0}(r) - Z_{\delta} \Psi_{\delta}^{\id, 0}(r) \right| \, \D r.
          \label{eq:ContinuityOfMollificationMainTermsFirstStep}
        \end{align}
        We will use the bound
        \begin{align*}
          \left|Z_{\epsilon} \Psi_{\epsilon}^{\id, 0}(r) - Z_{\delta} \Psi_{\delta}^{\id, 0}(r) \right| \
          &= \ \left|\frac{1}{\epsilon^2} \psi\left(\frac{r}{\epsilon \log \epsilon^{-1}} \right) p_{1/2}\left(\frac{r}{\epsilon} \right) - \frac{1}{\delta^2} \psi\left(\frac{r}{\delta \log \delta^{-1}} \right) p_{1/2}\left(\frac{r}{\delta} \right) \right| \\
          &\leq \ \left|\frac{1}{\epsilon^2} - \frac{1}{\delta^2} \right| \psi\left(\frac{r}{\frepsilon} \right) p_{1/2}\left(\frac{r}{\epsilon} \right) \\
          &\qquad \qquad + \frac{1}{\delta^2} \left|\psi\left(\frac{r}{\frepsilon} \right) - \psi\left(\frac{r}{\delta \log \delta^{-1}} \right)\right| p_{1/2}\left(\frac{r}{\epsilon} \right) \\
          &\qquad \qquad + \frac{1}{\delta^2} \psi\left(\frac{r}{\delta \log \delta^{-1}} \right) \left|p_{1/2}\left(\frac{r}{\epsilon} \right) - p_{1/2}\left(\frac{r}{\delta} \right)\right| \\
          &\leq \ \frac{C_1 C_2}{\delta^2} \left|\frac{\delta^2}{\epsilon^2} - 1 \right| + \frac{C_2 L_1}{\delta^2} \left|\frac{r}{\epsilon \log \epsilon^{-1}} - \frac{r}{\delta \log \delta^{-1}} \right| + \frac{C_1 L_2}{\delta^2} \left|\frac{r}{\epsilon} - \frac{r}{\delta} \right|,
        \end{align*}
        where $L_1 \coloneqq \max_{w \in \CC} |\nabla \psi(w)|$ and $L_2 \coloneqq \max_{w \in \CC} |\nabla p_{1/2}(w)|$.
        Combining this with \eqref{eq:ContinuityOfMollificationMainTermsFirstStep}, we find that when $\frac{\delta}{\epsilon} \leq 2$,
        \begin{align*}
          \left|Z_{\epsilon} \hat{h}_{\epsilon}^{*}(z) - Z_{\delta} \hat{h}_{\delta}^{*}(z) \right| \
          &\leq \ \frac{3 X C_1 C_2}{4} \left(\frac{\delta}{\epsilon} - 1 \right) \left(\log \delta^{-1} \right)^2 \left(2 \log \frac{1}{\delta \log \delta^{-1}} + 1 \right) \\
          &\qquad \qquad + \frac{X C_2 L_1}{9}  (\log \delta^{-1})^{2} \left|\frac{\delta \log \delta^{-1}}{\epsilon \log \epsilon^{-1}} - 1 \right| \left(3 \log \frac{1}{\delta \log \delta^{-1}} + 1 \right) \\
          &\qquad \qquad + \frac{X C_1 L_2}{9} \left(\frac{\delta}{\epsilon} - 1 \right) \left(\log \delta^{-1} \right)^3 \left(3 \log \frac{1}{\delta \log \delta^{-1}} + 1 \right).
        \end{align*}
        Equation \eqref{eq:ContinuityOfMollificationMainTerms} now follows after absorbing constants into $X$.
      \end{proof}
    \end{lemma}

    The main ingredient in the proof of Proposition \ref{prop:MainRegularityConditionHighProbability} is the following.

    \begin{lemma}
      \label{lemma:CoordinateChangedFieldComparison}
      Fix open sets $W \Subset V \Subset U$, $\DerivativeBound > 1$, and $\delta, \zeta \in (0,1)$.
      Then
      \begin{align}
        \lim_{\epsilon \to 0} \sup_{z_0 \in W} \Prob\left\{\sup_{z \in B_{2 \epsilon^{1 - \zeta}}(z_0)} \sup_{\phi \in \confmaps} \left|\langle h, \Psi_{\epsilon}^{\phi, z} - \Psi_{\epsilon/|\phi'(z_0)|}^{\id, z} \rangle \right| > \delta \right\} \
        &= \ 0.
        \label{eq:CoordinateChangedFieldComparisonProbability}
      \end{align}
    \end{lemma}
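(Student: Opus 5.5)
The plan is to exploit three facts. First, the difference $f_{\epsilon}^{\phi,z} \coloneqq \Psi_{\epsilon}^{\phi, z} - \Psi_{\epsilon/|\phi'(z_0)|}^{\id, z}$ has total integral zero. Second, it is supported in a ball of radius $O(\frepsilon)$ around $z$. Third, Lemma \ref{lemma:GrowthRateOfDistortedBumpFunctions} gives polynomially small bounds on $f_{\epsilon}^{\phi,z}$ and $\nabla f_{\epsilon}^{\phi,z}$ after rescaling to that ball. Once these are combined, a deterministic duality bound $|\langle h, f\rangle| \le \|h\|_{H^{-1}} \|f\|_{H^1_0}$ handles the supremum over $\phi$ for free. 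The supremum over $z$ is handled by a union bound over a grid.

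\emph{Step 1: zero mean and support.} By the change of variables $u = \phi(w)$, we get $\int \Psi_{\epsilon}^{\phi,z} \, \D A = Z_{\epsilon}^{-1}\int \psi_{\epsilon} p_{\epsilon^2/2} \, \D A = 1$, and likewise for $\Psi^{\id,z}_{\epsilon/|\phi'(z_0)|}$. So $\int f_{\epsilon}^{\phi,z} = 0$, and $\langle h, f_{\epsilon}^{\phi,z}\rangle$ is unchanged if we add any constant to $h$. By Lemma \ref{lemma:DistortedBumpProperties}\ref{DistortedBump:Support} and $|\phi'(z_0)|\le \DerivativeBound$, both functions are supported in $B_{4\DerivativeBound \frepsilon}(z)$ (possibly after enlarging the constant $4\DerivativeBound$ to, say, $8 \DerivativeBound^2$ for the identity term).

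\emph{Step 2: a grid.} Set $r_\epsilon \coloneqq \frepsilon$. Choose a grid $G_\epsilon \subset B_{2\epsilon^{1-\zeta}}(z_0)$ of spacing $\epsilon$, so that $\#G_\epsilon \le C\epsilon^{-2\zeta}$. Every $z \in B_{2\epsilon^{1-\zeta}}(z_0)$ lies within $\epsilon$ of some $z' \in G_\epsilon$, and then every $f_{\epsilon}^{\phi,z}$ is supported in $B' \coloneqq B_{M r_\epsilon}(z')$ for a constant $M = M(\DerivativeBound)$.

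Now rescale around $z'$. Let $\tilde h(u) \coloneqq h(z' + r_\epsilon u) - h_{r_\epsilon}(z')$ and $\tilde f(u) \coloneqq r_\epsilon^2 f_{\epsilon}^{\phi,z}(z' + r_\epsilon u)$. Then $\langle h, f_{\epsilon}^{\phi,z}\rangle = \langle \tilde h, \tilde f\rangle$, where $\tilde f$ is supported in $B_M(0)$ and has mean zero. By Lemma \ref{lemma:GrowthRateOfDistortedBumpFunctions},
\[
\|\tilde f\|_{L^\infty} \le C r_\epsilon^2 \epsilon^{-1-\zeta}\log\epsilon^{-1} \le C\epsilon^{1-\zeta}(\log\epsilon^{-1})^3, \qquad \|\nabla \tilde f\|_{L^\infty} \le C r_\epsilon^3\epsilon^{-2-\zeta}\log\epsilon^{-1} \le C\epsilon^{1-\zeta}(\log\epsilon^{-1})^4 .
\]
Hence $\sup_{\phi,\, |z - z'|<\epsilon}\|\tilde f\|_{H^1_0(B_M(0))} \le C\epsilon^{1-\zeta}(\log \epsilon^{-1})^4$, deterministically and uniformly in $\phi \in \confmaps$ and $z_0 \in W$.

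\emph{Step 3: tail bound for the field.} By scale and translation invariance of the whole-plane GFF modulo additive constants, the law of $\tilde h$ restricted to $B_M(0)$, as a distribution acting on mean-zero test functions, does not depend on $\epsilon$, $z'$, or $z_0$. The random variable
\[
N \coloneqq \sup\left\{ |\langle \tilde h, g\rangle| : g \in H^1_0(B_M(0)),\ \textstyle\int g = 0,\ \|g\|_{H^1}\le 1 \right\}
\]
is the supremum of a centered Gaussian process, and it is a.s. finite because the GFF lies in $H^{-s}_{\mathrm{loc}}$ for every $s>0$. By Fernique/Borell--TIS, $\Prob\{N > t\} \le C e^{-ct^2}$.

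Combining, on the event $\{N_{z'} \le \epsilon^{-(1-\zeta)/2}\}$ we get $\sup_{\phi}\sup_{|z-z'|<\epsilon}|\langle h, f_{\epsilon}^{\phi,z}\rangle| \le C\epsilon^{(1-\zeta)/2}(\log\epsilon^{-1})^4 < \delta$ for small $\epsilon$. A union bound over $G_\epsilon$ gives failure probability at most $C\epsilon^{-2\zeta} e^{-c\epsilon^{-(1-\zeta)}} \to 0$, uniformly in $z_0\in W$. This proves \eqref{eq:CoordinateChangedFieldComparisonProbability}.

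\emph{Main obstacle.} The delicate step is Step 3. One must check that $N$ is a.s. finite, with Gaussian tails, and with a law independent of the center and scale; the finiteness is where the $H^{-1}$ pairing genuinely needs the mean-zero property. I would verify this by writing $\tilde h$ as a zero-boundary GFF on $B_{2M}(0)$ plus an independent harmonic function. The zero-boundary part has a.s. finite $H^{-1}$ norm, with Gaussian tails. The harmonic part, minus its value at $0$, is smooth on $\overline{B_M(0)}$ with Gaussian-tailed $C^1$ norm, and against mean-zero $g$ it is controlled by $\|g\|_{L^1}$.
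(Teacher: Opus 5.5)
Your argument is correct, and it follows a different, more economical route than the paper.

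\textbf{The paper's route.} The paper first proves Lemma \ref{lemma:QuantitativeCoordinateChangedFieldComparison} by rescaling around $z_0$ at the window scale $\epsilon^{1-\zeta}$. This lets a single $\cH^{-1}$ norm control every $z \in B_{2\epsilon^{1-\zeta}}(z_0)$ at once. In that picture, however, the test functions occupy a disk of radius only $\approx \epsilon^{\zeta}\log\epsilon^{-1}$. The resulting $\cH^1$ bound is therefore only $O(\epsilon^{1-3\zeta}(\log\epsilon^{-1})^2)$, which forces $\zeta<1/3$. To reach general $\zeta$, the paper bootstraps in three steps:
\begin{enumerate}
\item it fixes $\zeta_-<1/3$;
\item it applies the quantitative lemma on a grid of spacing $\approx\epsilon^{1-\zeta_-}$, comparing to $\Psi^{\id,z}_{\epsilon/|\phi'(z_1)|}$;
\item it swaps $|\phi'(z_1)|$ for $|\phi'(z_0)|$ using the distortion theorem together with the almost sure scale-continuity estimate of Lemma \ref{lemma:MollifiedFieldComparison}.
\end{enumerate}

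\textbf{Your route.} You rescale at the support scale $\epsilon\log\epsilon^{-1}$ of the test functions themselves, where the $\cH^{-1}$/$\cH^1$ pairing is no longer lossy. The rescaled differences then have $\cH^1$ norm $O(\epsilon^{1-\zeta}(\log\epsilon^{-1})^4)$ for every $\zeta\in(0,1)$. The price is a union bound over $O(\epsilon^{-2\zeta})$ windows, which is negligible against the Gaussian tail.

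\textbf{What each approach buys.} Yours removes Lemma \ref{lemma:MollifiedFieldComparison} and the second distortion comparison entirely. It also yields a bound of the form $C\epsilon^{-2\zeta}\exp\bigl(-c\,\delta^2\epsilon^{-2(1-\zeta)}(\log\epsilon^{-1})^{-8}\bigr)$ for all $\zeta\in(0,1)$. That would extend Lemma \ref{lemma:QuantitativeCoordinateChangedFieldComparison} beyond $\zeta<1/3$ and simplify the first step of Lemma \ref{lemma:ErrorTerms}. Using $\int f = 0$ to pass to $h(z'+r_\epsilon\cdot)-h_{r_\epsilon}(z')$ is also cleaner than the paper's separate bound on the circle average. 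The paper's version, by contrast, avoids a union bound at the finest scale.

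\textbf{Housekeeping.}
\begin{itemize}
\item Take the grid in $\epsilon\ZZ^2\cap B_{2\epsilon^{1-\zeta}+\epsilon}(z_0)$, so that every $z$ really has a grid point within $\epsilon$.
\item The identity $\int\Psi^{\phi,z}_\epsilon=1$ needs $B_{\epsilon\log\epsilon^{-1}}(\phi(z))\subset\phi(U)$. This holds for small $\epsilon$ by Koebe.
\item Your ``main obstacle'' is milder than you suggest. After subtracting $h_{r_\epsilon}(z')$, the field $\tilde h$ is exactly a whole-plane GFF normalized to have mean zero on $\partial\DD$. Its $\cH_0^{-1}(B_M(0))$ norm is a.s.\ finite with Gaussian tails, by the same citation the paper uses. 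So the mean-zero property is needed only for the identity $\langle h,f\rangle=\langle\tilde h,\tilde f\rangle$, not for finiteness of $N$.
\end{itemize}
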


    Before proving Lemma \ref{lemma:CoordinateChangedFieldComparison}, we will prove the following more quantitative variant of Lemma \ref{lemma:CoordinateChangedFieldComparison} which will be used later to estimate LFPP distances for points at Euclidean distance $\epsilon^{1 - \zeta}$ apart.
    It is also used in the proof of Lemma \ref{lemma:CoordinateChangedFieldComparison}.
    We emphasize that Lemma \ref{lemma:QuantitativeCoordinateChangedFieldComparison} only applies to $\zeta \in (0, 1/3)$ while Lemma \ref{lemma:CoordinateChangedFieldComparison} holds for all $\zeta \in (0, 1)$.
    
    \begin{lemma}
      \label{lemma:QuantitativeCoordinateChangedFieldComparison}
      Fix open sets $W \Subset V \Subset U$, $\DerivativeBound > 1$, $\delta \in (0,1)$, and $\zeta \in (0, 1/3)$.
      There are constants $c = c(W, V, \DerivativeBound, \zeta, \delta) > 0$, $\epsilon_0 = \epsilon_0(W, V, \DerivativeBound, \zeta, \delta) \in (0, 1)$ such that for each $z_0 \in W$ and each $\epsilon \in (0, \epsilon_0)$,
      \begin{align*}
        \Prob\left\{\sup_{z \in B_{2 \epsilon^{1 - \zeta}}(z_0)} \sup_{\phi \in \confmaps} \left|\langle h, \Psi_{\epsilon}^{\phi, z} - \Psi_{\epsilon/|\phi'(z_0)|}^{\id, z} \rangle \right| > \delta \right\} \
        &\leq \ \exp\left(-c \frac{\epsilon^{-2 + 6 \zeta}}{(\log \epsilon^{-1})^5} \right).
      \end{align*}
      \begin{proof}
        The general idea is to view $h$ as a continuous linear functional on the Sobolev space $\cH^1(2 \epsilon^{1 - \zeta} \DD)$, then bound $|\langle h, \Psi_{\epsilon}^{\phi, z} - \Psi_{\epsilon/|\phi'(z_0)|}^{\id, z} \rangle|$ by the norm of the GFF times the norm of $\Psi_{\epsilon}^{\phi, z} - \Psi_{\epsilon/|\phi'(z_0)|}^{\id, z}$.
        The norm of the GFF will be controlled using the Borell-TIS inequality.
        The norm of $\Psi_{\epsilon}^{\phi, z} - \Psi_{\epsilon/|\phi'(z_0)|}^{\id, z}$ will be controlled using Lemma \ref{lemma:GrowthRateOfDistortedBumpFunctions}.
        Here, we will use the Sobolev norm $\|f\|_{\cH^1} = (\|f\|_{L^2} + \|f'\|_{L^2})^{1/2}$ and the dual norm on $\cH^{-1}$.

        Fix an open set $\tilde{W}$ such that $W \Subset \tilde{W} \Subset V$.
        Let $\epsilon_0$ and $C$ be as in Lemma \ref{lemma:GrowthRateOfDistortedBumpFunctions} with $\tilde{W}$ in place of $W$.
        We may assume $\epsilon_0$ is small enough that $\epsilon^{\zeta} \log \epsilon^{-1} < 1$ and $B_{4 \DerivativeBound \frepsilon + 2 \epsilon^{1 - \zeta}}(W) \subset \tilde{W}$ for all $\epsilon \in (0, \epsilon_0)$.
        Fix $z_0 \in W$, $\epsilon \in (0, \epsilon_0)$, and $z \in B_{2 \epsilon^{1 - \zeta}}(z_0)$.
        Note that since $\Psi_{\epsilon}^{\phi, z}$ and $\Psi_{\epsilon/|\phi'(z_0)|}^{\id, z}$ are supported on $B_{4 \DerivativeBound \frepsilon}(z) \subset \tilde{W}$, the support of $\Psi_{\epsilon}^{\phi, z}(\epsilon^{1 - \zeta} \cdot + z_0) - \Psi_{\epsilon/|\phi'(z_0)|}^{\id, z}(\epsilon^{1 - \zeta} \cdot + z_0)$ is contained in $B_{4 \DerivativeBound \epsilon^{\zeta} \log \epsilon^{-1}}(\epsilon^{-(1 - \zeta)}(z - z_0)) \subset 6 \DerivativeBound \DD$.
        Therefore, 
        \begin{align}
          &\left|\left\langle h, \Psi_{\epsilon}^{\phi, z} - \Psi_{\epsilon/|\phi'(z_0)|}^{\id, z} \right\rangle \right| \nonumber \\
          &\qquad = \ \left|\left\langle h(\epsilon^{1 - \zeta} \cdot + z_0), \epsilon^{2(1 - \zeta)} \left(\Psi_{\epsilon}^{\phi, z}(\epsilon^{1 - \zeta} \cdot + z_0) - \Psi_{\epsilon/|\phi'(z_0)|}^{\id, z}(\epsilon^{1 - \zeta} \cdot + z_0) \right) \right\rangle \right| \nonumber \\
          &\qquad \leq \ \|h(\epsilon^{1 - \zeta} \cdot + z_0)\|_{\cH_0^{-1}(6 \DerivativeBound \DD)} \epsilon^{2(1 - \zeta)} \|\Psi_{\epsilon}^{\phi, z}(\epsilon^{1 - \zeta} \cdot + z_0) - \Psi_{\epsilon/|\phi'(z_0)|}^{\id, z}(\epsilon^{1 - \zeta} \cdot + z_0)\|_{\cH_0^1(6 \DerivativeBound \DD)}. \label{eq:FieldNormInequality}
        \end{align}
        We will first bound the $\cH_0^1$ norm on the previous line.
        The idea is that the functions $\Psi_{\epsilon}^{\phi, z}$ and $\Psi_{\epsilon/|\phi'(z_0)|}^{\id, z}$ are supported on $B_{4 \DerivativeBound \frepsilon}(z)$ by Lemma \ref{lemma:DistortedBumpProperties}, so $\Psi_{\epsilon}^{\phi, z}(\epsilon^{1 - \zeta} \cdot + z_0) - \Psi_{\epsilon/|\phi'(z_0)|}^{\id, z}(\epsilon^{1 - \zeta} \cdot + z_0)$ is supported on a disk of radius $4 \DerivativeBound \epsilon^{\zeta} \log \epsilon^{-1}$.
        So we will bound the $L^2$ norm of this function and its derivative by their maximum (using Lemma \ref{lemma:GrowthRateOfDistortedBumpFunctions}) times the area of its support $\pi 16 \DerivativeBound^2 \epsilon^{2 \zeta} (\log \epsilon^{-1})^2$.
        The gradient is handled similarly.
        More precisely, using Lemma \ref{lemma:GrowthRateOfDistortedBumpFunctions},
        \begin{align*}
          &\|\Psi_{\epsilon}^{\phi, z}(\epsilon^{1 - \zeta} \cdot + z_0) - \Psi_{\epsilon/|\phi'(z_0)|}^{\id, z}(\epsilon^{1 - \zeta} \cdot + z_0)\|_{L^2(6 \DerivativeBound \DD)} \\
          &\qquad \qquad \leq \ \left(C^2 \epsilon^{2(-1 - \zeta)} (\log \epsilon^{-1})^2 \pi 16 \DerivativeBound^2 \epsilon^{2 \zeta} (\log \epsilon^{-1})^2 \right)^{1/2} \\
          &\qquad \qquad = \ C \sqrt{\pi} 4 \DerivativeBound \epsilon^{-1} \left(\log \epsilon^{-1} \right)^2, \\
          &\|\nabla (\Psi_{\epsilon}^{\phi, z}(\epsilon^{1 - \zeta} \cdot + z_0) - \Psi_{\epsilon/|\phi'(z_0)|}^{\id, z}(\epsilon^{1 - \zeta} \cdot + z_0)) \|_{L^2(6 \DerivativeBound \DD)} \\
          &\qquad \qquad = \ \epsilon^{1 - \zeta} \|(\nabla \Psi_{\epsilon}^{\phi, z})(\epsilon^{1 - \zeta} \cdot + z_0) - (\nabla \Psi_{\epsilon}^{\id, z})(\epsilon^{1 - \zeta} \cdot + z_0) \|_{L^2(6 \DerivativeBound \DD)} \\
          &\qquad \qquad \leq \ \epsilon^{1 - \zeta} \left( C^2 \epsilon^{2(-2 - \zeta)} \left(\log \epsilon^{-1} \right)^2 \pi 16 \DerivativeBound^2 \epsilon^{2 \zeta} (\log \epsilon^{-1})^2 \right)^{1/2} \\
          &\qquad \qquad = \ C \sqrt{\pi} 4 \DerivativeBound \epsilon^{-1 - \zeta} \left(\log \epsilon^{-1} \right)^2.
        \end{align*}
        It follows that
        \begin{align}
          &\epsilon^{2(1 - \zeta)} \|\Psi_{\epsilon}^{\phi, z}(\epsilon^{1 - \zeta} \cdot + z_0) - \Psi_{\epsilon/|\phi'(z_0)|}^{\id, z}(\epsilon^{1 - \zeta} \cdot + z_0)\|_{\cH_0^{1}(6 \DerivativeBound \DD)} \nonumber \\
          &\qquad \qquad \leq \ \epsilon^{2(1 - \zeta)} \left(C^2 \pi 16 \DerivativeBound \epsilon^{-2} (\log \epsilon^{-1})^4 + C^2 \pi 16 \DerivativeBound \epsilon^{-2 - 2 \zeta} (\log \epsilon^{-1})^4 \right)^{1/2} \nonumber \\
          &\qquad \qquad = \ C \sqrt{\pi} 4 \DerivativeBound \left(\epsilon^{2 - 4 \zeta} + \epsilon^{2 - 6 \zeta} \right)^{1/2} \left(\log \epsilon^{-1} \right)^2 \nonumber \\
          &\qquad \qquad \leq \ C \epsilon^{1 - 3 \zeta} \left(\log \epsilon^{-1} \right)^2, \label{eq:DistortedBumpFunctionH1NormBound}
        \end{align}
        where we have absorbed constants into $C$.
        Therefore,
        \begin{align*}
          &\Prob\left\{\sup_{z \in B_{2 \epsilon^{1 - \zeta}}(z_0)} \sup_{\phi \in \confmaps} \left|\langle h, \Psi_{\epsilon}^{\phi, z} - \Psi_{\epsilon/|\phi'(z_0)|}^{\id, z} \rangle \right| > \delta \right\} \\
          &\qquad \qquad \leq \ \Prob\left\{\|h(\epsilon^{1 - \zeta} \cdot + z_0)\|_{\cH_0^{-1}(6 \DerivativeBound \DD)} > \frac{\delta \epsilon^{3 \zeta - 1}}{C (\log \epsilon^{-1})^2}\right\} \tag*{(by \eqref{eq:FieldNormInequality}, \eqref{eq:DistortedBumpFunctionH1NormBound})} \\
          &\qquad \qquad \leq \ \Prob\left\{\|H\|_{\cH_0^{-1}(6 \DerivativeBound \DD)} > \frac{\delta \epsilon^{3 \zeta - 1}}{2 C (\log \epsilon^{-1})^2} \right\} + \Prob\left\{|h_{\epsilon^{1 - \zeta}}(z_0)| > \frac{\delta \epsilon^{3 \zeta - 1}}{2 C (\log \epsilon^{-1})^2} \right\},
        \end{align*}
        where $H$ is a whole-plane GFF normalized so that $H_1(0) = 0$.
        Since $\|H\|_{\cH_0^{-1}(6 \DerivativeBound \DD)} < \infty$ almost surely \cite[Theorem 1.45]{BerestyckiPowell}, the Borell-TIS inequality \cite{BorellTISBorell} \cite{BorellTISTIS} (see, e.g. \cite[Theorem 2.1.1]{RandomFieldsAndGeometry}) implies that if
        \begin{align*}
          \sigma^2 \
          &\coloneqq \ \sup_{\substack{\varphi \in \cH^1(6 \DerivativeBound \DD) \\ \|\varphi\|_{\cH^1(6 \DerivativeBound \DD)} \leq 1}} \E\left|\langle H, \varphi \rangle \right|^2,
        \end{align*}
        then $\sigma^2 < \infty$ and $\E[\|H\|_{\cH_0^{-1}(6 \DerivativeBound \DD)}] < \infty$, and moreover for each $u > 0$,
        \begin{align*}
          \Prob\left\{\|H\|_{\cH_0^{-1}(6 \DerivativeBound \DD)} > \E\left[\|H\|_{\cH_0^{-1}(6 \DerivativeBound \DD)} \right] + u \right\} \
          &\leq \ \exp\left(-\frac{u^2}{2 \sigma^2} \right).
        \end{align*}
        Apply this with $u$ equal to 
        \begin{align*}
          u(\epsilon) \
          &\coloneqq \ -\E\left[\|H\|_{\cH_0^{-1}(6 \DerivativeBound \DD)} \right] + \frac{\delta \epsilon^{3 \zeta - 1}}{2 C (\log \epsilon^{-1})^2},
        \end{align*}
        which is positive for $\epsilon$ small enough because $3 \zeta - 1 < 0$.
        We obtain
        \begin{align*}
          \Prob\left\{\|H\|_{\cH_0^{-1}(6 \DerivativeBound \DD)} > \frac{\delta \epsilon^{3 \zeta - 1}}{2 C (\log \epsilon^{-1})^2} \right\} \
          &\leq \ \exp\left(-\frac{u(\epsilon)^2}{2 \sigma^2} \right).
        \end{align*}
        Since $h_{\epsilon^{1 - \zeta}}(z_0)$ is a mean-zero Gaussian with variance bounded by a constant $c_0$ depending only on $W$ plus $\log \epsilon^{-(1 - \zeta)}$, standard estimates for the tail probabilities of Gaussian random variables show that
        \begin{align*}
          &\Prob\left\{|h_{\epsilon^{1 - \zeta}}(z_0)| > \frac{\delta \epsilon^{3 \zeta - 1}}{2 C (\log \epsilon^{-1})^2} \right\} \\
          &\qquad \qquad \leq \ \frac{2 C (\log \epsilon^{-1})^2 \sqrt{c_0 + \log \epsilon^{-(1 - \zeta)}}}{\sqrt{2 \pi} \delta} \epsilon^{1 - 3 \zeta} \exp\left(-\frac{\delta^2 \epsilon^{6 \zeta - 2}}{8 C^2 (\log \epsilon^{-1})^4 (c_0 + \log \epsilon^{-(1 - \zeta)})} \right).
        \end{align*}
        Since $1 - 3 \zeta > 0$, we see that after shrinking $\epsilon_0$, we will have $\frac{2 C (\log \epsilon^{-1})^2 \sqrt{c_0 + \log \epsilon^{-(1 - \zeta)}}}{\sqrt{2 \pi} \delta} \epsilon^{1 - 3 \zeta} < 1$.
        The lemma now follows with constant
        \begin{align*}
          c \
          &\coloneqq \ \min\left\{\frac{1}{2 \sigma^2} \inf_{\epsilon \in (0, \epsilon_0)} \frac{u(\epsilon)^2}{\frac{\epsilon^{-2 + 6 \zeta}}{(\log \epsilon^{-1})^5}}, \frac{\delta^2}{8 C^2} \inf_{\epsilon \in (0, \epsilon_0)} \frac{\log \epsilon^{-1}}{c_0 + \log \epsilon^{-(1 - \zeta)}} \right\}.
        \end{align*}
      \end{proof}
    \end{lemma}

    \begin{proof}[Proof of Lemma \ref{lemma:CoordinateChangedFieldComparison}]
      The idea is roughly as follows.
      Lemma \ref{lemma:QuantitativeCoordinateChangedFieldComparison} implies Lemma \ref{lemma:CoordinateChangedFieldComparison} when $\zeta \in (0, 1/3)$, but we would like the result to hold even when $\zeta$ is close to $1$.
      To deal with this, we will fix some $\zeta_{-} \in (0, 1/3)$, say $\zeta_{-} = 1/4$, and use Lemma \ref{lemma:QuantitativeCoordinateChangedFieldComparison} to estimate $\langle h, \Psi_{\epsilon}^{\phi, z} - \Psi_{\epsilon/|\phi'(z_1)|}^{\id, z} \rangle$ for some $z_1$ with $|z - z_1| \approx \epsilon^{1 - \zeta_{-}}$, then use distortion estimates and Lemma \ref{lemma:MollifiedFieldComparison} to estimate $\langle h, \Psi_{\epsilon/|\phi'(z_1)|}^{\id, z} - \Psi_{\epsilon/|\phi'(z_0)|}^{\id, z} \rangle$ when $|z_1 - z_0| \approx \epsilon^{1 - \zeta}$.

      Let us make this precise.
      Fix $0 < \zeta_{-} < \frac{1}{3} \w \zeta$. 
      For each $z_0 \in W$, the probability in \eqref{eq:CoordinateChangedFieldComparisonProbability} is bounded by the sum of the probabilities
      \begin{align}
        &\Prob\left\{\sup_{z \in B_{2 \epsilon^{1 - \zeta}}(z_0)} \sup_{z_1 \in B_{2 \epsilon^{1 - \zeta_{-}}}(z) \cap (\frac{\epsilon^{1 - \zeta_{-}}}{4} \ZZ^2)} \sup_{\phi \in \confmaps} \left|\langle h, \Psi_{\epsilon}^{\phi, z} - \Psi_{\epsilon/|\phi'(z_1)|}^{\id, z} \rangle \right| > \frac{\delta}{2} \right\}, \label{eq:CoordinateChangedFieldComparisonExponentialTerm} \\
        &\Prob\left\{\sup_{z \in B_{2 \epsilon^{1 - \zeta}}(z_0)} \sup_{z_1 \in B_{2 \epsilon^{1 - \zeta_{-}}}(z) \cap (\frac{\epsilon^{1 - \zeta_{-}}}{4} \ZZ^2)} \sup_{\phi \in \confmaps} \left|\langle h, \Psi_{\epsilon/|\phi'(z_1)|}^{\id, z} - \Psi_{\epsilon/|\phi'(z_0)|}^{\id, z} \rangle \right| > \frac{\delta}{2} \right\}. \label{eq:CoordinateChangedFieldComparisonScalingTerm}
      \end{align}
      By a union bound, we see that \eqref{eq:CoordinateChangedFieldComparisonExponentialTerm} is at most
      \begin{align*}
        \sum_{z_1 \in B_{2 \epsilon^{1 - \zeta} + 2 \epsilon^{1 - \zeta_{-}}}(z_0) \cap (\frac{\epsilon^{1 - \zeta_{-}}}{4} \ZZ^2)} \Prob\left\{\sup_{z \in B_{2 \epsilon^{1 - \zeta_{-}}}(z_1)} \sup_{\phi \in \confmaps} \left|\langle h, \Psi_{\epsilon}^{\phi, z} - \Psi_{\epsilon/|\phi'(z_1)|}^{\id, z} \rangle \right| > \frac{\delta}{2} \right\},
      \end{align*}
      which converges to $0$ uniformly over $z_0 \in W$ by Lemma \ref{lemma:QuantitativeCoordinateChangedFieldComparison}.
      To bound \eqref{eq:CoordinateChangedFieldComparisonScalingTerm}, let $r \coloneqq \dist(\partial W, \partial V)$, and note that by distortion estimates, if $\phi \in \confmaps$ and $z_0 \in W$ and $z_1 \in V$ with $|z_0 - z_1| < r$, then
      \begin{align}
        \frac{|\phi'(z_1)|}{|\phi'(z_0)|} \
        &\in \ \left[\frac{1 - r^{-1} |z_0 - z_1|}{(1 + r^{-1} |z_0 - z_1|)^3}, \frac{1 + r^{-1} |z_0 - z_1|}{(1 - r^{-1} |z_0 - z_1|)^3} \right].
        \label{eq:EpsilonRatioDistortionEstimate}
      \end{align}
      In particular, if $z_0 \in W$ and $z_1 \in V$ with $|z_0 - z_1| < 2 \epsilon^{1 - \zeta} + 2 \epsilon^{1 - \zeta_{-}}$, then 
      \begin{align*}
        1 - C \epsilon^{1 - \zeta} \
        &\leq \ \frac{|\phi'(z_1)|}{|\phi'(z_0)|} \
        \leq \ 1 + C \epsilon^{1 - \zeta}
      \end{align*}
      with constant $C$ depending only on $r$.
      By Lemma \ref{lemma:MollifiedFieldComparison} (with $s = 1/|\phi'(z_0)|$ and $t = 1/|\phi'(z_1)|$), \eqref{eq:CoordinateChangedFieldComparisonScalingTerm} converges to $0$ as $\epsilon \to 0$ uniformly over $z_0 \in W$.
    \end{proof}

    For the next lemma, note that since $Z_{\epsilon}^{-1} \psi_{\epsilon} p_{\epsilon^2/2}$ is an approximate identity, by \ref{DistortedBump:LogMollification} of Lemma \ref{lemma:DistortedBumpProperties}, we have $\langle -\log|\phi'|, \Psi_{\epsilon}^{\phi, z} \rangle \to -\log|\phi'(z)|$ as $\epsilon \to 0$.
    The next lemma shows that this convergence is uniform over $\phi \in \confmaps$.
    This is used to deal with the $\log|(\phi^{-1})'|$ terms which arise in the coordinate change formula.

    \begin{lemma}
      \label{lemma:LogMollificationConvergence}
      If $V \Subset U$ are open sets and $\DerivativeBound > 1$, then
      \begin{align*}
        \lim_{\epsilon \to 0} \sup_{z \in V} \sup_{\phi \in \confmaps} \left|\left\langle -\log|\phi'|, \Psi_{\epsilon}^{\phi, z} \right\rangle - (-\log|\phi'(z)|) \right| \
        &= \ 0.
      \end{align*}
      \begin{proof}
        We have
        \begin{align}
          &\langle -\log|\phi'|, \Psi_{\epsilon}^{\phi, z} \rangle \nonumber \\
          &\qquad \qquad = \ -\int\limits_{\phi^{-1}(B_{\frepsilon}(\phi(z)))} \log|\phi'(\omega)| \Psi_{\epsilon}^{\phi, z}(\omega) \, \D A(\omega) \nonumber \\
          &\qquad \qquad = \ -\int\limits_{B_{\frepsilon}(\phi(z))} \log|\phi'(\phi^{-1}(w))| \Psi_{\epsilon}^{\id, \phi(z)}(w) \, \D A(w) \label{eq:LogTermsCoordinateChange} \\
          &\qquad \qquad = \ \int\limits_{B_{\frepsilon}(\phi(z))} \left(-\log|\phi'(\phi^{-1}(w))| + \log|\phi'(z)| \right) \Psi_{\epsilon}^{\id, \phi(z)}(w) \, \D A(w) - \log|\phi'(z)|, 
          \label{eq:LogMollificationInitialEstimate}
        \end{align}
        where the last step used the fact that $\int_{B_{\frepsilon}} \Psi_{\epsilon}^{\id, \phi(z)}(w) \, \D A(w) = 1$.
        Note that the term $|(\phi^{-1})'(w)|^2 = 1/|\phi'(\omega)|^2$ from the change of variables formula used in \eqref{eq:LogTermsCoordinateChange} cancels with the $|\phi'(\omega)|^2$ in the definition of $\Psi_{\epsilon}^{\phi, z}(\omega)$.
        Assuming $\phi \in \confmaps$ and $z \in V$, the Koebe-$\frac{1}{4}$ theorem implies $\phi^{-1}(B_{\frepsilon}(\phi(z))) \subset B_{4 \DerivativeBound \frepsilon}(z)$.
        Therefore,
        \begin{align}
          \sup_{w \in B_{\frepsilon}(\phi(z))} \left|-\log|\phi'(\phi^{-1}(w))| + \log|\phi'(z)| \right| \
          &\leq \ \sup_{\omega \in B_{4 \DerivativeBound \frepsilon}(z)} \left|-\log|\phi'(\omega)| + \log|\phi'(z)| \right| \nonumber \\
          &= \ \sup_{\omega \in B_{4 \DerivativeBound \frepsilon}(z)} \left|\log\left|\frac{\phi'(\omega)}{\phi'(z)} \right|\right|. \label{eq:LogMollificationErrorEstimate}
        \end{align}
        By distortion estimates, if $r \coloneqq \dist(\partial V, \partial U)$, $z \in V$, and $\omega \in B_{4 \DerivativeBound \frepsilon}(z)$ with $\epsilon$ small enough that $4 \DerivativeBound \frepsilon < r$, then
        \begin{align}
          \left|\frac{\phi'(\omega)}{\phi'(z)} \right| \
          &\in \ \left[\frac{1 - \frac{4 \DerivativeBound \frepsilon}{r}}{(1 + \frac{4 \DerivativeBound \frepsilon}{r})^3}, \frac{1 + \frac{4 \DerivativeBound \frepsilon}{r}}{(1 - \frac{4 \DerivativeBound \frepsilon}{r})^3} \right]. \label{eq:LogMollificationDistortion}
        \end{align}
        Combine \eqref{eq:LogMollificationInitialEstimate}, \eqref{eq:LogMollificationErrorEstimate}, and \eqref{eq:LogMollificationDistortion} to prove the lemma.
      \end{proof}
    \end{lemma}

    The following will be used to control the scaling ratios which arise in \eqref{eq:LFPPScaling}. 
    \begin{lemma}
      \label{lemma:RegularlyVaryingScalingConstants}
      For any compact subinterval of $I$ of $(0, \infty)$,
      \begin{align*}
        \lim_{\epsilon \to 0} \frac{\fa_{C \epsilon}}{\fa_{\epsilon}} \
        &= \ C^{1 - \xi Q}
      \end{align*}
      uniformly over $C \in I$.
      \begin{proof}
        It is shown in \cite[Corollary 1.11]{ExistenceAndUniqueness} that when $\xi < \xi_{\text{crit}}$, $\epsilon \mapsto \fa_{\epsilon}$ is regularly varying with exponent $1 - \xi Q$.
        In fact, the proof of \cite[Corollary 1.11]{ExistenceAndUniqueness} applies even when $\xi \geq \xi_{\text{crit}}$ since it follows from convergence in probability of LFPP (which is shown when $\xi \geq \xi_{\text{crit}}$ in \cite[Theorem 1.3]{UniquenessOfSupercriticalLQGMetrics}).

        Regular variation of $\epsilon \mapsto \fa_{\epsilon}$ means $\lim_{\epsilon \to 0} \frac{\fa_{C \epsilon}}{\fa_{\epsilon}} = C^{1 - \xi Q}$ for any fixed $C \in (0, \infty)$.
        It is a standard result that for any regularly varying function, this limit is uniform over compact subsets of $(0, \infty)$ \cite{RegularVariationMbleCase} (see, e.g. \cite[Theorem 1.2.1]{RegularVariation}).
      \end{proof}
    \end{lemma}

    \begin{proof}[Proof of Proposition \ref{prop:MainRegularityConditionHighProbability}]
      Apply Lemmas \ref{lemma:CoordinateChangedFieldComparison}, \ref{lemma:LogMollificationConvergence}, and \ref{lemma:RegularlyVaryingScalingConstants} to choose $\epsilon_0 \in (0, 1)$ such that for each $\epsilon \in (0, \epsilon_0)$ and each $z_0 \in W$, the following are true:
      \begin{align}
        &P\left\{\sup_{z \in B_{2 \epsilon^{1 - \zeta}}(z_0)} \sup_{\phi \in \confmaps} \left|\left\langle h, \Psi_{\epsilon}^{\phi, z} - \Psi_{\epsilon/|\phi'(z_0)|}^{\id, z} \right\rangle \right| > \delta \right\} \
        < \ 1 - p, \label{eq:MainRegularityConditionFieldMollification} \\
        &\left|\frac{\fa_{t \epsilon}}{\fa_{\epsilon}} - t^{1 - \xi Q} \right| \
        \leq \ \frac{\delta}{1 + \delta} \DerivativeBound^{1 - \xi Q} \ \forall t \in [\DerivativeBound^{-1}, \DerivativeBound], \label{eq:MainRegularityConditionScalingConstants} \\
        &\sup_{z \in V} \sup_{\phi \in \confmaps} \left|\langle -\log|\phi'|, \Psi_{\epsilon}^{\phi, z} \rangle - (-\log|\phi'(z)|) \right| \
        < \ \delta. \label{eq:MainRegularityConditionLogMollification}
      \end{align}
      Note that \eqref{eq:MainRegularityConditionScalingConstants} implies
      \begin{align}
        (1 + \delta)^{-1} t^{1 - \xi Q} \
        &\leq \ \frac{\fa_{t \epsilon}}{\fa_{\epsilon}} \
        \leq \ (1 + \delta) t^{1 - \xi Q} \ \forall t \in [\DerivativeBound^{-1}, \DerivativeBound].
        \label{eq:MainRegularityConditionScalingConstantsRearranged}
      \end{align}
      Let $r = \dist(\partial W, \partial V)$.
      We may assume $\epsilon_0$ is small enough that $B_{2 \epsilon^{1 - \zeta}}(W) \subset V$ and
      \begin{align}
        \left(\frac{1 + r^{-1} 2 \epsilon_0^{1 - \zeta}}{1 - r^{-1} 2 \epsilon_0^{1 - \zeta}} \right)^{3(\xi Q - 1)} \
        &< \ 1 + \delta.
        \label{eq:MainRegularityConditionDistortionEstimateComparison}
      \end{align}
      By the distortion theorem, for $\epsilon \in (0, \epsilon_0)$, $z_0 \in W$, $\phi \in \confmaps$, and $\omega \in B_{2 \epsilon^{1 - \zeta}}(z_0)$,
      \begin{align}
        \frac{1 - r^{-1} 2 \epsilon^{1 - \zeta}}{(1 + r^{-1} 2 \epsilon^{1 - \zeta})^3} \
        &\leq \ \left|\frac{\phi'(\omega)}{\phi'(z_0)} \right| \
        \leq \ \frac{1 + r^{-1} 2 \epsilon^{1 - \zeta}}{(1 - r^{-1} 2 \epsilon^{1 - \zeta})^3}.
        \label{eq:MainRegularityConditionDistortionEstimate}
      \end{align}
      It follows that for each $\epsilon \in (0, \epsilon_0)$ and each $z_0 \in W$, it holds with probability at least $p$ that for each $\phi \in \confmaps$ and each piecewise $C^1$ path $P$ in $B_{2 \epsilon^{1 - \zeta}}(z_0)$,
      \begin{align*}
        &\fa_{\epsilon}^{-1} \int\limits_{0}^{1} e^{\xi (h \circ \phi^{-1} + Q \log|(\phi^{-1})'|) \ast (Z_{\epsilon}^{-1} \psi_{\epsilon} p_{\epsilon^2/2})(\phi(P(t)))} |(\phi \circ P)'(t)| \, \D t \\
        &= \ \fa_{\epsilon/|\phi'(z_0)|}^{-1} \int\limits_{0}^{1} e^{\xi \langle h, \Psi_{\epsilon}^{\phi, P(t)} - \Psi_{\epsilon/|\phi'(z_0)|}^{\id, P(t)} \rangle} e^{\xi Q (\langle -\log|\phi'|, \Psi_{\epsilon}^{\phi, P(t)} \rangle + \log|\phi'(P(t))|)} \\
        &\qquad \qquad \cdot \frac{\fa_{\epsilon/|\phi'(z_0)|}}{\fa_{\epsilon}} |\phi'(P(t))|^{1 - \xi Q} e^{\xi \hat{h}_{\epsilon/|\phi'(z_0)|}^{*}(P(t))} |P'(t)| \, \D t \\
        &\leq \ e^{\xi \delta} e^{\xi Q \delta} \left(1 + \delta \right) \fa_{\epsilon/|\phi'(z_0)|}^{-1} \int\limits_{0}^{1} \left|\frac{\phi'(P(t))}{\phi'(z_0)} \right|^{1 - \xi Q} e^{\xi \hat{h}_{\epsilon/|\phi'(z_0)|}^{*}(P(t))} |P'(t)| \, \D t \tag*{(by \eqref{eq:MainRegularityConditionFieldMollification}, \eqref{eq:MainRegularityConditionScalingConstantsRearranged}, \eqref{eq:MainRegularityConditionLogMollification})} \\
        &\leq \ e^{\xi \delta} e^{\xi Q \delta} \left(1 + \delta \right)^2 \fa_{\epsilon/|\phi'(z_0)|}^{-1} \int\limits_{0}^{1} e^{\xi \hat{h}_{\epsilon/|\phi'(z_0)|}^{*}(P(t))} |P'(t)| \, \D t, \tag*{(by \eqref{eq:MainRegularityConditionDistortionEstimateComparison}, \eqref{eq:MainRegularityConditionDistortionEstimate})}.
      \end{align*}
      This proves the upper bound in the proposition statement with $e^{\xi \delta} e^{\xi Q \delta} (1 + \delta)^2$ in place of $1 + \delta$.
      The lower bound is done analogously.
    \end{proof}

  \section{Comparison of coordinate-changed LFPP and LQG metric at large scales}
    \label{section:LargeScale}
    In this section, we present a multiscale argument to improve the small-scale pathwise estimate of Proposition \ref{prop:MainRegularityConditionHighProbability} to a global comparison between $\locLFPP[\epsilon][\hphi](\phi(\cdot), \phi(\cdot))$ and $D_h$.
    The argument is similar to those of \cite[Theorem 1.6]{LocalMetricsOfTheGFF}, \cite[Theorem 1.8]{UpToConstants}, \cite[Section 3]{ConformalCovariance}, and \cite[Section 3]{AlmostSureConvergence}, but with a key difference that those proofs work with only two metrics at a time, while we must consider infinite family of metrics $\locLFPP[\epsilon][\hphi]$ as $\phi$ ranges over all conformal maps $U \to \phi(U)$, and our estimates must be uniform over all $\phi$.
    The general approach is to show that when $\epsilon$ is small, $\locLFPP[\epsilon][\hphi](\phi(\cdot), \phi(\cdot))$ and $D_h$ are, roughly speaking, Lipschitz equivalent with a Lipschitz constant which a priori may be large, but crucially does not depend on $\phi$.
    This is achieved using Proposition \ref{prop:MainRegularityConditionHighProbability} to upper-bound $\locLFPP[\epsilon][\hphi](\phi(\cdot), \phi(\cdot))$ by $\locLFPP[\epsilon/|\phi'(z_0)|][h]$, then using almost sure convergence of LFPP to compare $\LFPP[\epsilon/|\phi'(z_0)|][h]$ with $D_h$.
    Since the comparison of $\locLFPP[\epsilon][\hphi](\phi(\cdot), \phi(\cdot))$ and $\locLFPP[\epsilon/|\phi'(z_0)|][h]$ is only valid on small scales, we will use local independence properties of the GFF (see Lemma \ref{lemma:IndependenceAcrossConcentricAnnuli} below) to prove that there are many annuli on which this small scale estimate is valid, then string together paths around such annuli to obtain a global Lipschitz condition.
    Our result will not be a true Lipschitz equivalence since $D_h$ is fractal while the metrics $\locLFPP[\epsilon][\hphi](\phi(\cdot), \phi(\cdot))$ are smooth; instead, we will prove an approximate Lipschitz condition where distances between small Euclidean balls using one metric are comparable to distances between points of the other.
    See Proposition \ref{prop:InitialLipschitzCondition} for a more precise statement.
    This part of the argument is most similar to \cite[Theorem 1.8]{UpToConstants}, and is carried out in Section \ref{section:InitialLipschitzCondition} below.

    The Lipschitz constant we obtain may be quite large since the stringing together of paths relies on upper-bounding the distance around a narrow annulus by a constant times the distance across (similar to RSW arguments in percolation theory).
    For this Lipschitz condition to be useful, we need the Lipschitz constant to be close to $1$.
    We will do this using a similar style of argument, where we show that on small scales, the Lipschitz constant is close to $1$.
    This roughly follows from the fact that $\delta$ in Proposition \ref{prop:MainRegularityConditionHighProbability} can be made close to $0$ and LFPP converges to $D_h$.
    Like the previous argument, we can prove many annuli exist where this better Lipschitz condition holds.
    Crucially, we will prove that there are enough annuli that a uniformly positive proportion of any geodesic between two points is comprised of segments contained in one of these annuli, where the Lipschitz constant is close to $1$.
    The remaining segments of the geodesic have $\locLFPP[\epsilon][\hphi](\phi(\cdot), \phi(\cdot))$- and $D_h$-lengths comparable using the initial Lipschitz constant from Proposition \ref{prop:InitialLipschitzCondition}.
    Since the proportion contained in ``good annuli'' is universal, independent of any parameters, the result is a global Lipschitz condition with constant closer to $1$ than we started with.
    By iterating this argument, we can make the constant as close to $1$ as we wish.
    A caveat is that, again, we are comparing distances between balls of radius $\epsilon^{1 - \zeta}$ with distances between points, and each step requires making $\zeta$ closer to $0$.
    This argument is similar to \cite[Section 3]{AlmostSureConvergence}, which in turn is based on \cite[Section 3]{ConformalCovariance}.
    This is the content of Section \ref{section:ImprovingTheLipschitzConstant} below, and the main result of that section is Proposition \ref{prop:ImprovingTheLipschitzConstant}.

    One issue with Proposition \ref{prop:ImprovingTheLipschitzConstant} is that the new Lipschitz constant depends on terms of the form $\ratios$ arising from \eqref{eq:LFPPScaling}.
    For a fixed $r$, $\ratios$ should be close to $1$ for small $\epsilon$ by regular variation of $\epsilon \mapsto \fa_{\epsilon}$, but we will take $r \approx \epsilon^{1 - \zeta}$ for $\zeta \in (0, 1)$, and need $\ratios[\epsilon t]$ to be close to $1$ for all $t$ in a compact interval (roughly, $t = 1/|\phi'(x)|$ for $x \in U$ and $\phi$ a conformal map).
    The proof of \cite[Lemma 3.11]{AlmostSureConvergence} with superficial changes shows that there are many radii $r$ for which $\ratios[\epsilon t]$ is close to $1$ for all $t$ in a compact interval.
    We give the precise statement in Lemma \ref{lemma:GoodScalingRatios}.

    Finally, using the result of Section \ref{section:ImprovingTheLipschitzConstant} and Lemma \ref{lemma:GoodScalingRatios}, we will prove that all of the coordinate-changed LFPP metrics converge simultaneously to $D_h$ on a neighbourhood of the diagonal in $U$.
    This is done in Section \ref{section:ConvergenceOnDiagonal}.

    Throughout this section, we fix $\xi < \xi_{\text{crit}}$.

    \subsection{Initial Lipschitz condition}
      \label{section:InitialLipschitzCondition}
      Fix $\xi < \xi_{\text{crit}}$.
      The goal of this section is to prove the following.
      \begin{prop}
        \label{prop:InitialLipschitzCondition}
        Fix $\xi > 0$, $\DerivativeBound > 1$, connected open sets $W \Subset V \Subset U$ and $\zeta_{+}, \zeta \in (0,1)$ with $1 - \zeta_{+} < \frac{1}{2} (1 - \zeta)$.
        There exists $C_0 = C_0(\DerivativeBound, \zeta) > 0$ such that the following is true. 
        With probability $1 - O_{\epsilon}(\epsilon^{2(1 - \zeta)})$ as $\epsilon \to 0$, for all $z,w \in W$,
        \begin{align}
          &\sup_{\phi \in \confmaps} \locLFPP[\epsilon][\hphi]\left(\phi(B_{2 \epsilon^{1 - \zeta_{+}}}(z)), \phi(B_{2 \epsilon^{1 - \zeta_{+}}}(w)); \phi(B_{2 \epsilon^{1 - \zeta_{+}}}(W)) \right) \nonumber \\
          &\qquad \qquad \leq \ C_0 D_h\left(z,w; W \right), \label{eq:InitialLipschitzConditionSupLeqLQGMetric} \\
          &D_h\left(B_{2 \epsilon^{1 - \zeta_{+}}}(z), B_{2 \epsilon^{1 - \zeta_{+}}}(w); B_{2 \epsilon^{1 - \zeta_{+}}}(W) \right) \nonumber \\
          &\qquad \qquad \leq \ C_0 \inf_{\phi \in \confmaps} \locLFPP[\epsilon][\hphi]\left(\phi(z), \phi(w); \phi(W) \right). \label{eq:InitialLipschitzConditionLQGMetricLeqInf}
        \end{align}
      \end{prop}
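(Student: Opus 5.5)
We will prove Proposition \ref{prop:InitialLipschitzCondition} by a multiscale ``good annuli'' argument in the spirit of \cite[Theorem 1.8]{UpToConstants}. The two key features are that the small-scale comparison of Proposition \ref{prop:MainRegularityConditionHighProbability} is uniform over $\phi \in \confmaps$, and that whether an annulus is good is determined by the field locally.

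\textbf{Step 1: good annuli.} Fix a scale $r \approx \epsilon^{1-\zeta}$. Call an annulus $A = \AA_{r, 2r}(z_0)$, with $z_0 \in \frac{r}{4}\ZZ^2 \cap B_{2\epsilon^{1-\zeta_+}}(W)$, \emph{good} if three conditions hold.
\begin{enumerate}[label=(\alph*)]
  \item The conclusion of Proposition \ref{prop:MainRegularityConditionHighProbability} holds on $B_{2\epsilon^{1-\zeta}}(z_0)$ with $\delta = 1$, simultaneously for all $\phi \in \confmaps$.
  \item For all $t \in [\DerivativeBound^{-1}, \DerivativeBound]$, we have $\locLFPP[\epsilon t](\text{around } A) \le M\, \locLFPP[\epsilon t](\text{across } A)$, together with comparisons $\locLFPP[\epsilon t](\text{around }A) \le M\, D_h(\text{across } A')$ and $D_h(\text{around } A) \le M\, \locLFPP[\epsilon t](\text{across } A')$ for a slightly larger concentric annulus $A'$.
  \item The analogous around/across bound holds for $D_h$ itself.
\end{enumerate}
By Lemma \ref{lemma:PropertiesOfLocalizedFieldAndLFPP}\ref{loc:Locality}, each condition is determined by $h$ on a slightly fattened annulus, modulo additive constants, which cancel in the ratios by \ref{loc:AddRandomVariableToField}.

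To make (b) uniform in $t$, we use the exact scaling \eqref{eq:LFPPScaling} together with Lemma \ref{lemma:RegularlyVaryingScalingConstants}. This reduces $\locLFPP[\epsilon t]$ on $A$ to LFPP at scale $\epsilon t/r$ on a unit annulus. We then combine the tightness and RSW-type estimates for LFPP across and around unit annuli (as in \cite{UpToConstants}) with Lemma \ref{lemma:MollifiedFieldComparison}, which lets us discretize $t$ to a mesh of size $\epsilon^{1-\zeta}$. Choosing $M$ large and $\epsilon$ small, each annulus is good with probability at least $p$, with $p$ as close to $1$ as we like.

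\textbf{Step 2: many good annuli.} Using independence across concentric annuli (Lemma \ref{lemma:IndependenceAcrossConcentricAnnuli}), we consider $N \approx \log$-many disjoint concentric scales $r_k \in [\epsilon^{1-\zeta}, \epsilon^{1-\zeta_+}]$ around each grid point. The hypothesis $1-\zeta_+ < \frac12(1-\zeta)$ leaves room for a number of scales growing like a constant times $\log \epsilon^{-1}$. Taking $p$ close to $1$, a standard union bound shows that with probability $1 - O(\epsilon^{2(1-\zeta)})$, every grid point at spacing $\epsilon^{1-\zeta_+}$ has at least one good annulus among its scales. This is the step where the polynomial error rate is produced, via the exponential-in-$N$ tail beating the polynomial number of grid points.

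\textbf{Step 3: chaining.} For \eqref{eq:InitialLipschitzConditionSupLeqLQGMetric}, take a near-$D_h(\cdot,\cdot;W)$-geodesic $\gamma$ from $z$ to $w$. Cover it by good annuli $A_1, \dots, A_m$ along a sequence of times at which $\gamma$ crosses each $A_j'$. For every $\phi$, we join consecutive loops around $\phi(A_j)$, whose $\locLFPP[\epsilon][\hphi]$-length is, by (a) and then (b), at most $2M$ times $D_h$ of the crossed segment of $\gamma$. Consecutive loops intersect because the annuli overlap. The first and last loops hit $\phi(B_{2\epsilon^{1-\zeta_+}}(z))$ and $\phi(B_{2\epsilon^{1-\zeta_+}}(w))$, and stay in $\phi(B_{2\epsilon^{1-\zeta_+}}(W))$. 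Each geodesic segment is charged boundedly many times, giving $C_0 = O(M)$ independently of $\phi$.

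The bound \eqref{eq:InitialLipschitzConditionLQGMetricLeqInf} is symmetric: we take a near-geodesic for $\locLFPP[\epsilon][\hphi](\phi(\cdot),\phi(\cdot);\phi(W))$, pull it back by $\phi$, and use $D_h$-loops around good annuli. These are bounded by the across-distance in $\locLFPP[\epsilon/|\phi'(z_0)|][h]$, which is in turn bounded via (a) by the $\hphi$-length. Since the annuli were chosen good for all $t$ at once, the infimum over $\phi$ is harmless.

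\textbf{Main obstacle.} The hardest part is Step 1(b): we need around/across comparisons between $D_h$ and $\locLFPP[\epsilon t]$ that are uniform in $t$, with high enough per-annulus probability to survive the union bound. I would first try to deduce them from the uniform-in-$\epsilon$ tightness of LFPP distance ratios across and around annuli, together with the convergence in Lemma \ref{lemma:PropertiesOfLocalizedFieldAndLFPP}\ref{loc:AlmostSureConvergence}.
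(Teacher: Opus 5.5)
Your proposal has a genuine gap in Step~1(b). There you assert that, for a fixed $M$ and all small $\epsilon$, two cross-metric comparisons hold with probability at least $p$ at every scale $r\in(\epsilon^{1-\zeta},\epsilon^{1-\zeta_+})$: $\locLFPP[\epsilon t](\text{around } A)\le M\,D_h(\text{across } A')$ and $D_h(\text{around } A)\le M\,\locLFPP[\epsilon t](\text{across } A')$. Rescaling $A$ to unit size via \eqref{eq:LFPPScaling} puts the deterministic factor $\ratios[\epsilon t]$ in front of the LFPP quantity, while $D_h$ only picks up $r^{\xi Q}e^{\xi h_r(x)}$. Write $\fa_\epsilon=\epsilon^{1-\xi Q}\ell(\epsilon)$ with $\ell$ slowly varying. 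Then this factor equals $\ell(\epsilon t/r)/\ell(\epsilon t)$, and the two arguments differ by the factor $1/r\to\infty$. Lemma~\ref{lemma:RegularlyVaryingScalingConstants} is uniform only over ratios in a compact set, so it gives no control here, and nothing known forces the factor to stay bounded above and below for all $r$ in the range. Tightness, RSW bounds and Lemma~\ref{lemma:MollifiedFieldComparison} control only the random unit-scale quantities, not this prefactor. So neither your per-annulus probability bound nor $C_0=O(M)$ follows. The same-metric around/across comparison in (b) is fine, since the factor cancels.

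The paper resolves this in two steps. First, Lemma~\ref{lemma:WeakInitialLipschitzCondition} builds the factor into the good event (Definition~\ref{defn:InitialLipschitzConditionEvents}). Chaining then yields the constant $C_0\sup_{r\in\cR_\epsilon}\sup_t\ratios[\epsilon t]$ (and its inverse analogue) for an \emph{arbitrary} set $\cR_\epsilon$ containing at least a third of the scales $8^{-j}$ in the range. Lemma~\ref{lemma:IndependenceAcrossConcentricAnnuli} is applied only along $\cR_\epsilon$, with $a=24\log 8$, so that a third of the scales still gives failure probability $O(\epsilon^{4(1-\zeta)})$ per point of the grid $\frac{\epsilon^{1-\zeta}}{8}\ZZ^2$. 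Second, Lemma~\ref{lemma:APrioriScalingRatiosBound} shows by contradiction that $\cR_\epsilon$ can be chosen with these ratios bounded uniformly in $t$. Regular variation is used only to compare different $t,s$ at the same $r$, where sup and inf over $t$ differ by at most a factor $4$. If more than a third of the scales had all ratios, or all inverse ratios, above $C_0+1$, the weak lemma with $\phi=\id$ would give an approximate Lipschitz bound between $\locLFPP$ and $D_h$ with constant $C_0/(C_0+1)<1$. That contradicts $\locLFPP\to D_h$.

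Some smaller points also need repair. Uniformity in $t$ should come from almost sure convergence of LFPP along the continuum (Lemma~\ref{lemma:TightnessAroundAnnuli} and \eqref{eq:InitialLipschitzLowerBoundAcross}). A union bound over roughly $\epsilon^{-(1-\zeta)}$ mesh values of $t$, each failing with constant probability, does not work. Condition~(a), stated for paths in a ball, is not measurable with respect to the field on an annulus. Proposition~\ref{prop:MainRegularityConditionHighProbability} should therefore only be used to lower-bound the probability of an annulus-measurable event. Finally, for the chaining to work, the grid must have spacing comparable to the smallest radius $\epsilon^{1-\zeta}$, not $\epsilon^{1-\zeta_+}$.
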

  
      Note that the constant $C_0$ may be large.
      It will be used as the initial condition in an iterative argument to prove a stronger Lipschitz condition with Lipschitz constant close to $1$ in Section \ref{section:ImprovingTheLipschitzConstant}.

      The proof of Proposition \ref{prop:InitialLipschitzCondition} will be done in two steps.
      We will first prove the following weaker version of Proposition \ref{prop:InitialLipschitzCondition}.
  
      \begin{lemma}
        \label{lemma:WeakInitialLipschitzCondition}
        Fix $\DerivativeBound > 1$, connected open sets $W \Subset V \Subset U$, and $\zeta_{+}, \zeta \in (0,1)$ with $1 - \zeta_{+} < \frac{1}{2} (1 - \zeta)$.
        There exists $C_0 > 0$ such that the following is true. 
        For each $\epsilon \in (0, 1)$, choose $\cR_{\epsilon} \subset (\epsilon^{1 - \zeta}, \epsilon^{1 - \zeta_{+}}) \cap \{8^{-j}\}_{j \in \NN}$ with $\# \cR_{\epsilon} \geq \frac{1}{3} \# ((\epsilon^{1 - \zeta}, \epsilon^{1 - \zeta_{+}}) \cap \{8^{-j}\}_{j \in \NN})$.
        With probability $1 - O_{\epsilon}(\epsilon^{2(1 - \zeta)})$ as $\epsilon \to 0$, for all $z,w \in W$,
        \begin{align}
          &\sup_{\phi \in \confmaps} \locLFPP[\epsilon][\hphi]\left(\phi(B_{2 \epsilon^{1 - \zeta_{+}}}(z)), \phi(B_{2 \epsilon^{1 - \zeta_{+}}}(w)); \phi(B_{2 \epsilon^{1 - \zeta_{+}}}(W)) \right) \nonumber \\
          &\qquad \qquad \leq \ C_0 \sup_{r \in \cR_{\epsilon}} \sup_{t \in [\DerivativeBound^{-1}, \DerivativeBound]} \ratios[\epsilon t] D_h\left(z,w; W \right), \label{eq:WeakInitialLipschitzConditionSupLeqLQGMetric} \\
          &D_h\left(B_{2 \epsilon^{1 - \zeta_{+}}}(z), B_{2 \epsilon^{1 - \zeta_{+}}}(w); B_{2 \epsilon^{1 - \zeta_{+}}}(W) \right) \nonumber \\
          &\qquad \qquad \leq \ C_0 \sup_{r \in \cR_{\epsilon}} \sup_{t \in [\DerivativeBound^{-1}, \DerivativeBound]} \invratios[\epsilon t] \inf_{\phi \in \confmaps} \locLFPP[\epsilon][\hphi]\left(\phi(z), \phi(w); \phi(W) \right). \label{eq:WeakInitialLipschitzConditionLQGMetricLeqInf}
        \end{align}
      \end{lemma}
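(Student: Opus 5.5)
We will follow the strategy of \cite[Theorem 1.8]{UpToConstants}: find many good annuli, bound distances around them by distances across, and string together paths around these annuli. For $z \in W$ and $r \in \cR_{\epsilon}$, consider the annulus $A_r(z) \coloneqq \AA_{r/2, r}(z)$. Call $A_r(z)$ \emph{good} if three things hold. First, the event of Proposition \ref{prop:MainRegularityConditionHighProbability} holds at $z_0 = z$ (with fixed $\delta$, say $\delta = 1/2$). Second, $\locLFPP[\epsilon t][h]$ and $\LFPP[\epsilon t][h]$ are comparable on $B_{2r}(z)$ for all $t \in [\DerivativeBound^{-1}, \DerivativeBound]$. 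Third, after rescaling by \eqref{eq:LFPPScaling} (with $a = r$, $b = z$), the rescaled field $h(r\cdot + z) - h_r(z)$ satisfies an RSW-type condition. Concretely, for every $t$, the $\LFPP[\epsilon t/r]$-distance around $\AA_{1/2,1}(0)$ is at most a constant $M$ times the $D$-distance across $\AA_{1/4, 1/2}(0)$, and conversely the $D$-distance around is at most $M$ times the LFPP-distance across. By Lemma \ref{lemma:PropertiesOfLocalizedFieldAndLFPP}\ref{loc:Locality}, each condition depends only on $h$ restricted to a slightly enlarged annulus, modulo additive constants, and both sides scale the same way under adding constants. The scaling relation turns LFPP-at-scale-$r$ quantities into $\ratios[\epsilon t]$ times LFPP at parameter $\epsilon t/r \to 0$. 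This is exactly where the factor $\sup_r \sup_t \ratios[\epsilon t]$ in \eqref{eq:WeakInitialLipschitzConditionSupLeqLQGMetric} comes from.

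Next we make $M$ and $p$ so that each $A_r(z)$ is good with probability at least $p$ close to $1$, uniformly in $z$, $r \in \cR_\epsilon$ and $\epsilon$. The around-versus-across bound uses tightness of $D_h$-distances around/across a fixed annulus together with the local uniform convergence $\LFPP[\epsilon'] \to D_h$ (Lemma \ref{lemma:PropertiesOfLocalizedFieldAndLFPP}\ref{loc:AlmostSureConvergence}). This convergence holds along the continuum, hence uniformly over $\epsilon' = \epsilon t/r$ with $t$ in a compact interval. Proposition \ref{prop:MainRegularityConditionHighProbability} supplies the $\phi$-uniform comparison.

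We then apply Lemma \ref{lemma:IndependenceAcrossConcentricAnnuli} (independence across concentric annuli at the dyadic-like scales $8^{-j}$) to show the following. Since $\#\cR_\epsilon \geq \frac13 \#\{8^{-j} \in (\epsilon^{1-\zeta}, \epsilon^{1-\zeta_+})\} \asymp \log \epsilon^{-1}$, the probability that no $r \in \cR_\epsilon$ gives a good annulus around a fixed $z$ is at most $\epsilon^{K}$ for any prescribed $K$, once $p$ is close enough to $1$. We take a union bound over $z$ in the grid $\frac{\epsilon^{1-\zeta_+}}{4}\ZZ^2 \cap B_1(W)$. This grid has $O(\epsilon^{-2(1-\zeta_+)})$ points, which is fine because the tail $\epsilon^{K}$ beats it. The result is that with probability $1 - O(\epsilon^{2(1-\zeta)})$, every grid point has a good annulus. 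This is where $1 - \zeta_+ < \frac12(1-\zeta)$ is used: the grid spacing $\epsilon^{1-\zeta_+}$ must be small compared to the annulus radii $\geq \epsilon^{1-\zeta}$ so that every point is surrounded.

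On this event, the path-stringing argument runs as follows. Fix $z, w$ and a $D_h(\cdot,\cdot;W)$-geodesic $P$. Cover $P$ by good annuli centred at grid points near $P$, choosing them greedily: each annulus is centred near the point where $P$ exits the previous one. Each good annulus is crossed by $P$, so its $D_h$-crossing length is at most the $D_h$-length of that piece of $P$. The corresponding $\locLFPP[\epsilon][\hphi]$-loop around the $\phi$-image is at most $M(1+\delta)\sup\ratios[\epsilon t]$ times that length, uniformly in $\phi$. Consecutive loops intersect, and their union connects $\phi(B_{2\epsilon^{1-\zeta_+}}(z))$ to $\phi(B_{2\epsilon^{1-\zeta_+}}(w))$ inside $\phi(B_{2\epsilon^{1-\zeta_+}}(W))$. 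Summing with bounded overlap multiplicity gives \eqref{eq:WeakInitialLipschitzConditionSupLeqLQGMetric}. The reverse inequality \eqref{eq:WeakInitialLipschitzConditionLQGMetricLeqInf} is symmetric: we take an $\locLFPP[\epsilon][\hphi]$-near-geodesic, pull it back by $\phi$, and use $D_h$-loops. The main obstacle is the uniformity in $\phi$ and $t$ of the good-annulus event, in particular controlling the bounded overlap. We handle the overlap by the standard trick of selecting annuli so that each point lies in $O(1)$ of them.
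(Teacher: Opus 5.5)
Your overall scheme matches the paper's: good annuli at scales in $\cR_\epsilon$, Lemma \ref{lemma:IndependenceAcrossConcentricAnnuli}, a union bound over a grid, and loops strung around annuli crossed by a near-geodesic. Two steps fail as written.

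\textbf{Locality of the good event.} Lemma \ref{lemma:IndependenceAcrossConcentricAnnuli} needs each event to be measurable with respect to $(h-h_{r}(x))|_{\AA_{s_1 r, s_2 r}(x)}$. Your good event is built from the ingredients of the probability estimate rather than being the estimate itself, and its components are not local:
\begin{itemize}
\item The event of Proposition \ref{prop:MainRegularityConditionHighProbability} concerns all paths in the ball $B_{2\epsilon^{1-\zeta_+}}(z)$ and does not depend on $r$ at all.
\item Your comparison with $\LFPP[\epsilon t][h]$ and your RSW condition involve the heat-kernel mollification $h^*_\epsilon$, which depends on $h$ on all of $\CC$. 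So part \ref{loc:Locality} of Lemma \ref{lemma:PropertiesOfLocalizedFieldAndLFPP} does not apply to them.
\end{itemize}
Worse, the Proposition \ref{prop:MainRegularityConditionHighProbability} event is a common factor of your events at every scale. Since $\zeta_+>\frac12$, Lemma \ref{lemma:QuantitativeCoordinateChangedFieldComparison} (which needs exponent $<1/3$) is unavailable, so its failure probability is only known to be at most a fixed $1-p$. Hence neither your $\epsilon^K$ bound at one grid point nor the union bound over $\asymp\epsilon^{-2(1-\zeta)}$ points follows.

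The paper's fix is to take the good event to be the conclusion itself (Definition \ref{defn:InitialLipschitzConditionEvents}). It says that $\sup_\phi$ of the $\locLFPP[\epsilon][\hphi]$-distance around $\phi(\AA_{3r/4,7r/4}(x))$ is at most $C\sup_t\ratios[\epsilon t]\, D_h(\partial B_{3r/4}(x),\partial B_r(x))$, together with the reverse inequality involving $\inf_\phi$. This involves only localized LFPP and $D_h$. After reducing to the countable family of Lemma \ref{lemma:CountableDenseSetOfConformalMaps}, it lies in $\sigma\{(h-h_{4r}(x))|_{\AA_{r/2,2r}(x)}\}$ (Lemma \ref{lemma:InitialLipschitzConditionMeasurability}). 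Proposition \ref{prop:MainRegularityConditionHighProbability}, the comparison with heat-kernel LFPP, \eqref{eq:LFPPScaling}, and Lemma \ref{lemma:TightnessAroundAnnuli} are then used only to show this event has probability at least $p$ (Lemma \ref{lemma:InitialLipschitzConditionHighProbability}).

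\textbf{The grid is too coarse.} Since $\zeta<\zeta_+$, the spacing $\epsilon^{1-\zeta_+}/4$ is much \emph{larger} than $\epsilon^{1-\zeta}$, so your stated reason for the hypothesis $1-\zeta_+<\frac12(1-\zeta)$ is backwards. The good radius at a grid point can be any element of $\cR_\epsilon$, possibly of order $\epsilon^{1-\zeta}$. A point of $P$ at distance of order $\epsilon^{1-\zeta_+}$ from the nearest grid point is then not surrounded by that annulus, and the chain breaks. You need spacing at most a small multiple of $\epsilon^{1-\zeta}$; the paper uses $\frac{\epsilon^{1-\zeta}}{8}\ZZ^2$, so every point is within $r/4$ of a grid point for every $r\in\cR_\epsilon$. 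That grid has $O(\epsilon^{-2(1-\zeta)})$ points, so you need per-point failure probability $O(\epsilon^{4(1-\zeta)})$. In the paper this is where the hypothesis enters: it gives $\#\cR_\epsilon\ge\frac{1-\zeta}{6\log 8}\log\epsilon^{-1}-\frac13$, so Lemma \ref{lemma:IndependenceAcrossConcentricAnnuli} with $a=24\log 8$ delivers that bound.

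\textbf{Two smaller points.}
\begin{itemize}
\item No overlap control is needed. Take $P(t_j)\in B_{r_j/4}(x_j)$ and $t_{j+1}$ the first exit time of $B_{r_j}(x_j)$. Then each crossing of $\AA_{3r_j/4,r_j}(x_j)$ is charged to the disjoint interval $[t_j,t_{j+1}]$, and the sum is at most $\Len(P;D_h)\le 2D_h(z,w;W)$. Use a near-geodesic here, since $D_h(\cdot,\cdot;W)$ need not have geodesics.
\item Consecutive loops need not intersect in general: a small loop around $P(t_j)$ can miss a previous loop lying near the inner boundary of its annulus. That the union of loops contains a path from $B_{2\epsilon^{1-\zeta_+}}(z)$ to $B_{2\epsilon^{1-\zeta_+}}(w)$ is \cite[Lemma 3.4]{UpToConstants}.
\end{itemize}
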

  
      After proving Lemma \ref{lemma:WeakInitialLipschitzCondition}, we will show that the sets $\cR_{\epsilon}$ can be chosen so that the scaling ratio terms $\ratios[\epsilon t]$ and $\invratios[\epsilon t]$ are bounded as functions of $\epsilon$.
      This implies Proposition \ref{prop:InitialLipschitzCondition}.
  
      The key to proving Lemma \ref{lemma:WeakInitialLipschitzCondition} is the following, which is a special case of \cite[Lemma 3.1]{LocalMetricsOfTheGFF}.
  
      \begin{lemma}
        \label{lemma:IndependenceAcrossConcentricAnnuli}
        Fix $0 < s_1 < s_2 < 1$ and $x \in \CC$.
        Let $(r_k)_{k=1}^{\infty}$ be a decreasing sequence of positive numbers such that $r_{k+1}/r_k \leq s_1$ for each $k$, and let $(E_{r_k})_{k=1}^{\infty}$ be events such that $E_{r_k} \in \sigma\{(h - h_{r_k}(x))|_{\AA_{s_1 r_k, s_2 r_k}(x)}\}$ for each $k$.
        Then for each $a > 0$, there exists $p = p(a, s_1, s_2) \in (0,1)$ and $c = c(a, s_1, s_2) > 0$ such that if $P[E_{r_k}] \geq p$ for all $k \in \NN$, then
        \begin{align*}
          P\left\{E_{r_k} \text{ occurs for at least one } k \leq K \right\} \
          &\geq \ 1 - c e^{-a K} \ \forall K \in \NN.
        \end{align*}
      \end{lemma}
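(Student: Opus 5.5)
This is a special case of \cite[Lemma 3.1]{LocalMetricsOfTheGFF}, and I would prove it the same way: use the domain Markov property of the GFF, scale by scale from the outside in. Throughout, $h$ is the whole-plane GFF, which is the setting in which the lemma is applied.

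\textbf{Setup.} Let $\cE_k$ be the $\sigma$-algebra generated by $h|_{\CC \setminus B_{r_k}(x)}$, viewed modulo additive constant.
\begin{itemize}
  \item For $j < k$, we have $\AA_{s_1 r_j, s_2 r_j}(x) \subset \CC \setminus \overline{B_{s_1 r_j}(x)} \subset \CC \setminus B_{r_{j+1}}(x) \subset \CC \setminus B_{r_k}(x)$.
  \item The circle average $h_{r_j}(x)$ is also determined by $h$ outside $B_{r_k}(x)$, since $r_j > r_k$.
  \item Hence $E_{r_j} \in \cE_k$ for all $j < k$.
\end{itemize}
By the Markov property, conditional on $\cE_k$, the field inside $B_{r_k}(x)$ is $\fh_k + \mathring{h}_k$. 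Here $\fh_k$ is the harmonic extension of the boundary data and $\mathring{h}_k$ is an independent zero-boundary GFF on $B_{r_k}(x)$. Moreover $h_{r_k}(x) = \fh_k(x)$. So on $\AA_{s_1 r_k, s_2 r_k}(x)$ we can write $h - h_{r_k}(x) = (\fh_k - \fh_k(x)) + \mathring{h}_k$.

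\textbf{Step 1: control of the harmonic part.} Rescale $B_{r_k}(x)$ to the unit disk. By scale invariance, the law of $\fh_k - \fh_k(x)$, restricted to $B_{(1+s_2)r_k/2}(x)$ and rescaled, does not depend on $k$. Let $G_k^M \in \cE_k$ be the event that this rescaled function has $C^1$-norm at most $M$ on $B_{(1+s_2)/2}(0)$. Then $\Prob[(G_k^M)^c] \to 0$ as $M \to \infty$, uniformly in $k$.

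\textbf{Step 2: conditional comparison on a good harmonic part.} Fix a smooth cutoff $\chi$ equal to $1$ on the annulus and supported in $B_{(1+s_2)r_k/2}(x)$. On $G_k^M$, the function $\chi(\fh_k - \fh_k(x))$ has Dirichlet energy bounded by $C(M)$. This energy is scale invariant, so $C(M)$ does not depend on $k$.
\begin{itemize}
  \item By Cameron--Martin, the conditional law of $(h - h_{r_k}(x))|_{\AA}$ given $\cE_k$ is absolutely continuous with respect to the law of $\mathring{h}_k|_{\AA}$.
  \item The Radon--Nikodym derivative has second moment at most $e^{C(M)}$.
  \item The law of $\mathring{h}_k|_{\AA}$ is, in turn, mutually absolutely continuous with respect to the unconditional law of $(h - h_{r_k}(x))|_{\AA}$. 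This holds by the same argument applied to the unconditional field, using the fact that the unconditional harmonic part is also a smooth Gaussian field on the smaller ball.
\end{itemize}
Cauchy--Schwarz twice then gives, on $G_k^M$,
\[
\Prob[E_{r_k}^c \mid \cE_k] \leq C'(M)\, \Prob[E_{r_k}^c]^{1/4} \leq C'(M)(1-p)^{1/4}.
\]

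\textbf{Step 3: iteration.} Define the martingale-type product over $k \leq K$:
\[
\Prob\Bigl[\textstyle\bigcap_{k \leq K} E_{r_k}^c\Bigr] \leq \Prob\bigl[\#\{k \leq K : (G_k^M)^c\} \geq K/2\bigr] + \bigl(C'(M)(1-p)^{1/4}\bigr)^{K/2}.
\]
This uses successive conditioning on $\cE_k$ along the indices where $G_k^M$ holds. The event in the second factor at step $k$ lives inside $B_{r_k}$, and is measurable w.r.t. $\cE_{k+1}$, so the tower property applies.

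\textbf{Main obstacle.} The hard step is the first term. The events $G_k^M$ are not independent across $k$: the harmonic parts at different scales are correlated. The plan is to write $\fh_k - \fh_k(x)$ via the decomposition of the whole-plane GFF into nested Markov pieces. Its badness is driven by the zero-boundary pieces from the annuli $\AA_{r_{k}, r_{k-m}}$, plus geometrically decaying contributions from scales further out. This decay holds because $r_{k+1}/r_k \leq s_1$, and harmonic functions have $C^1$-norm decaying like $s_1^m$ when seen from $m$ scales away. From this one gets an exponential tail
\[
\Prob\bigl[\#\{k \leq K : (G_k^M)^c\} \geq K/2\bigr] \leq c e^{-aK}
\]
once $M = M(a, s_1, s_2)$ is large, for example by a Chernoff bound on a sum of a dominating i.i.d.-plus-geometric-tail sequence. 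Having fixed $M$, choose $p$ close enough to $1$ that
\[
\bigl(C'(M)(1-p)^{1/4}\bigr)^{1/2} \leq e^{-a}.
\]
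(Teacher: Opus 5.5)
Your proposal is correct and takes essentially the same route as the paper. The paper gives no argument of its own: it simply invokes \cite[Lemma 3.1]{LocalMetricsOfTheGFF}, and your outline is the standard proof of that cited lemma, namely the Markov decomposition at each scale, a Cameron--Martin bound on the conditional law of the annulus field when the rescaled harmonic part is controlled, iterated conditioning along the filtration generated by $h$ outside $B_{r_k}(x)$, and an exponential tail for the number of bad scales via the independent harmonic increments and their geometric decay. When you write it out in full, state explicitly that the rescaled gradient tails of these increments are uniform over all ratios $r_k/r_{k-1} \in (0, s_1]$; this is what makes your Chernoff bound, and hence the constants $p$ and $c$, depend only on $(a, s_1, s_2)$.
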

  
      We will apply Lemma \ref{lemma:IndependenceAcrossConcentricAnnuli} to the following events.
  
      \begin{defn}
        \label{defn:InitialLipschitzConditionEvents}
        For open sets $V \Subset U$, $C > 1$, $\epsilon \in (0, 1)$, $\DerivativeBound > 1$, $x \in V$, $r > 0$ with $B_{2r}(x) \subset U$, let $E_{r, \epsilon}(x) = E_{r, \epsilon}(x; C, V, U, \DerivativeBound)$ be the event that
        \begin{align*}
          &\sup_{\phi \in \confmaps} \locLFPP[\epsilon][\hphi]\left(\text{around } \phi\left(\AA_{\frac{3r}{4}, \frac{7r}{4}}(x) \right)\right) \\
          &\qquad \qquad \leq \ C \sup_{t \in [\DerivativeBound^{-1}, \DerivativeBound]} \ratios[\epsilon t] D_h\left(\partial B_{\frac{3r}{4}}(x), \partial B_r(x) \right), \\
          &D_h\left(\text{around } \AA_{\frac{3r}{4}, \frac{7r}{4}}(x) \right) \\
          &\qquad \qquad \leq \ C \sup_{t \in [\DerivativeBound^{-1}, \DerivativeBound]} \invratios[\epsilon t] \inf_{\phi \in \confmaps} \locLFPP[\epsilon][\hphi]\left(\phi\left(\partial B_{\frac{3r}{4}}(x) \right), \phi\left(\partial B_r(x) \right)\right).
        \end{align*}
      \end{defn}
  
      The ratios $\ratios[\epsilon t]$ in Definition \ref{defn:InitialLipschitzConditionEvents} arise from the LFPP scaling relation \eqref{eq:LFPPScaling}.
      Before establishing measurability of $E_{r, \epsilon}(x)$, let us first show that the supremum and infimum over $\phi \in \confmaps$ in Definition \ref{defn:InitialLipschitzConditionEvents} can be taken over a countable set of conformal maps.
  
      \begin{lemma}
        \label{lemma:CountableDenseSetOfConformalMaps}
        There is a (deterministic) countable set $\lambda_{\DerivativeBound}(V,U) \subset \confmaps$ such that almost surely, for each $\phi \in \confmaps$, each $\epsilon \in (0, 1)$, there is a sequence $(\phi_n)_{n=1}^{\infty} \subset \lambda_{\DerivativeBound}(V,U)$ such that
        \begin{align*}
          \lim_{n \to \infty} \sup_{P \subset V} \frac{\Len(\phi \circ P; \locLFPP[\epsilon][\hphi])}{\Len(\phi_n \circ P; \locLFPP[\epsilon][h \circ \phi_n^{-1} + Q \log|(\phi_n^{-1})'|])} \
          &= \ 1,
        \end{align*}
        where the supremum is over all nontrivial piecewise $C^1$ paths $P$ contained in $V$.
        \begin{proof}
          $C^0(\overline{V}, \CC)$ and $C^1(\overline{V} \times \overline{B_{4 \DerivativeBound \frepsilon}(V)}, \RR)$ are separable normed vector spaces with norms $\|\cdot\|_{C^0}$ and $\|\cdot\|_{C^0} + \|\nabla \cdot\|_{C^0}$ respectively.
          Therefore, $C^0(\overline{V}, \CC) \times C^1(\overline{V} \times \overline{B_{4 \DerivativeBound \frepsilon}(V)}, \RR)$ is separable with respect to the norm $(f,g) \mapsto \|f\|_{C^0} + \|g\|_{C^0} + \|\nabla g\|_{C^0}$.
          The set
          \begin{align}
            \left\{\overline{V} \times \overline{V} \times \overline{B_{4 \DerivativeBound \frepsilon}(V)} \ni (x,z,w) \to (\phi'(x), \Psi_{\epsilon}^{\phi, z}(w)) : \phi \in \confmaps \right\}
            \label{eq:ProductFunctionSpace}
          \end{align}
          is contained in $C^0(\overline{V}, \CC) \times C^1(\overline{V} \times \overline{B_{4 \DerivativeBound \frepsilon}(V)}, \RR)$, so there is a countable subset $\lambda_{\DerivativeBound}(V,U) \subset \confmaps$ such that
          \begin{align*}
            \left\{\overline{V} \times \overline{V} \times \overline{B_{4 \DerivativeBound \frepsilon}(V)} \ni (x,z,w) \to (\phi'(x), \Psi_{\epsilon}^{\phi, z}(w)) : \phi \in \lambda_{\DerivativeBound}(V, U) \right\}
          \end{align*}
          is dense in \eqref{eq:ProductFunctionSpace} with respect to the norm $(f,g) \mapsto \|f\|_{C^0} + \|g\|_{C^0} + \|\nabla g\|_{C^0}$.
          The lemma now follows from the definition of $\locLFPP$ because for each $\epsilon \in (0, 1)$, $\phi, \Phi \in \confmaps$, and $z \in V$,
          \begin{align*}
            &\left|\langle h, \Psi_{\epsilon}^{\phi, z} - \Psi_{\epsilon}^{\Phi, z} \rangle \right| \\
            &\qquad \qquad \leq \ \|h\|_{\cH_0^{-1}(V)} \|\Psi_{\epsilon}^{\phi, z} - \Psi_{\epsilon}^{\Phi, z}\|_{\cH_0^{1}(V)} \\
            &\qquad \qquad \leq \ \sqrt{\mathrm{area}(V)} \|h\|_{\cH_0^{-1}(V)} \left(\|\Psi_{\epsilon}^{\phi, z} - \Psi_{\epsilon}^{\Phi, z}\|_{C^0(\overline{V})} + \|\nabla(\Psi_{\epsilon}^{\phi, z} - \Psi_{\epsilon}^{\Phi, z}) \|_{C^0(\overline{V})}\right), \\
            &\left|\langle -\log|\phi'|, \Psi_{\epsilon}^{\phi, z} \rangle - \langle -\log|\Phi'|, \Psi_{\epsilon}^{\Phi, z} \rangle \right| \\
            &\qquad \qquad = \ \left|\langle -\log \left|\frac{\phi'}{\Phi'} \right| \Psi_{\epsilon}^{\phi, z} \rangle + \langle -\log|\Phi'|, \Psi_{\epsilon}^{\phi, z} - \Psi_{\epsilon}^{\Phi, z} \rangle \right| \\
            &\qquad \qquad \leq \ \frac{\DerivativeBound^2 \mathrm{area}(V)}{Z_{\epsilon} \epsilon^2} \left[\log (1 + \DerivativeBound \|\phi' - \Phi'\|_{C^0(\overline{V})}) \vee \log\frac{1}{1 - \DerivativeBound \|\phi' - \Phi'\|_{C^0(\overline{V})}} \right] \\
            &\qquad \qquad \qquad \qquad + \log(\DerivativeBound) \|\Psi_{\epsilon}^{\phi, z} - \Psi_{\epsilon}^{\Phi, z} \|_{C^0(\overline{V})}, \\
            &\left|\frac{|\phi'(z)|}{|\Phi'(z)|} - 1 \right| \ 
            \leq \ \DerivativeBound \|\phi' - \Phi'\|_{C^0(\overline{V})},
          \end{align*}
          where we view $h$ as a random element of the Sobolev space $\cH_0^{-1}$ \cite[Theorem 1.45]{BerestyckiPowell} equipped with the dual norm to the Sobolev norm $\|f\|_{\cH_0^1} \coloneqq (\|f\|_{L^2}^2 + \|\nabla f\|_{L^2}^2)^{1/2}$.
        \end{proof}
      \end{lemma}
  
      \begin{lemma}
        \label{lemma:InitialLipschitzConditionMeasurability}
        For each $\epsilon \in (0,1)$, $r \in (4 \frepsilon, 1)$, and $x \in V$, we have $E_{r, \epsilon}(x) \in \sigma\{(h - h_{4r}(x))|_{\AA_{r/2, 2r}(x)}\}$.
        \begin{proof}
          The supremum and infimum over $\phi \in \confmaps$ can be taken over a countable set by Lemma \ref{lemma:CountableDenseSetOfConformalMaps}, so the lemma will follow if we know that for each $\phi \in \confmaps$, the event that
          \begin{align}
            \begin{split}
              &\locLFPP[\epsilon][\hphi]\left(\text{around } \phi\left(\AA_{\frac{3r}{4}, \frac{7r}{4}}(x) \right)\right) \\
              &\qquad \qquad \leq \ C \sup_{t \in [\DerivativeBound^{-1}, \DerivativeBound]} \ratios[\epsilon t] D_h\left(\partial B_{\frac{3r}{4}}(x), \partial B_r(x) \right), 
            \end{split} \label{eq:InitialLipschitzConditionSupRemoved} \\
            \begin{split}
              &D_h\left(\text{around } \AA_{\frac{3r}{4}, \frac{7r}{4}}(x) \right) \\
              &\qquad \qquad \leq \ C \sup_{t \in [\DerivativeBound^{-1}, \DerivativeBound]} \invratios[\epsilon t] \locLFPP[\epsilon][\hphi]\left(\phi\left(\partial B_{\frac{3r}{4}}(x) \right), \phi\left(\partial B_r(x) \right)\right)
            \end{split} \label{eq:InitialLipschitzConditionInfRemoved}
          \end{align}
          is determined by $(h - h_{4r}(x))|_{\AA_{r/2, 2r}(x)}$.
          Adding $-h_{4r}(x)$ to $h$ scales each metric in \eqref{eq:InitialLipschitzConditionSupRemoved} and \eqref{eq:InitialLipschitzConditionInfRemoved} by $e^{-\xi h_r(x)}$, so we may assume $h$ is normalized to have $h_{4r}(x) = 0$.
          Since $\frac{7 r}{4} + \frepsilon < 2 r$ and $\frac{3r}{4} - \frepsilon > \frac{r}{2}$, measurability of \eqref{eq:InitialLipschitzConditionSupRemoved} and \eqref{eq:InitialLipschitzConditionInfRemoved} follows from \ref{loc:Locality} of Lemma \ref{lemma:PropertiesOfLocalizedFieldAndLFPP}.
        \end{proof}
      \end{lemma}
  
      To prove that the events of Definition \ref{defn:InitialLipschitzConditionEvents} occur with high enough probability to apply Lemma \ref{lemma:IndependenceAcrossConcentricAnnuli}, we will use Proposition \ref{prop:MainRegularityConditionHighProbability} and Lemma \ref{lemma:PropertiesOfLocalizedFieldAndLFPP} to reduce to comparing $\LFPP[\epsilon/|\phi'(x)|][h]$-distances around a fixed annulus to the $D_h$-distance across a fixed annulus.
      Since the index $\epsilon/|\phi'(x)|$ depends on $\phi$, we will need the following result to control $\LFPP[\epsilon][h]$-distances around annuli for infinitely many values of $\epsilon$ simultaneously.
  
      \begin{lemma}
        \label{lemma:TightnessAroundAnnuli}
        For any $p \in (0, 1)$ and $0 < \alpha < \beta < \infty$, there exists $c, \epsilon_0 > 0$ such that with probability at least $p$,
        \begin{align*}
          D_h\left(\text{around } \AA_{\alpha, \beta}(0) \right) \vee \sup_{\epsilon \in (0, \epsilon_0)} \LFPP[\epsilon][h]\left(\text{around } \AA_{\alpha, \beta}(0) \right) \
          &\leq \ c.
        \end{align*}
        \begin{proof}
          Let $\eta \coloneqq \frac{\alpha + \beta}{2}$.
          The idea is to choose a dense enough finite collection of points on $\partial B_{\eta}(0)$ so that the $D_h$-distance between each consecutive pair of points counterclockwise around the circle is less than $D_h(\partial B_{\eta}(0), \partial \AA_{\alpha, \beta}(0))$.
          We can then use the length space property to concatenate these paths to give a path around $\AA_{\alpha, \beta}(0)$ with $D_h$-length at most $D_h(\partial B_{\eta}(0), \partial \AA_{\alpha, \beta}(0))$ times a random integer.
          Using the convergence of $\LFPP$ to $D_h$, the same set of points can be used to construct a similar path for $\LFPP$ for each $\epsilon$ sufficiently small.
  
          Let
          \begin{align*}
            R \
            &\coloneqq \ D_h(\partial B_{\eta}(0), \partial \AA_{\alpha, \beta}(0)) \\
            &\qquad \qquad \w D_h([-\beta, \beta], [-\beta - \alpha \i/2, \beta - \alpha \i/2]) \w D_h([-\beta, \beta], [-\beta + \alpha \i/2, \beta + \alpha \i/2]).
          \end{align*}
          Define $q_1 \coloneqq 0$ and
          \begin{align*}
            q_{j + 1} \
            &\coloneqq \ \sup\left\{q \in [q_j, \pi] : D_h(\eta e^{\i q_j}, \eta e^{\i q}) < R \right\}.
          \end{align*}
          Then $(q_j)_{j=1}^{\infty}$ is an increasing sequence bounded above by $\pi$, so it converges to some limit $L = \sup\{q_j\}_{j=1}^{\infty}$.
          Then $D_h(\eta e^{\i q_j}, \eta e^{\i L}) \to 0$ by continuity, so $D_h(\eta e^{\i q_j}, \eta e^{\i L}) < R$ for all $j$ sufficiently large.
          Fixing one such index $j$, if $L < \pi$, then continuity again implies $D_h(\eta e^{\i q_j}, \eta e^{\i q}) < R$ for $q \in (L, \pi)$ close enough to $L$, which is impossible because $L = \sup\{q_j\}_{j=1}^{\infty}$.
          So $L = \pi$ and moreover $D_h(\eta e^{\i q_j}, -\eta) < R$ for all $j$ sufficiently large, say for $j \geq J^{+}$.
          Letting $u_j^{+} \coloneqq \eta e^{\i q_j}$ for $j \leq J^{+}$ and $u_{J^{+}+1}^{+} \coloneqq -\eta$, the length space property and the definition of $u_j^{+}$ implies that for each $j \leq J^{+}$, there is a path from $u_j^{+}$ to $u_{j+1}^{+}$ with $D_h$-length less than $R$.
          In particular, each such path cannot leave $\AA_{\alpha, \beta}(0) \cap \{z \in \CC : \Im[z] \geq -\alpha/2\}$.
          By concatenating these paths, we obtain a path $P^{+}$ in $\AA_{\alpha, \beta}(0) \cap \{z \in \CC : \Im[z] \geq -\alpha/2\}$ from $\eta$ to $-\eta$ with $D_h$-length no larger than $J^{+} D_h(\partial B_{\eta}(0), \partial \AA_{\alpha, \beta}(0))$.
          Proceeding analogously with the lower half of the annulus, we can choose points $u_1^{-} = -\eta, u_2^{-}, \ldots, u_{J^{-} + 1} = \eta$ counterclockwise around $\partial B_{\eta}(0)$ such that $D_h(u_j^{-}, u_{j+1}^{-}) < R$ for all $j \leq J^{-}$, and concatenating paths between $u_j^{-}$ and $u_{j+1}^{-}$ like before gives a path $P^{-}$ from $-\eta$ to $\eta$ in $\AA_{\alpha, \beta}(0) \cap \{z \in \CC: \Im[z] \leq \alpha/2\}$ with $D_h$-length no larger than $J^{-} D_h(\partial B_{\eta}(0), \partial \AA_{\alpha, \beta}(0))$.
          Concatenating $P^{+}$ and $P^{-}$ shows that
          \begin{align}
            D_h\left(\text{around } \AA_{\alpha, \beta}(0) \right) \
            &\leq \ \left(J^{+} + J^{-} \right) D_h\left(\partial B_{\eta}(0), \partial \AA_{\alpha, \beta}(0) \right).
            \label{eq:DistanceAroundLQG}
          \end{align}
  
          To obtain the analogous result for $\LFPP$, define 
          \begin{align*}
            R_{\epsilon} \
            &\coloneqq \ \LFPP[\epsilon][h]\left(\partial B_{\eta}(0), \partial \AA_{\alpha, \beta}(0) \right) \w \LFPP[\epsilon][h]\left([-\beta, \beta], [-\beta - \alpha \i/2, \beta - \alpha \i/2] \right) \\
            &\qquad \qquad \w \LFPP[\epsilon][h]\left([-\beta, \beta], [-\beta + \alpha \i/2, \beta + \alpha \i/2] \right) \w D_h\left(\partial B_{\eta}(0), \partial \AA_{\alpha, \beta}(0) \right),
          \end{align*}
          and note that $R_{\epsilon} \to R$ almost surely.
          Using the almost sure convergence of $\LFPP$ to $D_h$, we can find $\epsilon_0$ such that 
          \begin{align}
            \Prob\left\{\LFPP[\epsilon][h]\left(u_j^{\pm}, u_{j+1}^{\pm} \right) < R_{\epsilon} \ \forall j \leq J^{\pm} \ \forall \epsilon \in (0, \epsilon_0) \right\} \
            &\geq \ 1 - \frac{1 - p}{2}.
            \label{eq:DistanceAroundLFPPEvent}
          \end{align}
          Then using the length space property as before, we get that on the event in \eqref{eq:DistanceAroundLFPPEvent}, for each $\epsilon < \epsilon_0$,
          \begin{align}
            \LFPP[\epsilon][h]\left(\text{around } \AA_{\alpha, \beta}(0) \right) \
            &\leq \ \left(J^{+} + J^{-} \right) D_h\left(\partial B_{\eta}(0), \partial \AA_{\alpha, \beta}(0) \right).
            \label{eq:DistanceAroundLFPP}
          \end{align}
          The claim of the lemma follows from \eqref{eq:DistanceAroundLQG}, \eqref{eq:DistanceAroundLFPPEvent}, and \eqref{eq:DistanceAroundLFPP} by choosing $c$ large enough that
          \begin{align*}
            \Prob\left\{\left(J^{+} + J^{-} \right) D_h\left(\partial B_{\eta}(0), \partial \AA_{\alpha, \beta}(0) \right) \leq c \right\} \
            &\geq \ 1 - \frac{1 - p}{2}.
          \end{align*}
        \end{proof}
      \end{lemma}

      \begin{lemma}
        \label{lemma:InitialLipschitzConditionHighProbability}
        Fix open sets $W \Subset V \Subset U$, $\DerivativeBound > 1$, and $p, \zeta, \zeta_{+} \in (0,1)$ with $\zeta < \zeta_{+}$.
        There exists $\epsilon_0 = \epsilon_0(p, V, W, \DerivativeBound, \zeta, \zeta_{+}) \in (0,1)$ and $C_0 = C_0(p) > 1$ such that for every $\epsilon \in (0, \epsilon_0)$, every $x \in W$, and every $r \in (\epsilon^{1 - \zeta}, \epsilon^{1 - \zeta_{+}})$, $\Prob[E_{r, \epsilon}(x; C_0)] \geq p$.
        \begin{proof}
          Choose $\epsilon_0$ according to Proposition \ref{prop:MainRegularityConditionHighProbability} with $\delta = 1$, $\zeta_{+}$ in place of $\zeta$, and $1 - \frac{1 - p}{4}$ in place of $p$.
          Use \ref{loc:UniformComparison} from Lemma \ref{lemma:PropertiesOfLocalizedFieldAndLFPP} to shrink $\epsilon_0$ so that with probability $1 - \frac{1 - p}{4}$, 
          \begin{align}
            \sup_{z \in V} \left|h_{\epsilon}^{*}(z) - \hat{h}_{\epsilon}^{*}(z) \right| \
            &< \ 1 \ \forall \epsilon \in (0, \epsilon_0^{\zeta} \DerivativeBound).
            \label{eq:InitialLipschitzLocToLFPP}
          \end{align}
          Using the almost sure convergence of $\LFPP$ to $D_h$, we can decrease $\epsilon_0$ and find $c_0 > 0$ so that with probability at least $1 - \frac{1 - p}{4}$, 
          \begin{align}
            D_h\left(\partial B_{3/4}(0), \partial B_1(0) \right) \w \inf_{\epsilon \in (0, \epsilon_0^{\zeta} \DerivativeBound)} \LFPP[\epsilon][h]\left(\partial B_{3/4}(0), \partial B_1(0) \right) \
            &\geq \ c_0^{-1}.
            \label{eq:InitialLipschitzLowerBoundAcross}
          \end{align}
          Using Lemma \ref{lemma:TightnessAroundAnnuli}, we can shrink $\epsilon_0$ and find $c_1 > 0$ such that with probability at least $1 - \frac{1 - p}{4}$,
          \begin{align}
            D_h\left(\text{around } \AA_{3/4, 7/4}(0) \right) \vee \sup_{\epsilon \in (0, \epsilon_0^{\zeta} \DerivativeBound)} \LFPP[\epsilon][h]\left(\text{around } \AA_{3/4, 7/4}(0) \right) \
            &\leq \ c_1.
            \label{eq:InitialLipschitzUpperBoundAround}
          \end{align}
          It follows that if $\epsilon \in (0, \epsilon_0)$, $r \in (\epsilon^{1 - \zeta}, \epsilon^{1 - \zeta_{+}})$, and $x \in W$, then with probability at least $p$,
          \begin{align*}
            &\locLFPP[\epsilon][\hphi]\left(\text{around } \phi\left(\AA_{\frac{3r}{4}, \frac{7r}{4}}(x) \right)\right) \\
            &\leq \ 2 \locLFPP[\epsilon/|\phi'(x)|][h]\left(\text{around } \AA_{\frac{3r}{4}, \frac{7r}{4}}(x) \right) \tag*{(Proposition \ref{prop:MainRegularityConditionHighProbability})} \\
            &\leq \ 2 e^{\xi} \LFPP[\epsilon/|\phi'(x)|][h]\left(\text{around } \AA_{\frac{3r}{4}, \frac{7r}{4}}(x) \right) \tag*{(by \eqref{eq:InitialLipschitzLocToLFPP})} \\
            &= \ 2 e^{\xi} \frac{r \fa_{\epsilon/|\phi'(x)|}^{-1}}{r^{\xi Q} \fa_{\epsilon/r|\phi'(x)|}^{-1}} r^{\xi Q} e^{\xi h_r(x)} \LFPP[\epsilon/r|\phi'(x)|][h(r \cdot + x) - h_r(x)]\left(\text{around } \AA_{\frac{3}{4}, \frac{7}{4}}(0) \right) \tag*{(by \eqref{eq:LFPPScaling})} \\
            &\leq \ 2 e^{\xi} \frac{r \fa_{\epsilon/|\phi'(x)|}^{-1}}{r^{\xi Q} \fa_{\epsilon/r|\phi'(x)|}^{-1}} r^{\xi Q} e^{\xi h_r(x)} c_1 \tag*{(by \eqref{eq:InitialLipschitzUpperBoundAround})} \\
            &\leq \ 2 e^{\xi} \frac{r \fa_{\epsilon/|\phi'(x)|}^{-1}}{r^{\xi Q} \fa_{\epsilon/r|\phi'(x)|}^{-1}} r^{\xi Q} e^{\xi h_r(x)} c_1 c_0 D_{h(r \cdot + x) - h_r(x)}\left(\partial B_{3/4}(0), \partial B_1(0) \right) \tag*{(by \eqref{eq:InitialLipschitzLowerBoundAcross})} \\ 
            &= \ 2 e^{\xi} \frac{r \fa_{\epsilon/|\phi'(x)|}^{-1}}{r^{\xi Q} \fa_{\epsilon/r|\phi'(x)|}^{-1}} c_1 c_0 D_{h}\left(\partial B_{\frac{3r}{4}}(x), \partial B_r(x) \right), \tag*{(axioms \ref{axiom:WeylScaling}, \ref{axiom:CoordinateChange})} \\ 
            &D_h\left(\text{around } \AA_{\frac{3r}{4}, \frac{7r}{4}}(x) \right) \\
            &= \ r^{\xi Q} e^{\xi h_r(x)} D_{h(r \cdot + x) - h_r(x)}\left(\text{around } \AA_{\frac{3}{4}, \frac{7}{4}}(0) \right) \tag*{(axioms \ref{axiom:WeylScaling}, \ref{axiom:CoordinateChange})} \\
            &\leq \ r^{\xi Q} e^{\xi h_r(x)} c_1 \tag*{(by \eqref{eq:InitialLipschitzUpperBoundAround})} \\
            &\leq \ r^{\xi Q} e^{\xi h_r(x)} c_1 c_0 \LFPP[\epsilon/r |\phi'(x)|][h(r \cdot + x) - h_r(x)]\left(\partial B_{\frac{3}{4}}(0), \partial B_1(0) \right) \tag*{(by \eqref{eq:InitialLipschitzLowerBoundAcross})} \\
            &= \ \frac{r^{\xi Q} \fa_{\epsilon/r|\phi'(x)|}^{-1}}{r \fa_{\epsilon/|\phi'(x)|}} c_1 c_0 \LFPP[\epsilon/|\phi'(x)|][h]\left(\partial B_{\frac{3r}{4}}(x), \partial B_r(x) \right) \tag*{(by \eqref{eq:LFPPScaling})} \\
            &\leq \ e^{\xi} \frac{r^{\xi Q} \fa_{\epsilon/r|\phi'(x)|}^{-1}}{r \fa_{\epsilon/|\phi'(x)|}} c_1 c_0 \locLFPP[\epsilon/|\phi'(x)|][h]\left(\partial B_{\frac{3r}{4}}(x), \partial B_r(x) \right) \tag*{(by \eqref{eq:InitialLipschitzLocToLFPP})} \\
            &\leq \ 2 e^{\xi} \frac{r^{\xi Q} \fa_{\epsilon/r|\phi'(x)|}^{-1}}{r \fa_{\epsilon/|\phi'(x)|}} c_1 c_0 \locLFPP[\epsilon][\hphi]\left(\phi\left(\partial B_{\frac{3r}{4}}(x) \right), \phi\left(\partial B_r(x) \right) \right) \tag*{(Proposition \ref{prop:MainRegularityConditionHighProbability})}.
          \end{align*}
          The lemma now follows with $C_0 \coloneqq 2 e^{\xi} c_1 c_0$.
        \end{proof}
      \end{lemma}
  
      \begin{proof}[Proof of Lemma \ref{lemma:WeakInitialLipschitzCondition}]
        The strategy is similar to the proofs of \cite[Theorem 1.6]{LocalMetricsOfTheGFF} and \cite[Theorem 1.8]{UpToConstants}; Lemma \ref{lemma:IndependenceAcrossConcentricAnnuli} implies there are many annuli for which the events from Definition \ref{defn:InitialLipschitzConditionEvents} occur, and by stringing together paths around such annuli, we obtain an approximate Lipschitz equivalence between the coordinate-changed LFPP metrics and $D_h$.

        Choose $p$ and $c$ according to Lemma \ref{lemma:IndependenceAcrossConcentricAnnuli} with $s_2 = 1/8$, $s_1 = 1/2$, and $a = 24 \log 8$.
        Choose $\epsilon_0$ and $C_0$ using Lemma \ref{lemma:InitialLipschitzConditionHighProbability} with this choice of $p$ and with an open $\tilde{W}$ such that $W \Subset \tilde{W} \Subset V$ in place of $W$.
        Since $1 - \zeta_{+} < \frac{1}{2} (1 - \zeta)$, it follows that
        \begin{align}
          \# \cR_{\epsilon} \
          &> \ \frac{1}{3} \#\left(\{8^{-k}\}_{k =1}^{\infty} \cap \left(\epsilon^{1 - \zeta}, \epsilon^{1 - \zeta_{+}} \right) \right) \
          \geq \ \frac{1 - \zeta}{6 \log 8} \log \epsilon^{-1} - \frac{1}{3}.
          \label{eq:LowerBoundOnCardinalityOfRadiiSets}
        \end{align}
        In the notation of Lemma \ref{lemma:IndependenceAcrossConcentricAnnuli}, let $(r_k)_{k=1}^{\infty}$ denote the elements of 
        \begin{align*}
          \left\{4 \cdot 8^{-k} : 8^{-k} \in (0, \epsilon^{1 - \zeta}) \cup \cR_{\epsilon} \right\},
        \end{align*}
        and $E_{r_k} = E_{r_k/4, \epsilon}(x; C_0)$ for a fixed $x \in \tilde{W}$ when $r_k/4 \in \cR_{\epsilon}$ and $E_{r_k}$ the trivial event otherwise, where $E_{r, \epsilon}(x; C_0)$ is defined as in Definition \ref{defn:InitialLipschitzConditionEvents}.
        Then Lemma \ref{lemma:IndependenceAcrossConcentricAnnuli} together with \eqref{eq:LowerBoundOnCardinalityOfRadiiSets} imply
        \begin{align*}
          \Prob\left\{\exists r \in \cR_{\epsilon} \text{ such that } E_{r, \epsilon}(x; C_0) \text{ occurs} \right\} \
          &\geq \ 1 - e^{8 \log 8} c \epsilon^{4 (1 - \zeta)}.
        \end{align*}
        Let $F_{\epsilon}$ be the event that for each $x \in \tilde{W} \cap (\frac{\epsilon^{1 - \zeta}}{8} \ZZ^2)$, there exists $r \in \cR_{\epsilon}$ such that $E_{r, \epsilon}(x; C_0)$ occurs.
        Then by a union bound, we see that $\Prob[F_{\epsilon}] = 1 - O(\epsilon^{2(1 - \zeta)})$, with big-$O$ constant depending on $V$.
  
        Now assume $F_{\epsilon}$ occurs, fix $z, w \in W$ with $|z - w| > 4 \epsilon^{1 - \zeta_{+}}$, fix $\phi \in \confmaps$, and fix a $D_h$-length parameterized path $P \subset W$ from $z$ to $w$ with length at most $2 D_h(z,w; W)$.
        Define $t_0 = 0$, and inductively define $t_{j}$ as follows.
        Let $t_{j}$ be the first time after $t_{j-1}$ that $P$ exits $B_{r_{j-1}}(x_{j-1})$, then choose $x_j \in \frac{\epsilon^{1 - \zeta}}{8} \ZZ^2$ and $r_j \in \cR_{\epsilon}$ such that $P(t_j) \in B_{r_j/4}(x_j)$, $E_{r_j, \epsilon}(x_j, C_0)$ occurs.
        If no such time $t_j$ exists, let $t_j \coloneqq \Len(P; D_h)$.
        Let $J$ denote the smallest index such that $t_{J+1} = \Len(P; D_h)$.
  
        Assume $j < J$.
        Then $P(t_j) \in W$ and $|P(t_j) - x_j| < r_j/4$, so $\overline{B_{\frac{7 r_j}{4}}(x_j)} \subset B_{2 \epsilon^{1 - \zeta_{+}}}(W)$.
        Also, since $P(t_j) \in B_{r_j/4}(x_j)$ and $P(t_{j+1}) \in \partial B_{r_j}(x_j)$, we know that $P$ crosses $\AA_{\frac{3 r_j}{4}, r_j}(x_j)$ between time $t_j$ and $t_{j+1}$.
        It follows that
        \begin{align}
          D_h\left(\partial B_{\frac{3 r_j}{4}}(x_j), \partial B_{r_j}(x_j) \right) \
          &\leq \ \Len\left(P|_{[t_j, t_{j+1}]}; D_h \right) \
          = \ t_{j+1} - t_j.
          \label{eq:InitialLipschitzCrossingBound}
        \end{align}
        By definition of $E_{r_j, \epsilon}(x; C_0)$, we can find a curve $\pi_j$ which disconnects the inner and outer boundaries of $\AA_{\frac{3 r_j}{4}, \frac{7 r_j}{4}}(x_j)$ with 
        \begin{align}
          \Len\left(\phi \circ \pi_j, \locLFPP[\epsilon][h^{\phi}]\right) \
          &\leq \ C_0 \sup_{t \in [\DerivativeBound^{-1}, \DerivativeBound]} \frac{r_j \fa_{\epsilon t}^{-1}}{r_j^{\xi Q} \fa_{\epsilon t/r_j}^{-1}} D_h\left(\partial B_{\frac{3 r_j}{4}}(x_j), \partial B_{r_j}(x_j) \right).
          \label{eq:InitialLipschitzPathsAroundAnnuli}
        \end{align}
        By \cite[Lemma 3.4]{UpToConstants}, $\cup_{j < J} \pi_j$ contains a path from $B_{2 \epsilon^{1 - \zeta_{+}}}(z)$ to $B_{2 \epsilon^{1 - \zeta_{+}}}(w)$.
        Therefore, 
        \begin{align*}
          &\locLFPP[\epsilon][\hphi]\left(\phi(B_{2 \epsilon^{1 - \zeta_{+}}}(z)), \phi(B_{2 \epsilon^{1 - \zeta_{+}}}(w)); \phi(B_{2 \epsilon^{1 - \zeta_{+}}}(W)) \right) \\
          &\qquad \leq \ \sum_{j=0}^{J - 1} \locLFPP[\epsilon][\hphi]\left(\text{around } \phi\left(\AA_{\frac{3r_j}{4}, \frac{7r_j}{4}}(x_j) \right)\right) \\
          &\qquad \leq \ \sum_{j=0}^{J - 1} C_0 \sup_{t \in [\DerivativeBound^{-1}, \DerivativeBound]} \frac{r_j \fa_{\epsilon t}^{-1}}{r_j^{\xi Q} \fa_{\epsilon t/r_j}^{-1}} D_h\left(\partial B_{\frac{3r_j}{4}}(x_j), \partial B_{r_j}(x_j) \right) \tag*{(by \eqref{eq:InitialLipschitzPathsAroundAnnuli})} \\
          &\qquad \leq \ C_0 \sup_{r \in \cR_{\epsilon}} \sup_{t \in [\DerivativeBound^{-1}, \DerivativeBound]} \ratios[\epsilon t] \sum_{j=0}^{J - 1} \left(t_{j+1} - t_j \right) \tag*{(by \eqref{eq:InitialLipschitzCrossingBound})} \\
          &\qquad \leq \ 2 C_0 \sup_{r \in \cR_{\epsilon}} \sup_{t \in [\DerivativeBound^{-1}, \DerivativeBound]} \ratios[\epsilon t] D_h\left(z,w; W \right).
        \end{align*}
        Taking the supremum over $\phi \in \confmaps$ proves \eqref{eq:WeakInitialLipschitzConditionSupLeqLQGMetric} with $2 C_0$ in place of $C_0$.
        The proof of \eqref{eq:WeakInitialLipschitzConditionLQGMetricLeqInf} is done analogously.
      \end{proof}
  
      \begin{lemma}
        \label{lemma:APrioriScalingRatiosBound}
        There exist $\cR_{\epsilon} \subset (\epsilon^{1 - \zeta}, \epsilon^{1 - \zeta_{+}}) \cap \{8^{-j}\}_{j \in \NN}$ with $\# \cR_{\epsilon} > \frac{1}{3} \# ((\epsilon^{1 - \zeta}, \epsilon^{1 - \zeta_{+}}) \cap \{8^{-j}\}_{j \in \NN})$ such that 
        \begin{align}
          \limsup_{\epsilon \to 0} \sup_{r \in \cR_{\epsilon}} \sup_{t \in [\DerivativeBound^{-1}, \DerivativeBound]} \ratios[\epsilon t] \
          &< \ \infty,
          \label{eq:LimsupOfScalingRatiosFinite} \\
          \limsup_{\epsilon \to 0} \sup_{r \in \cR_{\epsilon}} \sup_{t \in [\DerivativeBound^{-1}, \DerivativeBound]} \invratios[\epsilon t] \
          &< \ \infty.
          \label{eq:LimsupOfInverseScalingRatiosFinite}
        \end{align}
        \begin{proof}
          The proof is similar to \cite[Lemma 3.5]{UpToConstants}, except that lemma finds a large set of radii $r$ \textit{for each} $\epsilon$ where the ratios $\ratios$ are not too large, while we need a large set of radii $r$ where \textit{all} of the ratios $\ratios[\epsilon t]$ as $t$ ranges over a compact interval are not too large.
          The idea is that if there are many values of $r$ for which $\ratios[\epsilon t]$ is very small or very large for at least one value of $t$, then Lemma \ref{lemma:WeakInitialLipschitzCondition} implies $\locLFPP$ and $D_h$ satisfy either \eqref{eq:InitialLipschitzConditionSupLeqLQGMetric} or \eqref{eq:InitialLipschitzConditionLQGMetricLeqInf} with constant $C_0$ strictly less than $1$.
          This contradicts the convergence of $\LFPP$ to $D_h$.
  
          To make this reasoning precise, let
          \begin{align*}
            \cS_{\epsilon} \
            &\coloneqq \ \left\{\cR \subset (\epsilon^{1 - \zeta}, \epsilon^{1 - \zeta_{+}}) \cap \{8^{-j}\}_{j \in \NN} : \# \cR > \frac{2}{3} \# ((\epsilon^{1 - \zeta}, \epsilon^{1 - \zeta_{+}}) \cap \{8^{-j}\}_{j \in \NN}) \right\}
          \end{align*}
          and
          \begin{align*}
            X_{\epsilon}^{+} \
            &\coloneqq \ \min_{\cR \in \cS_{\epsilon}} \sup_{r \in \cR} \sup_{t \in [\DerivativeBound^{-1}, \DerivativeBound]} \ratios[\epsilon t], \\
            Y_{\epsilon}^{+} \
            &\coloneqq \ \min_{\cR \in \cS_{\epsilon}} \sup_{r \in \cR} \sup_{t \in [\DerivativeBound^{-1}, \DerivativeBound]} \invratios[\epsilon t], \\
            X_{\epsilon}^{-} \
            &\coloneqq \ \min_{\cR \in \cS_{\epsilon}} \sup_{r \in \cR} \inf_{t \in [\DerivativeBound^{-1}, \DerivativeBound]} \ratios[\epsilon t], \\
            Y_{\epsilon}^{-} \
            &\coloneqq \ \min_{\cR \in \cS_{\epsilon}} \sup_{r \in \cR} \inf_{t \in [\DerivativeBound^{-1}, \DerivativeBound]} \invratios[\epsilon t].
          \end{align*}
          Assume the lemma is false, and let $C_0$ be as in Lemma \ref{lemma:WeakInitialLipschitzCondition}.
          Then there is a subsequence $\cE$ such that $X_{\epsilon}^{+} \vee Y_{\epsilon}^{+} > 4 (C_0 + 1)$ for all $\epsilon \in \cE$.
          Indeed, if $X_{\epsilon}^{+} \vee Y_{\epsilon}^{+} \leq 4(C_0 + 1)$ for all sufficiently small $\epsilon \in (0, 1)$, then letting $\cR_{\epsilon}$ be the intersection of a minimizer of $X_{\epsilon}^{+}$ with a minimizer of $Y_{\epsilon}^{+}$, the sets $\cR_{\epsilon}$ have cardinality $> \frac{1}{3} \#((\epsilon^{1 - \zeta}, \epsilon^{1 - \zeta_{+}}) \cap \{8^{-j}\}_{j \in \NN})$ and satisfy \eqref{eq:LimsupOfScalingRatiosFinite} and \eqref{eq:LimsupOfInverseScalingRatiosFinite}, which is a contradiction.
  
          By Lemma \ref{lemma:RegularlyVaryingScalingConstants}, there exists $\epsilon_0 \in (0, 1)$ such that for all $\epsilon \in (0, \DerivativeBound \epsilon_0^{\zeta})$ and all $C \in [\DerivativeBound^{-2}, \DerivativeBound^2]$,
          \begin{align*}
            \frac{\fa_{\epsilon C}}{\fa_{\epsilon}} \
            &\geq \ \frac{1}{2} C^{1 - \xi Q}.
          \end{align*}
          This implies that for all $\epsilon \in (0, \epsilon_0)$, all $r \in (\epsilon^{1 - \zeta}, \epsilon^{1 - \zeta_{+}})$, and all $s,t \in [\DerivativeBound^{-1}, \DerivativeBound]$,
          \begin{align}
            \frac{\frac{r \fa_{\epsilon t}^{-1}}{r^{\xi Q} \fa_{\epsilon t/r}^{-1}}}{\frac{r \fa_{\epsilon s}^{-1}}{r^{\xi Q} \fa_{\epsilon s/r}^{-1}}} \
            &= \ \frac{\fa_{\epsilon s}}{\fa_{\epsilon t}} \frac{\fa_{\epsilon t/r}}{\fa_{\epsilon s/r}} \
            = \ \frac{\fa_{\epsilon t (s/t)}}{\fa_{\epsilon t}} \frac{\fa_{(\epsilon s/r)(t/s)}}{\fa_{\epsilon s/r}} \
            \geq \ \frac{1}{2} \left(\frac{s}{t} \right)^{1 - \xi Q} \frac{1}{2} \left(\frac{t}{s} \right)^{1 - \xi Q} \
            = \ \frac{1}{4}, \label{eq:SupScalingRatiosToInf} \\
            \frac{\frac{r^{\xi Q} \fa_{\epsilon t/r}^{-1}}{r \fa_{\epsilon t}^{-1}}}{\frac{r^{\xi Q} \fa_{\epsilon s/r}^{-1}}{r \fa_{\epsilon s}^{-1}}} \
            &= \ \frac{\fa_{\epsilon t}}{\fa_{\epsilon s}} \frac{\fa_{\epsilon s/r}}{\fa_{\epsilon t/r}} \
            = \ \frac{\fa_{\epsilon s (t/s)}}{\fa_{\epsilon s}} \frac{\fa_{(\epsilon t/r)(s/t)}}{\fa_{\epsilon t/r}} \
            \geq \ \frac{1}{2} \left(\frac{t}{s} \right)^{1 - \xi Q} \frac{1}{2} \left(\frac{s}{t} \right)^{1 - \xi Q} \
            = \ \frac{1}{4}.
            \label{eq:SupInverseScalingRatiosToInf}
          \end{align}
          Combining \eqref{eq:SupScalingRatiosToInf} and \eqref{eq:SupInverseScalingRatiosToInf} with the fact that $X_{\epsilon}^{+} \vee Y_{\epsilon}^{+} > 4 (C_0 + 1)$ for all $\epsilon \in \cE$, it follows that for all $\epsilon \in \cE \cap (0, \epsilon_0)$, we have $X_{\epsilon}^{-} \vee Y_{\epsilon}^{-} > C_0 + 1$.
          Therefore, for each $\epsilon \in \cE \cap (0, \epsilon_0)$, at least one of the sets
          \begin{align}
            \cR_{\epsilon}^{+} \
            &\coloneqq \ \left\{r \in (\epsilon^{1 - \zeta}, \epsilon^{1 - \zeta_{+}}) \cap \{8^{-j}\}_{j \in \NN} : \inf_{t \in [\DerivativeBound^{-1}, \DerivativeBound]} \ratios[\epsilon t] > C_0 + 1 \right\}, \label{eq:BadRadiiSetRatios} \\
            \cR_{\epsilon}^{-} \
            &\coloneqq \ \left\{r \in (\epsilon^{1 - \zeta}, \epsilon^{1 - \zeta_{+}}) \cap \{8^{-j}\}_{j \in \NN} : \inf_{t \in [\DerivativeBound^{-1}, \DerivativeBound]} \invratios[\epsilon t] > C_0 + 1 \right\}, \label{eq:BadRadiiSetInverseRatios}
          \end{align}
          has cardinality at $> \frac{1}{3} \# ((\epsilon^{1 - \zeta}, \epsilon^{1 - \zeta_{+}}) \cap \{8^{-j}\}_{j \in \NN})$.
          Apply Lemma \ref{lemma:WeakInitialLipschitzCondition} with $\cR_{\epsilon}$ equal to $\cR_{\epsilon}^{+}$ when $\epsilon \in \cE \cap (0, \epsilon_0)$ and $\# \cR_{\epsilon}^{+} > \frac{1}{3} \# ((\epsilon^{1 - \zeta}, \epsilon^{1 - \zeta_{+}}) \cap \{8^{-j}\}_{j \in \NN})$, with $\cR_{\epsilon}$ equal to $\cR_{\epsilon}^{-}$ when $\epsilon \in \cE \cap (0, \epsilon_0)$ and $\# \cR_{\epsilon}^{+} \leq \frac{1}{3} \#((\epsilon^{1 - \zeta}, \epsilon^{1 - \zeta_{+}}) \cap \{8^{-j}\}_{j \in \NN}$, and with $\cR_{\epsilon}$ defined arbitrarily when $\epsilon \not\in \cE \cap (0, \epsilon_0)$, and with open sets $W = \DD$, $V = 2 \DD$, $U = 3 \DD$.
          We get that with probability at least $1 - O_{\epsilon}(\epsilon^{2 (1 - \zeta)})$ as $\epsilon \to 0$, for all $z,w \in \DD$,
          \begin{align}
            \locLFPP[\epsilon][h]\left(B_{2 \epsilon^{1 - \zeta_{+}}}(z), B_{2 \epsilon^{1 - \zeta_{+}}}(w); B_{2 \epsilon^{1 - \zeta_{+}}}(\DD) \right) \
            &\leq \ C_0 \sup_{r \in \cR_{\epsilon}} \sup_{t \in [\DerivativeBound^{-1}, \DerivativeBound]} \ratios[\epsilon t] D_h\left(z,w; \DD \right), \label{eq:LipschitzConditionWithConstantLessThan1LFPPLeqLQG} \\
            D_h\left(B_{2 \epsilon^{1 - \zeta_{+}}}(z), B_{2 \epsilon^{1 - \zeta_{+}}}(w); B_{2 \epsilon^{1 - \zeta_{+}}}\DD) \right) \
            &\leq \ C_0 \sup_{r \in \cR_{\epsilon}} \sup_{t \in [\DerivativeBound^{-1}, \DerivativeBound]} \invratios[\epsilon t] \locLFPP[\epsilon][h]\left(z,w; \DD \right).
            \label{eq:LipschitzConditionWithConstantLessThan1LQGLeqLFPP}
          \end{align}

          At least one of the sets $\{\epsilon \in \cE \cap (0, \epsilon_0) : \cR_{\epsilon} = \cR_{\epsilon}^{+}\}$ or $\{\epsilon \in \cE \cap (0, \epsilon_0) : \cR_{\epsilon} = \cR_{\epsilon}^{-}\}$ is infinite.
          Let us assume that the former is infinite, and denote it by $\cE'$.
          Then by \eqref{eq:LipschitzConditionWithConstantLessThan1LQGLeqLFPP} and the fact that $\cR_{\epsilon} = \cR_{\epsilon}^{+}$ for all $\epsilon \in \cE'$, we see that with probability $1 - O_{\epsilon}(\epsilon^{2 (1 - \zeta)})$ as $\epsilon \to 0$ along $\cE'$,
          \begin{align}
            D_h\left(B_{2 \epsilon^{1 - \zeta_{+}}}(z), B_{2 \epsilon^{1 - \zeta_{+}}}(w); B_{2 \epsilon^{1 - \zeta_{+}}}(\DD) \right) \
            &\leq \ \frac{C_0}{C_0 + 1} \locLFPP\left(z,w; \DD \right) \ \forall z,w \in \DD.
            \label{eq:LipschitzConditionContradiction}
          \end{align}
          By Borel-Cantelli applied to a sparse subsequence $\cE'' \subset \cE'$, \eqref{eq:LipschitzConditionContradiction} holds for all $z,w \in \DD$ and all $\epsilon \in \cE''$ sufficiently small almost surely.
          Fixing $R > 0$ such that $D_h(z, w) < D_h(z, \partial \DD)$ for all $z, w \in B_R(0)$ with probability at least $\frac{1}{2}$, it follows that with probability at least $\frac{1}{2}$, for all $\epsilon$ sufficiently small,
          \begin{align*}
            D_h\left(B_{2 \epsilon^{1 - \zeta_{+}}}(0), B_{2 \epsilon^{1 - \zeta_{+}}}(R/2) \right) \
            &\leq \ \frac{C_0}{C_0 + 1} \locLFPP[\epsilon][h]\left(0, R/2 \right).
          \end{align*}
          This is impossible because the left-hand side converges to $D_h(0, R/2)$ by continuity while the right-hand side converges to $\frac{C_0}{C_0 + 1} D_h(0, R/2) < D_h(0, R/2)$.
  
          The case that $\cE' \coloneqq \{\epsilon \in \cE \cap (0, \epsilon_0) : \cR_{\epsilon} = \cR_{\epsilon}^{-}\}$ is infinite is similar.
          We pass to a subsequence $\cE''$ such that almost surely, for all $\epsilon \in \cE''$ sufficiently small,
          \begin{align*}
            \locLFPP[\epsilon][h]\left(B_{2 \epsilon^{1 - \zeta_{+}}}(z), B_{2 \epsilon^{1 - \zeta_{+}}}(w); B_{2 \epsilon^{1 - \zeta_{+}}}(\DD) \right) \
            &\leq \ \frac{C_0}{C_0 + 1} D_h\left(z,w; \DD \right) \ \forall z,w \in \DD.
          \end{align*}
          Then for $R > 0$ small enough, with probability at least $\frac{1}{2}$, for all $\epsilon \in \cE''$ sufficiently small,
          \begin{align*}
            \locLFPP[\epsilon][h]\left(B_{2 \epsilon^{1 - \zeta_{+}}}(0), B_{2 \epsilon^{1 - \zeta_{+}}}(R/2) \right) \
            &\leq \ \frac{C_0}{C_0 + 1} D_h\left(0, R/2 \right).
          \end{align*}
          But the left-hand side converges to $D_h(0, R/2) > \frac{C_0}{C_0 + 1} D_h(0, R/2)$, a contradiction.
        \end{proof}
      \end{lemma}

      \begin{proof}[Proof of Proposition \ref{prop:InitialLipschitzCondition}]
        Let $\cR_{\epsilon}$ be as in Lemma \ref{lemma:APrioriScalingRatiosBound}, so there exists $0 < L < \infty$ such that
        \begin{align*}
          \limsup_{\epsilon \to 0} \sup_{r \in \cR_{\epsilon}} \sup_{t \in [\DerivativeBound^{-1}, \DerivativeBound]} \ratios[\epsilon t] \vee \invratios[\epsilon t] \
          &< \ L.
        \end{align*}
        Apply Lemma \ref{lemma:WeakInitialLipschitzCondition} with this choice of $\cR_{\epsilon}$.
        Then Proposition \ref{prop:InitialLipschitzCondition} holds with constant $C_0$ equal to the constant $C_0$ from Lemma \ref{lemma:WeakInitialLipschitzCondition} times $L$.
      \end{proof}
  
    \subsection{Iteratively improving the Lipschitz constant}
      \label{section:ImprovingTheLipschitzConstant}
      We will now show that the constant $C_0$ from Proposition \ref{prop:InitialLipschitzCondition} can be made close to $1$.
      We assume throughout that $\xi < \xi_{\text{crit}}$.
      The argument is similar to the proof of \cite[Proposition 3.6]{ConformalCovariance}, in which the authors prove that $D_{\hphi}(\phi(\cdot), \phi(\cdot))$ and $D_h$ are almost surely Lipschitz equivalent with Lipschitz constant $1$ for each fixed $\phi$.
      The key difference for us is that we only have an ``approximate'' Lipschitz condition in which distances between small Euclidean balls of radius $\epsilon^{1 - \zeta}$ for one metric are comparable to distances between points for the other.
      So we will use a similar argument to \cite[Proposition 3.1]{AlmostSureConvergence} to iteratively construct a sequence of Lipschitz constants which converge to $1$ at the expense of increasing the scale of the radii $\epsilon^{1 - \zeta}$ (i.e. increasing $\zeta$) at each step.

      In a bit more detail, the idea is to use convergence of $\locLFPP$ together with Lemma \ref{lemma:IndependenceAcrossConcentricAnnuli} to show that with polynomially high probability, a uniformly positive proportion of a $D_h$- or $\locLFPP[\epsilon][\hphi]$-geodesic is comprised of segments for which $D_h$ and $\locLFPP[\epsilon][\hphi]$ are Lipschitz equivalent with Lipschitz constant close to $1$.
      Because the proportion is uniform over all geodesics, we (almost) obtain a global Lipschitz condition with Lipschitz constant closer to $1$.

      One caveat is that $D_h$- and $\locLFPP[\epsilon][\hphi]$-internal metrics on an open set need not admit geodesics for points far apart, so we work only with points close enough to have a geodesic which remains inside $V$.
      Actually, since we need to cover each geodesic by annuli like in the proof of Proposition \ref{prop:InitialLipschitzCondition}, where each annulus needs to be entirely inside $V$ in order to use the bound $|\phi'| \in [\DerivativeBound^{-1}, \DerivativeBound]$, we will need each geodesic to stay a fixed positive distance away from $\partial V$.
      This is the reason for the additional open set $\tilde{W}$ below in Proposition \ref{prop:ImprovingTheLipschitzConstant}.
  
      If $W \Subset \tilde{W}$ are open sets, let $\hypertarget{G}{G(W, \tilde{W})}$ denote the set of $(z,w) \in W^2$ for which there is a $D_h$-geodesic from $z$ to $w$ contained in $\tilde{W}$.
      Similarly, for a conformal map $\phi$ defined on a neighbourhood $U$ of $\tilde{W}$, let $\hypertarget{Gphi}{G_{\epsilon}^{\phi}(W, \tilde{W})}$ denote the set of $(z,w) \in W^2$ for which there is a $\locLFPP[\epsilon][\hphi](\phi(\cdot), \phi(\cdot); \phi(U))$-geodesic from $z$ to $w$ contained in $\tilde{W}$.

      The main result of this section is the following.

      \begin{prop}
        \label{prop:ImprovingTheLipschitzConstant}
        Fix $0 < \xi < \xi_{\text{crit}}$, open sets $W \Subset \tilde{W} \Subset V \Subset U$, and $\DerivativeBound > 1$.
        There is a constant $A > 0$ such that the following is true.
        Assume there exists $\zeta_{-} \in (0,1)$, $\beta > 0$, and $C_0(\epsilon), C_0'(\epsilon) > 0$ such that with probability $1 - O_{\epsilon}(\epsilon^{\beta})$ as $\epsilon \to 0$, for all $\phi \in \confmaps$,
        \begin{align}
          &\locLFPP[\epsilon][\hphi] \left(\phi\left(B_{4 \epsilon^{1 - \zeta_{-}}}(z) \right), \phi\left(B_{4 \epsilon^{1 - \zeta_{-}}}(w) \right); \phi(B_{4 \epsilon^{1 - \zeta_{-}}}(\tilde{W}))\right) \nonumber \\
          &\qquad \qquad \leq \ C_0(\epsilon) D_h\left(z,w; \tilde{W} \right) \ \forall (z,w) \in \hyperlink{G}{G(W, \tilde{W})}, \label{eq:ImprovingTheLipschitzConstantInitialSupLeqLQG} \\
          &D_h\left(B_{4 \epsilon^{1 - \zeta_{-}}}(z), B_{4 \epsilon^{1 - \zeta_{-}}}(w); B_{4 \epsilon^{1 - \zeta_{-}}}(\tilde{W}) \right) \nonumber \\
          &\qquad \qquad \leq \ C_0'(\epsilon) \LFPP[\epsilon][\hphi]\left(\phi(z), \phi(w); \phi(\tilde{W}) \right) \ \forall (z,w) \in \hyperlink{Gphi}{G_{\epsilon}^{\phi}(W, \tilde{W})}. \label{eq:ImprovingTheLipschitzConstantInitialLQGLeqInf}
        \end{align}
        Choose $\delta, \zeta, \zeta_{+} \in (0,1)$ such that $\zeta_{-} < \zeta < \zeta_{+}$ and $1 - \zeta_{+} < \frac{1}{2} (1 - \zeta)$.
        For each $\epsilon \in (0, 1)$, choose $\cR_{\epsilon} \subset (\epsilon^{1 - \zeta}, \epsilon^{1 - \zeta_{+}}) \cap \{8^{-k}\}_{k=1}^{\infty}$ with $\# \cR_{\epsilon} > \frac{1}{3} \#((\epsilon^{1 - \zeta}, \epsilon^{1 - \zeta_{+}}) \cap \{8^{-k}\}_{k=1}^{\infty})$.
        Then with probability $1 - O_{\epsilon}(\epsilon^{\beta \w 2(1 - \zeta)})$, for all $\phi \in \confmaps$,
        \begin{align}
          &\locLFPP[\epsilon][\hphi] \left(\phi\left(B_{4 \epsilon^{1 - \zeta_{+}}}(z) \right), \phi\left(B_{4 \epsilon^{1 - \zeta_{+}}}(w) \right); \phi(B_{4 \epsilon^{1 - \zeta_{+}}}(\tilde{W}))\right) \nonumber \\
          &\qquad \qquad \leq \ C_1(\epsilon) D_h\left(z,w; \tilde{W} \right) \ \forall (z,w) \in \hyperlink{G}{G(W, \tilde{W})}, \label{eq:ImprovingTheLipschitzConstantImprovedSupLeqLQG} \\
          &D_h\left(B_{4 \epsilon^{1 - \zeta_{+}}}(z), B_{4 \epsilon^{1 - \zeta_{+}}}(w); B_{4 \epsilon^{1 - \zeta_{+}}}(\tilde{W}) \right) \nonumber \\
          &\qquad \qquad \leq \ C_1'(\epsilon) \locLFPP[\epsilon][\hphi]\left(\phi(z), \phi(w); \phi(\tilde{W}) \right) \ \forall (z,w) \in \hyperlink{Gphi}{G_{\epsilon}^{\phi}(W, \tilde{W})}, \label{eq:ImprovingTheLipschitzConstantImprovedLQGLeqInf}
        \end{align}
        where the big-$O$ constant in $O_{\epsilon}(\epsilon^{\beta \w 2(1 - \zeta)})$ depends on the big-$O$ constant in $O_{\epsilon}(\epsilon^{\beta})$, $V$, and $\zeta$, and
        \begin{align}
          C_1(\epsilon) \
          &\coloneqq \ \frac{A}{A+1} \left[\sup_{r \in \cR_{\epsilon}} \sup_{t \in [\DerivativeBound^{-1}, \DerivativeBound]} \ratios[\epsilon t] \vee C_0(\epsilon) \right] + \left[\frac{1}{A + 1} + 3 \delta \right] \sup_{r \in \cR_{\epsilon}} \sup_{t \in [\DerivativeBound^{-1}, \DerivativeBound]} \ratios[\epsilon t], \label{eq:ImprovingTheLipschitzConstantImprovedC} \\
          C_1'(\epsilon) \
          &\coloneqq \ \frac{A}{A+1} \left[\sup_{r \in \cR_{\epsilon}} \sup_{t \in [\DerivativeBound^{-1}, \DerivativeBound]} \invratios[\epsilon t] \vee C_0'(\epsilon) \right] + \left[\frac{1}{A + 1} + 3 \delta \right] \sup_{r \in \cR_{\epsilon}} \sup_{t \in [\DerivativeBound^{-1}, \DerivativeBound]} \invratios[\epsilon t]. \label{eq:ImprovingTheLipschitzConstantImprovedCPrime}
        \end{align}
      \end{prop}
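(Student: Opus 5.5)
The plan is to follow the scheme of \cite[Proposition 3.6]{ConformalCovariance} and \cite[Proposition 3.1]{AlmostSureConvergence}, made uniform over $\phi \in \confmaps$. The argument has two parts: a supply of ``good'' annuli on which the Lipschitz constant is close to the scaling ratio, and a decomposition of geodesics into good and bad segments.

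\textbf{Good annuli.} For $x \in V$, $r \in \cR_{\epsilon}$ and $\delta > 0$, define an event $G_{r,\epsilon}(x)$ modelled on Definition \ref{defn:InitialLipschitzConditionEvents}, with two requirements.
\begin{enumerate}
  \item For every $\phi \in \confmaps$ and every pair of points $u,v$ in $\AA_{r/2,2r}(x)$ (or in a slightly smaller annulus), the $\locLFPP[\epsilon][\hphi](\phi(\cdot),\phi(\cdot))$-distance is at most $(1+\delta)\sup_{t}\ratios[\epsilon t]\, D_h(u,v) + \delta\,D_h(\text{across } \AA_{3r/4,r}(x))$. A matching inequality with the inverse ratios holds in the other direction.
  \item The $D_h$- and LFPP-distances around the annulus are at most $A$ times the distances across it.
\end{enumerate}
Here $A$ is a large constant, and I would choose $A$ first, as a quantile from Lemma \ref{lemma:TightnessAroundAnnuli} and \eqref{eq:InitialLipschitzLowerBoundAcross}. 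By Lemma \ref{lemma:CountableDenseSetOfConformalMaps} and the argument of Lemma \ref{lemma:InitialLipschitzConditionMeasurability}, this event is measurable with respect to $(h-h_{4r}(x))|_{\AA_{r/2,2r}(x)}$.

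Its probability is at least $p$ by the chain of inequalities used in Lemma \ref{lemma:InitialLipschitzConditionHighProbability}. Proposition \ref{prop:MainRegularityConditionHighProbability} (with small $\delta$) replaces $\hphi$-LFPP by $\locLFPP[\epsilon/|\phi'(x)|][h]$ up to a factor $(1+\delta)$. Lemma \ref{lemma:PropertiesOfLocalizedFieldAndLFPP}\ref{loc:UniformComparison} passes to $\LFPP[\epsilon/|\phi'(x)|][h]$. The scaling relation \eqref{eq:LFPPScaling} rescales to unit size, producing the ratio $\ratios[\epsilon t]$ with $t=1/|\phi'(x)|$. The point that gives uniformity in $\phi$ is that, by Lemma \ref{lemma:PropertiesOfLocalizedFieldAndLFPP}\ref{loc:AlmostSureConvergence}, $\LFPP[s][h(r\cdot+x)-h_r(x)]\to D_{h(r\cdot+x)-h_r(x)}$ uniformly on compacts simultaneously for all $s\in(0,s_0)$. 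Hence, with probability at least $p$, this LFPP is within a factor $1+\delta$ of $D$ on the unit annulus for all small $s$ at once, and the distribution is independent of $r$ and $x$. Lemma \ref{lemma:IndependenceAcrossConcentricAnnuli} and a union bound over $x\in \tilde W\cap \frac{\epsilon^{1-\zeta}}{8}\ZZ^2$ then show that, with probability $1-O(\epsilon^{2(1-\zeta)})$, every such $x$ has a good radius in $\cR_\epsilon$.

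\textbf{Improved bound \eqref{eq:ImprovingTheLipschitzConstantImprovedSupLeqLQG}.} Intersect with the event of the hypotheses \eqref{eq:ImprovingTheLipschitzConstantInitialSupLeqLQG}--\eqref{eq:ImprovingTheLipschitzConstantInitialLQGLeqInf}, and fix $(z,w)\in G(W,\tilde W)$ together with a $D_h$-geodesic $P\subset\tilde W$. As in the proof of Lemma \ref{lemma:WeakInitialLipschitzCondition}, define stopping times $t_j$ and good annuli $\AA_{r_j}(x_j)$ hit by $P$. Split $P$ into excursions inside the good annuli and the complementary segments.
\begin{itemize}
  \item On a segment inside a good annulus, property 1 bounds the $\phi$-LFPP length by $(1+\delta)\sup\ratios$ times the $D_h$-length, plus an error of $\delta$ times the crossing distance.
  \item On the complementary segments, apply the hypothesis \eqref{eq:ImprovingTheLipschitzConstantInitialSupLeqLQG} with constant $C_0(\epsilon)$. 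The $4\epsilon^{1-\zeta_-}$-balls at the endpoints are absorbed by paths around small annuli, and this is where the enlargement $\zeta_-\to\zeta_+$ is paid.
\end{itemize}
The pieces are glued using the loops around good annuli, whose cost is at most $A$ times the crossing, via \cite[Lemma 3.4]{UpToConstants}. The key counting fact is that each crossing of a good annulus costs $D_h$-length at least $D_h(\text{across})$, while the loop gluing costs at most $A$ times that. It follows that at least a fraction $1/(A+1)$ of the $D_h$-length of $P$ is handled with constant $(1+\delta)\sup\ratios$, and the rest with $\sup\ratios\vee C_0(\epsilon)$. This yields $C_1(\epsilon)$ of the form \eqref{eq:ImprovingTheLipschitzConstantImprovedC}, with the $3\delta$ collecting the three $\delta$-errors. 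The reverse inequality \eqref{eq:ImprovingTheLipschitzConstantImprovedLQGLeqInf} is proved symmetrically using $\locLFPP[\epsilon][\hphi]$-geodesics in $G^\phi_\epsilon(W,\tilde W)$. Because the good-annulus event is uniform in $\phi$, the infimum over $\phi$ is harmless.

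\textbf{Main obstacle.} I expect the hardest step to be the bookkeeping that guarantees a fixed proportion $1/(A+1)$ of every geodesic lies in good annuli, uniformly over $\phi$ and over all geodesics. This requires defining the segments so that the hypothesis at scale $\epsilon^{1-\zeta_-}$ applies to the bad pieces, whose endpoints become balls. I would first try to copy the decomposition of \cite[Section 3]{AlmostSureConvergence} verbatim, replacing its single metric by the supremum over the countable family $\lambda_\tau(V,U)$.
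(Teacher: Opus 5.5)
Your architecture is the paper's: good annuli via Lemma \ref{lemma:IndependenceAcrossConcentricAnnuli} and a union bound, then a decomposition of a geodesic with the hypothesis applied on the bad pieces. However, the step you flag as the main obstacle needs an idea you do not state, and the mechanism you do state would not produce $C_1(\epsilon)$.

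\textbf{The counting mechanism.} Gluing loops around good annuli into the competitor path via \cite[Lemma 3.4]{UpToConstants} is the mechanism of Lemma \ref{lemma:WeakInitialLipschitzCondition}. Used here, the loops' own $\locLFPP[\epsilon][\hphi]$-lengths (each up to $A$ times a crossing) enter the bound. That gives a constant of order $A\sup_t\ratios[\epsilon t]$, and no proportion $\frac{1}{A+1}$ comes out of it. In the paper the loops are never part of the competitor path; they are competitors to the geodesic $P$. Let $t_j$ be the first exit of $P$ from $B_{r_{j-1}}(x_{j-1})$ after $t_{j-1}$, and $s_j$ the last exit from $B_{\alpha r_{j-1}}(x_{j-1})$ before $t_j$. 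Then $P|_{[s_j,t_j]}$ is a geodesic crossing of $\overline{\AA_{\alpha r_{j-1}, r_{j-1}}(x_{j-1})}$, to which the good-annulus comparison applies. For $j\in[\underline J+1,\overline J]$, both $z$ and $w$ lie outside $B_{r_{j-1}}(x_{j-1})$. Hence $P$ meets the $D_h$-loop around this annulus both before $t_{j-1}$ and after $s_j$. Since $P$ is a $D_h$-geodesic, this forces $s_j-t_{j-1}\le A\,D_h(\partial B_{\alpha r_{j-1}}(x_{j-1}),\partial B_{r_{j-1}}(x_{j-1}))\le A(t_j-s_j)$, i.e.\ $s_j-t_{j-1}\le\frac{A}{A+1}(t_j-t_{j-1})$.

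This is also where $\zeta_+$ really enters: the annuli near $z$ and $w$ must be discarded, so the conclusion holds only between balls of radius $4\epsilon^{1-\zeta_+}$. It is not the cost of absorbing the $4\epsilon^{1-\zeta_-}$-balls of the hypothesis. Those are handled by a separate continuity requirement in the good event (condition 4 of Definition \ref{defn:ImprovingTheLipschitzConstantRegularityEvents}), which your event does not contain.

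\textbf{Requirement 1 needs reformulating.} It is stated for the global distances $D_h(u,v)$ and $\locLFPP[\epsilon][\hphi](\phi(u),\phi(v))$. Such an event is not determined by $(h-h_{4r}(x))|_{\AA_{r/2,2r}(x)}$, so Lemma \ref{lemma:IndependenceAcrossConcentricAnnuli} does not apply. Moreover, Proposition \ref{prop:MainRegularityConditionHighProbability} only compares lengths of paths inside $B_{2\epsilon^{1-\zeta_+}}(x)$, hence only internal metrics, while the almost sure convergence you invoke is for global metrics. Bridging internal and global metrics, uniformly in $t\in[\DerivativeBound^{-1},\DerivativeBound]$ and in $\phi$, is exactly why the paper does two things. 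It restricts condition 1 to $u\in\partial B_{\alpha r}(x)$, $v\in\partial B_r(x)$ joined by a geodesic inside the thin annulus. It also adds condition 2, proved via Lemma \ref{lemma:LQGDistancesInTube} and Proposition \ref{prop:InitialLipschitzCondition}. An additive-error version like yours can be made to work, but only with asymmetric internal metrics. For example, compare $\locLFPP[\epsilon][\hphi]$ internal to $\phi(\AA_{3r/4,5r/4}(x))$ against $D_h$ internal to a neighbourhood of $\overline{\AA_{\alpha r,r}(x)}$, and prove the unit-scale comparison by chaining short pieces of near-geodesics using the uniform convergence. None of this is in the proposal.
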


      Intuitively, Proposition \ref{prop:ImprovingTheLipschitzConstant} says that if $\locLFPP[\epsilon][\hphi](\phi(\cdot), \phi(\cdot))$ and $D_h$ are approximately bi-Lipschitz equivalent (``approximate'' in the same sense as Proposition \ref{prop:InitialLipschitzCondition}) with high probability with Lipschitz constants $C_0$ and $C_0'$, then they are approximately bi-Lipschitz equivalent with high probabililty with Lipschitz constants $C_1$ and $C_1'$ provided the radius $\epsilon^{1 - \zeta_{-}}$ is increased to $\epsilon^{1 - \zeta}$.
      The constant $C_1$ is, roughly, $\frac{A}{A + 1} C_0$ plus $\frac{1}{A + 1}$ times a term involving the scaling ratios $\ratios$.
      If the ratios $\ratios$ are close to $1$, then $C_1$ will be closer to $1$ than $C_0$.
      Likewise for $C_1'$.
      Note that we allow $C_0$, $C_0'$ to be $\epsilon$-dependent.
      This is so that we can apply Proposition \ref{prop:ImprovingTheLipschitzConstant} iteratively, with $C_1, C_1'$ in place of $C_0, C_0'$ to obtain Lipschitz constants $C_2, C_2'$.
      Repeating in this manner, we obtain Lipschitz constants $C_n, C_n'$.
      Since the constant $A$ is universal, we can show that $C_n, C_n' \to 1$ as $n \to \infty$.

      Like the proof of Proposition \ref{prop:InitialLipschitzCondition}, we will use Lemma \ref{lemma:IndependenceAcrossConcentricAnnuli} to find many annuli on which $\locLFPP[\epsilon][\hphi](\phi(\cdot), \phi(\cdot))$ and $D_h$ are bi-Lipschitz equivalent with constant close to $1$.
      The events we will apply Lemma \ref{lemma:IndependenceAcrossConcentricAnnuli} to are as follows.

      \begin{defn}
        \label{defn:ImprovingTheLipschitzConstantRegularityEvents}
        Fix $x \in W$, $\epsilon, \zeta_{-}, \delta \in (0, 1)$, $\alpha \in (7/8, 1)$, $A > 1$, and $r \in (\frac{4 \epsilon^{1 - \zeta_{-}}}{\alpha - 3/4}, 1)$.
        Let $E_{r, \epsilon}$ be the intersection of the following events.
        \begin{enumerate}
          \item $E_{r, \epsilon}^{(1)}(x) = E_{r, \epsilon}^{(1)}(x; \alpha, \delta)$: For each $\phi \in \confmaps$, $u \in \partial B_{\alpha r}(x)$, and $v \in \partial B_r(x)$ for which there is a $D_h(\cdot, \cdot; \AA_{3r/4, 5r/4}(x))$-geodesic (resp. $\locLFPP[\epsilon][\hphi](\phi(\cdot), \phi(\cdot); \phi(\AA_{3r/4, 5r/4}(x)))$-geodesic) from $u$ to $v$ contained in $\AA_{\alpha r, r}(x)$, we have respectively
            \begin{align*}
              &\locLFPP[\epsilon][h^{\phi}]\left(\phi(u), \phi(v); \phi\left(\AA_{3r/4, 5r/4}(x) \right)\right) \\
              &\leq \ \sup_{t \in [\DerivativeBound^{-1}, \DerivativeBound]} \ratios[\epsilon t] (1 + \delta) D_h\left(u, v; \AA_{\alpha r, r}(x) \right), \\
              &D_h\left(u, v; \AA_{3r/4, 5r/4}(x) \right) \\
              &\leq \ \sup_{t \in [\DerivativeBound^{-1}, \DerivativeBound]} \invratios[\epsilon t] \left(1 + \delta \right) \locLFPP[\epsilon][h^{\phi}]\left(\phi(u), \phi(v); \phi\left(\AA_{\alpha r, r}(x) \right)\right).
            \end{align*}
          \item $E_{r, \epsilon}^{(2)}(x) = E_{r, \epsilon}^{(2)}(x; \alpha)$: If $\phi \in \confmaps$, $u \in \partial B_{\alpha r}(x)$, and $v \in \partial B_r(x)$, then
            \begin{itemize}
              \item $D_h(u, v; \overline{\AA_{\alpha r, r}(x)}) = D_h(u, v; \AA_{3r/4, 5r/4}(x))$ implies that for all $t \in [\DerivativeBound^{-1}, \DerivativeBound]$,
                \begin{align*}
                  \locLFPP[\epsilon t][h]\left(u, v \right) \
                  &< \ \locLFPP[\epsilon t][h]\left(u, \partial \AA_{3r/4, 5r/4}(x) \right)
                \end{align*}
              \item $\locLFPP[\epsilon][\hphi](\phi(u), \phi(v); \phi(\overline{\AA_{\alpha r, r}(x)})) = \locLFPP[\epsilon][\hphi](\phi(u), \phi(v); \phi(\AA_{3r/4, 5r/4}(x)))$ for some $\phi \in \confmaps$ implies 
                \begin{align*}
                  D_h\left(u, v \right) \
                  &< \ D_h\left(u, \partial \AA_{3r/4, 5r/4}(x) \right).
                \end{align*}
            \end{itemize}
          \item $E_{r, \epsilon}^{(3)}(x) = E_{r, \epsilon}^{(3)}(x; \alpha, A)$: For all $\phi \in \confmaps$,
            \begin{align*}
              D_h\left(\text{around } \AA_{\alpha r, r}(x) \right) \
              &\leq \ A D_h\left(\partial B_{\alpha r}(x), \partial B_r(x) \right), \\
              \locLFPP[\epsilon][\hphi]\left(\text{around } \phi(\AA_{\alpha r, r}(x)) \right) \
              &\leq \ A \locLFPP[\epsilon][\hphi]\left(\phi(\partial B_{\alpha r}(x)), \phi(\partial B_r(x)) \right).
            \end{align*}
          \item $E_{r, \epsilon}^{(4)}(x) = E_{r, \epsilon}^{(4)}(x; \alpha, \delta, \zeta_{-})$: For all $u \in \overline{\AA_{\alpha r, r}(x)}$, $v \in \overline{B_{4 \epsilon^{1 - \zeta_{-}}}(u)}$, and $\phi \in \confmaps$, we have
            \begin{align*}
              &\locLFPP[\epsilon][\hphi]\left(\phi(u), \phi(v); \phi\left(\AA_{3r/4, 5r/4}(x) \right)\right) \\
              &\qquad \qquad \leq \ \delta \sup_{t \in [\DerivativeBound^{-1}, \DerivativeBound]} \ratios[\epsilon t] D_h\left(\partial B_{\alpha r}(x), \partial B_r(x) \right), \\
              &D_h\left(u, v; \AA_{3r/4, 5r/4}(x) \right) \\
              &\leq \ \delta \sup_{t \in [\DerivativeBound^{-1}, \DerivativeBound]} \invratios[\epsilon t] \locLFPP[\epsilon][\hphi]\left(\phi\left(\partial B_{\alpha r}(x) \right), \phi\left(\partial B_r(x) \right)\right).
            \end{align*}
        \end{enumerate}
      \end{defn}

      \begin{figure}[h!]

        {
          \centering
          \begin{center}
            \includegraphics[scale=0.6]{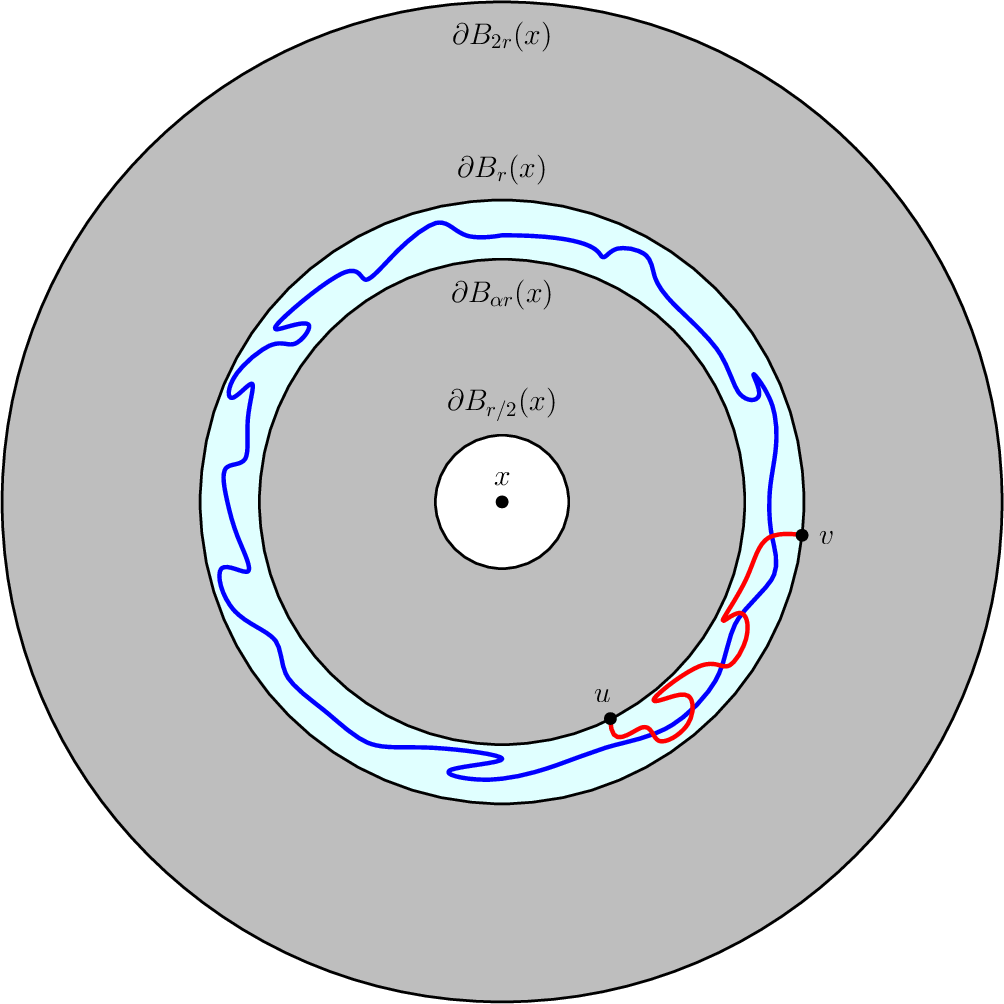}
          \end{center}
        }

        \caption{Illustration of Definition \ref{defn:ImprovingTheLipschitzConstantRegularityEvents}.
          Condition 1 says that if the red path is a $D_h$-geodesic, then $\locLFPP[\epsilon][\hphi](\phi(u), \phi(v)) \leq \sup_{t \in [\DerivativeBound^{-1}, \DerivativeBound]} \ratios[\epsilon t] (1 + \delta) D_h(u,v)$, and analogously with $D_h$ and $\locLFPP[\epsilon][\hphi]$ swapped.
          The second bullet of condition 2 says that if $D_h(u, v)$ is at least the $D_h$-distance from $u$ to the boundary of the gray annulus, then the red path cannot be a $\locLFPP[\epsilon][\hphi]$-geodesic for any $\phi \in \confmaps$.
          The first bullet of condition 2 says that if there is some $t \in [\DerivativeBound^{-1}, \DerivativeBound]$ such that $\locLFPP[\epsilon t][h](u, v)$ is at least the $\locLFPP[\epsilon t][h]$-distance from $u$ to the boundary of the gray annulus, then the red path cannot be a $D_h$-geodesic.
          Condition 3 says there is a path (drawn in blue) around the cyan annulus with $D_h$-length at most $A$ times the $D_h$-distance across the cyan annulus, and analogously for each metric $\locLFPP[\epsilon][\hphi]$ with $\phi \in \confmaps$.
        }
      \end{figure}
  
      The condition that $r > \frac{4 \epsilon^{1 - \zeta_{-}}}{\alpha - 3/4}$ is present to ensure the internal metrics in the fourth condition make sense.
  
      Condition 1 should occur with high probability by Proposition \ref{prop:MainRegularityConditionHighProbability} and the convergence of $\locLFPP$ to $D_h$.
      This is almost correct, but the internal metrics in condition 1 make the argument a bit more complicated.
      The internal metrics are needed to apply Proposition \ref{prop:MainRegularityConditionHighProbability} to compare $\locLFPP[\epsilon][\hphi]$ and $\locLFPP[\epsilon/|\phi'(x)|][h]$, but in order to compare $\locLFPP[\epsilon/|\phi'(x)|][h]$ to $D_h$, we need to remove the internal metrics.
      Condition 2 is what allows us to do this.
  
      Condition 3 is used to say that a $\frac{1}{A + 1}$-proportion of a geodesic lies in one of the annuli $\AA_{\alpha r, r}(x)$.
      Condition 4 is for controlling error terms from estimating point-to-point distances using ball-to-ball distances.

      Let us first check that $E_{r, \epsilon}(x)$ is determined by the field restricted to an annulus.
  
      \begin{lemma}
        \label{lemma:ImprovingTheLipschitzConstantMeasurability}
        For each $0 < \zeta_{-} < \zeta < \zeta_{+} < 1$, $\epsilon \in (0, 1)$ with $\epsilon^{\zeta} \log (\DerivativeBound \epsilon^{-1}) < \frac{1}{16 \DerivativeBound}$, each $r \in (\epsilon^{1 - \zeta}, \epsilon^{1 - \zeta_{+}})$, $x \in W$, $\delta \in (0, 1)$, $\alpha \in (7/8, 1)$, and $A > 1$, we have
        \begin{align*}
          E_{r, \epsilon}(x) \
          &\in \ \sigma\left\{(h - h_{4r}(x))|_{\AA_{r/2, 2r}(x)} \right\}.
        \end{align*}
        \begin{proof}
          Adding $-h_{4 r}(x)$ to $h$ scales each of the metrics in Definition \ref{defn:ImprovingTheLipschitzConstantRegularityEvents} by $e^{-\xi h_{4r}(x)}$, which doesn't affect the occurrence of $E_{r, \epsilon}(x)$.
          Therefore, conditions 1, 3, and 4 of Definition \ref{defn:ImprovingTheLipschitzConstantRegularityEvents} are determined by the internal metric of $D_{h - h_{4r}(x)}$ on $\AA_{3r/4, 5r/4}(x)$, and of $\locLFPP[\epsilon][\hphi - h_{4 r}(x)]$ on $\phi(\AA_{3r/4, 5r/4}(x))$ with $\phi \in \confmaps$.
          By Lemma \ref{lemma:CountableDenseSetOfConformalMaps}, we can replace $\confmaps$ by a countable subset $\lambda_{\DerivativeBound}(V, U) \subset \confmaps$.
          By axiom \ref{axiom:Locality}, the internal metric of $D_{h - h_{4r}(x)}$ on $\AA_{3r/4, 5r/4}(x)$ is determined by $(h - h_{4r}(x))|_{\AA_{3r/4, 5r/4}(x)}$.
          For each $\phi \in \confmaps$, Lemma \ref{lemma:DistortedBumpProperties} implies the internal metric of $\locLFPP[\epsilon][\hphi - h_{4r}(x)]$ on $\phi(\AA_{3r/4, 5r/4}(x))$ is determined by $(h - h_{4r}(x))|_{B_{4 \DerivativeBound \epsilon \log \epsilon^{-1}}(\AA_{3r/4, 5r/4}(x))}$. 
          The condition that $\epsilon^{\zeta} \log (\DerivativeBound \epsilon^{-1}) < \frac{1}{16 \DerivativeBound}$ implies $B_{4 \DerivativeBound \epsilon \log \epsilon^{-1}}(\AA_{3r/4, 5r/4}(x)) \subset \AA_{r/2, 2r}(x)$.
          This shows that $E_{r, \epsilon}^{(1)}(x), E_{r, \epsilon}^{(3)}(x), E_{r, \epsilon}^{(4)}(x) \in \sigma\{(h - h_{4 r}(x))|_{\AA_{r/2, 2r}(x)}\}$.
  
          To see that $E_{r, \epsilon}^{(2)}(x) \in \sigma\{(h - h_{4r}(x))|_{\AA_{r/2, 2r}(x)}\}$, note that by continuity of $(\epsilon, z) \mapsto \hat{h}_{\epsilon}^{*}(z)$ and Lemma \ref{lemma:CountableDenseSetOfConformalMaps}, condition 2 is equivalent to the same condition with $[\DerivativeBound^{-1}, \DerivativeBound]$ replaced by $[\DerivativeBound^{-1}, \DerivativeBound] \cap \QQ$ and $\confmaps$ replaced by a countable subset $\lambda_{\DerivativeBound}(V, U)$.
          Then note that for each $t \in [\DerivativeBound^{-1}, \DerivativeBound]$, the statement that $\locLFPP[\epsilon t][h - h_{4r}(x)](u, v) < \locLFPP[\epsilon t][h - h_{4r}(x)](u, \partial \AA_{3r/4, 5r/4}(x))$ is equivalent to saying $v$ is contained in the open $\locLFPP[\epsilon t][h - h_{4r}(x)]$-ball of radius $\locLFPP[\epsilon t][h - h_{4r}(x)](u, \partial \AA_{3r/4, 5r/4}(x))$ centered at $u$.
          This ball is, by definition, contained in $\AA_{3r/4, 5r/4}(x)$, so the first bullet point of condition 2 is determined by $(h - h_{4r}(x))|_{B_{\epsilon \DerivativeBound \log (\DerivativeBound \epsilon^{-1})}(\AA_{3r/4, 5r/4}(x))}$.
          Since $\epsilon^{\zeta} \log(\DerivativeBound \epsilon^{-1}) < \frac{1}{16 \DerivativeBound}$, we have $B_{\epsilon \DerivativeBound \log(\DerivativeBound \epsilon^{-1})}(\AA_{3r/4, 5r/4}(x)) \subset \AA_{r/2, 2r}(x)$.
  
          Likewise, the condition that $D_{h - h_{4r}(x)}(u,v) < D_{h - h_{4r}(x)}(u, \partial \AA_{3r/4, 5r/4}(x))$ means to say that $v$ is contained in the open $D_{h - h_{4r}(x)}$-ball of radius $D_h(u, \partial \AA_{3r/4, 5r/4}(x))$ centered at $u$, which is contained in $\AA_{3r/4, 5r/4}(x)$.
          Therefore, the second bullet point in condition 2 is also determined by $(h - h_{4r}(x))|_{\AA_{r/2, 2r}(x)}$.
        \end{proof}
      \end{lemma}

      We will now prove that the parameters in $E_{r, \epsilon}(x)$ can be chosen in a way that $E_{r, \epsilon}(x)$ occurs with high probability.
      \begin{prop}
        \label{prop:ImprovingTheLipschitzConstantHighProbability}
        Fix open sets $W \Subset V \Subset U \subset \CC$.
        For any $p \in (0, 1)$, $\DerivativeBound > 1$, $\delta \in (0, 1)$, $0 < \zeta_{-} < \zeta < \zeta_{+} < 1$, there exist $\alpha = \alpha(p) \in (7/8, 1)$, $A = A(p) > 1$, and $\epsilon_0 = \epsilon_0(p, \zeta_{-}, \zeta, \zeta_{+}, \delta) \in (0, 1)$ such that for all $x \in W$, $\epsilon \in (0, \epsilon_0)$, and $r \in (\epsilon^{1 - \zeta}, \epsilon^{1 - \zeta_{+}})$, $\Prob[E_{r, \epsilon}(x)] \geq p$.
      \end{prop}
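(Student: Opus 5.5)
The plan is to handle the four events separately: show each occurs with probability at least $1 - \frac{1-p}{4}$, then take a union bound. In every case the first step is the same reduction. Using \eqref{eq:LFPPScaling}, axioms \ref{axiom:WeylScaling}, \ref{axiom:CoordinateChange}, and \eqref{eq:ASCoordinateChangeForAffineTransformations}, rescale the annulus $\AA_{r/2,2r}(x)$ to $\AA_{1/2,2}(0)$ and replace $h$ by $h(r\cdot+x)-h_r(x)$. The rescaled field has the law of a whole-plane GFF modulo additive constant, so the resulting probabilities do not depend on $x$ or $r$.

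Under this rescaling, $\locLFPP[\epsilon t][h]$ on the annulus of radius $r$ becomes $\ratios[\epsilon t]$ times $\locLFPP[\epsilon t/r][\cdot]$ on the unit annulus, up to the $\LFPP$ versus $\locLFPP$ discrepancy. That discrepancy is handled by \ref{loc:UniformComparison}. Since $\epsilon t/r \le \DerivativeBound\epsilon^{\zeta}\to 0$ uniformly in $r\in(\epsilon^{1-\zeta},\epsilon^{1-\zeta_+})$ and $t\in[\DerivativeBound^{-1},\DerivativeBound]$, the almost sure local uniform convergence \ref{loc:AlmostSureConvergence} gives the key uniformity: with probability close to $1$, $\locLFPP[s][h]\to D_h$ uniformly over \emph{all} $s\in(0,\DerivativeBound\epsilon_0^{\zeta})$ on $\overline{\AA_{1/2,2}(0)}$. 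This covers every $t$ and every $\phi$ at once, since by Proposition \ref{prop:MainRegularityConditionHighProbability} the only $\phi$-dependence is through $t=1/|\phi'(x)|$.

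The events are then treated as follows.

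Event 3. Choose $A$ and $\alpha$ depending only on $p$ by tightness. The argument of Lemma \ref{lemma:TightnessAroundAnnuli} bounds $D_h(\text{around }\AA_{\alpha,1})$ and $\sup_s\LFPP[s][h](\text{around }\AA_{\alpha,1})$ from above. The lower bound on $D_h(\partial B_\alpha,\partial B_1)$ follows exactly as in \eqref{eq:InitialLipschitzLowerBoundAcross}. The $\phi$-metrics are converted to $\locLFPP[\epsilon/|\phi'(x)|][h]$ using Proposition \ref{prop:MainRegularityConditionHighProbability} with small $\delta$ and $\zeta_+$ in place of $\zeta$. That the required $A$ is uniform in $\alpha$ close to $1$ is not automatic; I would first fix $\alpha$ and then choose $A$.

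Event 4. Distances between points at Euclidean distance at most $4\epsilon^{1-\zeta_-}$ rescale to distances at scale $\epsilon^{\zeta-\zeta_-}\to0$. Uniform equicontinuity of $\locLFPP[s][h]$ in $s$, which follows from uniform convergence to the continuous $D_h$, makes these distances smaller than $\delta$ times the positive crossing distance. Proposition \ref{prop:MainRegularityConditionHighProbability} then transfers this to all $\phi$.

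Event 2. This is a statement about geodesics staying away from $\partial\AA_{3/4,5/4}$. Choose $\alpha$ close to $1$ so that, with high probability, $\sup_{u\in\partial B_\alpha,v\in\partial B_1}D_h(u,v)<\frac12\inf_u D_h(u,\partial\AA_{3/4,5/4})$, which is possible by continuity of $D_h$. This strict margin transfers to $\locLFPP[\epsilon t][h]$ for all $t$ by uniform convergence, and to every $\hphi$ by Proposition \ref{prop:MainRegularityConditionHighProbability}, whose factor $1+\delta$ is absorbed into the margin.

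Event 1. This is the main obstacle. On $\AA_{3r/4,5r/4}(x)$, Proposition \ref{prop:MainRegularityConditionHighProbability} gives
\[
\locLFPP[\epsilon][\hphi](\phi(u),\phi(v);\phi(\AA))\le(1+\delta)\,\locLFPP[\epsilon/|\phi'(x)|][h](u,v;\AA).
\]
Note the proposition is stated for paths in a ball, so I would cover the annulus by finitely many balls $B_{2\epsilon^{1-\zeta'}}(z_0)$ with $\zeta'$ slightly larger than $\zeta_+$. The resulting internal metric must be compared with $D_h(u,v;\AA_{\alpha r,r})$, which is the length of a geodesic lying in $\AA_{\alpha r,r}$. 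Uniform convergence of internal metrics to internal metrics is not automatic, and this is where event 2 enters. Event 2 guarantees that the relevant $\locLFPP[\epsilon t][h]$ distances are attained inside the larger annulus, so the internal and global distances agree. Global uniform convergence $\locLFPP[s][h](u,v)\to D_h(u,v)$, together with the positive lower bound on $D_h(u,v)\ge D_h(\partial B_{\alpha},\partial B_1)$, then yields the multiplicative factor $(1+\delta)$.

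The reverse inequality uses the second bullet of event 2 in the same way. Finally, $\epsilon_0$ is chosen so that all of the above holds for every $\epsilon<\epsilon_0$, using $\DerivativeBound\epsilon_0^{\zeta}$ as the threshold for the convergence statements.
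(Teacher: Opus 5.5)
Your treatment of events 3 and 4, and of event 1 once event 2 is granted, follows the paper. A small remark on event 1: no covering is needed. Since $\AA_{3r/4,5r/4}(x)\subset B_{2\epsilon^{1-\zeta_+}}(x)$, one application of Proposition \ref{prop:MainRegularityConditionHighProbability} centered at $x$ suffices, whereas several centers would produce different indices $\epsilon/|\phi'(z_0)|$.

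The genuine gap is event 2, and your event 1 depends on it. You try to prove the conclusions of both bullets for \emph{every} pair $u\in\partial B_{\alpha r}(x)$, $v\in\partial B_r(x)$. To do so you choose $\alpha$ near $1$ so that $\sup_{u,v}D_h(u,v)<\frac12\inf_u D_h(u,\partial\AA_{3/4,5/4}(0))$ with high probability. This cannot be done. Continuity of $D_h$ makes $D_h(u,v)$ small only when $|u-v|$ is small, but the supremum contains pairs such as $u=\alpha$, $v=-1$, at Euclidean distance almost $2$. With probability bounded below uniformly in $\alpha\in(7/8,1)$, one has $D_h(\alpha,-1)>D_h(\alpha,\partial\AA_{3/4,5/4}(0))$. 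To see this, add a large smooth bump near $-1$ and use absolute continuity modulo constants. So the conclusions of condition 2 really do fail for Euclidean-far pairs, and the hypotheses of the bullets (equality of internal distances) must be used to exclude them.

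The missing idea is the narrow-tube estimate, Lemma \ref{lemma:LQGDistancesInTube}, used contrapositively. Suppose the conclusion of a bullet fails. Then the crossing bounds \eqref{eq:Condition2LFPPAcross}--\eqref{eq:Condition2LQGAcross}, together with convergence of LFPP, give $D_h(u,v)>\frac13 s r^{\xi Q}e^{\xi h_r(x)}$. The correct use of continuity is the converse direction: $D_h$-far points are Euclidean-far, so $|u-v|\ge br$ as in \eqref{eq:FarAwayDhDistanceImpliesFarAwayEuclideanDistance}. For $\alpha\in(\alpha_0,1)$, Lemma \ref{lemma:LQGDistancesInTube} then forces the internal distance in the thin tube $\overline{\AA_{\alpha r,r}(x)}$ to be at least $S r^{\xi Q}e^{\xi h_r(x)}$. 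By \eqref{eq:Condition2SupLQG} and \eqref{eq:Condition2SupLFPP}, this strictly exceeds the internal distance in $\AA_{3r/4,5r/4}(x)$, so the hypothesis of the bullet fails. This is what actually determines $\alpha$.

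For the second bullet you also need a \emph{lower} bound on internal $\hat{D}^{\epsilon}_{h^{\phi}}$-distances inside $\phi(\overline{\AA_{\alpha r,r}(x)})$. Uniform convergence of the global metrics $\hat{D}^{s}_h\to D_h$ does not give this. The paper obtains it from Proposition \ref{prop:MainRegularityConditionHighProbability}, rescaling, and Proposition \ref{prop:InitialLipschitzCondition} applied with $\Phi(z)=z/t$. The latter bounds ball-to-ball internal $D_h$-distances, where the tube estimate applies, by internal LFPP distances.
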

      To prove Proposition \ref{prop:ImprovingTheLipschitzConstantHighProbability}, it will suffice to prove that each of the events $E_{r, \epsilon}^{(j)}(x)$ with $j = 1, 2, 3, 4$ occurs with high probability when the parameters are selected properly.
      The approach is similar to the arguments in \cite[Section 3.2]{ConformalCovariance} and \cite[Proposition 3.1]{AlmostSureConvergence}, with the key difference that we must select parameters which work for all conformal maps $\phi$.
      The key to this is to use Proposition \ref{prop:MainRegularityConditionHighProbability} to reduce to working with $\locLFPP[\epsilon/|\phi'(z_0)|][h]$.
      From here, the parameters can be selected using similar arguments to \cite[Section 3]{AlmostSureConvergence}.
  
      The proof that $E_{r, \epsilon}^{(2)}(x)$ occurs with high probability will use the following, which is a special case of \cite[Proposition 4.1]{WeakLQGMetrics}.
  
      \begin{lemma}
        \label{lemma:LQGDistancesInTube}
        Fix $b > 0$, $p \in (0, 1)$, and $S > 0$.
        There exists $7/8 < \alpha_0 < 1 < \alpha_1$ such that for each $r > 0$ and each $x \in \CC$, it holds with probability at least $p$ that
        \begin{align*}
          \inf\left\{D_h(u,v; \AA_{\alpha_0 r, \alpha_1 r}(x) : u, v \in \AA_{\alpha_0 r, \alpha_1 r}(x), |u - v| \geq b r \right\} \
          &\geq \ S r^{\xi Q} e^{\xi h_r(x)}.
        \end{align*}
      \end{lemma}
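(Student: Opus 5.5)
The plan is to reduce to a single fixed scale and location, and then show that the internal distances in a thinning tube blow up almost surely. This is essentially the content of \cite[Proposition 4.1]{WeakLQGMetrics}; we sketch how we would prove it, or check that the special case applies.

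\emph{Step 1 (reduction to $r = 1$, $x = 0$).} By axioms \ref{axiom:WeylScaling} and \ref{axiom:CoordinateChange} applied to the affine map $z \mapsto rz + x$ (or directly via \eqref{eq:ASCoordinateChangeForAffineTransformations}), we have
\begin{align*}
D_h\left(ru + x, rv + x; \AA_{\alpha_0 r, \alpha_1 r}(x)\right) \
&= \ r^{\xi Q} e^{\xi h_r(x)} D_{h(r\cdot + x) - h_r(x)}\left(u, v; \AA_{\alpha_0, \alpha_1}(0)\right).
\end{align*}
The field $h(r\cdot + x) - h_r(x)$ has the law of a whole-plane GFF normalized to have mean zero on $\partial \DD$. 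So the event in question has a probability independent of $r$ and $x$, and it suffices to find $\alpha_0, \alpha_1$ such that, with probability at least $p$, every pair $u, v \in \AA_{\alpha_0, \alpha_1}(0)$ with $|u - v| \geq b$ has $D_h(u, v; \AA_{\alpha_0, \alpha_1}(0)) \geq S$.

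\emph{Step 2 (monotonicity).} Internal distances only increase when the domain shrinks. Choose a deterministic sequence of tubes $\AA_{\alpha_0^{(n)}, \alpha_1^{(n)}}(0)$ decreasing to $\partial \DD$. Then $M_n \coloneqq \inf\{D_h(u,v;\AA^{(n)}) : u, v \in \AA^{(n)},\ |u-v| \geq b\}$ is nondecreasing in $n$. It therefore suffices to show $M_n \to \infty$ in probability, since we can then pick $n$ with $\Prob\{M_n \geq S\} \geq p$.

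\emph{Step 3 (divergence).} This is the main obstacle. Let the tube have width $w$ and cover it by about $2\pi/w$ boxes of side $w$. Any path in the tube between points at Euclidean distance at least $b$ must cross on the order of $b/w$ disjoint such boxes in the angular direction. Hence $M_n$ is at least the sum of the smallest roughly $b/(2w)$ box-crossing distances among consecutive boxes. By scaling, each crossing distance is of order $w^{\xi Q} e^{\xi h_w(z)}$ times an $O(1)$ random variable.

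Two ingredients are needed. The first is a lower-tail estimate: crossing distances are at least a constant times $w^{\xi Q}e^{\xi h_w(z)}$ for most boxes simultaneously. We would get this from Lemma \ref{lemma:IndependenceAcrossConcentricAnnuli}-type near-independence, or from a concentration bound on the number of bad boxes. The second is a lower bound on the circle averages: $e^{\xi h_w(z)} \geq w^{\xi \cdot o(1)}$ for a positive fraction of the boxes, which holds because the typical circle average is far from its extreme values. Combining the two gives
\begin{align*}
M_n \ &\gtrsim \ (b/w)\, w^{\xi Q + o(1)},
\end{align*}
which tends to $\infty$ because $\xi Q < 1$ in the subcritical regime.

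The delicate point is making the ``most boxes are good'' statement uniform over all paths and all pairs $(u,v)$ at once. We would handle it by requiring that every run of consecutive boxes of length about $b/(2w)$ contains a positive fraction of good boxes, and union bounding over the $O(1/w)$ possible starting positions.
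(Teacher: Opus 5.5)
Your Steps 1 and 2 are fine. The paper gives no argument beyond citing \cite[Proposition 4.1]{WeakLQGMetrics}, and your Step 1 is the natural reduction to that unit-scale statement. The gap is in Step 3: its conclusion rests entirely on the inequality $\xi Q<1$, which you assert without proof or reference. Nothing in the paper supplies it; the only quantitative fact about $Q$ the paper uses is $Q>2$. It is also not a formal consequence of subcriticality. Since $\xi Q=(2+\gamma^2/2)/d_\gamma$, it is equivalent to $d_\gamma>2+\gamma^2/2$, i.e.\ to $\fa_\epsilon\to0$ polynomially. That is a nontrivial quantitative statement about LFPP which you would have to establish for every $\xi<\xi_{\text{crit}}$, and there is no slack near $\xi=0$, where $\xi Q\to1$.

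You need this strong input because your lower bound discards the boxes that actually carry the sum. Among the $\asymp b/w$ boxes of an arc, roughly $w^{-1+a^2/2}$ have $h_w(z_k)\approx a\log w^{-1}$, and each contributes about $w^{\xi Q-\xi a}$. Keeping only $a\approx 0$ gives your $w^{\xi Q-1}$. The optimum $a=\xi$ gives $w^{\xi Q-1-\xi^2/2}$, because the sum behaves like a one-dimensional GMC with parameter $\xi<\sqrt2$. So the natural condition is $\xi Q-\xi^2/2<1$, which is exactly the statement that the KPZ $D_h$-dimension $\xi^{-1}(Q-\sqrt{Q^2-2})$ of a Euclidean segment exceeds $1$.

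There are two ways to repair the proof.

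\emph{Keep the thick boxes.} Use that the normalized crossing variable of box $k$ is (nearly) independent of $h_w(z_k)$. Then compare $\sum_k w^{\xi Q}e^{\xi h_w(z_k)}\mathbf{1}\{\tilde X_k\ge c\}$ with $w^{\xi Q-1-\xi^2/2}$ times the one-dimensional GMC mass of the arc, uniformly over arcs of length at least $b$.

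\emph{Argue softly.} Parametrize near-optimal paths by $D_h$-length; since $D_h$ induces the Euclidean topology on the compact closure of the largest tube, Arzel\`a--Ascoli applies. Combined with your Step 2, on the event $\{\sup_n M_n\le S\}$ some arc of $\partial\DD$ of Euclidean length at least $b$ has $D_h$-length at most $S+1$. The lemma therefore reduces to showing that each fixed arc of $\partial\DD$ almost surely has infinite $D_h$-length, and that is where the exponent comparison above genuinely enters.
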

  
      \begin{lemma}
        \label{lemma:ImprovingTheLipschitzConstantCondition2}
        For any $p \in (0, 1)$, $\DerivativeBound > 1$, and $0 < \zeta < \zeta_{+} < 1$, there exist $\alpha = \alpha(p) \in (7/8, 1)$ and $\epsilon_0 = \epsilon_0(p, \zeta, \zeta_{+}) \in (0, 1)$ such that for each $\epsilon \in (0, \epsilon_0)$, each $r \in (\epsilon^{1 - \zeta}, \epsilon^{1 - \zeta_{+}})$, and each $x \in W$, $E_{r, \epsilon}^{(2)}(x)$ occurs with probability at least $p$.
        \begin{proof}
          The idea is that when $u, v \in \AA_{\alpha r, r}(x)$ are far apart with respect to $D_h$, they are far apart with respect to Euclidean distance as well.
          Lemma \ref{lemma:LQGDistancesInTube} plus convergence of LFPP then implies $\locLFPP[\epsilon][h](u, v; \AA_{\alpha r, r}(x)) > \locLFPP[\epsilon][h](u, v; \AA_{3r/4, 5r/4}(x))$.

          Let $C_0$ be as in Proposition \ref{prop:InitialLipschitzCondition}.
          We claim that there exists $0 < s < S$ and $\epsilon_0 > 0$ such that for each $\epsilon \in (0, \epsilon_0)$, each $r \in (\epsilon^{1 - \zeta}, \epsilon^{1 - \zeta_{+}})$, and each $x \in W$, it holds with probability at least $p$ that for all $t \in [\DerivativeBound^{-1}, \DerivativeBound]$,
          \begin{align}
            \sup_{u, v \in \AA_{7r/8, r}(x)} \locLFPP[\epsilon t][h]\left(u, v; \AA_{3r/4, 5r/4}(x) \right) \
            &< \ \frac{1}{16} C_0^{-1} S \ratios[\epsilon t] r^{\xi Q} e^{\xi h_{r}(x)}, \label{eq:Condition2SupLFPP} \\
            \sup_{u, v \in \AA_{7r/8, r}(x)} D_h\left(u, v; \AA_{3r/4, 5r/4}(x) \right) \
            &< \ S r^{\xi Q} e^{\xi h_r(x)}, \label{eq:Condition2SupLQG} \\
            \locLFPP[\epsilon t][h]\left(\partial \AA_{7r/8, r}(x), \partial \AA_{3r/4, 5r/4}(x) \right) \
            &> \ s \ratios[\epsilon t] r^{\xi Q} e^{\xi h_r(x)}, \label{eq:Condition2LFPPAcross} \\
            D_h\left(\partial \AA_{7r/8, r}(x), \partial \AA_{3r/4, 5r/4}(x) \right) \
            &> \ s r^{\xi Q} e^{\xi h_r(x)}. \label{eq:Condition2LQGAcross}
          \end{align}
          Assuming we have \eqref{eq:Condition2SupLFPP}, \eqref{eq:Condition2SupLQG}, \eqref{eq:Condition2LFPPAcross}, and \eqref{eq:Condition2LQGAcross}, let us deduce Lemma \ref{lemma:ImprovingTheLipschitzConstantCondition2}.
          With $s, S > 0$ as in \eqref{eq:Condition2SupLFPP}, \eqref{eq:Condition2SupLQG}, \eqref{eq:Condition2LFPPAcross}, and \eqref{eq:Condition2LQGAcross} with $1 - \frac{1 - p}{7}$ in place of $p$, axiom \ref{axiom:CoordinateChange} implies there exists $b = b(s) > 0$ such that for each $r > 0$ and each $x \in \CC$, with probability at least $1 - \frac{1 - p}{7}$, 
          \begin{align}
            \left(u, v \in \overline{B_r(x)}, \ D_h(u,v) > \frac{1}{3} s r^{\xi Q} e^{\xi h_r(x)} \right) \implies |u - v| \geq b r.
            \label{eq:FarAwayDhDistanceImpliesFarAwayEuclideanDistance}
          \end{align}
          Fix $b' \in (0, b)$.
          Let $7/8 < \alpha_0 < 1 < \alpha_1$ be as in Lemma \ref{lemma:LQGDistancesInTube} with $b'$ in place of $b$ and $1 - \frac{1 - p}{7}$ in place of $p$, so for each $r > 0$ and each $x \in \CC$, it holds with probability at least $1 - \frac{1 - p}{7}$ that
          \begin{align}
            \inf\left\{D_h\left(u, v; \AA_{\alpha_0 r, \alpha_1 r}(x) \right) : u,v \in \AA_{\alpha_0 r, \alpha_1 r}(x), |u - v| \geq b' r \right\} \
            &\geq \ S r^{\xi Q} e^{\xi h_r(x)}. \label{eq:DhDistancesInNarrowTube}
          \end{align}
          Fix $\alpha \in (\alpha_0, 1)$.
          By \cite[Lemma 3.8]{WeakLQGMetrics} and the almost sure convergence of $\LFPP$ and $\locLFPP$ to $D_h$ uniformly on compacts, we can find $R > 0$ and $\epsilon_0 \in (0, 1)$ such that $\overline{U} \subset B_R(0)$ and such that with probability at least $1 - \frac{1 - p}{7}$, 
          \begin{align}
            \sup_{z, w \in B_2(0)} \left|\LFPP(z, w) - D_h(z, w) \right| \
            &< \ \frac{1}{2} D_h\left(\partial B_{\alpha}(0), \partial B_1(0) \right) \ \forall \epsilon \in (0, \epsilon_0^{\zeta} \DerivativeBound), \label{eq:Condition2Convergence} \\
            \LFPP\left(u, v \right) \
            &< \ \LFPP\left(U, \partial B_R(0) \right) \ \forall u, v \in U, \forall \epsilon \in (0, \epsilon_0 \DerivativeBound), \label{eq:Condition2LFPPRemoveInternal} \\
            \locLFPP\left(u, v \right) \
            &< \ \locLFPP\left(U, \partial B_R(0) \right) \ \forall u, v \in U, \forall \epsilon \in (0, \epsilon_0 \DerivativeBound). \label{eq:Condition2locLFPPRemoveInternal}
          \end{align}
          By Lemma \ref{lemma:PropertiesOfLocalizedFieldAndLFPP}, we can decrease $\epsilon_0$ so that with probability at least $1 - \frac{1 - p}{7}$,
          \begin{align}
            \sup_{z \in \overline{B_R(0)}} \left|h_{\epsilon}^{*}(z) - \hat{h}_{\epsilon}^{*}(z) \right| \
            &< \ \frac{1}{\xi} \log 2 \ \forall \epsilon \in (0, \epsilon_0 \DerivativeBound). \label{eq:Condition2CompareLFPPAndlocLFPP}
          \end{align}
          Proposition \ref{prop:MainRegularityConditionHighProbability} implies we can shrink $\epsilon_0$ further so that for each $\epsilon \in (0, \epsilon_0)$ and each $x \in W$, with probability at least $1 - \frac{1 - p}{7}$, for all $\phi \in \confmaps$ and all piecewise $C^1$ paths $P$ in $B_{2 \epsilon^{1 - \zeta_{+}}}(x)$,
          \begin{align}
            \frac{1}{2} \Len\left(P; \locLFPP[\epsilon/|\phi'(x)|][h] \right) \
            &\leq \ \Len\left(\phi \circ P; \locLFPP[\epsilon][\hphi] \right) \
            \leq \ 2 \Len\left(P; \locLFPP[\epsilon/|\phi'(x)|][h] \right).
            \label{eq:Condition2PathwiseComparison}
          \end{align}
          By Proposition \ref{prop:InitialLipschitzCondition}, we can decrease $\epsilon_0$ even further so that for each $\epsilon \in (0, \epsilon_0^{\zeta})$, with probability at least $1 - \frac{1 - p}{7}$, for all $\Phi \in \confmaps$ and all $z, w \in W$,
          \begin{align}
            D_h\left(B_{2 \epsilon^{1 - \zeta}}(z), B_{2 \epsilon^{1 - \zeta}}(w); B_{2 \epsilon^{1 - \zeta}}(W) \right) \
            &\leq \ C_0 \locLFPP[\epsilon][h^{\Phi}]\left(\Phi(z), \Phi(w); \Phi(W) \right).
            \label{eq:Condition2InitialLipschitz}
          \end{align}
          Decrease $\epsilon_0$ so that the following are true:
          \begin{align}
            \epsilon_0^{\zeta(1 - \zeta)} \ 
            &< \ \frac{b - b'}{4} \w \frac{\alpha - \alpha_0}{2} \w \frac{\alpha_1 - 1}{2}, \label{eq:Condition2EpsilonCutoff} \\
            \frac{\fa_{\epsilon t}}{\fa_{\epsilon}} \
            &\leq \ 2 t^{1 - \xi Q} \ \forall t \in [\DerivativeBound^{-1}, \DerivativeBound], \forall \epsilon \in (0, \epsilon_0^{\zeta}). \label{eq:Condition2RegularVariation}
          \end{align}
          Note that in particular, \eqref{eq:Condition2EpsilonCutoff} implies $B_{2 \epsilon^{1 - \zeta} r^{\zeta}}(\overline{\AA_{\alpha r, r}(x)}) \subset \AA_{\alpha_0 r, \alpha_1 r}(x)$ for all $\epsilon \in (0, \epsilon_0)$ and all $r \in (\epsilon^{1 - \zeta}, \epsilon^{1 - \zeta_{+}})$.
  
          Assuming each of the events above occurs, which happens with probability at least $p$, let us show that $E_{r, \epsilon}^{(2)}(x)$ occurs.
          Assume $u \in \partial B_{\alpha r}(x)$, $v \in \partial B_r(x)$, and $t \in [\DerivativeBound^{-1}, \DerivativeBound]$ are such that
          \begin{align*}
            \locLFPP[\epsilon t][h]\left(u, v \right) \
            &\geq \ \locLFPP[\epsilon t][h]\left(u, \partial \AA_{3r/4, 5r/4}(x) \right).
          \end{align*}
          Then
          \begin{align*}
            \locLFPP[\epsilon t][h]\left(u, v \right) \
            &> \ \locLFPP[\epsilon t][h]\left(\partial \AA_{7r/8, r}(x), \partial \AA_{3r/4, 5r/4}(x) \right) \
            \geq \ s \ratios[\epsilon t] r^{\xi Q} e^{\xi h_r(x)} \tag*{(by \eqref{eq:Condition2LFPPAcross})}.
          \end{align*}
          Therefore, if $u' \coloneqq \frac{u - x}{r}$ and $v' \coloneqq \frac{v - x}{r}$,
          \begin{align*}
            D_h\left(u, v \right) \
            &= \ r^{\xi Q} e^{\xi h_r(x)} D_{h(r \cdot + x) - h_r(x)}\left(u', v' \right) \tag*{(axiom \ref{axiom:CoordinateChange})} \\
            &\geq \ \frac{2}{3} r^{\xi Q} e^{\xi h_r(x)} \LFPP[\epsilon t/r][h(r \cdot + x) - h_r(x)]\left(u', v' \right) \tag*{(by \eqref{eq:Condition2Convergence})} \\
            &= \ \frac{2}{3} \invratios[\epsilon t] \LFPP[\epsilon t][h]\left(u, v \right) \tag*{(by \eqref{eq:LFPPScaling})} \\
            &\geq \ \frac{1}{3} \invratios[\epsilon t] \locLFPP[\epsilon t][h]\left(u, v \right) \tag*{(by \eqref{eq:Condition2LFPPRemoveInternal}, \eqref{eq:Condition2locLFPPRemoveInternal}, \eqref{eq:Condition2CompareLFPPAndlocLFPP})} \\
            &> \ \frac{1}{3} s r^{\xi Q} e^{\xi h_r(x)}.
          \end{align*}
          By \eqref{eq:FarAwayDhDistanceImpliesFarAwayEuclideanDistance}, we have $|u - v| \geq b r$.
          Therefore,
          \begin{align*}
            D_h\left(u, v; \overline{\AA_{\alpha r, r}(x)} \right) \
            &\geq \ S r^{\xi Q} e^{\xi h_r(x)} \tag*{(by \eqref{eq:DhDistancesInNarrowTube})} \\
            &> \ D_h\left(u, v; \AA_{3r/4, 5r/4}(x) \right) \tag*{(by \eqref{eq:Condition2SupLQG})}.
          \end{align*}
          This proves the first bullet point in the definition of $E_{r, \epsilon}^{(2)}(x)$.
  
          To prove the second bullet point, assume $u \in \partial B_{\alpha r}(x)$ and $v \in \partial B_r(x)$ are such that $D_h(u, v) \geq D_h(u, \partial \AA_{3r/4, 5r/4}(x))$.
          Then
          \begin{align*}
            D_h\left(u, v \right) \
            &> \ D_h\left(\partial \AA_{7r/8, r}(x), \partial \AA_{3r/4, 5r/4}(x) \right) \
            \geq \ s r^{\xi Q} e^{\xi h_r(x)} \tag*{(by \eqref{eq:Condition2LQGAcross})},
          \end{align*}
          so that \eqref{eq:FarAwayDhDistanceImpliesFarAwayEuclideanDistance} implies $|u - v| \geq b r$.
          If $z \in B_{2 \epsilon^{1 - \zeta} r^{\zeta}}(u)$ and $w \in B_{2 \epsilon^{1 - \zeta} r^{\zeta}}(v)$, then $|z - w| \geq b r - 4 \epsilon^{1 - \zeta} r^{\zeta} > b' r$.
          Therefore, since $B_{2 \epsilon^{1 - \zeta} r^{\zeta}}(\overline{\AA_{\alpha r, r}(x)}) \subset \AA_{\alpha_0 r, \alpha_1 r}(x)$, \eqref{eq:DhDistancesInNarrowTube} implies
          \begin{align}
            &D_h\left(B_{2 \epsilon^{1 - \zeta} r^{\zeta}}(u), B_{2 \epsilon^{1 - \zeta} r^{\zeta}}(v); B_{2 \epsilon^{1 - \zeta} r^{\zeta}}(\overline{\AA_{\alpha r, r}(x)}) \right) \nonumber \\
            &\qquad \qquad \geq \ D_h\left(B_{2 \epsilon^{1 - \zeta} r^{\zeta}}(u), B_{2 \epsilon^{1 - \zeta} r^{\zeta}}(v); \AA_{\alpha_0 r, \alpha_1 r}(x) \right) \nonumber \\
            &\qquad \qquad \geq \ S r^{\xi Q} e^{\xi h_r(x)}.
            \label{eq:DiskDistanceInTube}
          \end{align}
          Assuming $\phi \in \confmaps$ and putting $t \coloneqq 1/|\phi'(x)|$ and $H(\cdot) \coloneqq h(r \cdot + x) - h_r(x)$, we see that
          \begin{align*}
            &\locLFPP[\epsilon][\hphi]\left(\phi(u), \phi(v); \phi\left(\overline{\AA_{\alpha r, r}(x)} \right)\right) \\
            &\geq \ \frac{1}{2} \locLFPP[\epsilon t][h]\left(u, v; \overline{\AA_{\alpha r, r}(x)} \right) \tag*{(by \eqref{eq:Condition2PathwiseComparison})} \\
            &\geq \ \frac{1}{4} \LFPP[\epsilon t][h]\left(u, v; \overline{\AA_{\alpha r, r}(x)} \right) \tag*{(by \eqref{eq:Condition2CompareLFPPAndlocLFPP})} \\
            &= \ \frac{1}{4} \frac{r t \fa_{\epsilon t}^{-1}}{t^{\xi Q} \fa_{\epsilon/r}^{-1}} e^{\xi h_{r}(x)} \LFPP[\epsilon/r][H(t \cdot) + Q \log t]\left(\frac{u'}{t}, \frac{v'}{t}; \frac{1}{t} \overline{\AA_{\alpha, 1}(0)} \right) \tag*{(by \eqref{eq:LFPPScaling})} \\
            &\geq \ C_0^{-1} \frac{1}{4} \frac{r t \fa_{\epsilon t}^{-1}}{t^{\xi Q} \fa_{\epsilon/r}^{-1}} e^{\xi h_r(x)} D_H\left(B_{2 (\epsilon/r)^{1 - \zeta}}(u'), B_{2 (\epsilon/r)^{1 - \zeta}}(v'); B_{2 (\epsilon/r)^{1 - \zeta}}(\overline{\AA_{\alpha, 1}(0)}) \right) \tag*{($\ast$)} \\
            &= \ C_0^{-1} \frac{1}{4} \frac{r t \fa_{\epsilon t}^{-1}}{(r t)^{\xi Q} \fa_{\epsilon/r}^{-1}} D_h\left(B_{2 \epsilon^{1 - \zeta} r^{\zeta}}(u), B_{2 \epsilon^{1 - \zeta} r^{\zeta}}(v); B_{2 \epsilon^{1 - \zeta} r^{\zeta}}(\overline{\AA_{\alpha r, r}(x)} \right) \tag*{(axiom \ref{axiom:CoordinateChange})} \\
            &\geq \ C_0^{-1} \frac{1}{4} \frac{r t \fa_{\epsilon t}^{-1}}{(r t)^{\xi Q} \fa_{\epsilon/r}^{-1}} S r^{\xi Q} e^{\xi h_r(x)} \tag*{(by \eqref{eq:DiskDistanceInTube})} \\
            &> \ 4 \frac{r t \fa_{\epsilon t}^{-1}}{(r t)^{\xi Q} \fa_{\epsilon/r}^{-1}} \invratios[\epsilon t] \locLFPP[\epsilon t][h]\left(u,v; \AA_{3r/4, 5r/4}(x) \right) \tag*{(by \eqref{eq:Condition2SupLFPP})} \\
            &\geq \ 2 \frac{t \fa_{\epsilon t/r}^{-1}}{t^{\xi Q} \fa_{\epsilon/r}^{-1}} \locLFPP[\epsilon][\hphi]\left(\phi(u), \phi(v); \phi\left(\AA_{3r/4, 5r/4}(x) \right)\right) \tag*{(by \eqref{eq:Condition2PathwiseComparison})} \\
            &\geq \ \locLFPP[\epsilon][\hphi]\left(\phi(u), \phi(v); \phi\left(\AA_{3r/4, 5r/4}(x) \right)\right) \tag*{(by \eqref{eq:Condition2RegularVariation})},
          \end{align*}
          where $(\ast)$ is obtained by applying \eqref{eq:Condition2InitialLipschitz} with $\Phi(z) \coloneqq z/t$.
          This shows that the second bullet point in the definition of $E_{r, \epsilon}^{(2)}(x)$ holds.
  
          It remains to prove the existence of $s, S$ such that \eqref{eq:Condition2SupLFPP}, \eqref{eq:Condition2SupLQG}, \eqref{eq:Condition2LFPPAcross}, and \eqref{eq:Condition2LQGAcross} all hold with probability at least $p$.
          By continuity of $D_h$, we can find $0 < s < S$ such that with probability at least $1 - \frac{1 - p}{6}$,
          \begin{align}
            \sup_{u, v \in \AA_{7/8, 1}(0)} D_h\left(u, v; \AA_{3/4, 5/4}(0) \right) \
            &< \ S, \label{eq:Condition2DhInAnnulusAtScale1} \\
            D_h\left(\partial \AA_{7/8, 1}(0), \partial \AA_{3/4, 5/4}(0) \right) \
            &> \ 2 s. \label{eq:Condition2DhDistanceAcrossAtScale1}
          \end{align}
          Then axiom \ref{axiom:CoordinateChange} implies that for each $r > 0$, \eqref{eq:Condition2SupLQG} and \eqref{eq:Condition2LQGAcross} hold with probability at least $1 - \frac{1 - p}{6}$.
          Moreover, \eqref{eq:Condition2DhDistanceAcrossAtScale1}, \eqref{eq:LFPPScaling}, Lemma \ref{lemma:PropertiesOfLocalizedFieldAndLFPP}, and the almost sure convergence of $\LFPP$ to $D_h$ imply we can find $\epsilon_0 > 0$ such that with probability at least $1 - 2 \frac{1 - p}{6}$,
          \begin{align*}
            \locLFPP[\epsilon][h]\left(\partial \AA_{7r/8, r}(x), \partial \AA_{3r/4, 5r/4}(x) \right) \
            &> \ s \ratios r^{\xi Q} e^{\xi h_r(x)} \ \forall \epsilon \in (0, \epsilon_0 \DerivativeBound),
          \end{align*}
          which proves \eqref{eq:Condition2LFPPAcross}.
          
          By continuity, we can find $R > 0$ such that with probability at least $1 - \frac{1 - p}{6}$,
          \begin{align}
            D_h\left(z, w \right) \
            &< \ \frac{1}{3} D_h\left(\partial \AA_{13/16, 17/16}(0), \partial \AA_{3/4, 5/4}(0) \right) \ \forall z,w \in \AA_{3/4, 5/4}(0), |z - w| \leq R. \label{eq:ErrataContinuity}
          \end{align}
          Using the almost sure convergence of $\LFPP$ to $D_h$, we can find $\epsilon_0 > 0$ such that with probability at least $1 - \frac{1 - p}{6}$, 
          \begin{align}
            \sup_{z, w \in \overline{B_2(0)}} \left|\LFPP(z,w) - D_h(z,w) \right| \
            &< \ \frac{1}{3} D_h\left(\partial \AA_{13/16, 17/16}(0), \partial \AA_{3/4, 5/4}(0) \right) \quad \forall \epsilon \in (0, \epsilon_0).
            \label{eq:ErrataConvergence}
          \end{align}
          Note that if \eqref{eq:ErrataContinuity} and \eqref{eq:ErrataConvergence} hold, then
          \begin{align}
            \LFPP\left(\partial \AA_{13/16, 17/16}(0), \partial \AA_{3/4, 5/4}(0) \right) \
            &> \ \frac{2}{3} D_h\left(\partial \AA_{13/16, 17/16}(0), \partial \AA_{3/4, 5/4}(0)\right), \label{eq:ErrataLFPPAcross}
          \end{align}
          so if $z,w \in \AA_{13/16, 17/16}(0)$ with $|z - w| \leq R$, then
          \begin{align}
            \LFPP\left(z, w \right) \
            &< \ D_h\left(z, w \right) + \frac{1}{3} D_h\left(\partial \AA_{13/16, 17/16}(0), \partial \AA_{3/4, 5/4}(0) \right) \nonumber \\
            &< \ \frac{2}{3} D_h\left(\partial \AA_{13/16, 17/16}(0), \partial \AA_{3/4, 5/4}(0) \right) \nonumber \\
            &< \ \LFPP\left(\partial \AA_{13/16, 17/16}(0), \partial \AA_{3/4, 5/4}(0) \right), \label{eq:ErrataRemoveInternal}
          \end{align}
          which implies $\LFPP(z,w) = \LFPP(z,w; \AA_{3/4, 5/4}(0))$.
          Using Lemma \ref{lemma:PropertiesOfLocalizedFieldAndLFPP}, we can decrease $\epsilon_0$ so that with probability at least $1 - \frac{1 - p}{6}$,
          \begin{align}
            \sup_{z \in V} \left|\hat{h}_{\epsilon}^{*}(z) - h_{\epsilon}^{*}(z) \right| \
            &< \ \frac{1}{\xi} \log 2 \ \forall \epsilon \in (0, \epsilon_0). \label{eq:ErrataLocToLFPP}
          \end{align}
          Finally, increase $S$ so that with probability at least $1 - \frac{1 - p}{6}$,
          \begin{align}
            32 C_0 \left[2 + \#((R \ZZ^2) \cap \AA_{13/16, 17/16}(0)) \right] \frac{2}{3} D_h\left(\partial \AA_{13/16, 17/16}(0), \partial \AA_{3/4, 5/4}(0) \right) \
            &< \ S \label{eq:ErrataChoiceOfS}
          \end{align}
          It follows that if $\epsilon \in (0, \epsilon_0)$, $r \in (\epsilon^{1 - \zeta}, \epsilon^{1 - \zeta_{+}})$, and $x \in W$, then with probability at least $p$, if $u, v \in \AA_{7r/8, r}(x)$ and $u' \coloneqq \frac{u - x}{r}$ and $v' \coloneqq \frac{v - x}{r}$, then
          \begin{align*}
            &\locLFPP\left(u, v; \AA_{3r/4, 5r/4}(x) \right) \\
            &\leq \ 2 \LFPP\left(u, v; \AA_{3r/4, 5r/4}(x) \right) \tag*{(by \eqref{eq:ErrataLocToLFPP})} \\
            &= \ 2 \ratios r^{\xi Q} e^{\xi h_r(x)} \LFPP[\epsilon/r][h(r \cdot + x) - h_r(x)]\left(u', v'; \AA_{3/4, 5/4}(x) \right) \tag*{(by \eqref{eq:LFPPScaling})} \\
            &\leq \ 2 \ratios r^{\xi Q} e^{\xi h_r(x)} \#(2 + (R \ZZ^2) \cap \AA_{13/16, 17/16}(0)) \tag*{(triangle inequality)} \\
            &\qquad \qquad \cdot \sup_{z,w \in (R \ZZ^2) \cap \AA_{13/16, 17/16}(0)} \LFPP[\epsilon/r][h(r \cdot + x) - h_r(x)]\left(z,w; \AA_{3/4, 5/4}(0) \right) \\
            &= \ 2 \ratios r^{\xi Q} e^{\xi h_r(x)} \#(2 + (R \ZZ^2) \cap \AA_{13/16, 17/16}(0)) & \tag*{(by \eqref{eq:ErrataRemoveInternal})} \\
            &\qquad \qquad \cdot \sup_{z,w \in (R \ZZ^2) \cap \AA_{13/16, 17/16}(0)} \LFPP[\epsilon/r][h(r \cdot + x) - h_r(x)]\left(z,w \right) \\
            &\leq \ 2 \ratios r^{\xi Q} e^{\xi h_r(x)} \#(2 + (R \ZZ^2) \cap \AA_{13/16, 17/16}(0)) \tag*{(by \eqref{eq:ErrataContinuity}, \eqref{eq:ErrataConvergence})} \\
            &\qquad \qquad \cdot \frac{2}{3} D_{h(r \cdot + x) - h_r(x)}\left(\partial \AA_{13/16, 17/16}(0), \partial \AA_{3/4, 5/4}(0) \right) \\
            &\leq \ \frac{1}{16} C_0^{-1} S r^{\xi Q} e^{\xi h_r(x)}, \tag*{(by \eqref{eq:ErrataChoiceOfS})}
          \end{align*}
          which proves \eqref{eq:Condition2SupLFPP}.
        \end{proof}
      \end{lemma}
  
      \begin{lemma}
        \label{lemma:ImprovingTheLipschitzConstantCondition1}
        For each $\delta, p \in (0, 1)$ and $0 < \zeta < \zeta_{+} < 1$, there exist $\alpha = \alpha(p) \in (7/8, 1)$ and $\epsilon_0 = \epsilon_0(\delta, \zeta, \zeta_{+}, p) > 0$ such that for each $\epsilon \in (0, \epsilon_0)$, each $r \in (\epsilon^{1 - \zeta}, \epsilon^{1 - \zeta_{+}})$, and each $x \in W$, $\Prob[E_{r, \epsilon}^{(1)}(x)] \geq p$.
        \begin{proof}
          The idea is to use Proposition \ref{prop:MainRegularityConditionHighProbability} to compare $\locLFPP[\epsilon][\hphi](\phi(\cdot), \phi(\cdot))$ to $\locLFPP[\epsilon/|\phi'(x)|][h]$ up to a factor of $1 + \delta$, compare $\locLFPP$ to $\LFPP$ using Lemma \ref{lemma:PropertiesOfLocalizedFieldAndLFPP}, rescale by $r$ using \eqref{eq:LFPPScaling} (giving rise to the scaling ratio terms $\ratios[\epsilon t]$ in the definition of $E_{r, \epsilon}^{(1)}(x)$) so that we are working with an annulus with radius not depending on $\epsilon$, then compare $\LFPP$ to $D_h$.

          Fix $\delta' > 0$ such that $e^{\xi \delta'}(1 + \delta')^2 < (1 + \delta)$.
          By Lemma \ref{lemma:ImprovingTheLipschitzConstantCondition2}, we can choose $\alpha \in (7/8, 1)$ and $\epsilon_0 > 0$ so that $P[E_{r, \epsilon}^{(2)}(x; \alpha)] \geq 1 - \frac{1 - p}{5}$ for all $\epsilon \in (0, \epsilon_0)$, $r \in (\epsilon^{1 - \zeta}, \epsilon^{1 - \zeta_{+}})$, and $x \in W$.
          Using Proposition \ref{prop:MainRegularityConditionHighProbability}, we can decrease $\epsilon_0$ so that for each $\epsilon \in (0, \epsilon_0)$, with probability at least $1 - \frac{1 - p}{5}$, for all piecewise $C^1$ paths $P$ in $B_{2 \epsilon^{1 - \zeta_{+}}}(x)$ and all $\phi \in \confmaps$,
          \begin{align}
            \left(1 + \delta' \right)^{-1} \Len\left(P; \locLFPP[\epsilon/|\phi'(x)|][h] \right) \
            &\leq \ \Len\left(\phi \circ P; \locLFPP[\epsilon][\hphi] \right) \nonumber \\
            &\leq \ \left(1 + \delta' \right) \Len\left(\locLFPP[\epsilon/|\phi'(x)|][h] \right). \label{eq:Condition1PathwiseComparison}
          \end{align}
          Using the almost sure convergence of $\locLFPP$ and $\LFPP$ to $D_h$, we can find $R > 0$ and decrease $\epsilon_0$ so that $V \subset B_R(0)$ and that with probability at least $1 - \frac{1 - p}{5}$, 
          \begin{align}
            \locLFPP[\epsilon][h]\left(u,v \right) \
            &= \ \locLFPP[\epsilon][h]\left(u, v; B_R(0) \right) \ \forall u,v \in V, \forall \epsilon \in (0, \epsilon_0 \DerivativeBound) \label{eq:Condition1InternalToNonInternalLoc} \\
            \LFPP[\epsilon][h]\left(u,v \right) \
            &= \ \LFPP[\epsilon][h]\left(u,v; B_R(0) \right) \ \forall u,v \in V, \forall \epsilon \in (0, \epsilon_0 \DerivativeBound). \label{eq:Condition1InternalToNonInternalLFPP}
          \end{align}
          Lemma \ref{lemma:PropertiesOfLocalizedFieldAndLFPP} implies that after shrinking $\epsilon_0$ further, we can ensure that with probability at least $1 - \frac{1 - p}{5}$,
          \begin{align}
            \sup_{z \in B_R(0)} \left|\hat{h}_{\epsilon}^{*}(z) - h_{\epsilon}^{*}(z) \right| \
            &< \ \delta' \ \forall \epsilon \in (0, \epsilon_0 \DerivativeBound). \label{eq:Condition1LFPPToLocalizedLFPP} 
          \end{align}
          Using the almost sure convergence of $\LFPP$ to $D_h$, we can decrese $\epsilon_0$ again so that with probability at least $1 - \frac{1 - p}{5}$,
          \begin{align}
            \sup_{u, v \in B_2(0)} \left|\LFPP[\epsilon][h]\left(u,v \right) - D_h\left(u,v\right) \right| \
            &< \ \frac{\delta'}{\delta' + 1} D_h(\partial B_{\alpha}(0), \partial B_1(0)) \ \forall \epsilon \in (0, \epsilon_0^{\zeta} \DerivativeBound).
            \label{eq:Condition1DistanceComparison}
          \end{align}
          It follows that for each $\epsilon \in (0, \epsilon_0)$, $r \in (\epsilon^{1 - \zeta}, \epsilon^{1 - \zeta_{+}})$, $x \in W$, it holds with probability at least $p$ that if there is a $D_h(\cdot, \cdot; \AA_{3r/4, 5r/4}(x))$-geodesic from $u \in \partial B_{\alpha r}(x)$ to $v \in \partial B_r(x)$ contained in $\overline{\AA_{\alpha r, r}(x)}$, then for all $\phi \in \confmaps$,
          \begin{align*}
            &\locLFPP[\epsilon][\hphi]\left(\phi(u), \phi(v); \phi\left(\AA_{3r/4, 5r/4}(x) \right)\right) \\
            &\leq \ \left(1 + \delta' \right)\locLFPP[\epsilon/|\phi'(x)|][h]\left(u, v; \AA_{3r/4, 5r/4}(x) \right) \tag*{(\text{by \eqref{eq:Condition1PathwiseComparison}})} \\
            &= \ \left(1 + \delta' \right) \locLFPP[\epsilon/|\phi'(x)|][h]\left(u, v \right) \tag*{($E_{r, \epsilon}^{(2)}(x)$ \text{ occurs})} \\
            &= \ \left(1 + \delta' \right) \locLFPP[\epsilon/|\phi'(x)|][h]\left(u, v; B_R(0) \right) \tag*{(\text{by \eqref{eq:Condition1InternalToNonInternalLoc}})} \\
            &\leq \ e^{\xi \delta'} \left(1 + \delta' \right) \LFPP[\epsilon/|\phi'(x)|][h]\left(u, v; B_R(0) \right) \tag*{(\text{by \eqref{eq:Condition1LFPPToLocalizedLFPP}})} \\
            &= \ e^{\xi \delta'} \left(1 + \delta' \right) \LFPP[\epsilon/|\phi'(x)|][h]\left(u, v \right) \tag*{(\text{by \eqref{eq:Condition1InternalToNonInternalLFPP}})} \\
            &= \ e^{\xi \delta'} \left(1 + \delta' \right) \frac{r \fa_{\epsilon/|\phi'(x)|}^{-1}}{r^{\xi Q} \fa_{\epsilon/r|\phi'(x)|}^{-1}} r^{\xi Q} e^{\xi h_r(x)} \LFPP[\epsilon/r|\phi'(x)|][h(r \cdot + x) - h_r(x)]\left(\frac{u - x}{r}, \frac{v - x}{r} \right) \tag*{(\text{by \eqref{eq:LFPPScaling}})} \\
            &\leq \ e^{\xi \delta'} \left(1 + \delta' \right)^2 \frac{r \fa_{\epsilon/|\phi'(x)|}^{-1}}{r^{\xi Q} \fa_{\epsilon/r|\phi'(x)|}^{-1}} r^{\xi Q} e^{\xi h_r(x)} D_{h(r \cdot + x) - h_r(x)}\left(\frac{u - x}{r}, \frac{v - x}{r} \right) \tag*{(\text{by \eqref{eq:Condition1DistanceComparison}})} \\
            &= \ e^{\xi \delta'} \left(1 + \delta' \right)^2 \frac{r \fa_{\epsilon/|\phi'(x)|}^{-1}}{r^{\xi Q} \fa_{\epsilon/r|\phi'(x)|}^{-1}} D_h\left(u, v \right) \tag*{(\text{axiom \ref{axiom:CoordinateChange}})} \\
            &\leq \ \left(1 + \delta \right) \sup_{t \in [\DerivativeBound^{-1}, \DerivativeBound]} \frac{r \fa_{\epsilon t}^{-1}}{r^{\xi Q} \fa_{\epsilon t/r}^{-1}} D_h\left(u, v; \AA_{\alpha r, r}(x) \right).
          \end{align*}
          Likewise, if there is a $\locLFPP[\epsilon][\hphi](\phi(\cdot), \phi(\cdot); \phi(\AA_{3r/4, 5r/4}(x)))$-geodesic from $u$ to $v$ contained in $\overline{\AA_{\alpha r, r}(x)}$, then
          \begin{align*}
            &D_h\left(u, v; \AA_{3r/4, 5r/4}(x) \right) \\
            &= \ D_h\left(u, v \right) \tag*{($E_{r, \epsilon}^{(2)}(x)$ occurs)} \\
            &= \ r^{\xi Q} e^{\xi h_r(x)} D_{h(r \cdot + x) - h_r(x)} \left(\frac{u - x}{r}, \frac{v - x}{r} \right) \tag*{(axiom \ref{axiom:CoordinateChange})} \\
            &\leq \ \left(1 + \delta' \right) r^{\xi Q} e^{\xi h_r(x)} \LFPP[\epsilon/r |\phi'(x)|][h(r \cdot + x) - h_r(x)]\left(\frac{u - x}{r}, \frac{v - x}{r} \right) \tag*{(by \eqref{eq:Condition1DistanceComparison})} \\
            &= \ \left(1 + \delta' \right) \frac{r^{\xi Q} \fa_{\epsilon/r|\phi'(x)|}^{-1}}{r \fa_{\epsilon/|\phi'(x)|}^{-1}} \LFPP[\epsilon/|\phi'(x)|][h]\left(u, v \right) \tag*{(by \eqref{eq:LFPPScaling})} \\
            &\leq \ \left(1 + \delta' \right) \frac{r^{\xi Q} \fa_{\epsilon/r|\phi'(x)|}^{-1}}{r \fa_{\epsilon/|\phi'(x)|}^{-1}} \LFPP[\epsilon/|\phi'(x)|][h]\left(u, v; \AA_{\alpha r, r}(x) \right) \\
            &\leq \ e^{\xi \delta'} \left(1 + \delta' \right) \frac{r^{\xi Q} \fa_{\epsilon/r|\phi'(x)|}^{-1}}{r \fa_{\epsilon/|\phi'(x)|}^{-1}} \locLFPP[\epsilon/|\phi'(x)|][h]\left(u, v; \AA_{\alpha r, r}(x) \right) \tag*{(by \eqref{eq:Condition1LFPPToLocalizedLFPP})} \\
            &\leq \ e^{\xi \delta'} \left(1 + \delta' \right)^2 \frac{r^{\xi Q} \fa_{\epsilon/r|\phi'(x)|}^{-1}}{r \fa_{\epsilon/|\phi'(x)|}^{-1}} \locLFPP[\epsilon][\hphi]\left(\phi(u), \phi(v); \phi\left(\AA_{\alpha r, r}(x) \right)\right) \tag*{(by \eqref{eq:Condition1PathwiseComparison})} \\
            &\leq \ \left(1 + \delta \right) \sup_{t \in [\DerivativeBound^{-1}, \DerivativeBound]} \frac{r^{\xi Q} \fa_{\epsilon t/r}^{-1}}{r \fa_{\epsilon t/}^{-1}} \locLFPP[\epsilon][\hphi]\left(\phi(u), \phi(v); \phi\left(\AA_{\alpha r, r}(x) \right)\right).
          \end{align*}
        \end{proof}
      \end{lemma}
  
      \begin{lemma}
        \label{lemma:ImprovingTheLipschitzConstantCondition3}
        For any $\alpha, p \in (0, 1)$, $\DerivativeBound > 1$, and $0 < \zeta < \zeta_{+} < 1$, there exists $A = A(\alpha, p) > 1$ and $\epsilon_0 = \epsilon_0(p, \alpha, \zeta, \zeta_{+}) > 0$ such that for each $\epsilon \in (0, \epsilon_0)$ and each $r \in (\epsilon^{1 - \zeta}, \epsilon^{1 - \zeta_{+}})$, $\Prob[E_{r, \epsilon}^{(3)}(x)] \geq p$.
        \begin{proof}
          For $D_h$, distances around and across an annulus can be compared by rescaling to order $1$, then using the fact that the distance across an annulus is positive and the distance around is finite.
          The idea is similar for $\locLFPP[\epsilon][\hphi](\phi(\cdot), \phi(\cdot))$, but since the constant $A$ is not allowed to depend on $\epsilon$ or $\phi$, we use Proposition \ref{prop:MainRegularityConditionHighProbability} to replace $\locLFPP[\epsilon][\hphi](\phi(\cdot), \phi(\cdot))$ by $\locLFPP[\epsilon/|\phi'(x)|][h]$, then use convergence of LFPP to $D_h$ to choose $A$.

          By Proposition \ref{prop:MainRegularityConditionHighProbability}, we can choose $\epsilon_0 > 0$ such that for each $\epsilon \in (0, \epsilon_0)$ and each $x \in W$, it holds with probability at least $1 - \frac{1 - p}{4}$ that for all $\phi \in \confmaps$ and all piecewise $C^1$ paths $P$ in $B_{2 \epsilon^{1 - \zeta_{+}}}(x)$,
          \begin{align}
            \frac{1}{2} \Len\left(P; \locLFPP[\epsilon/|\phi'(x)|][h] \right) \
            &\leq \ \Len\left(\phi \circ P; \locLFPP[\epsilon][\hphi] \right) \nonumber \\
            &\leq \ 2 \Len\left(P; \locLFPP[\epsilon/|\phi'(x)|][h] \right).
            \label{eq:Condition3PathwiseComparison}
          \end{align}
          Apply Lemma \ref{lemma:TightnessAroundAnnuli} to choose $A > 0$ and decrease $\epsilon_0$ so that 
          \begin{align}
            \Prob\left\{D_h\left(\text{around } \AA_{\alpha, 1}(0) \right) \vee \sup_{\epsilon \in (0, \epsilon_0^{\zeta} \DerivativeBound)} \LFPP[\epsilon][h]\left(\text{around } \AA_{\alpha, 1}(0) \right) \leq c_1 \right\} \
            &\geq \ 1 - \frac{1 - p}{4}.
            \label{eq:Condition3DistanceAround}
          \end{align}
          By Lemma \ref{lemma:PropertiesOfLocalizedFieldAndLFPP}, we can shrink $\epsilon_0$ so that 
          \begin{align}
            \Prob\left\{\sup_{z \in V} |h_{\epsilon}^{*}(z) - \hat{h}_{\epsilon}^{*}(z)| < 1 \forall \epsilon \in (0, \epsilon_0 \DerivativeBound) \right\} \
            &\geq \ 1 - \frac{1 - p}{4}.
            \label{eq:Condition3LocalToNonLocal}
          \end{align}
          Using almost sure convergence of $\LFPP$ to $D_h$ together with $D_h(\partial B_{\alpha}(0), \partial B_1(0)) > 0$, we can find $c_0 > 0$ and shrink $\epsilon_0$ such that
          \begin{align}
            \Prob\left\{D_h\left(\partial B_{\alpha}(0), \partial B_1(0) \right) \w \inf_{\epsilon \in (0, \epsilon_0^{\zeta} \DerivativeBound)} \LFPP[\epsilon][h]\left(\partial B_{\alpha}(0), \partial B_1(0) \right) \geq c_0^{-1} \right\} \
            &\geq \ 1 - \frac{1 - p}{4}.
            \label{eq:Condition3DistanceAcross}
          \end{align}
          It follows that for each $\epsilon \in (0, \epsilon_0)$, each $x \in W$, and each $r \in (\epsilon^{1 - \zeta}, \epsilon^{1 - \zeta_{+}})$, it holds with probability at least $p$ that 
          \begin{align*}
            D_h\left(\text{around } \AA_{\alpha r, r}(x) \right) \
            &= \ r^{\xi Q} e^{\xi h_r(x)} D_{h(r \cdot + x) - h_r(x)} \left(\text{around } \AA_{\alpha, 1}(0) \right) \tag*{(by axioms \ref{axiom:WeylScaling}, \ref{axiom:CoordinateChange})} \\
            &\leq \ r^{\xi Q} e^{\xi h_r(x)} c_1 c_0 D_{h(r \cdot + x) - h_r(x)}\left(\partial B_{\alpha}(0), \partial B_1(0) \right) \tag*{(by \eqref{eq:Condition3DistanceAround}, \eqref{eq:Condition3DistanceAcross})} \\
            &= \ c_1 c_0 D_h\left(\partial B_{\alpha r}(x), \partial B_r(x) \right) \tag*{(by axioms \ref{axiom:WeylScaling}, \ref{axiom:CoordinateChange})},
          \end{align*}
          and also that for all $\phi \in \confmaps$,
          \begin{align*}
            &\locLFPP[\epsilon][\hphi]\left(\text{around } \phi\left(\AA_{\alpha r, r}(x) \right) \right) \\
            &\leq \ 2 \locLFPP[\epsilon/|\phi'(x)|][h]\left(\text{around } \AA_{\alpha r, r}(x) \right) & \tag*{(by \eqref{eq:Condition3PathwiseComparison})} \\
            &\leq \ 2 e^{\xi} \LFPP[\epsilon/|\phi'(x)|][h]\left(\text{around } \AA_{\alpha r, r}(x) \right) \tag*{(by \eqref{eq:Condition3LocalToNonLocal})} \\
            &= \ 2 e^{\xi} \frac{r \fa_{\epsilon/|\phi'(x)|}^{-1}}{r^{\xi Q} \fa_{\epsilon/r|\phi'(x)|}^{-1}} r^{\xi Q} e^{\xi h_r(x)} \LFPP[\epsilon/r|\phi'(x)|][h(r \cdot + x) - h_r(x)]\left(\text{around } \AA_{\alpha, 1}(0) \right) \tag*{(by \eqref{eq:LFPPScaling})}  \\
            &\leq \ 2 e^{\xi} \frac{r \fa_{\epsilon/|\phi'(x)|}^{-1}}{r^{\xi Q} \fa_{\epsilon/r |\phi'(x)|}^{-1}} r^{\xi Q} e^{\xi h_r(x)} c_1 c_0 \LFPP[\epsilon/r|\phi'(x)|][h(r \cdot + x) - h_r(x)]\left(\partial B_{\alpha}(0), \partial B_1(0) \right) \tag*{(by \eqref{eq:Condition3DistanceAround}, \eqref{eq:Condition3DistanceAcross})} \\
            &= \ 2 e^{\xi} c_1 c_0 \LFPP[\epsilon/|\phi'(x)|][h]\left(\partial B_{\alpha r}(x), \partial B_r(x) \right) \tag*{(by \eqref{eq:LFPPScaling})} \\
            &\leq \ 2 e^{2 \xi} c_1 c_0 \locLFPP[\epsilon/|\phi'(x)|][h]\left(\partial B_{\alpha r}(x), \partial B_r(x) \right) \tag*{(by \eqref{eq:Condition3LocalToNonLocal})} \\
            &\leq \ 4 e^{2 \xi} c_1 c_0 \locLFPP[\epsilon][\hphi]\left(\phi\left(\partial B_{\alpha r}(x) \right), \phi\left(\partial B_r(x) \right)\right) \tag*{(by \eqref{eq:Condition3PathwiseComparison}).}
          \end{align*}
          The desired result therefore holds with $A \coloneqq 4 e^{2 \xi} c_1 c_0$.
        \end{proof}
      \end{lemma}
  
      \begin{lemma}
        \label{lemma:ImprovingTheLipschitzConstantCondition4}
        For each $\delta, p \in (0,1)$, $\alpha \in (7/8, 1)$, and $0 < \zeta_{-} < \zeta < \zeta_{+} < 1$, there exist $\epsilon_0 = \epsilon_0(\zeta_{-}, \zeta, \zeta_{+}, \alpha, p, \delta)\in (0, 1)$ such that for each $\epsilon \in (0, \epsilon_0)$, each $r \in (\epsilon^{1 - \zeta}, \epsilon^{1 - \zeta_{+}})$, and each $x \in W$, $\Prob[E_{r, \epsilon}^{(4)}(x)] \geq p$.
        \begin{proof}
          The main idea is that by rescaling by $r$, it will suffice to show the $D_h$-diameter of $B_{4 \epsilon^{1 - \zeta_{-}}/r}(0)$ is much smaller than the $D_h$-distance across $\AA_{\alpha, 1}(0)$.
          This is true because $r \in (\epsilon^{1 - \zeta}, \epsilon^{1 - \zeta_{+}})$ and $D_h$ is a continuous metric.
          We can transfer this to $\locLFPP[\epsilon][\hphi](\phi(\cdot), \phi(\cdot))$ using Proposition \ref{prop:MainRegularityConditionHighProbability}, \eqref{eq:LFPPScaling}, and the convergence of LFPP to $D_h$.

          By Proposition \ref{prop:MainRegularityConditionHighProbability}, there exists $\epsilon_0 > 0$ so that for each $\epsilon \in (0, \epsilon_0)$ and each $x \in W$, it holds with probability at least $1 - \frac{1 - p}{4}$ that for all $\phi \in \confmaps$ and all piecewise $C^1$ paths $P$ in $B_{2 \epsilon^{1 - \zeta_{+}}}(x)$,
          \begin{align}
            \frac{1}{2} \Len\left(P; \locLFPP[\epsilon/|\phi'(x)|][h] \right) \
            &\leq \ \Len\left(\phi \circ P; \locLFPP[\epsilon][\hphi] \right) \
            \leq \ 2 \Len\left(P; \locLFPP[\epsilon/|\phi'(x)|][h] \right). \label{eq:Condition4PathwiseComparison}
          \end{align}
          Using Lemma \ref{lemma:PropertiesOfLocalizedFieldAndLFPP}, we can shrink $\epsilon_0$ so that with probability at least $1 - \frac{1 - p}{4}$,
          \begin{align}
            \sup_{z \in V} \left|\hat{h}_{\epsilon}^{*}(z) - h_{\epsilon}^{*}(z) \right| \
            &< \ \frac{1}{\xi} \log 2 \ \forall \epsilon \in (0, \epsilon_0).
            \label{eq:Condition4LFPPToLocalizedLFPP}
          \end{align}
          By continuity, we can decrease $\epsilon_0$ further so that with probability at least $1 - \frac{1 - p}{4}$, for all $\epsilon \in (0, \epsilon_0)$,
          \begin{align}
            \sup_{z \in B_2(0)} \sup_{w \in B_{4 \epsilon^{\zeta - \zeta_{-}}}(z)} D_h\left(z, w \right) \
            &< \ \frac{\delta}{12 + \delta} \left(D_h\left(\partial B_{\alpha}(0), \partial B_1(0) \right) \w D_h\left(\partial \AA_{\alpha, 1}(0), \partial \AA_{3/4, 5/4}(0) \right)\right). \label{eq:Condition4Continuity}
          \end{align}
          Using the almost sure convergence of $\LFPP$ to $D_h$, we can shrink $\epsilon_0$ again so that with probability at least $1 - \frac{1 - p}{4}$, for all $\epsilon \in (0, \epsilon_0)$,
          \begin{align}
            \begin{split}
              &\sup_{z, w \in B_2(0)} \left|\LFPP[\epsilon][h]\left(z, w \right) - D_h\left(z, w \right)\right| \\
              &\qquad \qquad < \frac{\delta}{12 + \delta} \left(D_h\left(\partial B_{\alpha}(0), \partial B_1(0) \right) \w D_h\left(\partial \AA_{\alpha, 1}(0), \partial \AA_{3/4, 5/4}(0) \right) \right). 
            \end{split}
            \label{eq:Condition4Convergence}
          \end{align}
  
          It follows that for each $\epsilon \in (0, \epsilon_0)$, $r \in (\epsilon^{1 - \zeta}, \epsilon^{1 - \zeta_{+}})$, and $x \in W$, it holds with probability at least $p$ that for all $u \in \overline{\AA_{\alpha r, r}(x)}$, all $v \in \overline{B_{4 \epsilon^{1 - \zeta_{-}}}(u)}$, and all $\phi \in \confmaps$ that
          \begin{align*}
            &\locLFPP[\epsilon][\hphi]\left(\phi(u), \phi(v); \phi\left(\AA_{3r/4, 5r/4}(x) \right)\right) \\
            &\leq \ 2 \locLFPP[\epsilon/|\phi'(x)|][h]\left(u, v; \AA_{3r/4, 5r/4}(x) \right) \tag*{(by \eqref{eq:Condition4PathwiseComparison})} \\
            &\leq \ 4 \LFPP[\epsilon/|\phi'(x)|][h]\left(u, v; \AA_{3r/4, 5r/4}(x) \right) \tag*{(by \eqref{eq:Condition4LFPPToLocalizedLFPP})} \\
            &= \ 4 \frac{r \fa_{\epsilon/|\phi'(x)|}^{-1}}{r^{\xi Q} \fa_{\epsilon/r|\phi'(x)|}^{-1}} r^{\xi Q} e^{\xi h_r(x)} \LFPP[\epsilon/r|\phi'(x)|][h(r \cdot + x) - h_r(x)]\left(\frac{u - x}{r}, \frac{v - x}{r}; \AA_{3/4, 5/4}(0) \right) \tag*{(by \eqref{eq:LFPPScaling})} \\
            &= \ 4 \frac{r \fa_{\epsilon/|\phi'(x)|}^{-1}}{r^{\xi Q} \fa_{\epsilon/r|\phi'(x)|}^{-1}} r^{\xi Q} e^{\xi h_r(x)} \LFPP[\epsilon/r|\phi'(x)|][h(r \cdot + x) - h_r(x)]\left(\frac{u - x}{r}, \frac{v - x}{r} \right) \tag*{(by \eqref{eq:Condition4Continuity}, \eqref{eq:Condition4Convergence})} \\
            &\leq \ 4 \frac{r \fa_{\epsilon/|\phi'(x)|}^{-1}}{r^{\xi Q} \fa_{\epsilon/r|\phi'(x)|}^{-1}} r^{\xi Q} e^{\xi h_r(x)} \frac{2 \delta}{12 + \delta} D_{h(r \cdot + x) - h_r(x)}\left(\partial B_{\alpha}(0), \partial B_1(0) \right) \tag*{(by \eqref{eq:Condition4Continuity}, \eqref{eq:Condition4Convergence})} \\ 
            &< \ \delta \frac{r \fa_{\epsilon/|\phi'(x)|}^{-1}}{r^{\xi Q} \fa_{\epsilon/r |\phi'(x)|}^{-1}} D_h\left(\partial B_{\alpha r}(x), \partial B_r(x) \right) \tag*{(axiom \ref{axiom:CoordinateChange}).}
          \end{align*}
          Likewise,
          \begin{align*}
            &D_h\left(u, v; \AA_{3r/4, 5r/4}(x) \right) \\
            &= \ r^{\xi Q} e^{\xi h_r(x)} D_{h(r \cdot + x) - h_r(x)}\left(\frac{u - x}{r}, \frac{v - x}{r}; \AA_{3/4, 5/4}(0) \right) \tag*{(axiom \ref{axiom:CoordinateChange})} \\
            &= \ r^{\xi Q} e^{\xi h_r(x)} D_{h(r \cdot + x) - h_r(x)}\left(\frac{u - x}{r}, \frac{v - x}{r} \right) \tag*{(by \eqref{eq:Condition4Continuity})} \\
            &\leq \ r^{\xi Q} e^{\xi h_r(x)} \frac{3 \frac{\delta}{12 + \delta}}{1 - \frac{\delta}{12 + \delta}} \LFPP[\epsilon/r |\phi'(x)|][h(r \cdot + x) - h_r(x)]\left(\partial B_{\alpha}(0), \partial B_1(0) \right) \tag*{(by \eqref{eq:Condition4Continuity}, \eqref{eq:Condition4Convergence})} \\
            &= \ \frac{r^{\xi Q} \fa_{\epsilon/r|\phi'(x)|}^{-1}}{r \fa_{\epsilon/|\phi'(x)|}^{-1}} \frac{\delta}{4} \LFPP[\epsilon/|\phi'(x)|][h]\left(\partial B_{\alpha r}(x), \partial B_r(x) \right) \tag*{(by \eqref{eq:LFPPScaling})} \\
            &\leq \ \frac{r^{\xi Q} \fa_{\epsilon/r|\phi'(x)|}^{-1}}{r \fa_{\epsilon/|\phi'(x)|}^{-1}} \frac{\delta}{2} \locLFPP[\epsilon/|\phi'(x)|][h]\left(\partial B_{\alpha r}(x), \partial B_r(x) \right) \tag*{(by \eqref{eq:Condition4LFPPToLocalizedLFPP})} \\
            &\leq \ \delta \frac{r^{\xi Q} \fa_{\epsilon/r|\phi'(x)|}^{-1}}{r \fa_{\epsilon/|\phi'(x)|}^{-1}} \locLFPP[\epsilon][\hphi]\left(\phi\left(\partial B_{\alpha r}(x) \right), \phi\left(\partial B_r(x) \right) \right) \tag*{(by \eqref{eq:Condition4PathwiseComparison})}.
          \end{align*}
        \end{proof}
      \end{lemma}
      
      \begin{figure}[h!]

        {
          \centering
          \begin{center}
            \includegraphics[scale=0.6]{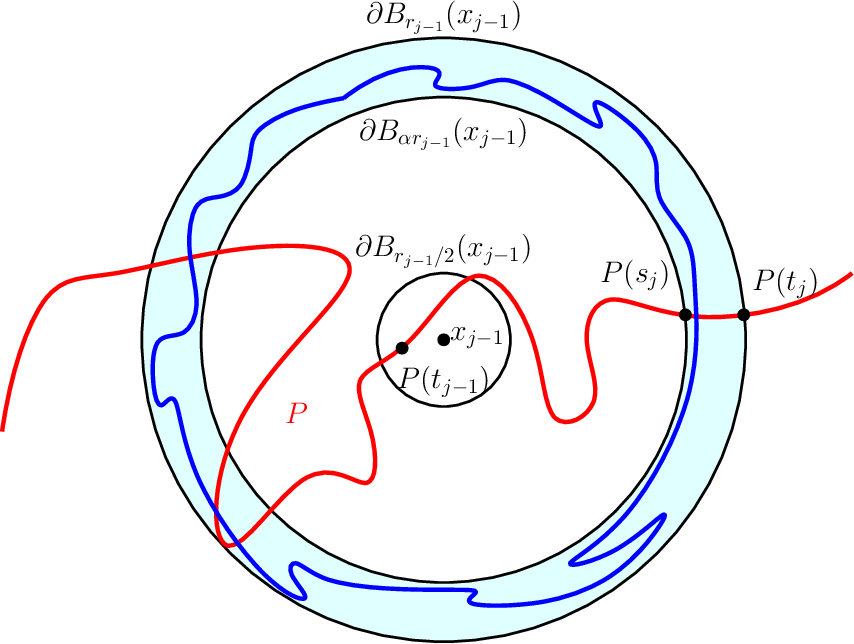}
          \end{center}
        }

        \caption{Illustration of the proof of Proposition \ref{prop:ImprovingTheLipschitzConstant}.
          The cyan annulus is $\AA_{\alpha r_{j-1}, r_{j-1}}(x_{j-1})$, the red path is the $D_h$-geodesic $P$, and the blue path is the path around $\AA_{\alpha r_{j-1}, r_{j-1}}(x_{j-1})$ guaranteed by condition 3 in Definition \ref{defn:ImprovingTheLipschitzConstantRegularityEvents}.
          Time $t_j$ is the first time after $t_{j-1}$ that $P$ exits $B_{r_{j-1}}(x_{j-1})$, and $s_j$ is the last time before $t_j$ that $P$ exits $B_{\alpha r_{j-1}}(x_{j-1})$.
          By condition 1 in Definition \ref{defn:ImprovingTheLipschitzConstantRegularityEvents}, $\locLFPP[\epsilon][\hphi](\phi(P(s_j)), \phi(P(t_j))) \leq \sup_{t \in [\DerivativeBound^{-1}, \DerivativeBound]} \frac{r_{j-1} \fa_{\epsilon t}^{-1}}{r_{j-1}^{\xi Q} \fa_{\epsilon t/r_{j-1}}^{-1}} (1 + \delta) (t_j - s_j)$.
          The blue path has $D_h$-length at most $A(t_j - s_j)$ by condition 3 in Definition \ref{defn:ImprovingTheLipschitzConstantRegularityEvents}.
          Since $P$ is a geodesic, and it crosses the blue path before $t_{j-1}$ and after $s_j$, we must have $s_j - t_{j-1} \leq A(t_j - s_j)$.
          This is used to show a $\frac{1}{A + 1}$-proportion of $P$ is comprised of the segments $P|_{[s_j, t_j]}$.
        }
        \label{fig:Iteration}
      \end{figure}

      \begin{proof}[Proof of Proposition \ref{prop:ImprovingTheLipschitzConstant}]
        The proof is similar to \cite[Proposition 3.6]{ConformalCovariance} and its adaptation in \cite[Proposition 3.1]{AlmostSureConvergence}.
        We will break a geodesic into segments contained in a ``good annulus'' $\AA_{\alpha r, r}(x)$ where $E_{r, \epsilon}(x)$ occurs and segments not contained in a good annulus.
        For the segments in a good annulus, condition 1 in Definition \ref{defn:ImprovingTheLipschitzConstantRegularityEvents} roughly says $\locLFPP[\epsilon][\hphi](\phi(\cdot), \phi(\cdot))$ and $D_h$ are bi-Lipschitz equivalent on $\AA_{\alpha r, r}(x)$ with Lipschitz constant $1 + \delta$ times a term involving scaling ratios $\ratios$.
        For the remaining segments, we can use the Lipschitz constants $C_0, C_0'$ from \eqref{eq:ImprovingTheLipschitzConstantInitialSupLeqLQG} and \eqref{eq:ImprovingTheLipschitzConstantInitialLQGLeqInf} to compare metrics, with condition 4 in Definition \ref{defn:ImprovingTheLipschitzConstantRegularityEvents} used to deal with the error terms from approximating point-to-point distances by ball-to-ball distances.
        Condition 3 in Definition \ref{defn:ImprovingTheLipschitzConstantRegularityEvents} roughly implies that the proportion of any geodesic in good annuli is $\frac{1}{A + 1}$, so we obtain a new Lipschitz condition with constant approximately $\frac{A}{A + 1} C_0 + \frac{1}{A + 1} (1 + \delta) \ratios$.

        We will assume $\epsilon$ is small enough that $B_{4 \epsilon^{1 - \zeta_{+}}}(\tilde{W}) \subset V$.
        Let $F_{\epsilon}$ be the event that for each $x \in B_{\epsilon^{1 - \zeta_{+}}}(\tilde{W}) \cap (\frac{\epsilon^{1 - \zeta}}{4} \ZZ^2)$ there exists $r \in \cR_{\epsilon}$ such that $E_{r, \epsilon}(x)$ occurs.
        Let $H_{\epsilon}$ be the event that \eqref{eq:ImprovingTheLipschitzConstantInitialSupLeqLQG} and \eqref{eq:ImprovingTheLipschitzConstantInitialLQGLeqInf} hold for all $\phi \in \confmaps$.
        By a union bound, $\Prob[F_{\epsilon} \cap H_{\epsilon}] \geq 1 - O_{\epsilon}(\epsilon^{\beta \w 2(1 - \zeta)})$.
  
        Now fix $\epsilon$ and assume $F_{\epsilon} \cap H_{\epsilon}$ occurs, and we will prove that \eqref{eq:ImprovingTheLipschitzConstantImprovedSupLeqLQG} and \eqref{eq:ImprovingTheLipschitzConstantImprovedLQGLeqInf} hold.
        As the proofs of \eqref{eq:ImprovingTheLipschitzConstantImprovedSupLeqLQG} and \eqref{eq:ImprovingTheLipschitzConstantImprovedLQGLeqInf} are nearly identical, we will prove only \eqref{eq:ImprovingTheLipschitzConstantImprovedSupLeqLQG}.
        Note that if $C_0(\epsilon) < \sup_{r \in \cR_{\epsilon}} \sup_{t \in [\DerivativeBound^{-1}, \DerivativeBound]} \ratios[\epsilon t]$, then $C_1(\epsilon) \geq \sup_{r \in \cR_{\epsilon}} \sup_{t \in [\DerivativeBound^{-1}, \DerivativeBound]} \ratios[\epsilon t] > C_0(\epsilon)$ and so \eqref{eq:ImprovingTheLipschitzConstantImprovedSupLeqLQG} holds because \eqref{eq:ImprovingTheLipschitzConstantInitialSupLeqLQG} holds.
        We may therefore assume $C_0(\epsilon) \geq \sup_{r \in \cR_{\epsilon}} \sup_{t \in [\DerivativeBound^{-1}, \DerivativeBound]} \ratios[\epsilon t]$.
        
        Fix $(z,w) \in \hyperlink{G}{G(W, \tilde{W})}$ and $\phi \in \confmaps$.
        Then there is a $D_h$-geodesic $P$ from $z$ to $w$ contained in $\tilde{W}$, and we may assume $P$ is parameterized by $D_h$-length.
        Let $t_0 = 0$ and choose $x_0 \in (\frac{\epsilon^{1 - \zeta}}{4} \ZZ^2) \cap B_{\epsilon^{1 - \zeta_{+}}}(\tilde{W})$ and $r_0 \in \cR_{\epsilon}$ such that $z \in B_{r_0/2}(x_0)$ and $E_{r_0, \epsilon}(x_0)$ occurs.
        Then inductively define $t_j$ to be the first time after time $t_{j-1}$ that $P$ exits $B_{r_{j-1}}(x_{j-1})$, and choose $x_j \in (\frac{\epsilon^{1 - \zeta}}{4} \ZZ^2) \cap B_{\epsilon^{1 - \zeta}}(\tilde{W})$ and $r_j \in \cR_{\epsilon}$ such that $P(t_j) \in B_{r_j/2}(x_j)$ and $E_{r_j, \epsilon}(x_j)$ occurs.
        If no such time $t_j$ exists, leave $t_j$ undefined.
        If $t_j$ does exist, let $s_j$ denote the last time before $t_j$ that $P$ exits $B_{\alpha r_{j-1}}(x_{j-1})$.
        See Fig \ref{fig:Iteration} for an illustration.
        Define
        \begin{align*}
          \underline{J} \
          &\coloneqq \ \max\left\{j \geq 1 : |P(t_{j-1}) - z| \leq 2 \epsilon^{1 - \zeta_{+}} \right\}, \\
          \overline{J} \
          &\coloneqq \ \min\left\{j \geq 0 : |P(t_{j+1}) - w| \leq 2 \epsilon^{1 - \zeta_{+}} \right\}.
        \end{align*}
  
        Assuming $j \in [\underline{J} + 1, \overline{J}] \cap \ZZ$, then $P|_{[s_j, t_j]}$ is a $D_h$-geodesic from $P(s_j) \in \partial B_{\alpha r_j}(x_j)$ to $P(t_j) \in \partial B_{r_j}(x_j)$ contained in $\overline{\AA_{\alpha r_j, r_j}(x_j)}$, so by condition 1 in Definition \ref{defn:ImprovingTheLipschitzConstantRegularityEvents},
        \begin{align*}
          \locLFPP[\epsilon][\hphi]\left(\phi(z), \phi(w); \phi\left(\AA_{3r_j/4, 7r_j/4}(x_j) \right)\right) \
          &\leq \ \sup_{t \in [\DerivativeBound^{-1}, \DerivativeBound]} \frac{r_j \fa_{\epsilon t}^{-1}}{r_j^{\xi Q} \fa_{\epsilon t/r_j}^{-1}} \left(1 + \delta \right) \left(t_j - s_j \right).
        \end{align*}
        Assuming $j \in [\underline{J}, \overline{J} + 1] \cap \ZZ$, condition 4 in Definition \ref{defn:ImprovingTheLipschitzConstantRegularityEvents} and \eqref{eq:ImprovingTheLipschitzConstantInitialSupLeqLQG} imply
        \begin{align*}
          &\locLFPP[\epsilon][\hphi]\left(\phi(P(t_j)), \phi(P(s_{j+1})); \phi\left(B_{4 \epsilon^{1 - \zeta_{+}}}(\tilde{W}) \right) \right) \\
          &\qquad \leq \ \locLFPP[\epsilon][\hphi]\left(\phi\left(B_{4 \epsilon^{1 - \zeta_{-}}}(P(t_j)) \right), \phi\left(B_{4 \epsilon^{1 - \zeta_{-}}}(P(s_{j+1})) \right); \phi\left(B_{4 \epsilon^{1 - \zeta_{-}}}(\tilde{W}) \right)\right) \\
          &\qquad \qquad + \sup_{\omega \in \partial B_{4 \epsilon^{1 - \zeta_{-}}}(P(t_j))} \locLFPP[\epsilon][\hphi]\left(\phi(P(t_j)), \phi(\omega); \phi\left(\AA_{3 r_{j-1}/4, 5r_{j-1}/4}(x_{j-1}) \right)\right) \\
          &\qquad \qquad + \sup_{\omega \in \partial B_{4 \epsilon^{1 - \zeta_{-}}}(P(s_{j+1}))} \locLFPP[\epsilon][\hphi]\left(\phi(P(s_{j+1})), \phi(\omega); \phi\left(\AA_{3 r_{j}/4, 5r_{j}/4}(x_{j}) \right)\right) \\
          &\qquad \leq \ C_0(\epsilon) \left(s_{j+1} - t_j \right) \\
          &\qquad \qquad + \delta \sup_{t \in [\DerivativeBound^{-1}, \DerivativeBound]} \frac{r_{j-1} \fa_{\epsilon t}^{-1}}{r_{j-1}^{\xi Q} \fa_{\epsilon t/r_{j-1}}^{-1}} D_h\left(\partial B_{\alpha r_{j-1}}(x_{j-1}), \partial B_{r_{j-1}}(x_{j-1}) \right) \\
          &\qquad \qquad + \delta \sup_{t \in [\DerivativeBound^{-1}, \DerivativeBound]} \frac{r_{j} \fa_{\epsilon t}^{-1}}{r_{j}^{\xi Q} \fa_{\epsilon t/r_{j}}^{-1}} D_h\left(\partial B_{\alpha r_{j}}(x_{j}), \partial B_{r_{j}}(x_{j}) \right).
        \end{align*}
        If $j \in [\underline{J} - 1, \overline{J} - 1] \cap \ZZ$, then since $P(t_j) \in B_{r_j/2}(x_j)$ and $P(t_{j+1}) \in \partial B_{r_j}(x_j)$, we know $P$ crosses $\AA_{\alpha r_j, r_j}(x_j)$ at least once between times $t_{j}$ and $t_{j+1}$.
        It follows that
        \begin{align*}
          &\locLFPP[\epsilon][\hphi]\left(\phi(P(t_j)), \phi(P(s_{j+1})); \phi\left(B_{4 \epsilon^{1 - \zeta_{+}}}(\tilde{W}) \right) \right) \\
          &\qquad \qquad \leq \ C_0(\epsilon) \left(s_{j+1} - t_j \right) + \delta \sup_{t \in [\DerivativeBound^{-1}, \DerivativeBound]} \frac{r_{j-1} \fa_{\epsilon t}^{-1}}{r_{j-1}^{\xi Q} \fa_{\epsilon t/r_{j-1}}^{-1}} \left(t_j - t_{j-1} \right) \\
          &\qquad \qquad \qquad \qquad + \delta \sup_{t \in [\DerivativeBound^{-1}, \DerivativeBound]} \frac{r_j \fa_{\epsilon t}^{-1}}{r_j^{\xi Q} \fa_{\epsilon t/r_j}^{-1}} \left(t_{j+1} - t_j \right).
        \end{align*}
        Therefore, by the triangle inequality,
        \begin{align*}
          &\locLFPP[\epsilon][\hphi]\left(\phi\left(P(t_{\underline{J}}) \right), \phi\left(P(t_{\overline{J}}) \right); \phi\left(B_{4 \epsilon^{1 - \zeta_{+}}}(\tilde{W}) \right) \right) \\
          &\qquad \leq \ \sum_{j=\underline{J}}^{\overline{J} - 1} \locLFPP[\epsilon][\hphi]\left(\phi\left(P(t_j) \right) \phi\left(P(s_{j+1}) \right); \phi\left(B_{4 \epsilon^{1 - \zeta_{+}}}(\tilde{W}) \right)\right) \\
          &\quad \qquad \ + \sum_{\underline{J} + 1}^{\overline{J}} \locLFPP[\epsilon][\hphi]\left(\phi\left(P(s_j) \right), \phi\left(P(t_j) \right); \phi\left(B_{4 \epsilon^{1 - \zeta_{+}}}(\tilde{W}) \right)\right) \\
          &\qquad \leq \ C_0(\epsilon) \sum_{j=\underline{J}}^{\overline{J} - 1} \left(s_{j+1} - t_j \right) + \delta \sup_{r \in \cR_{\epsilon}} \sup_{t \in [\DerivativeBound^{-1}, \DerivativeBound]} \frac{r \fa_{\epsilon t}^{-1}}{r^{\xi Q} \fa_{\epsilon t/r}^{-1}} \sum_{j=\underline{J}}^{\overline{J} - 1} \left(t_j - t_{j-1} \right) \\
          &\qquad \qquad + \delta \sup_{r \in \cR_{\epsilon}} \sup_{t \in [\DerivativeBound^{-1}, \DerivativeBound]} \frac{r \fa_{\epsilon t}^{-1}}{r^{\xi Q} \fa_{\epsilon t/r}^{-1}} \sum_{j=\underline{J}}^{\overline{J} - 1}\left(t_{j+1} - t_j \right) \\
          &\qquad \qquad + (1 + \delta) \sup_{r \in \cR_{\epsilon}} \sup_{t \in [\DerivativeBound^{-1}, \DerivativeBound]} \ratios[\epsilon t] \sum_{j=\underline{J} + 1}^{\overline{J}} \left(t_j - s_j \right) \\
          &\leq \ \left[C_0(\epsilon) - \sup_{r \in \cR_{\epsilon}} \sup_{t \in [\DerivativeBound^{-1}, \DerivativeBound]} \ratios[\epsilon t] \right] \sum_{j=\underline{J}}^{\overline{J} - 1} \left(s_{j+1} - t_j \right) \\
          &\qquad \qquad + \delta \sup_{r \in \cR_{\epsilon}} \sup_{t \in [\DerivativeBound^{-1}, \DerivativeBound]} \ratios[\epsilon t] \sum_{j=\underline{J}}^{\overline{J} - 1} \left(t_j - t_{j-1} \right) \\
          &\qquad \qquad + \delta \sup_{r \in \cR_{\epsilon}} \sup_{t \in [\DerivativeBound^{-1}, \DerivativeBound]} \frac{r \fa_{\epsilon t}^{-1}}{r^{\xi Q} \fa_{\epsilon t/r}^{-1}} \sum_{j=\underline{J}}^{\overline{J} - 1}\left(t_{j+1} - t_j \right) \\
          &\qquad \qquad + (1 + \delta) \sup_{r \in \cR_{\epsilon}} \sup_{t \in [\DerivativeBound^{-1}, \DerivativeBound]} \ratios[\epsilon t] \sum_{j=\underline{J}}^{\overline{J} - 1} \left(t_{j+1} - t_j \right).
        \end{align*}
        If $j \in [\underline{J} + 1, \overline{J}] \cap \ZZ$, then $|P(t_{j-1}) - z| > 2 \epsilon^{1 - \zeta_{+}} > 2 r_{j-1}$ and $|P(t_{j-1}) - x_{j-1}| < r_{j-1}/2$, so $|z - x_{j-1}| > r_{j-1}$.
        Similarly, $|w - x_{j-1}| > r_{j-1}$.
        So $P$ must cross $\AA_{\alpha r_{j-1}, r_{j-1}}(x_{j-1})$ at least once before time $t_{j-1}$ and at least once after time $s_j$.
        Therefore, by condition 3 in Definition \ref{defn:ImprovingTheLipschitzConstantRegularityEvents},
        \begin{align*}
          s_{j} - t_{j-1} \
          &\leq \ A D_h\left(\partial B_{\alpha r_{j-1}}(x_{j-1}), \partial B_{r_{j-1}}(x_{j-1}) \right) \
          \leq \ A \left(t_j - s_j \right).
        \end{align*}
        Adding $A (s_j - t_{j-1})$ to both sides, we obtain
        \begin{align*}
          s_j - t_{j-1} \
          &\leq \ \frac{A}{A+1} \left(t_j - t_{j-1} \right).
        \end{align*}
        Combining the above estimates together with the fact that $|P(t_{\underline{J}}) - z| \leq 4 \epsilon^{1 - \zeta_{+}}$ and $|P(t_{\overline{J}}) - w| \leq 4 \epsilon^{1 - \zeta_{+}}$, we obtain
        \begin{align*}
          &\locLFPP[\epsilon][\hphi]\left(\phi\left(B_{4 \epsilon^{1 - \zeta_{+}}}(z) \right), \phi\left(B_{4 \epsilon^{1 - \zeta_{+}}}(w) \right); \phi\left(B_{4 \epsilon^{1 - \zeta_{+}}}(\tilde{W}) \right) \right) \\
          &\qquad \leq \ \left[C_0(\epsilon) - \sup_{r \in \cR_{\epsilon}} \sup_{t \in [\DerivativeBound^{-1}, \DerivativeBound]} \ratios[\epsilon t] \right] \frac{A}{A+1} \sum_{j=\underline{J}}^{\overline{J} - 1} \left(t_{j+1} - t_j \right) \\ 
          &\qquad \qquad + \delta \sup_{r \in \cR_{\epsilon}} \sup_{t \in [\DerivativeBound^{-1}, \DerivativeBound]} \ratios[\epsilon t] \sum_{j=\underline{J}}^{\overline{J} - 1} \left(t_j - t_{j-1} \right) \\
          &\qquad \qquad + \delta \sup_{r \in \cR_{\epsilon}} \sup_{t \in [\DerivativeBound^{-1}, \DerivativeBound]} \frac{r \fa_{\epsilon t}^{-1}}{r^{\xi Q} \fa_{\epsilon t/r}^{-1}} \sum_{j=\underline{J}}^{\overline{J} - 1}\left(t_{j+1} - t_j \right) \\
          &\qquad \qquad + (1 + \delta) \sup_{r \in \cR_{\epsilon}} \sup_{t \in [\DerivativeBound^{-1}, \DerivativeBound]} \ratios[\epsilon t] \sum_{j=\underline{J}}^{\overline{J} - 1} \left(t_{j+1} - t_j \right) \\
          &\qquad \leq \ \left[\frac{A}{A + 1} C_0(\epsilon) + \left(\frac{1}{A + 1} + 3 \delta \right) \sup_{r \in \cR_{\epsilon}} \sup_{t \in [\DerivativeBound^{-1}, \DerivativeBound]} \ratios[\epsilon t] \right] D_h\left(z,w; \tilde{W} \right). 
        \end{align*}
      \end{proof}
  
    \subsection{Convergence of coordinate-changed LFPP}
      \label{section:ConvergenceOnDiagonal}
      Our goal now is to show that the coordinate-changed metrics $\locLFPP[\epsilon][h^{\phi}](\phi(\cdot), \phi(\cdot); \phi(V))$ for all $\phi \in \confmaps$ converge uniformly to $D_h(\cdot, \cdot; V)$ on a neighbourhood of the diagonal simultaneously.
      The idea is to iteratively apply Proposition \ref{prop:ImprovingTheLipschitzConstant} to get a sequence of Lipschitz constants $C_j(\epsilon)$ depending on the scaling ratios $\ratios[\epsilon t]$.
      We will argue these ratios are close to $1$ when $\cR_{\epsilon}$ is chosen well (Lemma \ref{lemma:GoodScalingRatios}), and that this implies $C_j(\epsilon) < 1 + \delta$ for $j$ large enough.
      Since the Lipschitz condition in Proposition \ref{prop:ImprovingTheLipschitzConstant} holds with polynomially high probability as $\epsilon \to 0$, Borel-Cantelli applied with $\epsilon = 2^{-n}$ shows we get an almost sure approximate Lipschitz condition with Lipschitz constant $1 + \delta$ for any fixed $\delta > 0$.
      Sending $\delta \to 0$ lets us deduce almost sure convergence.

      Throughout, $\xi < \xi_{\text{crit}}$.
      Let us state the main result of this section.
      To ease notation, if $\rho > 0$ and $W \subset \CC$, define
      \begin{align*}
        \hypertarget{Delta}{\Delta_{\rho}(W)} \
        &\coloneqq \ \left\{(z,w) \in W^2 : |z - w| \leq \rho \right\}.
      \end{align*}

      \begin{thm}
        \label{thm:ConvergenceTheorem}
        Let $h$ be a whole-plane GFF.
        Fix open sets $W \Subset V \Subset U$ and $\DerivativeBound > 1$.
        There exists random $\rho = \rho_{W, V, U, \DerivativeBound} > 0$ such that almost surely, $\locLFPP[\epsilon][\hphi](\phi(\cdot), \phi(\cdot); \phi(V))$ converges to $D_h(\cdot, \cdot; V)$ uniformly on $\diagonal$ uniformly over $\phi \in \confmaps$ as $\epsilon \to 0$ along $\{2^{-n}\}_{n \in \NN}$.
      \end{thm}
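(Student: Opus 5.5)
The plan is to iterate Proposition \ref{prop:ImprovingTheLipschitzConstant}, starting from Proposition \ref{prop:InitialLipschitzCondition}. At each step the Lipschitz constants contract towards the scaling ratios. We then use a good choice of $\cR_{\epsilon}$ to make those ratios close to $1$, and Borel--Cantelli along $\epsilon = 2^{-n}$ to get an almost sure statement. Finally we pass from ball-to-ball distances to point-to-point distances near the diagonal.

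\textbf{Step 1 (good radii).} Using Lemma \ref{lemma:GoodScalingRatios}, which adapts \cite[Lemma 3.11]{AlmostSureConvergence}, fix $\eta > 0$. For each of finitely many scale windows $(\epsilon^{1-\zeta_k}, \epsilon^{1-\zeta_{k+1}})$ we choose sets $\cR_{\epsilon}$ with more than a third of the dyadic radii such that, for all small $\epsilon$,
\[
\sup_{r \in \cR_{\epsilon}} \sup_{t \in [\DerivativeBound^{-1}, \DerivativeBound]} \left(\ratios[\epsilon t] \vee \invratios[\epsilon t]\right) \leq 1 + \eta .
\]

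\textbf{Step 2 (iteration).} Choose an increasing sequence $\zeta_0 < \zeta_1 < \dots < \zeta_N < 1$ with $1 - \zeta_{k+1} < \frac12 (1 - \zeta_k)$. Since the exponents $1 - \zeta_k$ shrink geometrically, $N$ is finite and fixed, but we need $\zeta_0$ small. We start from Proposition \ref{prop:InitialLipschitzCondition}, which gives a constant $C_0$, and note that its conclusion restricted to pairs in $G$ and $G_\epsilon^\phi$ is the hypothesis of Proposition \ref{prop:ImprovingTheLipschitzConstant} (adjusting $2$ versus $4$ in the radii by shifting $\zeta$ slightly). Applying Proposition \ref{prop:ImprovingTheLipschitzConstant} $N$ times with the same universal $A$ gives
\[
C_{k+1} \leq \frac{A}{A+1}\bigl((1+\eta) \vee C_k\bigr) + \Bigl(\frac{1}{A+1} + 3\delta\Bigr)(1+\eta),
\]
and similarly for $C_k'$. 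Hence $C_N, C_N' \leq 1 + O(\eta + \delta) + (A/(A+1))^N C_0$. This requires $N$ large, which forces $1 - \zeta_0$ to be tiny relative to the final window.

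I expect this bookkeeping to be the main obstacle: the exponents must stay admissible, and the failure probabilities must remain polynomial, namely $O(\epsilon^{\beta \w 2(1-\zeta_N)})$, so that they are summable along $2^{-n}$. I would first try to resolve it by taking the final $\zeta_N$ close to $1$, so that the last ball radius $\epsilon^{1-\zeta_N}$ still tends to $0$, and working backwards to choose the $\zeta_k$.

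\textbf{Step 3 (almost sure convergence).} By Borel--Cantelli, almost surely for all large $n$, with $\epsilon = 2^{-n}$ and for all $\phi \in \confmaps$, the ball-to-ball inequalities hold with constant $1 + \delta'$. To handle the geodesic restriction, use continuity of $D_h$ and the initial Lipschitz bound to find a random $\rho > 0$ such that every pair in $\diagonal$ has $D_h$-geodesics, and LFPP near-geodesics, staying in $\tilde W$. Then $\diagonal \subset G(W,\tilde W) \cap G^\phi_\epsilon(W, \tilde W)$ uniformly in $\phi$; for the LFPP side this follows from the lower bound \eqref{eq:InitialLipschitzConditionLQGMetricLeqInf} on distances to $\partial \tilde W$.

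Next convert ball-to-ball distances to point distances. For $D_h$, uniform continuity on compacts bounds the $D_h$-diameter of $B_{4\epsilon^{1-\zeta_N}}(z)$. For the LFPP metrics, uniformly in $\phi$, use the condition-4 type estimate (Lemma \ref{lemma:ImprovingTheLipschitzConstantCondition4}) on a grid of points together with a union bound, or use Proposition \ref{prop:InitialLipschitzCondition} to control small-ball diameters. Together these give
\[
\sup_{\phi} \sup_{(z,w) \in \diagonal} \bigl|\locLFPP[\epsilon][\hphi](\phi(z),\phi(w);\phi(V)) - D_h(z,w;V)\bigr| \leq C\delta' + o(1).
\]
Letting $\delta' \to 0$ along a countable sequence proves the theorem.
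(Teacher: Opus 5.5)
Your architecture matches the paper's: seed with Proposition~\ref{prop:InitialLipschitzCondition}, iterate Proposition~\ref{prop:ImprovingTheLipschitzConstant}, apply Borel--Cantelli along $2^{-n}$, then pass from balls to points near the diagonal. However, Step~1 asserts something that is not available, and that is exactly where the real difficulty lies. Lemma~\ref{lemma:GoodScalingRatios} does not give you, for each fixed window, a good $\cR_\epsilon$ for all small $\epsilon$. It only says that for each $\epsilon<\epsilon_N$, more than $2N$ of the $3N$ windows have $X^{(n)}_\epsilon>\frac{1}{1+\delta}$ and more than $2N$ have $Y^{(n)}_\epsilon>\frac{1}{1+\delta}$. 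So at least $N$ windows are good in both senses, but \emph{which} ones depends on $\epsilon$. The per-window statement cannot be extracted. Write $\fa_\epsilon=\epsilon^{1-\xi Q}L(\epsilon)$ with $L$ slowly varying; then $\frac{r\fa_{\epsilon t}^{-1}}{r^{\xi Q}\fa_{\epsilon t/r}^{-1}}=L(\epsilon t/r)/L(\epsilon t)$ with $\epsilon/r\in(\epsilon^{\zeta_+},\epsilon^{\zeta})$, and slow variation says nothing about such ratios. The contradiction argument behind the lemma needs $N$ bad windows to accumulate the factor $(\frac{A}{A+1})^N$ against $C_0L_0$, so it cannot rule out a single window being bad along a sequence $\epsilon\to0$.

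The missing idea is the decomposition in the proof of Proposition~\ref{prop:LipschitzConstant1PlusDelta}. For each $S\subset\{1,\dots,3N\}$ with $\#S\ge N$, let $\cE_S=\{\epsilon<\epsilon_N: S\subset I_\epsilon\}$. Along $\cE_S$, iterate only through the windows $n_1<\dots<n_N$ of $S$. Skipping windows is harmless: a ball-to-ball bound at scale $\zeta_+(n_{j-1})$ implies the hypothesis at scale $\zeta_-(n_j)\ge\zeta_+(n_{j-1})$, since the balls only get larger. In each used window, the sub-lemma turns $X^{(n)}_\epsilon\wedge Y^{(n)}_\epsilon>\frac1{1+\delta}$ plus regular variation in $t$ into a third of the radii with all ratios in $[(1+\delta)^{-2},(1+\delta)^2]$. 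Then combine the finitely many $S$ with $\inf\cE_S=0$, taking the smallest $\beta(S)$ and the largest final exponent $\zeta_+(n_N)$. The exponent bookkeeping you flagged is not an obstacle: any increasing sequence in $(0,1)$ with $1-\zeta_+(n)<\frac12(1-\zeta(n))$ works, and the failure probabilities stay polynomial automatically.

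Step~3 also has a gap. You need an upper bound on $\fa_\epsilon^{-1}\hat D^\epsilon_{h^\phi}(\phi(z),\phi(w);\phi(\tilde W))$ for $|z-w|\lesssim\epsilon^{1-\zeta}$, uniformly in $\phi$. This is needed both for the ball-to-point conversion and for $\Delta_\rho(W)\subset G^\phi_\epsilon(W,\tilde W)$; the lower bound \eqref{eq:InitialLipschitzConditionLQGMetricLeqInf} on distances to $\partial\tilde W$ is only half of that argument. Neither tool you name provides it:
\begin{itemize}
\item Proposition~\ref{prop:InitialLipschitzCondition} controls only distances between balls, not diameters of balls.
\item Condition~4 of Definition~\ref{defn:ImprovingTheLipschitzConstantRegularityEvents} holds only with probability $p$ for each center, so there is no union bound over a grid, and it covers only points of the good annulus.
\end{itemize}
The paper's Lemma~\ref{lemma:ErrorTerms} instead uses the stretched-exponential bound of Lemma~\ref{lemma:QuantitativeCoordinateChangedFieldComparison}, valid for $\zeta<1/3$. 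That bound survives a union bound over a grid of mesh $\epsilon^{1-\zeta}/4$ and Borel--Cantelli, reducing to $\hat D_h^{\epsilon/|\phi'(z_0)|}$. A straight segment, together with $\sup|\hat h^*_\epsilon|\le(2+\eta)\log\epsilon^{-1}+C$, $\fa_\epsilon\ge c'\epsilon^{1-\xi Q}(\log\epsilon^{-1})^{-b}$ and $Q>2$, then makes these lengths vanish for small $\zeta$. General $\zeta$ follows by combining this with Proposition~\ref{prop:InitialLipschitzCondition} at a small $\zeta'$.
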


      We remark that the simultaneous convergence of $\locLFPP[\epsilon][\hphi](\phi(\cdot), \phi(\cdot))$ for all $\phi \in \confmaps$ is only shown along a discrete subsequence $\{2^{-j}\}_{j \in \NN}$.
      To prove convergence along $(0, 1)$, one would need to compare $(h \circ \phi^{-1})_{\epsilon}^{*}(\phi(z))$ with $(h \circ \phi^{-1})_{\delta}^{*}(\phi(z))$ when $\epsilon, \delta$ are small and $\frac{\epsilon}{\delta} \approx 1$.
      This is not difficult for a fixed $\phi$, but doing so for all $\phi$ simultaneously seems to require new ideas.

      Before proving Theorem \ref{thm:ConvergenceTheorem}, we will prove a few lemmas.
      Our first lemma shows, roughly, that no more than a $\frac{2}{3}$ proportion of $r \in (\epsilon^{1 - \zeta}, \epsilon^{1 - \zeta_{+}}) \cap \{8^{-j}\}_{j \in \NN}$ have $\sup_{t \in [\DerivativeBound^{-1}, \DerivativeBound]} \ratios[\epsilon t]$ or $\sup_{t \in [\DerivativeBound^{-1}, \DerivativeBound]} \invratios[\epsilon t]$ far from $1$.

      Let us introduce some notation needed for the lemma statement.
      For a sequence $0 < \zeta(0) < \zeta_{+}(0) = \zeta_{-}(1) < \zeta(1) < \zeta_{+}(1) = \zeta_{-}(2) < \zeta(2) < \zeta_{+}(2) = \zeta_{-}(3) < \cdots < 1$ with $1 - \zeta_{+}(n) < \frac{1}{2} (1 - \zeta(n))$ for all $n$, define
      \begin{align}
        \cS_{\epsilon}^{(n)} \
        &\coloneqq \ \left\{\cR \subset (\epsilon^{1 - \zeta(n)}, \epsilon^{1 - \zeta_{+}(n)}) \cap \{8^{-j}\}_{j \in \NN} : \# \cR > \frac{1}{3} \#((\epsilon^{1 - \zeta(n)}, \epsilon^{1 - \zeta_{+}(n)}) \cap \{8^{-j}\}_{j \in \NN}) \right\}, \nonumber \\
        X_{\epsilon}^{(n)} \
        &\coloneqq \ \min_{\cR \in \cS_{\epsilon}^{(n)}} \sup_{r \in \cR} \sup_{t \in [\DerivativeBound^{-1}, \DerivativeBound]} \ratios[\epsilon t], \label{eq:BestScalingRatios} \\
        Y_{\epsilon}^{(n)} \
        &\coloneqq \ \min_{\cR \in \cS_{\epsilon}^{(n)}} \sup_{r \in \cR} \sup_{t \in [\DerivativeBound^{-1}, \DerivativeBound]} \invratios[\epsilon t]. \label{eq:BestInverseScalingRatios}
      \end{align}
      By Lemma \ref{lemma:APrioriScalingRatiosBound}, there exist $\cR_{\epsilon}^{(0)} \subset (\epsilon^{1 - \zeta(0)}, \epsilon^{1 - \zeta_{+}(0)}) \cap \{8^{-j}\}_{j \in \NN}$ and $L_0 = L_0(\zeta(0), \zeta_{+}(0)) \in (0, \infty)$ with $\# \cR_{\epsilon}^{(0)} > \frac{1}{3} \# ((\epsilon^{1 - \zeta(0)}, \epsilon^{1 - \zeta_{+}(0)}) \cap \{8^{-j}\}_{j \in \NN}$ such that
      \begin{align}
        \limsup_{\epsilon \to 0} \sup_{r \in \cR_{\epsilon}^{(0)}} \sup_{t \in [\DerivativeBound^{-1}, \DerivativeBound]} \ratios[\epsilon t] \
        &< \ L_0, \label{eq:FiniteLimsupOfBestScalingRatios} \\
        \limsup_{\epsilon \to 0} \sup_{r \in \cR_{\epsilon}^{(0)}} \sup_{t \in [\DerivativeBound^{-1}, \DerivativeBound]} \invratios[\epsilon t] \
        &< \ L_0. \label{eq:FiniteLimsupOfBestInverseScalingRatios}
      \end{align}
      The following lemma is used to control the scaling ratios $\ratios[\epsilon t]$.

      \begin{lemma}
        \label{lemma:GoodScalingRatios}
        Fix a sequence $0 < \zeta(0) < \zeta_{+}(0) = \zeta_{-}(1) < \zeta(1) < \zeta_{+}(1) = \zeta_{-}(2) < \zeta(2) < \zeta_{+}(2) = \zeta_{-}(3) < \cdots < 1$ with $1 - \zeta_{+}(n) < \frac{1}{2} (1 - \zeta(n))$ for all $n$.
        Define $X_{\epsilon}^{(n)}$ and $Y_{\epsilon}^{(n)}$ by \eqref{eq:BestScalingRatios} and \eqref{eq:BestInverseScalingRatios}, and let $L_0 \in (0, \infty)$ be such that \eqref{eq:FiniteLimsupOfBestScalingRatios} and \eqref{eq:FiniteLimsupOfBestInverseScalingRatios} are both true.
        Let $C_0$ be as in Proposition \ref{prop:InitialLipschitzCondition} with $(\zeta(0), \zeta_{+}(0))$ in place of $(\zeta, \zeta_{+})$, and let $A$ be as in Proposition \ref{prop:ImprovingTheLipschitzConstant}.
        Fix $\delta \in (0,1)$, and choose $N$ large enough that $(\frac{A}{A + 1})^N C_0 L_0 < \frac{\delta/4}{1 + \delta}$.
        There exists $\epsilon_N \in (0, 1)$ such that for all $\epsilon \in (0, \epsilon_N)$,
        \begin{align}
          \#\left\{1 \leq n \leq 3N : X_{\epsilon}^{(n)} > \frac{1}{1 + \delta} \right\} > 2N, \label{eq:LargeProportionGoodScalingRatios} \\
          \#\left\{1 \leq n \leq 3N : Y_{\epsilon}^{(n)} > \frac{1}{1 + \delta} \right\} > 2 N. \label{eq:LargeProportionGoodInverseScalingRatios}
        \end{align}
        \begin{proof}
          The argument is essentially the same as \cite[Lemma 4.19]{AlmostSureConvergence}, so we just give a brief overview.
          The idea is that if the claim is false, then there is a set $S$ of $N$ indices in $\{1,2, \ldots, 3N\}$ such that along a sequence of $\epsilon$'s with limit $0$, we have, say, $Y_{\epsilon}^{(n)} \leq \frac{1}{1 + \delta}$ for all $n \in S$.
          If we iteratively apply Proposition \ref{prop:ImprovingTheLipschitzConstant} with $(\zeta_{-}, \zeta, \zeta_{+}) = (\zeta_{-}(n), \zeta(n), \zeta_{+}(n))$ and with $\cR_{\epsilon}$ a minimizer of $Y_{\epsilon}^{(n)}$, then we will obtain a near-Lipschitz condition between $D_h$ and $\locLFPP$ with Lipschitz constant strictly less than $1$ (provided $N$ is large enough).
          This contradicts the fact that $\locLFPP \to D_h$.
        \end{proof}
      \end{lemma}

      Using Lemma \ref{lemma:GoodScalingRatios}, we can now make precise the idea that $C_j(\epsilon), C_j'(\epsilon) < 1 + \delta$ for all $j$ sufficiently large.

      \begin{prop}
        \label{prop:LipschitzConstant1PlusDelta}
        Let $W \Subset \tilde{W} \Subset V \Subset U$ be open sets.
        There exists $\beta = \beta(\delta) > 0$ and $\zeta = \zeta(\delta) \in (0, 1)$ such that with probability $1 - O_{\epsilon}(\epsilon^{\beta})$ as $\epsilon \to 0$, for all $\phi \in \confmaps$,
        \begin{align}
          &\locLFPP[\epsilon][\hphi]\left(\phi\left(B_{4 \epsilon^{1 - \zeta}}(z) \right), \phi\left(B_{4 \epsilon^{1 - \zeta}}(w) \right); \phi\left(B_{4 \epsilon^{1 - \zeta}}(\tilde{W}) \right)\right) \nonumber \\
          &\qquad \qquad \leq \ \left(1 + \delta \right) D_h\left(z,w; \tilde{W} \right) \ \forall (z,w) \in \hyperlink{G}{G(W, \tilde{W})}, \label{eq:1PlusDeltaLipschitzConstantLFPPLeqLQG} \\
          &D_h\left(B_{4 \epsilon^{1 - \zeta}}(z), B_{4 \epsilon^{1 - \zeta}}(w); B_{4 \epsilon^{1 - \zeta}}(\tilde{W}) \right) \nonumber \\
          &\qquad \qquad \leq \ \left(1 + \delta \right) \locLFPP[\epsilon][\hphi]\left(\phi(z), \phi(w); \phi(\tilde{W})\right) \ \forall (z,w) \in \hyperlink{Gphi}{G_{\epsilon}^{\phi}(W, \tilde{W})}. \label{eq:1PlusDeltaLipschitzConstantLQGLeqLFPP}
        \end{align}
        \begin{proof}
          Choose a sequence $0 < \zeta(0) < \zeta_{+}(0) = \zeta_{-}(1) < \zeta(1) < \zeta_{+}(1) = \zeta_{-}(2) < \zeta(2) < \zeta_{+}(2) = \zeta_{-}(3) < \cdots$ such that $1 - \zeta_{+}(n) < \frac{1}{2} (1 - \zeta(n))$ for all $n$.
          Let $C_0$ be as in Proposition \ref{prop:InitialLipschitzCondition}.
          Let $A$ be as in Proposition \ref{prop:ImprovingTheLipschitzConstant}.
          Let $L_0$ be as in \eqref{eq:FiniteLimsupOfBestScalingRatios} and \eqref{eq:FiniteLimsupOfBestInverseScalingRatios}.
          Choose $N$ large enough that $(\frac{A}{A + 1})^N C_0 L_0 < \frac{\delta/4}{1 + \delta}$.
          Adopt notation from Lemma \ref{lemma:GoodScalingRatios} and the paragraph preceding it.
          By Lemma \ref{lemma:GoodScalingRatios}, for each $\epsilon \in (0, \epsilon_N)$, the set
          \begin{align*}
            I_{\epsilon} \
            &\coloneqq \ \left\{1 \leq n \leq 3 N : X_{\epsilon}^{(n)} \w Y_{\epsilon}^{(n)} > \frac{1}{1 + \delta} \right\}
          \end{align*}
          has cardinality at least $N$.
          Lemma \ref{lemma:RegularlyVaryingScalingConstants}, implies we can shrink $\epsilon_N$ so that for each $\epsilon \in (0, \DerivativeBound \epsilon_N^{\zeta(N)})$ and each $t \in [\DerivativeBound^{-2}, \DerivativeBound^2]$,
          \begin{align}
            \frac{1}{\sqrt{1 + \delta}} t^{1 - \xi Q} \
            &\leq \ \frac{\fa_{\epsilon t}}{\fa_{\epsilon}} \
            \leq \ \sqrt{1 + \delta} t^{1 - \xi Q}.
            \label{eq:ActualGoodLipschitzConstantRegularVariation}
          \end{align}
          For each $S \subset \{1, 2, \ldots, 3N\}$ with $\# S \geq N$, let 
          \begin{align*}
            \cE_S \
            &\coloneqq \ \left\{\epsilon \in (0, \epsilon_N) : S \subset I_{\epsilon} \right\}.
          \end{align*}
          Note that $\epsilon \in \cE_{I_{\epsilon}}$ for each $\epsilon \in (0, \epsilon_N)$, so $(0, \epsilon_N)$ is the union of the $\cE_S$ as $S$ ranges over all subsets of $\{1, 2, \ldots, 3N\}$ with cardinality at least $N$.

          Fix $S \subset \{1, 2, \ldots, 3N\}$ with $\# S \geq N$ such that $\inf \cE_S = 0$.
          We will need the following lemma.
          \begin{lemma}
            For each $n \in S$ and $\epsilon \in \cE_S$, the set
            \begin{align*}
              \cR_{\epsilon}^{(n)} \
              &\coloneqq \ \left\{r \in (\epsilon^{1 - \zeta(n)}, \epsilon^{1 - \zeta_{+}(n)}) \cap \{8^{-j}\}_{j \in \NN} : \frac{1}{(1 + \delta)^2} \leq \ratios[\epsilon t] \leq (1 + \delta)^2 \ \forall t \in [\DerivativeBound^{-1}, \DerivativeBound] \right\}
            \end{align*}
            has cardinality at least $\frac{1}{3} \# ((\epsilon^{1 - \zeta(n)}, \epsilon^{1 - \zeta_{+}(n)}) \cap \{8^{-j}\}_{j \in \NN})$.
            \begin{proof}
              Indeed, if this were false, then one of the sets
              \begin{align*}
                \cU_{\epsilon}^{(n)} \
                &\coloneqq \ \left\{r \in (\epsilon^{1 - \zeta(n)}, \epsilon^{1 - \zeta_{+}(n)}) \cap \{8^{-j}\}_{j \in \NN} : \inf_{t \in [\DerivativeBound^{-1}, \DerivativeBound]} \invratios[\epsilon t] < \frac{1}{(1 + \delta)^2} \right\}, \\
                \cL_{\epsilon}^{(n)} \
                &\coloneqq \ \left\{r \in (\epsilon^{1 - \zeta(n)}, \epsilon^{1 - \zeta_{+}(n)}) \cap \{8^{-j}\}_{j \in \NN} : \inf_{t \in [\DerivativeBound^{-1}, \DerivativeBound]} \ratios[\epsilon t] < \frac{1}{(1 + \delta)^2} \right\}
              \end{align*}
              has cardinality $> \# \frac{1}{3} ((\epsilon^{1 - \zeta(n)}, \epsilon^{1 - \zeta_{+}(n)}) \cap \{8^{-j}\}_{j \in \NN})$.
              But if $\# \cU_{\epsilon}^{(n)} > \frac{1}{3} \#((\epsilon^{1 - \zeta(n)}, \epsilon^{1 - \zeta_{+}(n)}) \cap \{8^{-j}\}_{j \in \NN})$, then since for each $r \in \cU_{\epsilon}^{(n)}$ and each $t, s \in [\DerivativeBound^{-1}, \DerivativeBound]$,
              \begin{align*}
                \frac{\frac{r^{\xi Q} \fa_{\epsilon t/r}^{-1}}{r \fa_{\epsilon t}^{-1}}}{\frac{r^{\xi Q} \fa_{\epsilon s/r}^{-1}}{r \fa_{\epsilon s}^{-1}}} \
                &= \ \frac{\fa_{\epsilon s/r}}{\fa_{\epsilon t/r}} \frac{\fa_{\epsilon t}}{\fa_{\epsilon s}} \\
                &= \ \frac{\fa_{(\epsilon t/r)(s/t)}}{\fa_{\epsilon t/r}} \frac{\fa_{\epsilon s (t/s)}}{\fa_{\epsilon s}} \\
                &\leq \ \sqrt{1 + \delta} \left(\frac{s}{t} \right)^{1 - \xi Q}  \sqrt{1 + \delta} \left(\frac{t}{s} \right)^{1 - \xi Q} \tag*{(by \eqref{eq:ActualGoodLipschitzConstantRegularVariation})} \\
                &= \ 1 + \delta,
              \end{align*}
              we see that
              \begin{align*}
                Y_{\epsilon}^{(n)} \
                &\leq \ \sup_{r \in \cU_{\epsilon}^{(n)}} \sup_{t \in [\DerivativeBound^{-1}, \DerivativeBound]} \invratios[\epsilon t] \
                \leq \ \left(1 + \delta \right) \sup_{r \in \cU_{\epsilon}^{(n)}} \inf_{t \in [\DerivativeBound^{-1}, \DerivativeBound]} \invratios[\epsilon t] \
                < \ \frac{1}{1 + \delta},
              \end{align*}
              which contradicts the fact that $n \in S$ and $\epsilon \in \cE_S$.
            \end{proof}
          \end{lemma}

          Denote the first $N$ elements of $S$ by $n_1 < n_2 < \cdots < n_N$, and let $n_0 = 0$.
          Let $\delta' \coloneqq \frac{\delta}{6 (A + 1)}$ and let $\cR_{\epsilon}^{(0)}$ be as in \eqref{eq:FiniteLimsupOfBestScalingRatios} and \eqref{eq:FiniteLimsupOfBestInverseScalingRatios}.
          Recursively define $C_j(\epsilon)$ and $C_j'(\epsilon)$ by $C_0(\epsilon) \coloneqq C_0 L_0$, $C_0'(\epsilon) \coloneqq C_0 L_0$, and
          \begin{align*}
            C_j
            &\coloneqq \frac{A}{A + 1} \left[\sup_{r \in \cR_{\epsilon}^{(n_j)}} \sup_{t \in [\DerivativeBound^{-1}, \DerivativeBound]} \ratios[\epsilon t] \vee C_{j-1}(\epsilon) \right] + \left[\frac{1}{A + 1} + 3 \delta' \right] \sup_{r \in \cR_{\epsilon}^{(n_j)}} \sup_{t \in [\DerivativeBound^{-1}, \DerivativeBound]} \ratios[\epsilon t], \\
            C_j'
            &\coloneqq \frac{A}{A + 1} \left[\sup_{r \in \cR_{\epsilon}^{(n_j)}} \sup_{t \in [\DerivativeBound^{-1}, \DerivativeBound]} \invratios[\epsilon t] \vee C_{j-1}'(\epsilon) \right] + \left[\frac{1}{A + 1} + 3 \delta' \right] \sup_{r \in \cR_{\epsilon}^{(n_j)}} \sup_{t \in [\DerivativeBound^{-1}, \DerivativeBound]} \invratios[\epsilon t].
          \end{align*}
          By Proposition \ref{prop:InitialLipschitzCondition} with $(\zeta(0), \zeta_{+}(0), \cR_{\epsilon}^{(0)}, \tilde{W})$ in place of $(\zeta, \zeta_{+}, \cR_{\epsilon}, W)$, then Proposition \ref{prop:ImprovingTheLipschitzConstant} with $(\zeta_{-}(n_j), \zeta(n_j), \zeta_{+}(n_j), \cR_{\epsilon}^{(n_j)}, C_{j-1}(\epsilon), C_{j-1}'(\epsilon),\delta')$ in place of $(\zeta_{-}, \zeta, \zeta_{+}, \cR_{\epsilon},$ $C_0(\epsilon), C_0'(\epsilon),\delta)$, we see that for some $\beta = \beta(S) > 0$, with probability $1 - O_{\epsilon}(\epsilon^{\beta})$ as $\epsilon \to 0$,
          \begin{align}
            &\locLFPP[\epsilon][\hphi]\left(\phi\left(B_{4 \epsilon^{1 - \zeta_{+}(n_N)}}(z) \right), \phi\left(B_{4 \epsilon^{1 - \zeta_{+}(n_N)}}(w) \right); \phi\left(B_{4 \epsilon^{1 - \zeta_{+}(n_N)}}(\tilde{W}) \right)\right) \nonumber \\
            &\qquad \qquad \leq \ C_N(\epsilon) D_h\left(z,w; \tilde{W} \right) \ \forall (z,w) \in \hyperlink{G}{G(W, \tilde{W})}, \label{eq:IteratedLipschitzConditionLFPPLeqLQG} \\
            &D_h\left(B_{4 \epsilon^{1 - \zeta_{+}(n_N)}}(z), B_{4 \epsilon^{1 - \zeta_{+}(n_N)}}(w); B_{4 \epsilon^{1 - \zeta_{+}(n_N)}}(\tilde{W}) \right) \nonumber \\
            &\qquad \qquad \leq \ C_N'(\epsilon) \locLFPP[\epsilon][\hphi]\left(z,w; \tilde{W} \right) \ \forall (z,w) \in \hyperlink{Gphi}{G_{\epsilon}^{\phi}(W, \tilde{W})}. \label{eq:IteratedLipschitzConditionLQGLeqLFPP}
          \end{align}

          We claim that $C_N(\epsilon) \vee C_N'(\epsilon) \leq (1 + \delta)^{3}$ for all $\epsilon \in \cE_S$.
          Indeed, if $C_j(\epsilon) \leq (1 + \delta)^3$ for some $1 \leq j \leq N - 1$, then 
          \begin{align*}
            C_{j+1}(\epsilon) \
            &\leq \ \frac{A}{A + 1} \left(1 + \delta \right)^3 + \left[\frac{1}{A + 1} + 3 \delta' \right] \left(1 + \delta \right)^2 \\
            &< \ \frac{A}{A + 1} \left(1 + \delta \right)^3 + \frac{1}{A + 1} \left(1 + \delta \right)^2 + 3 \frac{\delta}{3(A + 1)} (1 + \delta)^2 & \left(\delta' < \frac{\delta}{3(A + 1)} \right) \\
            &= \ \left(1 + \delta \right)^3,
          \end{align*}
          so $C_N(\epsilon) \leq (1 + \delta)^3$.
          If instead $C_j(\epsilon) > (1 + \delta)^3$ for all $1 \leq j \leq N - 1$, then by induction and the fact that $\ratios[\epsilon t] \leq (1 + \delta)^2$ for all $r \in \cR_{\epsilon}^{(n_j)}$ and all $t \in [\DerivativeBound^{-1}, \DerivativeBound]$,
          \begin{align*}
            C_N(\epsilon) \
            &\leq \ \left(\frac{A}{A + 1} \right)^{N} C_0 L_0 + \sum_{j=0}^{N-1} \left(\frac{A}{A + 1} \right)^j \left[\frac{1}{A + 1} + 3 \delta' \right] \left(1 + \delta \right)^2 \\
            &\leq \ \frac{\delta}{2} (1 + \delta)^2 + \left[1 + 3 (A + 1) \delta' \right] (1 + \delta)^2 \qquad \left(\left(\frac{A}{A + 1} \right)^N C_0 L \leq \frac{\delta/4}{1 + \delta} < \frac{\delta}{2} (1 + \delta)^2 \right)\\
            &= \ \frac{\delta}{2} \left(1 + \delta \right)^2 + \left(1 + \delta \right)^2 + 3 (A + 1) \frac{\delta}{6(A + 1)} (1 + \delta)^2 \qquad \left(\delta' = \frac{\delta}{6(A + 1)} \right) \\
            &= \ \left(1 + \delta \right)^3.
          \end{align*}
          Similarly, $C_{N}'(\epsilon) \leq (1 + \delta)^3$.

          Combining the bound $C_N(\epsilon) \vee C_N'(\epsilon) \leq (1 + \delta)^3$ with \eqref{eq:IteratedLipschitzConditionLFPPLeqLQG} and \eqref{eq:IteratedLipschitzConditionLQGLeqLFPP}, we have shown that \eqref{eq:1PlusDeltaLipschitzConstantLFPPLeqLQG} and \eqref{eq:1PlusDeltaLipschitzConstantLQGLeqLFPP} hold with probability $1 - O_{\epsilon}(\epsilon^{\beta(S)})$ as $\epsilon \to 0$ along $\cE_S$ with $(1 + \delta)^3$ in place of $1 + \delta$ and $\zeta_{+}(n_N)$ in place of $\zeta$.
          Repeat this with all finitely many $S \subset \{1, 2, \ldots, 3N\}$ with $\# S \geq N$ and $\inf \cE_S = 0$, and let $\beta$ be the smallest of the $\beta(S)$ and $\zeta$ be the largest of the $\zeta_{+}(n_N)$ to prove the proposition with $(1 + \delta)^3$ in place of $1 + \delta$.
        \end{proof}
      \end{prop}

      Our next lemma will allow us to control the error terms from estimating point-to-point distances by ball-to-ball distances.
      It will rely on the following, which is \cite[equation (40)]{AlmostSureConvergence}.

      \begin{lemma}
        \label{lemma:AlmostSureFieldMollificationBound}
        Let $h$ be a whole-plane GFF, and fix $R > 0$ and $\eta \in (0, 1)$.
        There is a random variable $C > 0$ such that almost surely, for all $\epsilon$ sufficiently small,
        \begin{align*}
          \sup_{z \in B_R(0)} \left|\hat{h}_{\epsilon}^{*}(z) \right| \
          &\leq \ (2 + \eta) \log \epsilon^{-1} + C.
        \end{align*}
      \end{lemma}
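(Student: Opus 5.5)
The plan is to combine the a priori growth bound $|h_r(z)| \leq X \log r^{-1}$ for circle averages (from \cite[Lemma 2.2]{WeakLQGMetrics}, already used in the proof of Lemma \ref{lemma:MollifiedFieldComparison}) with the polar-coordinate representation \eqref{eq:MollificationPolarCoordinates} of $\hat{h}_{\epsilon}^{*}$. That representation writes $\hat{h}_{\epsilon}^{*}(z)$ as an average of circle averages $h_s(z)$ over radii $s \leq \epsilon \log \epsilon^{-1}$, weighted by a probability density in $s$. Plugging in the crude circle-average bound gives $|\hat h^*_\epsilon(z)| \lesssim X (\log\epsilon^{-1})^2\log\epsilon^{-1}$, which is far too weak. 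So I would instead use the sharp form of the modulus bound, and the main work is controlling the radial weights.

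\textbf{Step 1 (circle averages).} Use the standard almost sure estimate for the whole-plane GFF: for each $\eta' > 0$ there is a random $C'$ with
\[
\sup_{z \in B_{R+1}(0)} |h_s(z)| \leq (2 + \eta') \log s^{-1} + C' \quad \text{for all } s \in (0, 1).
\]
To prove it, take a union bound over a grid of mesh $s^{1+\eta''}$ and dyadic $s$. Each $h_s(z)$ is Gaussian with variance $\log s^{-1} + O(1)$. The grid has $\approx s^{-2-2\eta''}$ points, so the tail bound $e^{-(2+\eta')^2 \log s^{-1}/2}$ beats the count provided $(2+\eta')^2/2 > 2 + 2\eta''$. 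Continuity of $(s,z)\mapsto h_s(z)$ (Hölder estimates, as in \cite{HuMillerPeres}-type arguments) then interpolates off the grid, and Borel--Cantelli finishes.

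\textbf{Step 2 (radial weights).} Write $\hat{h}_{\epsilon}^{*}(z) = \int_0^{\epsilon\log\epsilon^{-1}} h_s(z)\, \mu_\epsilon(\D s)$, where $\mu_\epsilon(\D s) = 2\pi s \Psi^{\id,0}_\epsilon(s)\,\D s$ is a probability measure. Using Step 1,
\[
|\hat h^*_\epsilon(z)| \leq (2+\eta')\int \log s^{-1}\,\mu_\epsilon(\D s) + C'.
\]
The density of $\mu_\epsilon$ is at most $(2s/\epsilon^2)e^{-s^2/\epsilon^2}/Z_\epsilon$, a Rayleigh-type law at scale $\epsilon$. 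Substituting $s = \epsilon u$ gives
\[
\int \log s^{-1}\,\mu_\epsilon(\D s) \leq Z_\epsilon^{-1}\bigl(\log\epsilon^{-1} + \textstyle\int_0^\infty 2u e^{-u^2}|\log u|\,\D u\bigr) = \log\epsilon^{-1} + O(1).
\]
Here $Z_\epsilon^{-1} = 1 + O(e^{-(\log\epsilon^{-1})^2/4})$ by \eqref{eq:NormalizingConstantEstimate}, so that factor only contributes an $o(1)$ error. Choosing $\eta' = \eta$ and absorbing the constants into $C$ gives the claim for all small $\epsilon$.

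\textbf{Main obstacle.} The delicate point is Step 1: getting the constant exactly $2+\eta$, uniformly over a continuum of centres and all scales simultaneously. This needs both the grid union bound and a modulus-of-continuity estimate for $(s,z)\mapsto h_s(z)$ at scale $s^{1+\eta''}$ that costs only $o(\log s^{-1})$. I would first try to import this directly from known GFF thick-point estimates. Failing that, I would prove the increment bound via Kolmogorov's criterion, using $\mathrm{Var}(h_s(z)-h_{s'}(z')) \lesssim (|z-z'|+|s-s'|)/s$.
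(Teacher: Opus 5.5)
Your argument is correct. The paper gives no argument of its own here: it imports the bound verbatim as \cite[equation (40)]{AlmostSureConvergence}. You instead derive it from a more primitive input, the sharp uniform circle-average bound $\sup_{z}|h_s(z)| \leq (2+\eta')\log s^{-1} + C'$, by averaging against the radial weight in \eqref{eq:MollificationPolarCoordinates}. You write that weight as $\mu_\epsilon(\D s) = 2\pi s \Psi_\epsilon^{\id,0}(s)\,\D s$; the factor $2\pi$ is what makes it a probability measure. Dominating it by $Z_\epsilon^{-1}$ times a Rayleigh density at scale $\epsilon$ shows that the localization by $\psi_\epsilon$ and the normalization $Z_\epsilon$ cost only an additive $O(1)$. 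So the constant $2+\eta$ is inherited directly from the thick-point exponent of the circle averages. The citation buys brevity. Your route explains where the constant comes from, at the cost of proving or quoting Step 1, which is itself a standard fact.

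A few small points to tidy:

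\textbf{The motivating claim about the crude bound.} You say $|h_s(z)| \leq X \log s^{-1}$ only yields order $(\log\epsilon^{-1})^3$. That reflects the lossy computation in the proof of Lemma \ref{lemma:MollifiedFieldComparison}, where the weight is replaced by its supremum. Integrated against $\mu_\epsilon$ as in your Step 2, the crude bound gives $X\log\epsilon^{-1} + O(X)$. The actual defect is the random prefactor $X$ in place of $2+\eta$.

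\textbf{Interpolation in the radius in Step 1.} Make sure the grid has mesh $s^{1+\eta''}$ in the radius variable as well, not only dyadic radii. H\"older bounds of Hu--Miller--Peres type give, up to logarithmic factors, an increment of order $s^{-\zeta}$ for a radius change of order $s$, which is not $o(\log s^{-1})$. The extra factor $s^{-\eta''}$ in the grid count is harmless. Alternatively, control $\sup_{s' \in [s/2, s]} |h_{s'}(z) - h_s(z)|$ directly: for fixed $z$ it is the supremum of a Brownian motion over time $\log 2$.

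\textbf{Simultaneity of the Step 2 identity.} The identity must hold for all $(z,\epsilon)$ at once. This follows from its almost sure validity at each fixed $(z,\epsilon)$ together with continuity of both sides.
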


      \begin{lemma}
        \label{lemma:ErrorTerms}
        Fix $\zeta \in (0, 1)$ and open sets $W \Subset \tilde{W} \Subset V \Subset U$.
        Almost surely,
        \begin{align}
          \lim_{n \to \infty} \sup_{\substack{z, w \in W \\ |z - w| \leq 4 \cdot 2^{-n(1 - \zeta)}}} \sup_{\phi \in \confmaps} \locLFPP[2^{-n}][\hphi]\left(\phi(z), \phi(w); \phi\left(\tilde{W}\right)\right) \
          &= \ 0.
          \label{eq:ErrorTermsConvergeTo0}
        \end{align}
        \begin{proof}
          The idea of the proof is that when $\zeta \in (0, 1/3)$, Lemma \ref{lemma:QuantitativeCoordinateChangedFieldComparison} and Borel-Cantelli imply that \eqref{eq:ErrorTermsConvergeTo0} will follow by showing that
          \begin{align}
            \sup_{\substack{z,w \in W \\ |z - w| \leq 4 \cdot 2^{-n(1 - \zeta)}}} \sup_{t \in [\DerivativeBound^{-1}, \DerivativeBound]} \locLFPP[2^{-n} t][h]\left(z, w; \tilde{W} \right) \
            &\to \ 0.
            \label{eq:ErrorTermsReduction}
          \end{align}
          By estimating the $\locLFPP[\epsilon t][h]$-length of a straight line segment of Euclidean length $\epsilon^{1 - \zeta}$, we can show that \eqref{eq:ErrorTermsReduction} holds for all $\zeta \in (0, 1)$ sufficiently close to $0$.
          To extend this to the case of arbitrary $\zeta \in (0, 1)$, we fix $\zeta' \in (0, 1)$ close enough to $0$ for the previous case to apply, then estimate $\locLFPP[\epsilon][\hphi](\phi(z), \phi(w); \phi(\tilde{W}))$ by the distances from $\phi(z)$ (resp. $\phi(w)$) to a point of $\phi(\partial B_{4 \epsilon^{1 - \zeta'}}(z))$ (resp. $\phi(\partial B_{4 \epsilon^{1 - \zeta'}}(w))$) plus the distance between $\phi(B_{4 \epsilon^{1 - \zeta'}}(z))$ and $\phi(B_{4 \epsilon^{1 - \zeta'}}(w))$.
          The former distances converge to $0$ almost surely along the sequence $\epsilon = 2^{-n}$ because $\zeta'$ is close enough to $0$.
          To handle the distance between $\phi(B_{4 \epsilon^{1 - \zeta'}}(z))$ and $\phi(B_{4 \epsilon^{1 - \zeta'}}(w))$, we use Proposition \ref{prop:InitialLipschitzCondition} to compare to the $D_h$-distance between $z$ and $w$, which converges to $0$ by continuity.

          First assume $\zeta \in (0, 1/3)$.
          Fix an open set $W'$ with $W \Subset W' \Subset \tilde{W}$.
          Throughout the proof, we always assume $\epsilon$ is small enough that $B_{4 \epsilon^{1 - \zeta}}(W') \subset \tilde{W}$.
          By Lemma \ref{lemma:QuantitativeCoordinateChangedFieldComparison} and a union bound over $(\frac{\epsilon^{1 - \zeta}}{4} \ZZ^2) \cap W'$, there are constants $c, \tilde{c}, \epsilon_0 > 0$ such that for each $\epsilon \in (0, \epsilon_0)$,
          \begin{align}
            &\Prob\left\{\sup_{z_0 \in (\frac{\epsilon^{1 - \zeta}}{4} \ZZ^2) \cap W'} \sup_{z \in B_{4 \epsilon^{1 - \zeta}}(z_0)} \sup_{\phi \in \confmaps} \left|\langle h, \Psi_{\epsilon}^{\phi, z} - \Psi_{\epsilon/|\phi'(z_0)|}^{\id, z} \rangle \right| > 1 \right\} \nonumber \\
            &\qquad \qquad \leq \ \tilde{c} \epsilon^{-2 (1 - \zeta)} \exp\left(-c \frac{\epsilon^{-2 + 6 \zeta}}{(\log \epsilon^{-1})^3} \right). \label{eq:ErrorTermUnionBound}
          \end{align}
          Taking $\epsilon = 2^{-n}$, \eqref{eq:ErrorTermUnionBound} is summable, so by Borel-Cantelli, almost surely, for all $n$ sufficiently large,
          \begin{align}
            \sup_{z_0 \in (\frac{2^{-n(1 - \zeta)}}{4} \ZZ^2) \cap W'} \sup_{z \in B_{4 \cdot 2^{-n(1 - \zeta)}}(z_0)} \sup_{\phi \in \confmaps} \left|\langle h, \Psi_{2^{-n}}^{\phi, z} - \Psi_{2^{-n}/|\phi'(z_0)|}^{\id, z} \rangle \right| \
            &\leq \ 1. \label{eq:ErrorTermsBorelCantelli}
          \end{align}
          By Lemma \ref{lemma:LogMollificationConvergence} and Lemma \ref{lemma:RegularlyVaryingScalingConstants}, we can decrease $\epsilon_0$ so that for all $\epsilon \in (0, \epsilon_0)$,
          \begin{align}
            \sup_{z \in \tilde{W}} \sup_{\phi \in \confmaps} \left|\langle - \log|\phi'|, \Psi_{\epsilon}^{\phi, z} \rangle - ( -\log|\phi'(z)|) \right| \
            &< \ 1, \label{eq:ErrorTermsLogDerivatives} \\
            \sup_{t \in [\DerivativeBound^{-1}, \DerivativeBound]} \left|\frac{\fa_{t \epsilon}}{\fa_{\epsilon}} - t^{1 - \xi Q} \right| \
            &\leq \ \DerivativeBound^{1 - \xi Q}. \label{eq:ErrorTermsScalingConstants}
          \end{align}
          Note that if \eqref{eq:ErrorTermsScalingConstants} holds, then for $s, t \in [\DerivativeBound^{-1}, \DerivativeBound]$,
          \begin{align}
            \frac{\fa_{\epsilon/t}}{\fa_{\epsilon}} s^{1 - \xi Q} \
            &= \ s^{1 - \xi Q} \left(\frac{\fa_{\epsilon/t}}{\fa_{\epsilon}} - t^{-(1 - \xi Q)} \right) + (s/t)^{1 - \xi Q} \
            \leq \ 2 \DerivativeBound^{2(1 - \xi Q)}. 
            \label{eq:ErrorTermsScalingConstantsAgain}
          \end{align}
          Now assume $\epsilon = 2^{-n}$ with $n$ large enough that $\epsilon \in (0, \epsilon_0)$.
          Fix $z, w \in W$ with $|z - w| \leq \epsilon^{1 - \zeta}$.
          Fix $z_0 \in (\frac{\epsilon^{1 - \zeta}}{4} \ZZ^2) \cap W'$ such that $|z - z_0| < \epsilon^{1 - \zeta}$.
          Then $z, w \in B_{4 \epsilon^{1 - \zeta}}(z_0) \subset \tilde{W}$, so
          \begin{align*}
            &\locLFPP[\epsilon][\hphi]\left(\phi(z), \phi(w); \phi(\tilde{W}) \right) \\
            &\leq \ \locLFPP[\epsilon][\hphi]\left(\phi(z), \phi(w); \phi(B_{4 \epsilon^{1 - \zeta}}(z_0)) \right) \\
            &\leq \ e^{\xi} e^{\xi Q} \fa_{\epsilon/|\phi'(z_0)|}^{-1} \tag*{(by \eqref{eq:ErrorTermsBorelCantelli}, \eqref{eq:ErrorTermsLogDerivatives})} \\
            &\qquad \qquad \cdot \inf_{\substack{P \colon z \to w \\ P \subset B_{4 \epsilon^{1 - \zeta}}(z_0)}} \int\limits_{0}^{1} \frac{\fa_{\epsilon/|\phi'(z_0)|}}{\fa_{\epsilon}} |\phi'(P(t))|^{1 - \xi Q} e^{\xi \hat{h}_{\epsilon/|\phi'(z_0)|}^{*}(P(t))} |P'(t)| \, \D t  \\
            &\leq \ e^{\xi} e^{\xi Q} 2 \DerivativeBound^{2(1 - \xi Q)} \fa_{\epsilon/|\phi'(z_0)|}^{-1} \inf_{\substack{P \colon z \to w \\ P \subset B_{4 \epsilon^{1 - \zeta}}(z_0)}} \int\limits_{0}^{1}  e^{\xi \hat{h}_{\epsilon/|\phi'(z_0)|}^{*}(P(t))} |P'(t)| \, \D t \tag*{(by \eqref{eq:ErrorTermsScalingConstantsAgain})} \\
            &= \ e^{\xi} e^{\xi Q} 2 \DerivativeBound^{2(1 - \xi Q)} \locLFPP[\epsilon/|\phi'(z_0)|][h]\left(z, w; B_{4 \epsilon^{1 - \zeta}}(z_0) \right).
          \end{align*}
          Bounding the metric on the right-hand side by the length of a straight line segment from $z$ to $w$, Lemma \ref{lemma:AlmostSureFieldMollificationBound} and the fact that $\fa_{\epsilon} \geq c' \epsilon^{1 - \xi Q} (\log \epsilon^{-1})^{-b}$ for some $b, c' > 0$ \cite[Theorem 1.11]{UpToConstants} imply
          \begin{align*}
            \locLFPP[\epsilon/|\phi'(z_0)|][h]\left(z, w; B_{2 \epsilon^{1 - \zeta}}(z_0) \right) \
            &\leq \ C |z - w| \epsilon^{-\xi (2 + \eta)} \epsilon^{\xi Q - 1} \left(\log \epsilon^{-1} \right)^b
          \end{align*}
          for any $\eta \in (0, 1)$, where $C > 0$ is random, but does not depend on $z_0$ or $\phi$.
          Since $|z - w| \leq 4 \epsilon^{1 - \zeta}$ and $Q > 2$, if $\eta$ and $\zeta$ are close enough to $0$, the right-hand side will be at most $2 C$ times a positive power of $\epsilon$ times $(\log \epsilon^{-1})^b$, so converges to $0$ as $\epsilon \to 0$.
          This proves the lemma in the case that $\zeta$ is sufficiently close to $0$.

          To deduce the result for arbitrary $\zeta \in (0, 1)$, let $\zeta' \in (0, \zeta)$ be close enough to $0$ for the previous argument to apply, and use Proposition \ref{prop:InitialLipschitzCondition} with $\zeta'$ in place of $\zeta_{+}$ to see that with probability $1 - O_{\epsilon}(\epsilon^{\beta})$ for some $\beta = \beta(\zeta') > 0$, for all $z,w \in W$ with $|z - w| \leq \epsilon^{1 - \zeta}$, 
          \begin{align*}
            &\locLFPP[\epsilon][\hphi]\left(\phi(z), \phi(w); \phi(\tilde{W}) \right) \\
            &\qquad \qquad \leq \ \sup_{u \in B_{4 \epsilon^{1 - \zeta'}}(z)} \locLFPP[\epsilon][\hphi]\left(\phi(z), \phi(u); \phi(\tilde{W}) \right) \\
            &\qquad \qquad \qquad \qquad + \sup_{v \in B_{4 \epsilon^{1 - \zeta'}}(w)} \locLFPP[\epsilon][\hphi]\left(\phi(v), \phi(w); \phi(\tilde{W}) \right) \\
            &\qquad \qquad \qquad \qquad + \locLFPP[\epsilon][\hphi]\left(\phi\left(B_{4 \epsilon^{1 - \zeta'}}(z) \right), \phi\left(B_{4 \epsilon^{1 - \zeta'}}(w) \right); \phi\left(B_{4 \epsilon^{1 - \zeta'}}(W') \right)\right) \\
            &\qquad \qquad \leq \ \sup_{u \in B_{4 \epsilon^{1 - \zeta'}}(z)} \locLFPP[\epsilon][\hphi]\left(\phi(z), \phi(u); \phi(\tilde{W}) \right) \\
            &\qquad \qquad \qquad \qquad + \sup_{v \in B_{4 \epsilon^{1 - \zeta'}}(w)} \locLFPP[\epsilon][\hphi]\left(\phi(v), \phi(w); \phi(\tilde{W}) \right) \\
            &\qquad \qquad \qquad \qquad + C_0 D_h\left(z, w; W' \right),
          \end{align*}
          where $C_0$ is as in Proposition \ref{prop:InitialLipschitzCondition}.
          By Borel-Cantelli, this inequality holds for $\epsilon = 2^{-n}$ for all $n$ sufficiently large.
          The first two terms on the right-hand side converge to $0$ by the previous case (applied with an open set $W'$ such that $W \Subset W' \Subset \tilde{W}$ in place of $W$ to ensure $B_{4 \epsilon^{1 - \zeta}}(z) \subset W'$ for all $z \in W$ and all $\epsilon$ sufficiently small), while the third term converges to $0$ almost surely by continuity of $D_h$.
        \end{proof}
      \end{lemma}

      Recall that for $\rho > 0$, we defined
      \begin{align*}
        \diagonal \
        &\coloneqq \ \left\{(z,w) \in W^2 : |z - w| \leq \rho \right\}.
      \end{align*}
      Our next lemma shows that $\cap_{\phi \in \confmaps} \hyperlink{Gphi}{G_{\epsilon}^{\phi}(W, \tilde{W})}$ contains $\diagonal$ for a random $\rho > 0$.

      \begin{lemma}
        \label{lemma:GeodesicSets}
        For each connected open $U \subset \CC$, there is a measurable set $F_U \subset \cD'(U)$ satisfying the following.
        \begin{enumerate}[label=(\roman*)]
          \item If $h \in F_U$, then for all open $W \Subset \tilde{W} \Subset V \Subset U$ and $\DerivativeBound > 1$, there exists $\rho > 0$ and $N \in \NN$ such that
            \begin{align*}
              \diagonal \
              &\subset \ \bigcap_{\phi \in \confmaps} \hyperlink{Gphi}{G_{2^{-n}}^{\phi}\left(W, \tilde{W} \right)} \ \forall n \geq N.
            \end{align*}
          \item If $h$ is a whole-plane GFF, then $h|_U \in F_U$ almost surely.
          \item If $h \in \cD'(U)$ and $\phi \colon U \to \phi(U)$ is a conformal map, then $h \in F_U$ if and only if $\hphi \in F_{\phi(U)}$.
        \end{enumerate}
        \begin{proof}
          Given open sets $W \Subset \tilde{W} \Subset V \Subset U$, $\DerivativeBound > 1$, $\rho > 0$, and $M \in \NN$, define $F_{W, \tilde{W}, V, U, \DerivativeBound, \rho, M}$ to be the set of $h \in \cD'(U)$ such that for all $\phi \in \confmaps$,
          \begin{align}
            \locLFPP[2^{-m}][\hphi]\left(\phi(z), \phi(w); \phi(V) \right) \
            &< \ \frac{1}{2} \locLFPP[2^{-m}][\hphi]\left(\phi(W), \phi(\partial \tilde{W}) \right) \ \forall m \geq M \ \forall (z,w) \in \diagonal.
            \label{eq:CoordinateChangeDistanceToBoundary}
          \end{align}
          Note that if \eqref{eq:CoordinateChangeDistanceToBoundary} holds, then $\diagonal \subset \hyperlink{Gphi}{G_{2^{-m}}^{\phi}(W, \tilde{W})}$ for all $m \geq M$.
          Moreover, $\confmaps$ can be replaced by $\lambda_{\DerivativeBound}(V, U)$ from Lemma \ref{lemma:CountableDenseSetOfConformalMaps}, so $F_{W, \tilde{W}, V, U, \DerivativeBound, \rho, M}$ is a measurable subset of $\cD'(U)$.

          Now choose open sets $W_n \Subset \tilde{W}_n \Subset V_n \Subset U$ increasing to $U$.
          Define $F_U$ to be the event that for all $n \in \NN$ there exists $j \geq n$ such that for all $\DerivativeBound \geq 2$ there exists $M, \ell \in \NN$ such that $F_{W_n, \tilde{W}_j, V_j, U, \DerivativeBound, 1/\ell, M}$ occurs.

          From \eqref{eq:CoordinateChangeDistanceToBoundary}, it is clear that (i) holds.
          To see (iii), use the fact that if $\phi \colon U \to \phi(U)$ and $\psi \colon \phi(U) \to \psi(\phi(U))$ are conformal maps and $W \Subset \tilde{W} \Subset V \Subset \phi(U)$, then $(\hphi)^{\psi} = \hphi[\psi \circ \phi]$ (recall $\hphi \coloneqq h \circ \phi^{-1} + Q \log|(\phi^{-1})'|$) and
          \begin{align*}
            \locLFPP[\epsilon][(\hphi)^{\psi}]\left(\psi(z), \psi(w); \psi(V) \right)
            &= \locLFPP[\epsilon][\hphi[\psi \circ \phi]]\left((\psi \circ \phi)(\phi^{-1}(z)), (\psi \circ \phi)(\phi^{-1}(w)); (\psi \circ \phi)(\phi^{-1}(V)) \right) \\
            \locLFPP[\epsilon][(\hphi)^{\psi}]\left(\psi(W), \psi(\partial \tilde{W}) \right)
            &= \locLFPP[\epsilon][\hphi[\psi \circ \phi]]\left((\psi \circ \phi)(\phi^{-1}(W)), (\psi \circ \phi)(\partial \phi^{-1}(\tilde{W}))\right)
          \end{align*}
          together with the fact that $h \in F_U$ implies $h \in F_{\phi^{-1}(W), \phi^{-1}(\tilde{W}), \phi^{-1}(V), U, \DerivativeBound, \rho, M}$ for $M$ and $\tilde{W} \Subset V \Subset \phi(U)$ sufficiently large and $\rho$ sufficiently small.

          Let us now verify (ii).
          Let $h$ be a whole-plane GFF.
          The idea of the proof is that by continuity, when $\rho$ is sufficiently small, $D_h(z,w) < \frac{1}{2} D_h(W, \partial \tilde{W})$ for all $z,w \in W$ with $|z - w| \leq \rho$.
          Using Proposition \ref{prop:InitialLipschitzCondition}, we can transfer this inequality to $\locLFPP[\epsilon][\hphi]$ for sufficiently small $\epsilon$.

          Fix $\zeta \in (0, 1)$.
          By Lemma \ref{lemma:ErrorTerms} (applied with $W$ replaced by a neighbourhood of $\overline{W}$ contained in $\tilde{W}$, almost surely,
          \begin{align*}
            \lim_{n \to \infty} \sup_{z \in W} \sup_{w \in \partial B_{4 n^{-a(1 - \zeta)}}(z)} \sup_{\phi \in \confmaps} \locLFPP[2^{-n}][\hphi]\left(\phi(z), \phi(w); \phi\left(\tilde{W} \right)\right) \
            &= \ 0.
          \end{align*}
          Fix an open set $W'$ such that $\tilde{W} \Subset W' \Subset V$.
          Use Proposition \ref{prop:InitialLipschitzCondition} to find $C_0, \beta > 0$ such that with probability at least $1 - O_{\epsilon}(\epsilon^{\beta})$ as $\epsilon \to 0$, 
          \begin{align}
            &\sup_{\phi \in \confmaps} \locLFPP[\epsilon][\hphi]\left(\phi\left(B_{4 \epsilon^{1 - \zeta}}(z) \right), \phi\left(B_{4 \epsilon^{1 - \zeta}}(w) \right); \phi\left(B_{4 \epsilon^{1 - \zeta}}(\tilde{W}) \right)\right) \nonumber \\
            &\qquad \qquad \leq \ C_0 D_h\left(z, w; \tilde{W} \right) \ \forall z,w \in \tilde{W}, \label{eq:GeodesicSetsInitialLipschitz1} \\
            &D_h\left(B_{4 \epsilon^{1 - \zeta}}(z), B_{4 \epsilon^{1 - \zeta}}(w); B_{4 \epsilon^{1 - \zeta}}(W') \right) \nonumber \\
            &\qquad \qquad \leq \ C_0 \inf_{\phi \in \confmaps} \locLFPP[\epsilon][\hphi]\left(\phi(z), \phi(w); \phi\left(W'\right)\right) \ \forall z,w \in W'. \label{eq:GeodesicSetsInitialLipschitz2}
          \end{align}
          Borel-Cantelli implies that almost surely, \eqref{eq:GeodesicSetsInitialLipschitz1} and \eqref{eq:GeodesicSetsInitialLipschitz2} hold for $\epsilon = 2^{-n}$ for all $n$ sufficiently large.
          Fix $\eta > 0$ such that $\inf_{z \in W, w \in \partial \tilde{W}} |z - w| > 2 \eta$ and $\inf_{z \in \tilde{W}, w \in \partial W'} |z - w| > 2 \eta$.
          By continuity of $D_h$, there exists $\rho > 0$ such that 
          \begin{align}
            \sup_{(z,w) \in \diagonal} D_h\left(z, w \right) \
            &< \ \frac{1}{4} C_0^{-2} D_h\left(B_{\eta}(W), B_{\eta}(\partial \tilde{W}) \right). \label{eq:GeodesicSetsLQGMetric}
          \end{align}
          Choose $N$ large enough that $4 \cdot 2^{-n(1 - \zeta)} < \eta$, \eqref{eq:GeodesicSetsInitialLipschitz1} and \eqref{eq:GeodesicSetsInitialLipschitz2} both hold for $\epsilon = 2^{-n}$ with $n \geq N$, and for each $n \geq N$,
          \begin{align}
            \begin{split}
              &\sup_{z \in W} \sup_{w \in \partial B_{4 \cdot 2^{-n(1 - \zeta)}}(z)} \sup_{\phi \in \confmaps} \locLFPP[2^{-n}][\hphi]\left(\phi(z), \phi(w); \phi\left(\tilde{W} \right)\right) \\
              &\qquad \qquad < \ \frac{1}{8} C_0^{-1} D_h\left(B_{\eta}(W), B_{\eta}(\partial \tilde{W}) \right). 
            \end{split}
            \label{eq:GeodesicSetsErrorTerms}
          \end{align}
          Then for each $n \geq N$ and each $\phi \in \confmaps$, with $\epsilon = 2^{-n}$,
          \begin{align*}
            &\sup_{(z,w) \in \diagonal} \locLFPP[\epsilon][\hphi]\left(\phi(z), \phi(w); \phi(V) \right) \\
            &\leq \ \sup_{(z,w) \in \diagonal} \locLFPP[\epsilon][\hphi]\left(\phi(z), \phi(w); \phi\left(B_{4 \epsilon^{1 - \zeta}}(\tilde{W}) \right)\right) \\
            &\leq \ \sup_{(z,w) \in \diagonal} \locLFPP[\epsilon][\hphi]\left(\phi(B_{4 \epsilon^{1 - \zeta}}(z)), \phi(B_{4 \epsilon^{1 - \zeta}}(w)); \phi(B_{4 \epsilon^{1 - \zeta}}(\tilde{W})) \right) \\
            &\qquad \qquad + 2 \sup_{u \in W} \sup_{v \in \partial B_{4 \epsilon^{1 - \zeta}}(u)} \locLFPP[\epsilon][\hphi]\left(\phi(u), \phi(v); \phi\left(B_{4 \epsilon^{1 - \zeta}}(\tilde{W}) \right)\right) \\
            &\leq \ C_0 \sup_{(z,w) \in \diagonal} D_h\left(z, w; \tilde{W} \right) \tag*{(by \eqref{eq:GeodesicSetsInitialLipschitz1})} \\
            &\qquad \qquad + 2 \sup_{u \in W} \sup_{v \in \partial B_{4 \epsilon^{1 - \zeta}}(u)} \locLFPP[\epsilon][\hphi]\left(\phi(u), \phi(v); \phi\left(\tilde{W} \right)\right) \\
            &< \ \frac{1}{2} C_0^{-1} D_h\left(B_{4 \epsilon^{1 - \zeta}}(W), B_{4 \epsilon^{1 - \zeta}}(\partial \tilde{W}) \right) \tag*{(by \eqref{eq:GeodesicSetsLQGMetric}, \eqref{eq:GeodesicSetsErrorTerms})}  \\
            &\leq \ \frac{1}{2} \locLFPP[\epsilon][\hphi]\left(W, \partial \tilde{W} \right), \tag*{(by \eqref{eq:GeodesicSetsInitialLipschitz2})}
          \end{align*}
        \end{proof}
      \end{lemma}

      \begin{proof}[Proof of Theorem \ref{thm:ConvergenceTheorem}]
        Fix an open set $\tilde{W}$ such that $W \Subset \tilde{W} \Subset V$.
        By Proposition \ref{prop:LipschitzConstant1PlusDelta}, for each $j \in \NN$, we can find $\beta_j > 0$ and $\zeta_j \in (0, 1)$ such that with probability $1 - O_{\epsilon}(\epsilon^{\beta_j})$ as $\epsilon \to 0$, for all $\phi \in \confmaps$,
        \begin{align}
          &\locLFPP[\epsilon][\hphi]\left(\phi\left(B_{4 \epsilon^{1 - \zeta_j}}(z) \right), \phi\left(B_{4 \epsilon^{1 - \zeta_j}}(w) \right); \phi\left(B_{4 \epsilon^{1 - \zeta_j}}(\tilde{W}) \right)\right) \nonumber \\
          &\qquad \qquad \leq \ \frac{j+1}{j} D_{h}\left(z, w; \tilde{W} \right) \ \forall (z,w) \in \hyperlink{G}{G(W, \tilde{W})}, \label{eq:iConvergenceLipschitz1} \\
          &D_h\left(B_{4 \epsilon^{1 - \zeta_j}}(z), B_{4 \epsilon^{1 - \zeta_j}}(w); B_{4 \epsilon^{1 - \zeta_j}}(\tilde{W}) \right) \nonumber \\
          &\qquad \qquad \leq \ \frac{j + 1}{j} \locLFPP[\epsilon][\hphi]\left(\phi(z), \phi(w); \phi\left(\tilde{W} \right)\right) \ \forall (z,w) \in \hyperlink{Gphi}{G_{\epsilon}^{\phi}(W, \tilde{W})}. \label{eq:iConvergenceLipschitz2}
        \end{align}
        Fix an open set $W'$ with $W \Subset W' \Subset \tilde{W}$.
        By Lemma \ref{lemma:ErrorTerms} and Lemma \ref{lemma:GeodesicSets}, we can find a random $\rho > 0$ and deterministic $N_j \in \NN$ such that for each $j$, almost surely, 
        \begin{align}
          &\sup_{(z,w) \in \diagonal[4 \epsilon^{1 - \zeta_{j}}][W']} D_h\left(z,w; \tilde{W} \right) \
          < \ \frac{1}{j}, \quad \epsilon = 2^{-n}, \ \forall n \geq N_j, \label{eq:iConvergenceLQGError} \\
          &\sup_{(z,w) \in \diagonal[4 \epsilon^{1 - \zeta_j}][W']} \sup_{\phi \in \confmaps} \locLFPP[\epsilon][\hphi]\left(\phi(z), \phi(w); \phi(\tilde{W}) \right) \
          < \ \frac{1}{j} \quad \epsilon = 2^{-n}, \ \forall n \geq N_j, \label{eq:iConvergenceLFPPError} \\
          &\diagonal \
          \subset \ \hyperlink{G}{G(W, \tilde{W})} \cap \bigcap_{\phi \in \confmaps} \hyperlink{Gphi}{G_{\epsilon}^{\phi}\left(W, \tilde{W} \right)} \quad \epsilon = 2^{-n}, \ \forall n \geq N_1. \label{eq:iConvergenceGeodesicSets}
        \end{align}
        By Borel-Cantelli, we can increase $N_j$ so that \eqref{eq:iConvergenceLipschitz1} and \eqref{eq:iConvergenceLipschitz2} both hold for $\epsilon = 2^{-n}$ for all $n \geq N_j$.
        We may assume $(N_j)_{j=1}^{\infty}$ is strictly increasing, and that $B_{4 \cdot 2^{-N_1}}(W) \Subset W'$.

        Let $\epsilon = 2^{-n}$ and assume $n \geq N_j$.
        For $(z,w) \in \diagonal$ and $\phi \in \confmaps$,
        \begin{align*}
          &\locLFPP[\epsilon][\hphi]\left(\phi(z), \phi(w); \phi(V) \right) \\
          &= \ \locLFPP[\epsilon][\hphi]\left(\phi(z), \phi(w); \phi\left(B_{4 \epsilon^{1 - \zeta_j}}(\tilde{W}) \right)\right) \tag*{(by \eqref{eq:iConvergenceGeodesicSets})} \\
          &\leq \ \locLFPP[\epsilon][\hphi]\left(\phi(B_{4 \epsilon^{1 - \zeta_j}}(z)), \phi(B_{4 \epsilon^{1 - \zeta_j}}(w)); \phi(B_{4 \epsilon^{1 - \zeta_j}}(\tilde{W})) \right) \\
          &\qquad \qquad + 2 \sup_{(u, v) \in \diagonal[4 \epsilon^{1 - \zeta_j}][W']} \locLFPP[\epsilon][\hphi]\left(\phi(u), \phi(w); \phi(\tilde{W}) \right) \\
          &\leq \ \frac{j+1}{j} D_h\left(z, w; \tilde{W} \right) + \frac{2}{j} \tag*{(by \eqref{eq:iConvergenceLipschitz1}, \eqref{eq:iConvergenceLFPPError})} \\
          &= \ \frac{j+1}{j} D_h\left(z, w; V \right) + \frac{2}{j}, \tag*{(by \eqref{eq:iConvergenceGeodesicSets})}.
        \end{align*}
        It follows that
        \begin{align}
          &\limsup_{2^{-n} = \epsilon \to 0} \sup_{(z,w) \in \diagonal} \sup_{\phi \in \confmaps} \left[\locLFPP[\epsilon][\hphi]\left(\phi(z), \phi(w); \phi(V) \right) - D_h\left(z, w; V \right) \right] \nonumber \\
          &\leq \ \limsup_{j \to \infty} \left[\frac{1}{j} \sup_{(z, w) \in \diagonal} D_h\left(z,w; V \right) + \frac{2}{j} \right] \nonumber \\
          &= \ 0,
          \label{eq:iConvergenceUpperBound}
        \end{align}
        where we have used the fact that $\sup_{(z,w) \in \diagonal} D_h(z, w; V) < \infty$ almost surely by continuity.
        Similarly, if $(z,w) \in \diagonal$ and $\phi \in \confmaps$,
        \begin{align*}
          &D_h\left(z, w; V \right) \\
          &= \ D_h\left(z, w; B_{4 \epsilon^{1 - \zeta_j}}(\tilde{W}) \right) \tag*{(by \eqref{eq:iConvergenceGeodesicSets})} \\
          &\leq \ D_h\left(B_{4 \epsilon^{1 - \zeta_j}}(z), B_{4 \epsilon^{1 - \zeta_j}}(w); B_{4 \epsilon^{1 - \zeta_j}}(\tilde{W}) \right) + 2 \sup_{(u,v) \in \diagonal[4 \epsilon^{1 - \zeta_j}][W']} D_h\left(u, v; \tilde{W} \right) \\
          &\leq \ \frac{j+1}{j} \locLFPP[\epsilon][\hphi]\left(\phi(z), \phi(w); \phi(\tilde{W}) \right) + \frac{2}{j} \tag*{(by \eqref{eq:iConvergenceLipschitz2}, \eqref{eq:iConvergenceLQGError})} \\
          &= \ \frac{j+1}{j} \locLFPP[\epsilon][\hphi]\left(\phi(z), \phi(w); \phi(V) \right) + \frac{2}{j}, \tag*{(by \eqref{eq:iConvergenceGeodesicSets})},
        \end{align*}
        hence
        \begin{align*}
          &\liminf_{2^{-n} = \epsilon \to 0} \inf_{(z, w) \in \diagonal} \inf_{\phi \in \confmaps} \left[\locLFPP[\epsilon][\hphi]\left(\phi(z), \phi(w); \phi(V) \right) - D_h\left(z,w; V \right)\right] \\
          &\geq \ \liminf_{j \to \infty} \inf_{2^{-n} = \epsilon < 2^{-N_j}} \inf_{(z,w) \in \diagonal} \inf_{\phi \in \confmaps} \left[-\frac{1}{j} \locLFPP[\epsilon][\hphi]\left(\phi(z), \phi(w); \phi(V)\right) - \frac{2}{j} \right] \\
          &\geq \ -\limsup_{j \to \infty} \sup_{2^{-n} = \epsilon < 2^{-N_j}} \left[\frac{1}{j} \sup_{(z,w) \in \diagonal} \sup_{\phi \in \confmaps} \locLFPP[\epsilon][\hphi]\left(\phi(z), \phi(w); \phi(V)\right) + \frac{2}{j} \right] \\
          &= \ 0,
        \end{align*}
        where the last step uses \eqref{eq:iConvergenceUpperBound} to see that
        \begin{align*}
          &\limsup_{j \to \infty} \sup_{2^{-n} = \epsilon < 2^{-N_j}} \sup_{(z,w) \in \diagonal} \sup_{\phi \in \confmaps} \locLFPP[\epsilon_j][\hphi]\left(\phi(z), \phi(w); \phi(V) \right) \\
          &\qquad \qquad \leq \ \sup_{(z,w) \in \diagonal} D_h\left(z,w; V \right) \
          < \ \infty.
        \end{align*}
      \end{proof}

  \section{Proof of Theorem \ref{thm:StrongCoordinateChange}}
    \label{section:CoordinateChange}
    We will conclude by constructing an LQG metric satisfying the strong coordinate change formula.
    We must define a collection of measurable functions $D^U \colon \cD'(U) \to \{\text{continuous metrics on } U\}$, one for each open set $U \subset \CC$, satisfying the LQG metric axioms plus axiom \ref{axiom:StrongCoordinateChange}.
    The idea is that we have shown that on a neighbourhood of the diagonal in $U$, we have the simultaneous convergence of metrics of the form $\locLFPP[\epsilon][\hphi](\phi(\cdot), \phi(\cdot); \phi(U))$ for all conformal maps $\phi$ to the same common limit when $h$ is a whole-plane GFF restricted to $U$.
    The limit can be extended to a function on $U \times U$ by taking the length metric induced by the limiting metric near the diagonal.
    When $h$ is a whole-plane GFF, the metric $D_{h|_U}^U$ almost surely equals $D_h(\cdot, \cdot; U)$ (where $D_h$ is the LQG metric on $\CC$ constructed in \cite{AlmostSureConvergence}), which implies $D^U$ satisfies the LQG metric axioms \ref{axiom:LengthSpace}-\ref{axiom:WeylScaling}.
    The strong coordinate change follows from the fact that $\locLFPP[\epsilon][\hphi](\cdot, \cdot; \phi(U))$ converges to the same limit as $\locLFPP[\epsilon][h](\phi^{-1}(\cdot), \phi^{-1}(\cdot); U)$.

    Let us be more precise.
    Fix a connected open set $U \subset \CC$.
    Let $F_U$ be the event from Lemma \ref{lemma:GeodesicSets}.
    Fix open sets $W \Subset V \Subset U$ and $\DerivativeBound > 1$.
    By (i) of Lemma \ref{lemma:GeodesicSets} (applied with $\tilde{W}$ an arbitrary open set $W \Subset \tilde{W} \Subset V$), if $h \in F_U$, then there exists $\rho > 0$ such that
    \begin{align}
      \diagonal \
      \subset \ \bigcap_{\phi \in \confmaps} \hyperlink{Gphi}{G_{2^{-n}}^{\phi}(W, V)} \ \forall n \gg 0, \label{eq:ADefnGeodesicSets}
    \end{align}
    Let $A_{W, V, U, \DerivativeBound}$ be the set of $h \in F_U$ such that there exists $\rho > 0$ such that
    \begin{align}
      \lim_{2^{-j} = \epsilon \to 0} \sup_{\phi, \psi \in \confmaps} \sup_{(z,w) \in \diagonal} \left|\locLFPP[\epsilon][\hphi]\left(\phi(z), \phi(w); \phi(V) \right) - \locLFPP[\epsilon][\hphi[\psi]]\left(\psi(z), \psi(w); \psi(V) \right)\right|
      &= 0, \label{eq:ADefnConformalMaps} \\
      \lim_{2^{-j} = \epsilon \to 0} \sup_{2^{-k} = \delta \leq \epsilon} \sup_{(z,w) \in \diagonal} \left|\locLFPP[\epsilon][h]\left(z, w; V \right) - \locLFPP[\delta][h]\left(z, w; V \right)\right|
      &= 0. \label{eq:ADefnConvergence}
    \end{align}
    By Lemma \ref{lemma:CountableDenseSetOfConformalMaps} and continuity of $\locLFPP[\epsilon][h](\cdot, \cdot)$, $A_{W, V, U, \DerivativeBound}$ is a measurable subset of $\cD'(U)$.
    Choose open sets $W_n \Subset V_n \Subset U$ increasing to $U$.
    Then define
    \begin{align*}
      A_U \
      &\coloneqq \ \bigcap_{n=1}^{\infty} \bigcap_{j=n}^{\infty} \bigcap_{\DerivativeBound = 2}^{\infty} A_{W_n, V_j, U, \DerivativeBound}.
    \end{align*}
    Theorem \ref{thm:ConvergenceTheorem} implies that if $h$ is a whole-plane GFF, then $h|_U \in A_U$ almost surely.
    Note that if $\phi \colon U \to \phi(U)$ is a conformal map, then $A_{\phi(U)} = \{\hphi : h \in A_U\}$.

    If $h \in A_{W, V, U, \DerivativeBound}$, then $\locLFPP[\epsilon][h](\cdot, \cdot; V)$ converges uniformly on $\diagonal$, where $\rho$ is small enough that \eqref{eq:ADefnGeodesicSets}, \eqref{eq:ADefnConformalMaps}, and \eqref{eq:ADefnConvergence} all hold.
    Moreover, \eqref{eq:ADefnGeodesicSets} implies that if $V \subset V'$, then $\locLFPP[\epsilon][h](\cdot, \cdot; V')$ converges to the same limit on $\diagonal$.
    Therefore, if $h \in A_U$, we can unambiguously define 
    \begin{align}
      \tilde{D}_h^U(z,w) \
      &\coloneqq \ \lim_{2^{-k} = \epsilon \to 0} \locLFPP[\epsilon][h]\left(z, w; V_{j} \right) \text{ if } h \in A_{W_n, V_{j}, U, \DerivativeBound}, (z,w) \in \diagonal[\rho][W_n]. \label{eq:TildeLQGMetricNearDiagonal}
    \end{align}
    This defines $\tilde{D}_h^U$ on a neighbourhood of the diagonal in $U \times U$.
    Recall that if $d$ is a metric on $U$, its induced length metric is the metric $d^I$ defined by 
    \begin{align*}
      d^I(x, y) \
      &\coloneqq \ \inf_{P \colon x \to y} \Len(P; d),
    \end{align*}
    where the infimum is over all continuous paths $P \colon [0, 1] \to U$ from $x$ to $y$, and where $\Len(P; d)$ is defined as in \eqref{eq:PathLengthDefinition}.
    By the triangle inequality, this definition makes sense even if $d$ is defined only on a neighbourhood of the diagonal.
    So we can extend $\tilde{D}_h^U$ to a function on $U \times U$ by defining it to equal the length metric induced by $\tilde{D}_h^U$.
    Note that the length metric induced by $\tilde{D}_h^U$ is indeed an extension of $\tilde{D}_h^U$ in the sense that it agrees with \eqref{eq:TildeLQGMetricNearDiagonal}.
    To prove this, one must show that when $(z,w) \in \diagonal[\rho][W_n]$ and $\epsilon > 0$, there is a path $P$ from $z$ to $w$ with $\tilde{D}_h^U$-length at most $\tilde{D}_h^U(z, w) + \epsilon$, which can be done by an approximate midpoints argument similar to the proof of \cite[Theorem 2.4.16]{Burago}.
    When $h \not\in A_U$, we define $\tilde{D}_h^U$ to equal the length metric induced by the Euclidean metric.

    We would like to say the collection of functions $(h \mapsto \tilde{D}_h^U)_{U \subset \CC}$ is an LQG metric, but technically we don't know that $\tilde{D}_h^U$ is a continuous metric for every $h \in \cD'(U)$ (indeed, it is a continuous metric when $h \not\in A_U$ or for almost every realization of a whole-plane GFF restricted to $U$, but we need $\tilde{D}_h^U$ to induce the Euclidean topology for \textit{every} $h \in \cD'(U)$).
    To deal with this, introduce the set
    \begin{align*}
      B_U \
      &\coloneqq \ \left\{h \in A_U : \tilde{D}_h^U \text{ induces the Euclidean topology} \right\}.
    \end{align*}
    If $\phi \colon U \to \phi(U)$ is a conformal transformation and $h \in A_U$, then $\tilde{D}_h^U = \tilde{D}_{\hphi}^{\phi(U)}(\phi(\cdot), \phi(\cdot))$ on a neighbourhood of the diagonal in $U$.
    From this, we see that $\tilde{D}_h^U$ induces the Euclidean topology if and only if $\tilde{D}_{\hphi}^{\phi(U)}$ does, i.e. $B_{\phi(U)} = \{\hphi : h \in B_U\}$.

    Finally, we define $D_h^U$ to equal $\tilde{D}_h^U$ when $h \in B_U$ and equal the length metric induced by the Euclidean metric on $U$ when $h \not\in B_U$.
    The remainder of this section will show that $(D^U)_{U \subset \CC}$ is an LQG metric satisfying the strong coordinate change formula.

    Theorem \ref{thm:ConvergenceTheorem} shows that if $h$ is a whole-plane GFF, then $h|_U \in B_U$ almost surely because $\tilde{D}_{h|_U}^U = D_h(\cdot, \cdot; U)$ and the metric on the right-hand side induces the Euclidean topology.
    As the LQG metric axioms are required to hold for a GFF plus continuous function on $U$, we will need the following lemma.

    \begin{lemma}
      \label{lemma:GeneralFields}
      If $h$ is a GFF plus continuous function $f$ on $U$, then $h \in B_U$ almost surely.
      Moreover, if $\fh^U$ is a random harmonic function on $U$ such that $h - f + \fh^U$ has the law of a whole-plane GFF on $U$, then $D_h^U = e^{\xi (f - \fh^U)} \cdot D_{h - f + \fh^U}^U$ whenever $h \in B_U$.
      \begin{proof}
        Such a harmonic function $\fh^U$ exists by the standard fact that if $X, Y, Z$ are random variables with $X \stackrel{d}{=} Y$, then there is a random variable $W$ such that $(X, Z) \stackrel{d}{=} (Y, W)$ (apply this with $X$ and $Z$ the zero boundary and harmonic parts of a whole-plane GFF restricted to $U$, and $Y = h - f$).
        Since $h - f + \fh^U$ has the law of a whole-plane GFF restricted to $U$, we know $h - f + \fh^U \in B_U$ almost surely for the reason given in the paragraph before this lemma.
        Now use a similar proof to \cite[Lemma 7]{WeylScaling} to show that $h - f + \fh^U \in B_U$ implies $h \in B_U$, and that $D_h^U = e^{\xi (f - \fh^U)} \cdot D_{h - f + \fh^U}^U$.
      \end{proof}
    \end{lemma}

    \begin{thm}
      $(D^U)_{U \subset \CC}$ satisfies the LQG metric axioms \ref{axiom:LengthSpace}, \ref{axiom:Locality}, and \ref{axiom:WeylScaling}, plus the strong coordinate change formula \ref{axiom:StrongCoordinateChange}.
      \begin{proof}
        Since $D_h^U$ is defined to be a length metric for \textit{all} $h \in \cD'(U)$, axiom \ref{axiom:LengthSpace} requires no justification.

        For the proofs of axiom \ref{axiom:Locality} and \ref{axiom:WeylScaling}, let $h$ be a GFF plus continuous function $f$ on $U$, and let $\fh^U$ be as in Lemma \ref{lemma:GeneralFields}.
        Then for any continuous function $g \colon U \to \RR$, by Lemma \ref{lemma:GeneralFields}
        \begin{align*}
          D_{h + g}^U \
          &= \ e^{\xi(f + g - \fh^U)} \cdot D_{h - f + \fh^U}^U \
          = \ e^{\xi g} \cdot e^{\xi (f - \fh^U)} \cdot D_{h - f + \fh^U}^U \
          = \ e^{\xi g} \cdot D_{h}^U.
        \end{align*}
        Therefore, axiom \ref{axiom:WeylScaling} holds.

        For axiom \ref{axiom:Locality}, if $V \subset U$ is an open set, then since
        \begin{align*}
          D_h^U(\cdot, \cdot; V) \
          &= \ e^{\xi (f|_V - \fh^U|_V)} \cdot \left(D_{h - f + \fh^U}^U \left(\cdot, \cdot; V \right) \right),
        \end{align*}
        where $h - f + \fh^U$ has the law of a whole-plane GFF restricted to $U$, it will suffice to check that if $H$ is a whole-plane GFF, then almost surely, $D_{H|_U}^U(\cdot, \cdot; V) = D_{H|_V}^V$.
        This is true because $D_{H|_U}^U = D_H(\cdot, \cdot; U)$ almost surely by Theorem \ref{thm:ConvergenceTheorem}, therefore
        \begin{align*}
          D_{H|_U}^U(\cdot, \cdot; V) \
          &= \ \text{internal metric of } D_H(\cdot, \cdot; U) \text{ on } V \
          = \ D_H(\cdot, \cdot; V) \
          = \ D_{H|_V}^V.
        \end{align*}

        Let us now explain why the strong coordinate change formula holds.
        The key fact is that for a conformal map $\phi \colon U \to \phi(U)$, we have $B_{\phi(U)} = \{\hphi : h \in B_U\}$.
        Suppose $h$ is a GFF plus continuous function on $U$, and assume the event $\{h \in B_U\}$ occurs.
        For a conformal map $\phi \colon U \to \phi(U)$, let $W_n^{\phi}, V_{n}^{\phi}$ be the open sets from the definition of $A_{\phi(U)}$, and write $W_n = W_n^{\id}$ and $V_{n} = V_{n}^{\id}$.
        Fix $n \in \NN$.
        We will show that on a neighbourhood of the diagonal in $W_n \times W_n$, the metrics $D_{\hphi}^{\phi(U)}(\phi(\cdot), \phi(\cdot))$ and $D_h^U$ are equal.
        Sending $n \to \infty$ so that $W_n \nearrow U$, since $D_h^U$ and $D_{\hphi}^{\phi(U)}(\phi(\cdot), \phi(\cdot))$ are both length metrics, they must be equal.

        Since $\phi(W_n) \Subset \phi(U)$, we can find $N \in \NN$ such that $\phi(W_n) \subset W_N^{\phi}$.
        Fix $J \geq N$.
        Choose $j \geq n$ such that $V_J^{\phi} \Subset \phi(V_j)$.
        Fix $\DerivativeBound \geq 2$ such that $\phi \in \confmaps[\DerivativeBound][V_j][U]$. 
        Note that $h \in A_{W_n, V_j, U, \DerivativeBound}$ and $\hphi \in A_{W_N^{\phi}, V_J^{\phi}, \phi(U), 2}$.
        Let $\rho > 0$ be as in the definition of $A_{W_n, V_{j}, U, \DerivativeBound}$ and $\rho^{\phi} > 0$ be as in the definition of $A_{W_N^{\phi}, V_{J}^{\phi}, \phi(U), 2}$.
        We may assume $\rho$ and $\rho^{\phi}$ are small enough that \eqref{eq:ADefnGeodesicSets} holds with $(W_n, V_j, \rho)$ and $(W_N^{\phi}, V_j^{\phi}, \rho^{\phi})$ in place of $(W, V, \rho)$.
        Let $0 < \tilde{\rho} < \rho$ be such that $\phi(\diagonal[\tilde{\rho}][W_n]) \coloneqq \{(\phi(z), \phi(w)) : (z, w) \in \diagonal[\tilde{\rho}][W_n] \} \subset \diagonal[\rho^{\phi}][W_N^{\phi}]$.
        Then on $\diagonal[\tilde{\rho}][W_n]$, 
        \begin{align*}
          &D_{\hphi}^{\phi(U)}\left(\phi(\cdot), \phi(\cdot) \right) \\
          &\qquad \qquad = \ \lim_{2^{-k} = \epsilon \to 0} \locLFPP[\epsilon][\hphi]\left(\phi(\cdot), \phi(\cdot); V_{J}^{\phi} \right) & (\phi(\diagonal[\tilde{\rho}][W_n]) \subset \diagonal[\rho^{\phi}][W_N^{\phi}], \hphi \in B_{\phi(U)}) \\
          &\qquad \qquad = \ \lim_{2^{-k} = \epsilon \to 0} \locLFPP[\epsilon][\hphi]\left(\phi(\cdot), \phi(\cdot); \phi(V_{j}) \right) & (A_{W_N^{\phi}, V_{J}^{\phi}, \phi(U), 2} \subset A_{W_N^{\phi}, \phi(V_{j}), \phi(U), 2}) \\
          &\qquad \qquad = \ \lim_{2^{-k} = \epsilon \to 0} \locLFPP[\epsilon][h]\left(\cdot, \cdot; V_{j} \right) & (h \in A_{W_n, V_{j}, U, \DerivativeBound}) \\
          &\qquad \qquad = \ D_h^U & (h \in B_U).
        \end{align*}
      \end{proof}
    \end{thm}

  \sloppy 
  \hfuzz=2pt 
  \printbibliography
\end{document}